\newtheorem*{thma}{Theorem A}
\newtheorem*{thmb}{Theorem B}
\newtheorem*{thmc}{Theorem C}
\newtheorem*{thmd}{Theorem D}
\newtheorem*{thme}{Thereom E}
\newtheorem{thm}{Theorem}[section]
\newtheorem{lemma}[thm]{Lemma}
\newtheorem{cor}[thm]{Corollary}
\newtheorem{claim}{Claim}[thm]
\newtheorem{prop}[thm]{Proposition}
\newtheorem{fact}[thm]{Fact}
\theoremstyle{definition}
\newtheorem{defn}[thm]{Definition}
\newtheorem{conv}[thm]{Convention}
\newtheorem{question}[thm]{Question}
\theoremstyle{remark}
\newtheorem{remark}[thm]{Remark}
\DeclareMathOperator{\op}{op}
\DeclareMathOperator{\fp}{FP}
\DeclareMathOperator{\cub}{CUB}
\DeclareMathOperator{\add}{Add}
\DeclareMathOperator{\clps}{clps}
\DeclareMathOperator{\supp}{supp}
\DeclareMathOperator{\ord}{OR}
\DeclareMathOperator{\id}{id}
\DeclareMathOperator{\cf}{cf}
\DeclareMathOperator{\otp}{otp}
\DeclareMathOperator{\acc}{acc}
\DeclareMathOperator{\nacc}{nacc}
\DeclareMathOperator{\tr}{Tr}
\DeclareMathOperator{\dom}{dom}
\DeclareMathOperator{\im}{Im}
\DeclareMathOperator{\reg}{Reg}
\DeclareMathOperator{\sing}{Sing}
\DeclareMathOperator{\refl}{Refl}
\DeclareMathOperator{\cof}{cof}
\DeclareMathOperator{\pr}{pr}
\DeclareMathOperator{\Col}{Col}
\DeclareMathOperator{\ns}{NS}
\DeclareMathOperator{\dl}{Dl}
\DeclareMathOperator{\cl}{cl}
\newcommand{\forces}{\Vdash}
\newcommand{\s}{\subseteq}
\newcommand{\br}{\blacktriangleright}
\newcommand*{\axiomfont}[1]{\textsf{\textup{#1}}}
\newcommand{\zf}{\axiomfont{ZF}}
\newcommand{\zfc}{\axiomfont{ZFC}}
\newcommand{\gch}{\axiomfont{GCH}}
\newcommand{\mm}{\axiomfont{MM}}
\newcommand{\lcc}{\axiomfont{LCC}}
\renewcommand{\mid}{\mathrel{|}\allowbreak}
\renewcommand{\restriction}{\mathbin\upharpoonright}
\newcommand{\symdiff}{\mathbin\triangle}
\newcommand{\redup}{\mathrel{\hookrightarrow_{\mathbf p}}}
\newcommand{\redum}{\mathrel{\hookrightarrow_{BM}}}
\newcommand{\redub}{\mathrel{\hookrightarrow_B}}
\newcommand{\reduc}{\mathrel{\hookrightarrow_c}}
\newcommand{\reduO}{\mathrel{\hookrightarrow_1}}
\newcommand{\nredum}{\mathrel{\not\mskip-\thinmuskip\hookrightarrow_{BM}}}
\newcommand{\nredub}{\mathrel{\not\mskip-\thinmuskip\hookrightarrow_B}}
\newcommand*{\sq}[1]{\mathrel{\le^{#1}}}
\newcommand*{\sqc}[1]{\mathrel{\subseteq^{#1}}}
\providecommand{\myceil}[1]{{}^\ulcorner #1{}^\urcorner }
\title{Fake reflection}
\author{Gabriel Fernandes}
\address{Department of Mathematics, Bar-Ilan University, Ramat-Gan 5290002, Israel.}
\urladdr{http://u.math.biu.ac.il/\textasciitilde zanettg}
\author{Miguel Moreno}
\address{Department of Mathematics, Bar-Ilan University, Ramat-Gan 5290002, Israel.}
\urladdr{http://u.math.biu.ac.il/\textasciitilde morenom3}
\author{Assaf Rinot}
\address{Department of Mathematics, Bar-Ilan University, Ramat-Gan 5290002, Israel.}
\urladdr{http://www.assafrinot.com}
\subjclass[2010]{Primary 03E35. Secondary 03E05, 54H05.}
\keywords{Filter reflection, Lipschitz reduction, higher Baire space.}
\begin{document}
\begin{abstract} We introduce a generalization of stationary set reflection
which we call \emph{filter reflection}, and show it is compatible with the axiom of constructibility
as well as with strong forcing axioms. We prove the independence of filter reflection from ZFC,
and present applications of filter reflection to the study of canonical equivalence relations of the higher Cantor and Baire spaces.
\end{abstract}

\maketitle
\section{Introduction}

Throughout the paper, $\kappa$ denotes an uncountable cardinal satisfying $\kappa^{<\kappa}=\kappa$.

Motivated by questions arising from the study of the higher Cantor and Baire spaces (also known as the generalized Cantor and Baire spaces), $2^\kappa$ and $\kappa^\kappa$,
respectively, in this paper, we introduce and study a new notion of reflection which we call \emph{filter reflection}.

\begin{defn}\label{Def0} Suppose $X$ and $S$ are stationary subsets of $\kappa$,
and $\vec{\mathcal F}=\langle\mathcal F_\alpha\mid\alpha\in S\rangle$ is a sequence
such that, for each $\alpha\in S$, $\mathcal F_\alpha$ is a filter over $\alpha$.
\begin{enumerate}
\item We say that $\vec{\mathcal F}$  \emph{captures clubs} iff, for every club $C\s\kappa$,
the set $\{\alpha\in S\mid C\cap\alpha\notin\mathcal F_\alpha\}$ is non-stationary;
\item We say that \emph{$X$ $\vec{\mathcal F}$-reflects to $S$} iff $\vec{\mathcal F}$ captures clubs and,
for every stationary $Y\s X$, the set $\{\alpha\in S\mid Y\cap\alpha\in\mathcal F_\alpha^+\}$ is stationary;
\item We say that \emph{$X$ $\mathfrak f$-reflects to $S$} iff there exists a sequence of filters $\vec{\mathcal F}$
over a stationary subset $S'$ of $S$ such that $X$ $\vec{\mathcal F}$-reflects to $S'$.
\end{enumerate}
\end{defn}

It is not hard to verify (see Lemma~\ref{lemma53} below)
that if $X$ $\mathfrak f$-reflects to $S$, then the equivalence relation $=_X$ on the space $\kappa^\kappa$ (see Definition~\ref{canonicalER} below)
admits a Lipschitz reduction to the equivalence relation $=_S$.
A detailed study of the effect of filter reflection (and stronger forms of which) on the higher Baire and Cantor spaces 
is given in Section~\ref{section2} below,
but as we feel that the notion of filter reflection is of independent interest, let us take a moment to inspect Definition~\ref{Def0}.
\begin{itemize}
\item A sequence $\vec{\mathcal F}$ over a stationary $S\s \cof({>}\omega)$ that captures clubs can be obtained in \zfc;
for every $\alpha\in S$, simply take $\mathcal F_\alpha$ to be the club filter over $\alpha$, $\cub(\alpha)$.

An alternative way to consistently get a sequence $\vec{\mathcal F}$ that captures clubs is by consulting a $\diamondsuit^*_S$-sequence,
in which case, each filter $\mathcal{F}_\alpha$ will have a small base.

\item If we omit the requirement that $\vec{\mathcal F}$ captures clubs from Definition~\ref{Def0}(2),
then again such a sequence may be constructed in $\zfc$, with each $\mathcal F_\alpha$ being a principal ultrafilter.
However, if $S$ is an ineffable set (see Definition \ref{pointer-ineffable} below), then capturing implies that $\mathcal F_\alpha\supseteq\cub(\alpha)$ for most $\alpha\in S$.

An alternative way to consistently get ``reflection minus capturing'' goes through forcing axioms;
for instance, it follows from the main result of \cite{MR2224055}
that \textsf{MRP} (a principle weaker than the Proper Forcing Axiom)
entails that for $X:=\omega_2\cap\cof(\omega)$ and $S:=\omega_2\cap\cof(\omega_1)$,
there exists a sequence $\vec{\mathcal F}$ with $\mathcal F_\alpha\s \cub(\alpha)$ for all $\alpha\in S$,
and satisfying that for every stationary $Y\s X$, the set $\{\alpha\in S\mid Y\cap\alpha\in\mathcal F_\alpha^+\}$ is stationary.
\end{itemize}

Thus, filter reflection is the conjunction of two requirements, each being a consequence of \zfc,
and the challenge is in simultaneously satisfying both.
The special case in which $\mathcal F_\alpha=\cub(\alpha)$ for all $\alpha\in S'$
is better known as the assertion that ``every stationary subset of $X$ reflects in $S$''
that goes back to some of the milestone papers in the history of forcing with large cardinals \cite{MR0416925,magidor1982reflecting,MR783595,MR1029909}.

In this paper, we pay special attention to the case in which $\mathcal F_\alpha\nsupseteq\cub(\alpha)$ for all $\alpha\in S'$;
we regard this special case by the name \emph{fake reflection}.
The obvious advantage of fake reflection over genuine reflection is that
the former makes sense even for stationary sets $S$ consisting of points of countable cofinality.
The other --- somewhat unexpected --- advantage is that its consistency does not require large cardinals:
\begin{thma} For all stationary subsets $X$ and $S$ of $\kappa$,
there exists a ${<}\kappa$-closed $\kappa^+$-cc forcing extension,
in which  $X$ $\mathfrak f$-reflects to $S$.
\end{thma}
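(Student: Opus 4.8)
The plan is to force the witnessing filter sequence directly, by approximations of size ${<}\kappa$, each condition also carrying a small reservoir of $\mathbb P$-names for clubs that it promises to capture. Fix a set $\mathcal N$ of $\mathbb P$-names, each forced to be a club in $\kappa$, such that every club of every generic extension is named by some member of $\mathcal N$. A condition of $\mathbb P=\mathbb P(X,S)$ is a triple $p=(a_p,\langle c^p_\alpha\mid\alpha\in a_p\rangle,\mathcal C^p)$ with $a_p\in[S]^{<\kappa}$, each $c^p_\alpha\in[\alpha]^{<\kappa}$, and $\mathcal C^p\in[\mathcal N\times\kappa]^{<\kappa}$, subject to the coherence requirement that $p$ force $c^p_\alpha\setminus(\beta+1)\s\dot C$ whenever $(\dot C,\beta)\in\mathcal C^p$, $\alpha\in a_p$ and $\alpha>\beta$; and $q\le p$ means $a_q\supseteq a_p$, $\mathcal C^q\supseteq\mathcal C^p$, and $c^q_\alpha$ end-extends $c^p_\alpha$ (i.e.\ $c^p_\alpha=c^q_\alpha\cap(\sup(c^p_\alpha)+1)$) for each $\alpha\in a_p$. (The self-reference of the coherence clause to $\forces_{\mathbb P}$, and the interdependence of $\mathbb P$ with $\mathcal N$, is unwound by a routine simultaneous recursion on ranks of names, which I would not dwell on.)

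I would first dispatch the two structural facts. $\mathbb P$ is ${<}\kappa$-closed: the lower bound of a decreasing sequence of length ${<}\kappa$ is the coordinatewise union, after adjoining to each $c_\alpha$ its supremum whenever that falls below $\alpha$ --- legitimate precisely because promised clubs are closed, so these new points land inside them. $\mathbb P$ is $\kappa^+$-cc: since $\kappa^{<\kappa}=\kappa$ there are only $\kappa$ candidates for the pair $(a_p,\langle c^p_\alpha\mid\alpha\in a_p\rangle)$, and any two conditions sharing such a pair are amalgamated by unioning their reservoirs (the union again cohering, as it extends each given condition in the ``skeleton'' order). In $V[G]$, for $\alpha\in S$ put $c_\alpha:=\bigcup\{c^p_\alpha\mid p\in G,\ \alpha\in a_p\}$, let $S':=\{\alpha\in S\mid c_\alpha$ unbounded in $\alpha\}$, and for $\alpha\in S'$ set $\mathcal F_\alpha:=\{B\s\alpha\mid c_\alpha\setminus B$ bounded in $\alpha\}$, a proper filter with $\mathcal F_\alpha^+=\{B\s\alpha\mid B\cap c_\alpha$ unbounded in $\alpha\}$. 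Also, ${<}\kappa$-closure preserves stationary subsets of $\kappa$, so $X$ and $S$ stay stationary.

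Capturing is then almost free: given a club $C$ in $V[G]$ named by $\dot C\in\mathcal N$, the set of conditions mentioning $\dot C$ in their reservoir is dense --- given $r$, just adjoin $(\dot C,\sup(a_r)+1)$ to $\mathcal C^r$, which preserves coherence vacuously --- so there are $p\in G$ and $\beta$ with $(\dot C,\beta)\in\mathcal C^p$, and then $c_\alpha\setminus(\beta+1)\s C$, i.e.\ $C\cap\alpha\in\mathcal F_\alpha$, for every $\alpha\in S'$ with $\alpha>\beta$; hence $\{\alpha\in S'\mid C\cap\alpha\notin\mathcal F_\alpha\}$ is bounded. For reflection, fix $p$ and names $\dot Y,\dot E$ that $p$ forces to be a stationary subset of $X$ and a club respectively, let $D_p$ name $\bigcap\{\dot C\mid(\dot C,\beta)\in\mathcal C^p\}$ (a club, being an intersection of fewer than $\kappa$ clubs), and choose $M\prec H_\theta$ with $p,\dot Y,\dot E\in M$, $|M|<\kappa$, and $\alpha:=M\cap\kappa\in S$ --- possible since $S$ is stationary and such $M\cap\kappa$ form a club. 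Because $p\in M$ we get $\beta_p:=\sup\{\beta\mid(\dot C,\beta)\in\mathcal C^p\}<\alpha$, and $p$ forces $\dot Y\cap D_p$ stationary, hence unbounded. Now build a decreasing sequence of length $|M|$ below $p$, all of whose iterated lower bounds keep the reservoir equal to $\mathcal C^p$, meeting enough dense sets from $M$ to force $\dot E\cap\alpha$ and $\dot Y\cap D_p\cap\alpha$ cofinal in $\alpha$ (possible: deciding statements never requires new promises). Its limit $q^*$ then forces $\alpha\in\dot E$ and decides a set $d\s\dot Y\cap D_p\cap(\beta_p,\alpha)$ that is cofinal in $\alpha$ and of order type $\cf(\alpha)$. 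Finally let $q$ extend $q^*$ by adding $\alpha$ to the domain with $c^q_\alpha:=d$: coherence at $\alpha$ holds because every point of $d$ exceeds $\beta_p\ge\beta$ and lies in $\dot C$ for each $(\dot C,\beta)\in\mathcal C^p=\mathcal C^{q^*}$, while $q$ forces $\alpha\in S'$ and $\dot Y\cap c_\alpha=d$ cofinal in $\alpha$. Thus $q\le p$ forces $\alpha\in\dot E\cap\{\gamma\in S'\mid\dot Y\cap\gamma\in\mathcal F_\gamma^+\}$. By genericity $\{\gamma\in S'\mid Y\cap\gamma\in\mathcal F_\gamma^+\}$ is stationary for every stationary $Y\s X$; taking $Y:=X$ shows $S'$ is stationary. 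Hence $\langle\mathcal F_\alpha\mid\alpha\in S'\rangle$ witnesses that $X$ $\mathfrak f$-reflects to $S$ in $V[G]$.

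The one genuinely delicate step is this last density argument: once $p$ has issued some promises, one needs a tail of $\alpha$ lying above all of their thresholds and entirely inside both $\dot Y$ and the intersection of the promised clubs, in order to install it as $c_\alpha$ without violating coherence. This is exactly why $M$ is required to contain $p$ (which pins $\beta_p$ below $\alpha$) and why the auxiliary generic sequence must be prevented from acquiring new promises; by contrast the bookkeeping that keeps $d$ cofinal against rising thresholds, together with ${<}\kappa$-closure, the $\kappa^+$-cc and properness of each $\mathcal F_\alpha$, is routine.
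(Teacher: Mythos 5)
Your strategy --- forcing the generating sets $c_\alpha$ of the filters directly, with each condition carrying a reservoir of promises about clubs to be captured --- is genuinely different from the paper's route, which forces a $\diamondsuit^{++}_S$-sequence via (a generalization of) Sakai's poset, derives $\dl^*_S(\Pi^1_2)$ from it, and then reads off the filters as $\mathcal F_\alpha=\cub(\alpha)\cap N_\alpha$ for small models $N_\alpha$ that guess $C\cap\alpha$ for every club $C$ of the extension. However, the two steps you explicitly set aside as routine are exactly where the difficulty lives, and as written the argument does not go through.

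First, the definition of $\mathbb P$ is genuinely circular, and a ``simultaneous recursion on ranks of names'' does not resolve it. Whether a triple $p$ is a condition is a statement about $\Vdash_{\mathbb P}$, and $p\Vdash_{\mathbb P}\check\gamma\in\dot C$ amounts (for a nice name) to the predensity below $p$ of a certain antichain \emph{in $\mathbb P$} --- a condition quantifying over all members of $\mathbb P$, including ones whose reservoirs contain names of arbitrarily high rank. So the stratum of ``conditions with low-rank reservoirs'' cannot be isolated without already knowing the whole poset, and the forcing relation of any proper subposet is not the forcing relation of $\mathbb P$. Repairing this requires a substantive new idea (e.g.\ an iteration of length $\kappa^+$ in which stage $\iota+1$ only promises clubs of the stage-$\iota$ model, followed by a $\kappa^+$-cc catching-the-tail argument), not bookkeeping. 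This is precisely the problem the paper's approach sidesteps: there the promises attached to conditions are ground-model sets, and the capturing of clubs of the extension is delegated to the guessing clause of $\diamondsuit^{++}_S$.

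Second, the reflection density argument founders on the interaction between deciding statements and the reservoir. The parenthetical ``deciding statements never requires new promises'' is unjustified: an extension of $q_i$ deciding an element of $\dot Y\cap D_p$ may carry a strictly larger reservoir, and the condition obtained by shrinking that reservoir back to $\mathcal C^p$ is a \emph{different, weaker} condition (it has extensions the original lacked), so it need not force the decided statement. One cannot freeze, shrink, or re-threshold reservoirs while preserving forced statements, because the reservoir is part of the condition's identity. If instead you let reservoirs grow along the construction, the final coherence check at $\alpha$ fails: a promise $(\dot C,\beta)$ acquired at an intermediate stage with $\beta$ below earlier elements of $d$ obligates those earlier elements to lie in $\dot C$, and nothing forces that. (By contrast, the step you single out as delicate --- pinning $\beta_p$ below $\alpha$ via $p\in M$ and running a canonical $<_\Theta$-least recursion of length $\alpha$ whose proper initial segments stay in $M$ --- is fine and needs no approachability hypothesis.)
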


We shall also show that (a strong form of) fake reflection is a consequence of two axioms of contradictory nature,
that is, the axiom of constructibility (${\sf V=L}$) and Martin's Maximum (${\sf MM}$).

After realizing that fake reflection is so prevalent,
we tried for a while to prove that it is a consequence of \zfc,
or, at least, a consequence of $\diamondsuit^+_S$,
but in vain. Eventually, we discovered that this is not the case:\footnote{See also Corollary~\ref{cor525} below.}
\begin{thmb} There exists a cofinality-preserving forcing extension
in which for all two disjoint stationary subsets $X,S$ of $\kappa$,
$X$ does not $\mathfrak f$-reflect to $S$.
\end{thmb}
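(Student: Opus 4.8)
The plan is to produce, by forcing, a model in which the continuum function on $\kappa$ behaves "generically freely" while cofinalities are preserved, and then to argue that in such a model $\mathfrak f$-reflection must fail between any two disjoint stationary sets. The guiding intuition is that if $X$ $\vec{\mathcal F}$-reflects to $S$ with $X\cap S=\emptyset$, then on a stationary set of $\alpha\in S$ the filter $\mathcal F_\alpha$ must "see" every stationary $Y\subseteq X$ positively; but a filter over $\alpha$ lives in $H_{\kappa}$ (indeed in $\mathcal P(\mathcal P(\alpha))$), so only boundedly much information about $X$ is locally available at $\alpha$. To exploit this, I would force with a cofinality-preserving iteration (a Radin- or Prikry-style tuning will not be needed; the natural candidate is a carefully chosen product/iteration that adds, for each stationary $X\subseteq\kappa$ concentrating on a fixed cofinality class, a "generic decomposition" into $\kappa$-many stationary pieces that defeats every potential filter sequence). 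Concretely, one fixes in the ground model a bookkeeping of all pairs $(X,S)$ of disjoint stationary sets together with all candidate sequences $\vec{\mathcal F}$, and iterates with ${<}\kappa$-closed, $\kappa^+$-cc forcing that, at each stage, shoots a club through the set of $\alpha\in S$ where the current candidate fails — i.e. adds a club $C$ with $\{\alpha\in S\mid C\cap\alpha\notin\mathcal F_\alpha\}$ stationary, or, dually, partitions $X$ into two stationary sets $Y_0,Y_1$ so that $\{\alpha\in S\mid Y_i\cap\alpha\in\mathcal F_\alpha^+\}$ is non-stationary for some $i$.

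The key steps, in order, are: (1) Set up the iteration. By $\kappa^{<\kappa}=\kappa$ there are only $\kappa^+$-many candidate triples $(X,S,\vec{\mathcal F})$ appearing along any cofinality-preserving ${<}\kappa$-closed $\kappa^+$-cc extension, so a length-$\kappa^+$ iteration with ${<}\kappa$-support suffices; standard arguments give that the iteration is ${<}\kappa$-closed, $\kappa^+$-cc (using $\kappa^{<\kappa}=\kappa$ and a $\Delta$-system argument on conditions), hence cofinality-preserving and $\mathcal P(\kappa)$-preserving in the relevant sense. (2) Design the single step. Given a candidate $(X,S,\vec{\mathcal F})$ with $X\cap S=\emptyset$ and $\vec{\mathcal F}$ capturing clubs, the forcing $\mathbb Q_{X,S,\vec{\mathcal F}}$ should add a stationary $Y\subseteq X$ with $\{\alpha\in S\mid Y\cap\alpha\in\mathcal F_\alpha^+\}$ non-stationary; conditions are bounded approximations $(y,c)$ where $y$ is an initial segment of $Y$ and $c$ an initial segment of a club $D\subseteq\kappa$ witnessing the non-stationarity, with the compatibility demand that $\alpha\in D\cap S\Rightarrow Y\cap\alpha\notin\mathcal F_\alpha^+$, i.e. $\alpha\setminus Y\in\mathcal F_\alpha$. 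Closure under ${<}\kappa$-sequences is automatic from boundedness; the crux is that this forcing preserves the stationarity of $X$ (and of $S$), which is where one uses that $\mathcal F_\alpha$ does not contain $\cub(\alpha)$ only in the sense needed — actually here one needs a genericity argument showing that a condition can always extend $y$ to hit any club, while simultaneously arranging $\alpha\setminus Y\in\mathcal F_\alpha$ for the new points $\alpha$ of $D$. (3) Verify the whole iteration kills all reflection: any purported witness $(X,S,\vec{\mathcal F})$ in the final model is added at some stage $<\kappa^+$ (by $\kappa^+$-cc and the $\mathcal P(\kappa)$-bound), and at a later stage the iteration explicitly diagonalizes against it, contradicting that it works in the final model.

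The main obstacle — and the part that will require the most care — is step (2): preserving stationarity of $X$ and of $S$ while shooting the "anti-reflecting" set $Y$ and club $D$. The danger is that forcing $\alpha\setminus Y\in\mathcal F_\alpha$ for all $\alpha\in D\cap S$ might thin $X$ too aggressively and kill its stationarity, or that it might be outright impossible when $\mathcal F_\alpha$ is, say, an ultrafilter extending $\cub(\alpha)$ — but this is exactly where the hypothesis $X\cap S=\emptyset$ and the capturing requirement are used: since $X$ misses $S$, one has freedom to choose $Y$'s new elements outside the relevant constraints, and a careful fusion/amalgamation argument (adding $Y$ "slowly" along a master condition below a fixed club) shows stationarity of $X$ is preserved. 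One must also ensure the iteration does not accidentally add a *new* bad sequence $\vec{\mathcal F}$ that was not anticipated; this is handled by the $\kappa^+$-cc reflection of names together with a suitable closure of the bookkeeping under the iteration (a standard "catch your tail" argument). Finally, one should double-check — and I would include this remark — that the resulting model is consistent with whatever large-cardinal hypotheses are floating around, but since the forcing is ${<}\kappa$-closed $\kappa^+$-cc it preserves all the usual large-cardinal properties of $\kappa$ and above, so no inconsistency with the earlier theorems arises.
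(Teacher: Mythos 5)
Your overall strategy --- diagonalize against every candidate $(X,S,\vec{\mathcal F})$ by a cofinality-preserving iteration --- is in the right spirit, but the single step, which you yourself identify as the crux, does not work as described, and the missing ingredient is precisely the one the paper's proof is built around. First, your poset $\mathbb Q_{X,S,\vec{\mathcal F}}$ of pairs $(y,c)$ with the demand $\alpha\in c\cap S\Rightarrow\alpha\setminus y\in\mathcal F_\alpha$ is \emph{not} ``automatically ${<}\kappa$-closed from boundedness'': at a limit stage of a descending sequence one must close off $c$ by adding $\delta=\sup(\bigcup_i c_i)$, and if $\delta\in S$ there is no reason the accumulated $y$ should satisfy $\delta\setminus y\in\mathcal F_\delta$; since $S$ is stationary you cannot steer $\delta$ out of $S$, and you cannot control $\mathcal F_\delta$ without knowing $\delta$ in advance. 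Second, and more fundamentally, the object you are trying to add --- a stationary $Y\s X$ whose $\mathcal F^+$-trace on $S$ is non-stationary --- may not be addable by any reasonable forcing at all: if $\mathcal F_\alpha=\cub(\alpha)$ for $\alpha\in S\s\cof({>}\omega)$ and genuine stationary reflection from $X$ to $S$ holds and is robust (e.g.\ $\kappa$ weakly compact and suitably indestructible), then every stationary $Y\s X$ reflects at stationarily many $\alpha\in S$. This is not a hypothetical worry --- it is exactly the content of Theorem~C of the paper, which shows that over a strongly inaccessible $\kappa$ the Cohen extension leaves $\mathfrak f$-reflection equivalent to genuine reflection. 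So some ground-model preparation that \emph{destroys} reflection is indispensable, and your proposal has none.

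The paper supplies this preparation and then wins by a different, more local mechanism. It first forces a coherent regressive $C$-sequence over a club in $\kappa$ (the standard $\square_\lambda$ forcing if $\kappa=\lambda^+$, or a two-step version at an inaccessible), which guarantees that for all disjoint stationary $X,S$ one can shrink $X$ to a stationary $Z$ with $S\in I[\kappa-Z]$, where $I[\kappa-Z]$ is an approachability-type ideal. It then adds $\kappa^+$ Cohen subsets of $\kappa$. The key point is that a \emph{single} copy of $\add(\kappa,1)$ factors as $\mathbb P*\dot{\mathbb Q}$, where $\mathbb P$ adds a generic stationary $Y\s Z$ and $\mathbb Q$ shoots a club $C$ disjoint from $Y$; membership of $S$ in $I[\kappa-Z]$ is exactly what lets the club-shooting preserve the stationarity of $T=\{\alpha\in S\mid Y\cap\alpha\in\mathcal F_\alpha^+\}$ (the limit points of the construction are guided along an approaching set avoiding $Z$). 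The contradiction is then internal and does not require $Y$ to remain stationary: capturing clubs forces $C\cap\alpha\in\mathcal F_\alpha$ for some $\alpha\in T$, while $Y\cap\alpha\in\mathcal F_\alpha^+$ forces $C\cap Y\cap\alpha\neq\emptyset$, contradicting $C\cap Y=\emptyset$. The $\kappa^+$-cc of $\add(\kappa,\kappa^+)$ and homogeneity then catch every candidate $\vec{\mathcal F}$ at some stage, with the tail forcing preserving stationary sets --- this also disposes of the preservation problem your iteration would face (later club-shooting stages could kill the stationarity of earlier witnesses $Y$), and of your undercount of candidates (there are up to $2^\kappa$ filter sequences, not $\kappa^+$). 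In short: without the square/approachability preparation and without an argument replacing the false closure claim, the proposal does not go through.
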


Theorems $A$ and $B$ give two extreme configurations of filter reflection.
The next theorem allows considerably more subtle configurations,
as it gives rise to models in which filter reflection is no more general than the classic notion of reflection:
\begin{thmc} If $\kappa$ is strongly inaccessible (e.g., a Laver-indestructible large cardinal),
then, in the forcing extension by $\add(\kappa,\kappa^+)$, for all two disjoint stationary subsets $X,S$ of $\kappa$,
the following are equivalent:
\begin{itemize}
\item $X$ $\mathfrak f$-reflects to $S$;
\item every stationary subset of $X$ reflects in $S$.
\end{itemize}
\end{thmc}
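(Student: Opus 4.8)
The plan is to show that adding $\kappa^+$ many Cohen subsets of $\kappa$ via $\add(\kappa,\kappa^+)$ destroys all ``fake'' instances of filter reflection while leaving genuine stationary reflection untouched, so that in the extension the two notions coincide. The ``if'' direction is the easy one: if every stationary subset of $X$ reflects in $S$, then taking $\mathcal F_\alpha:=\cub(\alpha)$ for $\alpha$ in the relevant set of cofinality-${>}\omega$ points of $S$ gives a sequence of filters that captures clubs (a \zfc\ fact noted in the introduction) and witnesses $X$ $\mathfrak f$-reflects to $S$ by definition of genuine reflection. So the content is entirely in the forward direction, where one must rule out the ``fake'' witnesses.

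First I would set up the hypothesis carefully. Since $\kappa$ is strongly inaccessible, $\kappa^{<\kappa}=\kappa$ holds and $\add(\kappa,\kappa^+)$ is ${<}\kappa$-closed and $\kappa^+$-cc, hence cofinality-preserving and stationary-set-preserving; moreover $\kappa$ remains inaccessible and $2^\kappa=\kappa^+$ in the extension. Fix in the extension two disjoint stationary $X,S\s\kappa$ and suppose toward a contradiction that $X$ $\mathfrak f$-reflects to $S$ but some stationary $Y\s X$ fails to reflect in $S$, i.e.\ $T:=\{\alpha\in S\mid Y\cap\alpha\text{ is stationary in }\alpha\}$ is non-stationary. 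By shrinking $S$ we may assume $T=\emptyset$, so $Y\cap\alpha$ is non-stationary for every $\alpha\in S$. Now let $\vec{\mathcal F}=\langle\mathcal F_\alpha\mid\alpha\in S'\rangle$ be a witnessing sequence: it captures clubs, and $\{\alpha\in S'\mid Y\cap\alpha\in\mathcal F_\alpha^+\}$ is stationary.

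The heart of the argument is a genericity/mutual-generics analysis of the name for $\vec{\mathcal F}$. Work in the ground model $V$ and let $\dot{\vec{\mathcal F}}$ be a $\add(\kappa,\kappa^+)$-name for the sequence; by the $\kappa^+$-cc and the product structure, $\dot{\vec{\mathcal F}}$ is (essentially) decided by a sub-poset $\add(\kappa,I)$ for some $I\in[\kappa^+]^{\le\kappa}$, while the full extension still contains ``fresh'' Cohen subsets of $\kappa$ coded by coordinates outside $I$. The idea is that for a stationary set of $\alpha\in S'$ one can use a fresh Cohen generic to build a club $C_\alpha\s\alpha$ (or a subset of $Y\cap\alpha$) which diagonalizes against the filter $\mathcal F_\alpha$ — exploiting that $\mathcal F_\alpha$ depends only on the $\add(\kappa,I)$-part while $C_\alpha$ is generic over that part — so that either $C_\alpha$ witnesses a failure of capturing clubs, or $Y\cap\alpha\notin\mathcal F_\alpha^+$ on a club of $\alpha$'s, contradicting the choice of $\vec{\mathcal F}$. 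Concretely, one runs a reflection/$\Delta$-system argument on the names for the $\mathcal F_\alpha$'s (each has a base of size ${<}\kappa$ by inaccessibility after a further thinning, or one argues with a dense set of conditions) and then feeds in the generic subset of $\kappa$ to split against all filter elements simultaneously. This should be closely parallel to the argument behind Theorem~B, localized under the hypothesis that reflection genuinely fails, and I would expect to quote or adapt the key lemma from that proof.

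The main obstacle I anticipate is the bookkeeping that makes the ``fresh Cohen generic'' simultaneously diagonalize $\mathcal F_\alpha$ for stationarily many $\alpha$ at once, rather than one $\alpha$ at a time — one needs a single condition in $\add(\kappa,\kappa^+)$ forcing the bad behaviour on a stationary set, which requires that the supports of the names $\dot{\mathcal F}_\alpha$ form a $\Delta$-system with a root that can be absorbed, and that the diagonalizing club $C_\alpha$ at stage $\alpha$ can be read off from coordinates disjoint from the supports used at other relevant stages. Handling the case $\cf(\alpha)=\omega$ (where there is no club filter to compare against, so the failure of capturing must be extracted purely from the generic subset) versus $\cf(\alpha)>\omega$ may require splitting into cases. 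Once the stationarily-many simultaneous diagonalization is in hand, the contradiction with Definition~\ref{Def0}(2) is immediate, completing the forward direction and hence the equivalence.
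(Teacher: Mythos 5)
Your reduction of the problem is on the right track: the easy direction via club filters, the observation that any witnessing sequence $\vec{\mathcal F}$ consists of bounded objects and hence (by the $\kappa^{+}$-cc and ${<}\kappa$-closure) lives in an intermediate model $V[G_\iota]$, and the plan to use the remaining coordinates to destroy it all match the paper. But the mechanism you propose for the destruction has a genuine gap. You want a ``fresh Cohen generic'' to produce, for stationarily many $\alpha\in S'$, a club $C_\alpha\s\alpha$ (or a subset of $Y\cap\alpha$) that is generic over the part of the forcing deciding $\mathcal F_\alpha$ and so diagonalizes against that filter. This cannot work as stated: $\add(\kappa,\kappa^+)$ is ${<}\kappa$-closed, so it adds no new bounded subsets of $\kappa$; every candidate $C_\alpha\s\alpha$ already belongs to $V[G_\iota]$, and there is no local genericity to exploit. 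The $\Delta$-system bookkeeping you worry about is aimed at the wrong target.

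What the paper actually does (Theorem~\ref{CohenEffect2}) is global, and uses a single further Cohen subset of $\kappa$: the next coordinate $\add(\kappa,1)$ is rewritten as a two-step iteration which first adds a generic set $Y\s X$ that is stationary in the intermediate extension, and then shoots a club $C$ through $\kappa\setminus Y$. Capturing clubs forces $C\cap\alpha\in\mathcal F_\alpha$ for almost all $\alpha\in S'$, while $T:=\{\alpha\in S'\mid Y\cap\alpha\in\mathcal F_\alpha^+\}$ is stationary after the first step; the contradiction is that $C\cap Y\cap\alpha=\emptyset$. The entire difficulty --- and the ingredient missing from your proposal --- is showing that the club-shooting quotient preserves the stationarity of $T$. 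This is exactly where the hypothesis that some stationary $Z\s X$ fails to reflect in $S$ enters: by Lemma~\ref{lemma515} (using strong inaccessibility), $\tr(Z)\cap S$ non-stationary implies $S\in I[\kappa-Z]$ in the sense of Definition~\ref{appideal}, and the approachability structure of $S$ relative to $Z$ is what lets one build, below any condition, a decreasing sequence of conditions whose limit stages avoid $Z$ (hence remain legal) and converge to a prescribed $\delta\in T$. Your sketch never addresses why the set of $\alpha$ with $Y\cap\alpha\in\mathcal F_\alpha^+$ survives the second step, and without $S\in I[\kappa-Z]$ there is no reason it should. (The case split on $\cf(\alpha)$ you anticipate is likewise absorbed into the $I[\kappa-Z]$ machinery rather than handled by hand.)
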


As hinted earlier, filter reflection has a strong connection with the Borel-reducibility hierarchy of the higher Cantor and Baire spaces. To exemplify:\footnote{See Section~\ref{section2} for missing definitions.}
\begin{thmd} For all two disjoint stationary subsets $X,S$ of $\kappa$. If $X$ $\mathfrak f$-reflects to $S$ and $S$ $\mathfrak f$-reflects to $X$, then there is a  map $f:\kappa^\kappa\rightarrow\kappa^\kappa$ simultaneously witnessing $${=_X}\reduO{=_S}\ \&\ {=_S}\reduO{=_X}.$$
\end{thmd}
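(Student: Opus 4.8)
The plan is to first isolate, from Lemma~\ref{lemma53} (which gives a Lipschitz reduction from $=_X$ to $=_S$ out of the hypothesis ``$X$ $\mathfrak f$-reflects to $S$''), a \emph{continuous} — indeed Lipschitz, i.e. $\reduO$ — reduction $g\colon\kappa^\kappa\to\kappa^\kappa$ witnessing ${=_X}\reduO{=_S}$, and symmetrically a Lipschitz reduction $h\colon\kappa^\kappa\to\kappa^\kappa$ witnessing ${=_S}\reduO{=_X}$; here one uses that the disjointness of $X$ and $S$ ensures the two sequences of filters live over disjoint stationary sets. Since $X\cap S=\emptyset$, I would then aim to glue $g$ and $h$ into a \emph{single} map $f$ by interleaving coordinates: think of $\kappa$ as partitioned into two stationary-and-co-stationary pieces (or, more simply, split $\kappa\cong\kappa\sqcup\kappa$ via a fixed Lipschitz-invariant bijection $\langle\pi_0,\pi_1\rangle$ of $\kappa$ onto $\kappa\times 2$) and set, for $\eta\in\kappa^\kappa$, the value $f(\eta)$ to be the function whose even part is $g(\eta)$ and whose odd part is $h(\eta)$ — so that from $f(\eta)$ one can read off both $g(\eta)$ and $h(\eta)$, and conversely. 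One must check $f$ is still Lipschitz (it is, being a ``product'' of two Lipschitz maps composed with a Lipschitz reshuffling of coordinates), and that it reduces $=_X$ to $=_S$: this follows because $\eta\mathrel{=_X}\xi$ iff $g(\eta)\mathrel{=_S}g(\xi)$, which since $f(\eta),f(\xi)$ encode the $g$-values gives $\eta\mathrel{=_X}\xi\Leftrightarrow f(\eta)\mathrel{=_S}f(\xi)$ once we know the $h$-parts don't interfere.

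That last point is exactly where disjointness of $X$ and $S$ is essential and where the main subtlety lies: $f(\eta)\mathrel{=_S}f(\xi)$ must be shown equivalent to $\eta\mathrel{=_X}\xi$, but a priori $f(\eta)\mathrel{=_S}f(\xi)$ could hold by accident through the $h$-coordinates while failing through the $g$-coordinates. The fix is to arrange the coding so that the set on which $f(\eta)$ and $f(\xi)$ agree, intersected with $S$, is controlled: choose the interleaving so that the ``even'' coordinates land on a club-many set of ordinals of the cofinalities relevant to $S$ while the ``odd'' coordinates are parked on points irrelevant to $S$ (e.g. of cofinality $\omega$ if $S\subseteq\cof({>}\omega)$, or more generally using a fixed partition of $\kappa$ into $\kappa$-many stationary sets). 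Concretely, one wants $\{\alpha<\kappa\mid f(\eta)(\alpha)=f(\xi)(\alpha)\}$ to reflect-stationarily-in-$S$ the set $\{\alpha<\kappa\mid g(\eta)(\alpha)=g(\xi)(\alpha)\}$, and symmetrically for $X$ and $h$; making these two requirements simultaneously compatible is the heart of the argument, and it is precisely the disjointness $X\cap S=\emptyset$ that lets the $S$-reflection be read purely off the $g$-part and the $X$-reflection purely off the $h$-part.

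In more detail, I would proceed as follows. \emph{Step 1:} Fix a partition $\langle T_i\mid i<\kappa\rangle$ of $\kappa$ into $\kappa$ pairwise disjoint stationary sets with $T_0,T_1$ avoiding each other's relevant cofinalities, together with a bijection $e\colon\kappa\times\kappa\to\kappa$ that is continuous in the right coordinate. \emph{Step 2:} By Lemma~\ref{lemma53} applied to ``$X$ $\mathfrak f$-reflects to $S$'' obtain $g$ with $\eta\mathrel{=_X}\xi\Leftrightarrow g(\eta)\mathrel{=_S}g(\xi)$ and $g$ Lipschitz; symmetrically obtain $h$ Lipschitz with $\eta\mathrel{=_S}\xi\Leftrightarrow h(\eta)\mathrel{=_X}\xi$ — wait, rather $\eta\mathrel{=_S}\xi\Leftrightarrow h(\eta)\mathrel{=_X}h(\xi)$. \emph{Step 3:} Define $f(\eta)(\alpha):=g(\eta)(\beta)$ if $\alpha\in T_0$ codes $\beta$ via $e$, and $f(\eta)(\alpha):=h(\eta)(\beta)$ if $\alpha\in T_1$ codes $\beta$, and $f(\eta)(\alpha):=0$ otherwise — arranged so the dependence of $f(\eta)\restriction\gamma$ on $\eta\restriction\gamma$ is Lipschitz. \emph{Step 4:} Verify both reductions: for $=_X$, note $\{\alpha\in S\mid f(\eta)(\alpha)\ne f(\xi)(\alpha)\}$ differs from (the $e$-image in $T_0$ of) $\{\beta\mid g(\eta)(\beta)\ne g(\xi)(\beta)\}$ only on non-stationary-in-$S$ error, using that $T_1\cap S$ is non-stationary by the cofinality bookkeeping; conclude $f(\eta)\mathrel{=_S}f(\xi)\Leftrightarrow g(\eta)\mathrel{=_S}g(\xi)\Leftrightarrow\eta\mathrel{=_X}\xi$, and symmetrically $f(\eta)\mathrel{=_X}f(\xi)\Leftrightarrow\eta\mathrel{=_S}\xi$. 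I expect Step~4 — pushing the stationary-set bookkeeping through the coding so that neither family of coordinates contaminates the other — to be the main obstacle; everything else is the routine verification that interleaving preserves the Lipschitz property.
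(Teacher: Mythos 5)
Your overall strategy --- extract the two Lipschitz reductions from Lemma~\ref{lemma53} and merge them into one map using the disjointness of $X$ and $S$ --- is the right one, but the merging step as you describe it does not work, and the difficulty you yourself flag as ``the heart of the argument'' is never actually resolved. The problem is Step~3/Step~4: once you re-index via a pairing bijection $e$ and read $f(\eta)(\alpha):=g(\eta)(\beta)$ for $\alpha\in T_0$ coding $\beta$, the set $\{\alpha\in S\mid f(\eta)(\alpha)\neq f(\xi)(\alpha)\}$ is (mod the $T_1$-part) the image under $\beta\mapsto e(\cdot,\beta)$ of $\{\beta\mid g(\eta)(\beta)\neq g(\xi)(\beta)\}$ intersected with $S$, and a bijection of $\kappa$ does not transport the non-stationary ideal to itself: there is no reason that this image meets $S$ stationarily if and only if $\{\beta\in S\mid g(\eta)(\beta)\neq g(\xi)(\beta)\}$ is stationary. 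The appeal to ``cofinality bookkeeping'' cannot repair this (note that $S$ may well be contained in $\cof(\omega)$ --- that is the whole point of fake reflection), and the same re-indexing also threatens the $1$-Lipschitz property whenever $\alpha$ codes some $\beta>\alpha$. So the interleaving-by-coding route introduces exactly the obstruction that the hypothesis $X\cap S=\emptyset$ is meant to let you bypass.

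The paper's proof avoids re-indexing altogether. Fix stationary $S'\subseteq S$ and $X'\subseteq X$ carrying the two witnessing filter sequences; since $X\cap S=\emptyset$ these are disjoint. Define a single map $f$ by letting $f(\eta)(\alpha)$ be the code $\myceil{[\eta\restriction\alpha]_{\sim_\alpha}}$ of the Lemma~\ref{lemma53}-equivalence class computed relative to $(X,\mathcal F_\alpha)$ when $\alpha\in S'$, relative to $(S,\mathcal G_\alpha)$ when $\alpha\in X'$, and $0$ otherwise (this is Lemma~\ref{onefunction}(1)). Then $f\restriction$``coordinates in $S$'' is literally the Lemma~\ref{lemma53} reduction of $=_X$ to $=_S$ (the $X'$-coordinates lie outside $S$ and so are invisible to $=_S$), and symmetrically for $=_X$; no stationarity needs to be pushed through any bijection, and the Lipschitz property is immediate since $f(\eta)(\alpha)$ depends only on $\eta\restriction\alpha$. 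If you replace your Steps 1 and 3 with this superposition on disjoint supports, the rest of your outline goes through.
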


Filter reflection also provides us with tools to answer a few questions from the literature. For instance, the following answers Question~2.12 of \cite{AHKM} for the case $\kappa=\omega_2$:
\begin{thme}
The consistency of $\mm$ implies the consistency of
$${=_{\omega_2\cap\cof (\omega)}}\reduO{=^2_{\omega_2\cap\cof (\omega_1)}}\ \&\  {=^2_{\omega_2\cap\cof (\omega_1)}}\nredum{=_{\omega_2\cap\cof (\omega)}}.$$
\end{thme}

\subsection{Organization of this paper} In Section~\ref{section2},
we introduce the notion of filter reflection and its variations and study their effect on canonical equivalence relations over the higher Baire and Cantor spaces.
The proof of Theorem~D will be found there.

In Section~\ref{Section3}, we study
strong and simultaneous forms of filter reflection, proving various sufficient and equivalent conditions for these principles to hold.

In Section~\ref{section4}, we present a poset that forces filter and fake reflection to hold,
thus, proving Theorem~A.

In Section~\ref{sectionFFR}, we present a poset that forces filter and fake reflection to fail,
along the way, proving Theorems B and C.

In Section~\ref{nonreductionsection}, we extend the analysis of Section~\ref{sectionFFR} to get strong failures of Baire measurable reductions.
Along the way, we answer a few questions from the literature, and prove Theorem~E.

\subsection{Notation and conventions}
For sets $X,Y$, their difference $\{ x\in X\mid x\notin Y\}$
is denoted by $X\setminus Y$, whereas the notation $X-Y$ is reserved for
the relative of Shelah's approachability ideal that we introduce in Section~\ref{sectionFFR}.

By a \emph{filter} we always mean a proper filter, so that is does not contain the empty set.
For a filter $\mathcal F$ over a set $X$, we let $\mathcal F^+:=\{Y\in\mathcal P(X)\mid X\setminus Y\notin\mathcal F\}$.
For an ideal $\mathcal I$ over a set $X$ and a subset $Y\s X$, we write $\mathcal I\restriction Y$ for $\mathcal I\cap\mathcal P(Y)$.

The class of ordinals is denoted by $\ord$.
The class of ordinals of cofinality $\mu$ is denoted by $\cof(\mu)$, and
the class of ordinals of cofinality greater than $\mu$ is denoted by $\cof({>}\mu)$.
The class of infinite singular ordinals is denoted by $\sing$.
The class of infinite regular cardinals is denoted by $\reg$.
We write $\sing(\kappa)$ for ${\sing}\cap\kappa$,
and $\reg(\kappa)$ for ${\reg}\cap\kappa$.
For $m<n<\omega$, we denote $S^n_m:=\aleph_n\cap\cof(\aleph_m)$.

For a set of ordinals $a$, we write $\cl(a):=\{ \sup(a\cap\alpha)\mid \alpha\in\ord, a\cap\alpha\neq\emptyset\}$,
$\acc(a):=\{\alpha\in a\mid \sup (a\cap\alpha)=\alpha>0\}$,
$\acc^+(a):=\{\alpha<\sup(a)\mid \sup(a\cap\alpha)=\alpha>0\}$ 
and $\nacc(a):=a\setminus \acc(a)$.
For a stationary $Y\s\kappa$, we write $\tr(Y):=\{\alpha\in \kappa\cap\cof({>}\omega)\mid Y\cap\alpha\text{ is stationary in }\alpha\}$.

We let $\Col(\lambda,{<}\kappa)$ denote L\'evy's notion of forcing for collapsing $\kappa$ to $\lambda^+$,
and let $\add(\kappa,\theta)$ denote Cohen's notion of forcing for adding $\theta$ many functions from $\kappa$ to $2$.
For all $\alpha<\kappa$, $p:\alpha\rightarrow2$, and $i<2$, we denote by
$p{}^\curvearrowright i$ the unique function $p'$ extending $p$ satisfying $\dom(p')=\alpha+1$ and $p'(\alpha)=i$.

\section{Filter reflection and Lipschitz reductions}\label{section2}

Throughout this section, we let $S,X$ denote stationary subsets of $\kappa$,
and consider sequences $\vec{\mathcal F}=\langle\mathcal F_\alpha\mid\alpha\in S\rangle$,
where, for all $\alpha\in S$, $\mathcal F_\alpha$ is a filter over $\alpha$.
Recall that $\vec{\mathcal F}$ is said to \emph{captures clubs} iff, for every club $C\s\kappa$,
the set $\{\alpha\in S\mid C\cap\alpha\notin\mathcal F_\alpha\}$ is non-stationary.

\begin{defn}[Filter reflection]\label{fake-ref}\hfill
\begin{enumerate}
\item We say that \emph{$X$ $\vec{\mathcal F}$-reflects to $S$} iff $\vec{\mathcal F}$ captures clubs and,
for every stationary $Y\s X$, the set $\{\alpha\in S\mid Y\cap\alpha\in\mathcal F_\alpha^+\}$ is stationary;
\item We say that \emph{$X$ strongly $\vec{\mathcal F}$-reflects to $S$} iff $\vec{\mathcal F}$ captures clubs and,
for every stationary $Y\s X$, the set $\{\alpha\in S\mid Y\cap\alpha\in\mathcal F_\alpha\}$ is stationary;
\item We say that \emph{$X$ $\vec{\mathcal F}$-reflects  with $\diamondsuit$ to $S$} iff
$\vec{\mathcal F}$ captures clubs and there exists a sequence $\langle Y_\alpha\mid\alpha\in S\rangle$ such that,
for every stationary $Y\s X$, the set $\{\alpha\in S\mid Y_\alpha=Y\cap\alpha\ \&\ Y\cap\alpha\in\mathcal F_\alpha^+\}$ is stationary.
\end{enumerate}
\end{defn}
\begin{defn}
We say that \emph{$X$ $\mathfrak f$-reflects to $S$} whenever there exists a stationary $S'\s S$ and sequence of filters $\vec{\mathcal F}=\langle\mathcal F_\alpha\mid\alpha\in S'\rangle$ such that $X$ $\vec{\mathcal F}$-reflects to $S'$.

The same convention applies to strong $\mathfrak f$-reflection and to $\mathfrak f$-reflection with $\diamondsuit$.
\end{defn}

Notice that a priori there is no reason to believe that ``$X$ $\mathfrak f$-reflects to $S$'' implies the existence of a sequence $\vec{\mathcal F}$ for which ``$X$ $\vec{\mathcal F}$-reflects to $S$''.
To put our finger at the challenge, it is in the requirement of capturing clubs.

\begin{prop}\label{prop23} For stationary subsets $X$ and $S$  of $\kappa$, $(1)\implies(2)\implies(3)$:
\begin{enumerate}
\item $X$ $\mathfrak f$-reflects with $\diamondsuit$ to $S$;
\item $X$ strongly $\mathfrak f$-reflects to $S$;
\item $X$ $\mathfrak f$-reflects to $S$.
\end{enumerate}
\end{prop}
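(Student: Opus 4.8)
The plan is to unwind the definitions and observe that the three conditions differ only in the strength of the conclusion demanded from the filter sequence, so that the implications are obtained by forgetting information. For $(1)\implies(2)$: suppose $X$ $\mathfrak f$-reflects with $\diamondsuit$ to $S$, as witnessed by a stationary $S'\s S$, a sequence of filters $\vec{\mathcal F}=\langle\mathcal F_\alpha\mid\alpha\in S'\rangle$ that captures clubs, and a $\diamondsuit$-type sequence $\langle Y_\alpha\mid\alpha\in S'\rangle$ such that for every stationary $Y\s X$ the set $\{\alpha\in S'\mid Y_\alpha=Y\cap\alpha\ \&\ Y\cap\alpha\in\mathcal F_\alpha^+\}$ is stationary. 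I claim the same $S'$ and $\vec{\mathcal F}$ already witness that $X$ strongly $\mathfrak f$-reflects to $S$. The point is that if $Y_\alpha=Y\cap\alpha$ and $Y\cap\alpha\in\mathcal F_\alpha^+$, then in particular $Y\cap\alpha\in\mathcal F_\alpha$ is \emph{not} what we get for free --- so here is the subtlety. Rather than using $\vec{\mathcal F}$ directly, I would pass to the modified sequence $\vec{\mathcal F}'=\langle\mathcal F'_\alpha\mid\alpha\in S'\rangle$ where $\mathcal F'_\alpha$ is the filter generated by $\mathcal F_\alpha\cup\{Y_\alpha\}$ whenever $Y_\alpha\in\mathcal F_\alpha^+$ (so that this is a proper filter), and $\mathcal F'_\alpha:=\mathcal F_\alpha$ otherwise. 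Since $\mathcal F'_\alpha\supseteq\mathcal F_\alpha$, the sequence $\vec{\mathcal F}'$ still captures clubs. And for stationary $Y\s X$, on the stationary set of $\alpha\in S'$ with $Y_\alpha=Y\cap\alpha\in\mathcal F_\alpha^+$ we have by construction $Y\cap\alpha=Y_\alpha\in\mathcal F'_\alpha$; hence $\{\alpha\in S'\mid Y\cap\alpha\in\mathcal F'_\alpha\}$ is stationary, as required for strong $\mathfrak f$-reflection.

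For $(2)\implies(3)$: suppose $X$ strongly $\mathfrak f$-reflects to $S$, witnessed by a stationary $S'\s S$ and a club-capturing $\vec{\mathcal F}=\langle\mathcal F_\alpha\mid\alpha\in S'\rangle$ such that for every stationary $Y\s X$ the set $\{\alpha\in S'\mid Y\cap\alpha\in\mathcal F_\alpha\}$ is stationary. Since $\mathcal F_\alpha$ is a proper filter over $\alpha$ (in particular $\emptyset\notin\mathcal F_\alpha$), we always have $\mathcal F_\alpha\s\mathcal F_\alpha^+$; thus $\{\alpha\in S'\mid Y\cap\alpha\in\mathcal F_\alpha\}\s\{\alpha\in S'\mid Y\cap\alpha\in\mathcal F_\alpha^+\}$, and the latter is stationary. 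So the very same $S'$ and $\vec{\mathcal F}$ witness that $X$ $\mathfrak f$-reflects to $S$.

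The only genuinely non-formal point is the one flagged above in $(1)\implies(2)$: one must check that adjoining $Y_\alpha$ to $\mathcal F_\alpha$ (on the set where $Y_\alpha\in\mathcal F_\alpha^+$) yields a proper filter --- which is immediate from the definition of $\mathcal F_\alpha^+$, since $Y_\alpha\in\mathcal F_\alpha^+$ means precisely $\alpha\setminus Y_\alpha\notin\mathcal F_\alpha$, so no finite intersection $Y_\alpha\cap A$ with $A\in\mathcal F_\alpha$ can be empty --- and that enlarging each filter preserves the capturing-clubs property, which holds because capturing is an upward-closed condition on each $\mathcal F_\alpha$. I expect the referee-facing obstacle, if any, to be purely notational: being careful that $S'$ may be a proper subset of $S$ throughout, and that the $\diamondsuit$-sequence is indexed by $S'$ rather than by all of $S$.
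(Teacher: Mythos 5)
Your proof is correct and follows essentially the same route as the paper: for $(1)\implies(2)$ the paper likewise adjoins $Y_\alpha$ to $\mathcal F_\alpha$ on the set $S'':=\{\alpha\in S'\mid Y_\alpha\in\mathcal F_\alpha^+\}$ (it simply restricts the witnessing sequence to $S''$ rather than leaving $\mathcal F_\alpha$ unchanged off $S''$ as you do, which is an immaterial difference), and $(2)\implies(3)$ is the same observation that $\mathcal F_\alpha\s\mathcal F_\alpha^+$ for a proper filter. Your checks that the generated filter is proper and that enlarging the filters preserves club-capturing are exactly the points that need verifying.
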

\begin{proof} $(1)\implies(2)$ Suppose $\vec{\mathcal F}=\langle\mathcal F_\alpha\mid\alpha\in S'\rangle$ and $\langle Y_\alpha\mid\alpha\in S'\rangle$
witness together that  $X$ $\mathfrak f$-reflects with $\diamondsuit$ to $S$.
Let $S'':=\{ \alpha\in S'\mid Y_\alpha\in\mathcal F_\alpha^+\}$.
For each $\alpha\in S''$, let $\bar{\mathcal F}_\alpha$ be the filter over $\alpha$ generated by $\mathcal{F}_\alpha\cup\{Y_\alpha\}$.
Evidently, $\langle \bar{\mathcal F}_\alpha\mid \alpha\in S''\rangle$ witnesses that $X$ strongly $\mathfrak f$-reflects to $S$.
\end{proof}

The following is obvious:
\begin{lemma}[Monotonicity]\label{monotonicity} For stationary sets $Y\s X\s\kappa$ and $S\s T\s\kappa$:
\begin{enumerate}
\item If $X$ $\mathfrak f$-reflects to $S$, then $Y$ $\mathfrak f$-reflects to $T$;
\item If $X$ strongly $\mathfrak f$-reflects to $S$, then $Y$ strongly $\mathfrak f$-reflects to $T$;
\item If $X$ $\mathfrak f$-reflects with $\diamondsuit$ to $S$, then
$Y$ $\mathfrak f$-reflects with $\diamondsuit$ to $T$.\qed
\end{enumerate}
\end{lemma}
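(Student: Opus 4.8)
The plan is simply to unfold the definition of $\mathfrak f$-reflection and observe that a witness for the stronger hypothesis is already a witness for the weaker conclusion, with no modification needed. I treat item~(1); items (2) and (3) are verbatim the same argument after replacing ``$\mathcal F_\alpha^+$'' by ``$\mathcal F_\alpha$'' in case (2), and after additionally carrying along the auxiliary $\diamondsuit$-sequence in case (3).

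So suppose $X$ $\mathfrak f$-reflects to $S$, as witnessed by a stationary $S'\s S$ and a sequence of filters $\vec{\mathcal F}=\langle\mathcal F_\alpha\mid\alpha\in S'\rangle$ such that $X$ $\vec{\mathcal F}$-reflects to $S'$. Since $S\s T$, the set $S'$ is a stationary subset of $T$ as well, so it suffices to check that $Y$ $\vec{\mathcal F}$-reflects to $S'$. First, the assertion that $\vec{\mathcal F}$ captures clubs refers only to $S'$ and $\vec{\mathcal F}$ (for every club $C\s\kappa$, $\{\alpha\in S'\mid C\cap\alpha\notin\mathcal F_\alpha\}$ is nonstationary), hence it carries over unchanged. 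Second, let $Z\s Y$ be stationary; as $Y\s X$, $Z$ is a stationary subset of $X$, so by the choice of $\vec{\mathcal F}$ the set $\{\alpha\in S'\mid Z\cap\alpha\in\mathcal F_\alpha^+\}$ is stationary. Thus $Y$ $\vec{\mathcal F}$-reflects to $S'$, and therefore $Y$ $\mathfrak f$-reflects to $T$.

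For item~(3), the witness for ``$X$ $\mathfrak f$-reflects with $\diamondsuit$ to $S$'' additionally supplies a sequence $\langle Y_\alpha\mid\alpha\in S'\rangle$ such that for every stationary $Z\s X$ the set $\{\alpha\in S'\mid Y_\alpha=Z\cap\alpha\ \&\ Z\cap\alpha\in\mathcal F_\alpha^+\}$ is stationary; the very same pair $(\vec{\mathcal F},\langle Y_\alpha\mid\alpha\in S'\rangle)$ then witnesses that $Y$ $\mathfrak f$-reflects with $\diamondsuit$ to $T$, by restricting attention to stationary $Z\s Y\s X$ exactly as above. There is no real obstacle here: the only point worth noting is that the parameter set over which the filters live, $S'$, is never enlarged, so nothing about capturing clubs or about the relevant stationarity statements is affected by passing from $S$ to $T$, while shrinking $X$ to $Y$ only shrinks the family of stationary sets we must handle.
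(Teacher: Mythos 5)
Your proposal is correct and is exactly the argument the paper has in mind (the lemma is stated as obvious with no written proof): the same witness $(S',\vec{\mathcal F})$ (and, for item (3), the same $\diamondsuit$-sequence) works after noting $S'\s S\s T$ and that every stationary subset of $Y$ is a stationary subset of $X$. Nothing to add.
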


\begin{defn}\label{canonicalER}
\begin{enumerate}
\item For all $\eta,\xi\in\kappa^\kappa$, denote
$$\Delta(\eta,\xi):=\min(\{\alpha<\kappa\mid \eta(\alpha)\neq\xi(\alpha)\}\cup\{\kappa\}).$$
\item For every $\theta\in[2,\kappa]$, the equivalence relation $=^\theta_S$ over $\theta^\kappa$
is defined via
$$\eta\mathrel{=^\theta_S}\xi\text{ iff }\{\alpha\in S\mid \eta(\alpha)\neq\xi(\alpha)\}\text{ is non-stationary}.$$
\item The special case $=^\kappa_S$ is denoted by $=_S$.
\end{enumerate}
\end{defn}

The above equivalence relation is an important building block in the study of the connection between model theory and generalized descriptive set theory (cf.~\cite[Chapter~6]{FHK} and \cite[Theorem~7]{HKM}).

\begin{defn}\label{defreduction}
For $i<2$, let $X_i$ be some space from the collection $\{\theta^\kappa\mid \theta\in[2,\kappa]\}$.

Let $R_0$ and $R_1$ be binary relations over $X_0$ and $X_1$, respectively.
A function $f:X_0\rightarrow X_1$ is said to be:
\begin{itemize}
\item[(a)] a \emph{reduction of $R_0$ to $R_1$} iff, for all $\eta,\xi\in X_0$, $$\eta\mathrel{R_0}\xi\text{ iff }f(\eta)\mathrel{R_1}f(\xi).$$
\item[(b)] $1$-\emph{Lipschitz} iff for all $\eta,\xi\in X_0$,  $$\Delta(\eta,\xi)\le\Delta(f(\eta),f(\xi)).$$
\end{itemize}
\end{defn}
The existence of a function $f$ satisfying (a) and (b) is  denoted by ${R_0}\reduO{R_1}$.
We likewise define ${R_0}\reduc{R_1}$, ${R_0}\redub{R_1}$ and  ${R_0}\redum{R_1}$ once we replace clause~(b) by a continuous, Borel, or Baire measurable map, respectively.\footnote{See Section~\ref{nonreductionsection} for definitions and topological properties of the higher Cantor and Baire spaces.}
The following is obvious.
\begin{lemma}[Monotonicity]\label{transitivity} For
\begin{itemize}
\item $\mathbf p\in\{1,c,B,BM\}$,
\item stationary sets $X\s X'\s\kappa$ and $Y\s Y'\s\kappa$, and
\item ordinals $2\le\theta\le\theta'\le\kappa$ and $2\le\lambda\le\lambda'\le\kappa$,
\end{itemize}
${=^{\theta'}_{X'}}\redup{=^\lambda_Y}$ entails ${=^\theta_{X}}\redup{=^{\lambda'}_{Y'}}$.\qed
\end{lemma}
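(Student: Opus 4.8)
The plan is to manufacture the required function by sandwiching a given witness between two elementary coordinate‑trimming maps. Fix a map $h\colon\theta'^\kappa\to\lambda^\kappa$ witnessing ${=^{\theta'}_{X'}}\redup{=^\lambda_Y}$. Define $g\colon\theta^\kappa\to\theta'^\kappa$ by $g(\eta)(\alpha):=\eta(\alpha)$ for $\alpha\in X$ and $g(\eta)(\alpha):=0$ for $\alpha\in\kappa\setminus X$, and define $k\colon\lambda^\kappa\to\lambda'^\kappa$ by $k(\zeta)(\alpha):=\zeta(\alpha)$ for $\alpha\in Y$ and $k(\zeta)(\alpha):=0$ for $\alpha\in\kappa\setminus Y$; these take values in $\theta'^\kappa$ and $\lambda'^\kappa$ since $2\le\theta\le\theta'$ and $2\le\lambda\le\lambda'$. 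The candidate witness for ${=^\theta_X}\redup{=^{\lambda'}_{Y'}}$ is $F:=k\circ h\circ g$.

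First I would check that $F$ is a reduction, which is pure definition‑chasing: for $\eta,\xi\in\theta^\kappa$ one has $\{\alpha<\kappa\mid g(\eta)(\alpha)\neq g(\xi)(\alpha)\}=\{\alpha\in X\mid\eta(\alpha)\neq\xi(\alpha)\}$, a subset of $X\subseteq X'$, so $\eta\mathrel{=^\theta_X}\xi$ iff $g(\eta)\mathrel{=^{\theta'}_{X'}}g(\xi)$; symmetrically, using $Y\subseteq Y'$, $\zeta\mathrel{=^\lambda_Y}\zeta'$ iff $k(\zeta)\mathrel{=^{\lambda'}_{Y'}}k(\zeta')$. Feeding these two equivalences into the hypothesis that $h$ reduces $=^{\theta'}_{X'}$ to $=^\lambda_Y$ yields $\eta\mathrel{=^\theta_X}\xi$ iff $F(\eta)\mathrel{=^{\lambda'}_{Y'}}F(\xi)$, as required.

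Next I would transfer the regularity of $h$ to $F$. The maps $g$ and $k$ are computed coordinatewise, hence continuous, and since a minimum taken over a smaller set only grows, $\Delta(g(\eta),g(\xi))\ge\Delta(\eta,\xi)$ and $\Delta(k(\zeta),k(\zeta'))\ge\Delta(\zeta,\zeta')$; thus $g$ and $k$ are $1$‑Lipschitz. Since the classes of $1$‑Lipschitz, continuous, and Borel maps are each closed under composition, $F$ is $1$‑Lipschitz (resp.\ continuous, resp.\ Borel) whenever $h$ is. This settles $\mathbf p\in\{1,c,B\}$.

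The case $\mathbf p=BM$ is the one I expect to be the real obstacle. Postcomposition with the continuous map $k$ causes no trouble, as $(k\circ h)^{-1}[U]=h^{-1}[k^{-1}[U]]$ with $k^{-1}[U]$ open, so $k\circ h$ is Baire measurable; the delicate point is \emph{pre}composition with $g$. In the generic situation — e.g.\ when $\theta<\theta'$, or when $\kappa\setminus X$ is stationary — the image of $g$ is a nowhere‑dense closed subset of $\theta'^\kappa$, so $g$ fails to be an open map and $g^{-1}$ need not carry sets with the Baire property to sets with the Baire property; equivalently, the restriction of a Baire‑measurable map to such a subspace may fail to be Baire measurable, and then $F$ need not witness anything. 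When $\langle\theta,X\rangle=\langle\theta',X'\rangle$ one has $F=k\circ h$ and the argument is complete, and more generally it goes through whenever $g$ is a homeomorphism onto a clopen subspace; so one option is to check that the applications only invoke the $\mathbf p=BM$ instance of the lemma in such configurations. Otherwise one must supply a dedicated argument producing a Baire‑measurable witness — for instance, by realizing $\theta^\kappa$ as a well‑placed clopen subset of $\theta'^\kappa$, or by a more careful choice of $h$ — and pinning down the cleanest such argument is, to my mind, the crux of the lemma.
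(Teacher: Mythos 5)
Your handling of $\mathbf p\in\{1,c,B\}$ is correct, and since the paper offers no argument at all for this lemma (it is declared obvious), the sandwich $k\circ h\circ g$ with the two coordinatewise padding maps is surely what the authors intended there: the computation of the difference sets shows $g$ and $k$ are reductions of $=^\theta_X$ to $=^{\theta'}_{X'}$ and of $=^\lambda_Y$ to $=^{\lambda'}_{Y'}$, both are $1$-Lipschitz, and the $1$-Lipschitz, continuous and Borel classes are each closed under this composition (for Borel, because $g$ and $k$ are continuous and the family of sets whose $g$-preimage is Borel is a $\kappa^+$-algebra containing the open sets).

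For $\mathbf p=BM$ you have correctly located a genuine gap, and your first escape route does not close it: the paper invokes exactly the problematic instance. Corollaries~\ref{cor518} and~\ref{cor519} (and through them Theorem~\ref{MM-application}) pass from ${=^2_Z}\nredum{=_Y}$ to ${=^2_X}\nredum{=_Y}$ for a stationary $Z$ properly contained in $X$; there your $g$ zeroes out the coordinates in $X\setminus Z$, its image is a closed nowhere dense copy of $2^{\kappa\setminus(X\setminus Z)}$, and, as you observe, the trace of a set with the Baire property on such a subspace need not have the Baire property relative to it, so $g^{-1}$ need not preserve the Baire property and $h\circ g$ need not be Baire measurable. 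The missing idea is to replace the constant padding $\vec 0$ by a generically chosen parameter. By Lemma~\ref{BMcontinuous} fix a comeager $D=\bigcap_{i<\kappa}D_i\s\theta'^\kappa$ on which $h$ is continuous, and fix a group operation $*$ on the set $\theta'$. For $\rho\in\theta'^\kappa$ let $g_\rho(\eta)(\alpha):=\rho(\alpha)$ when $\alpha\in X'\setminus X$ and $g_\rho(\eta)(\alpha):=\rho(\alpha)*\eta(\alpha)$ otherwise. Every $g_\rho$ is a $1$-Lipschitz reduction of $=^\theta_X$ to $=^{\theta'}_{X'}$, since the difference set of $g_\rho(\eta)$ and $g_\rho(\xi)$ meets $X'$ exactly in $\{\alpha\in X\mid\eta(\alpha)\neq\xi(\alpha)\}$. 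A Kuratowski--Ulam-style fibre argument --- for each $i<\kappa$ and each $q\in\theta^{<\kappa}$, the set of $p\in\theta'^{<\kappa}$ forcing $g_\rho[N_q]$ into $D_i$ is dense, and there are only $\kappa$ many such pairs to handle, so Fact~\ref{bct} applies --- produces a $\rho$ for which $g_\rho^{-1}[D]$ is comeager in $\theta^\kappa$. For such a $\rho$, $(h\circ g_\rho)^{-1}[V]$ differs from an open set by a subset of the meager set $g_\rho^{-1}[\theta'^\kappa\setminus D]$, so $k\circ h\circ g_\rho$ is Baire measurable and is the desired reduction. So the lemma is true, but establishing the $BM$ clause requires this extra step, which your write-up flags without supplying; in the case actually used in the paper ($\theta=\theta'$) the operation $*$ is unnecessary and one simply chooses a generic $\sigma\in\theta^{X'\setminus X}$ in the product decomposition $\theta^\kappa\cong\theta^{X'\setminus X}\times\theta^{\kappa\setminus(X'\setminus X)}$.
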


\begin{lemma}\label{lemma53} If $X$ $\mathfrak f$-reflects to $S$, then ${=_X}\reduO{=_S}$.
\end{lemma}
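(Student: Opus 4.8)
The plan is to construct the reduction $f\colon\kappa^\kappa\to\kappa^\kappa$ locally, using each filter in a witnessing sequence to ``fingerprint'' an initial segment, and to let $f(\eta)$ be constantly $0$ on all remaining coordinates. Concretely, I would fix a stationary $S'\s S$ and a sequence $\vec{\mathcal F}=\langle\mathcal F_\alpha\mid\alpha\in S'\rangle$ witnessing that $X$ $\mathfrak f$-reflects to $S$, and, for each $\alpha\in S'$, introduce the relation $s\sim_\alpha t$ on $\kappa^\alpha$ defined by ``$\{\beta\in X\cap\alpha\mid s(\beta)\neq t(\beta)\}\notin\mathcal F_\alpha^+$''. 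The first real step is to check that $\sim_\alpha$ is an equivalence relation: symmetry is immediate, while reflexivity and transitivity follow from the fact that $\mathcal F_\alpha^*:=\mathcal P(\alpha)\setminus\mathcal F_\alpha^+$ is a proper ideal on $\alpha$, together with the triangle inequality $\{\beta\mid s(\beta)\neq u(\beta)\}\s\{\beta\mid s(\beta)\neq t(\beta)\}\cup\{\beta\mid t(\beta)\neq u(\beta)\}$. Since $\kappa^{<\kappa}=\kappa$ forces $|\kappa^\alpha|=\kappa$, the relation $\sim_\alpha$ has at most $\kappa$ equivalence classes, so I can fix a map $g_\alpha\colon\kappa^\alpha\to\kappa$ with $g_\alpha(s)=g_\alpha(t)$ iff $s\sim_\alpha t$.

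Then I would define $f$ by setting $f(\eta)(\alpha):=g_\alpha(\eta\restriction\alpha)$ for $\alpha\in S'$ and $f(\eta)(\alpha):=0$ for $\alpha\in\kappa\setminus S'$, so that $f(\eta)\in\kappa^\kappa$. Since $f(\eta)(\alpha)$ is a function of $\eta\restriction\alpha$ alone, whenever $\eta\restriction\gamma=\xi\restriction\gamma$ we also get $f(\eta)\restriction\gamma=f(\xi)\restriction\gamma$; hence $\Delta(\eta,\xi)\le\Delta(f(\eta),f(\xi))$ and $f$ is $1$-Lipschitz. This part is routine.

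For the reduction property, given $\eta,\xi\in\kappa^\kappa$ I would put $D:=\{\beta\in X\mid\eta(\beta)\neq\xi(\beta)\}$ and observe that, by the defining property of the $g_\alpha$'s, the set $\{\alpha\in S\mid f(\eta)(\alpha)\neq f(\xi)(\alpha)\}$ equals $E:=\{\alpha\in S'\mid D\cap\alpha\in\mathcal F_\alpha^+\}$ (the coordinates in $S\setminus S'$ contribute nothing, as $f$ is $0$ there, and for $\alpha\in S'$ one has $D\cap\alpha=\{\beta\in X\cap\alpha\mid\eta(\beta)\neq\xi(\beta)\}$). If $\eta\mathrel{=_X}\xi$ fails, then $D$ is a stationary subset of $X$, and since $X$ $\vec{\mathcal F}$-reflects to $S'$ the set $E$ is stationary, so $f(\eta)\mathrel{=_S}f(\xi)$ fails. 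If $\eta\mathrel{=_X}\xi$ holds, pick a club $C$ with $C\cap D=\emptyset$; since $\vec{\mathcal F}$ captures clubs there is a club $C'$ with $C\cap\alpha\in\mathcal F_\alpha$ for all $\alpha\in C'\cap S'$, and for each such $\alpha$ we get $D\cap\alpha\s\alpha\setminus(C\cap\alpha)\in\mathcal F_\alpha^*$, hence $D\cap\alpha\notin\mathcal F_\alpha^+$ and $\alpha\notin E$; thus $E\cap C'=\emptyset$, so $E$ is non-stationary and $f(\eta)\mathrel{=_S}f(\xi)$ holds.

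The only step beyond bookkeeping is the well-definedness of the local fingerprint maps $g_\alpha$, i.e.\ the verification that ``having $\mathcal F_\alpha$-small symmetric difference on $X\cap\alpha$'' is transitive; this is exactly where the ideal structure of $\mathcal F_\alpha^*$ enters. Once that is in place, the two directions of the reduction are direct translations of the two clauses of filter reflection — capturing of clubs delivers the non-stationary implication and the positivity clause delivers the stationary one — so I expect no further obstacle.
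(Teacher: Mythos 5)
Your proposal is correct and follows essentially the same route as the paper's proof: the relation $s\sim_\alpha t$ defined by ``$\{\beta\in X\cap\alpha\mid s(\beta)\neq t(\beta)\}\notin\mathcal F_\alpha^+$'' is exactly the paper's relation ``there is $W\in\mathcal F_\alpha$ with $W\cap X\s\{\beta<\alpha\mid s(\beta)=t(\beta)\}$'', and the coding of classes, the coordinate-wise definition of $f$, and the two-case verification of the reduction all match. Your explicit check of transitivity via the dual ideal is a detail the paper leaves implicit, but it introduces nothing new.
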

\begin{proof} Suppose that $\vec{\mathcal F}=\langle \mathcal F_\alpha\mid\alpha\in S'\rangle$
witnesses that $X$ $\mathfrak f$-reflects to $S$.
For every $\alpha\in S'$,
define an equivalence relation $\sim_\alpha$ over $\kappa^\alpha$ by letting
$\eta\sim_\alpha\xi$ iff there is $W\in \mathcal F_\alpha$ such that $W\cap X\subseteq\{\beta<\alpha\mid \eta(\beta)=\xi(\beta)\}$.
As there are at most $|\kappa^\alpha|$ many equivalence classes and as $\kappa^{<\kappa}=\kappa$,
we may attach to each equivalence class $[\eta]_{\sim_\alpha}$ a unique ordinal (a \emph{code}) in $\kappa$,
which we shall denote by $\myceil{[\eta]_{\sim_\alpha}}$.
Next, define a map $f:\kappa^\kappa\rightarrow\kappa^\kappa$ by letting
for all $\eta\in\kappa^\kappa$ and $\alpha<\kappa$:
$$f(\eta)(\alpha):=\begin{cases}
\myceil{[\eta\restriction\alpha]_{\sim_\alpha}},&\text{if }\alpha\in S';\\
0,&\text{otherwise}.\end{cases}$$

As, for all $\eta\in \kappa^\kappa$ and $\alpha<\kappa$, $f(\eta)(\alpha)$ depends only on $\eta\restriction\alpha$, $f$ is $1$-Lipschitz.
To see that it forms a reduction from $=_X$ to $=_S$, let $\eta,\xi$ be arbitrary elements of $\kappa^\kappa$. There are two main cases to consider:

$\br$ If $\eta=_X\xi$, then let us fix a club $C$ such that $C\cap X\subseteq \{\beta<\kappa\mid \eta(\beta)=\xi(\beta)\}$. 
Since $\vec{\mathcal F}$ captures clubs, let us fix a club $D\s\kappa$ such that, for all $\alpha\in D\cap S'$, $C\cap\alpha\in\mathcal F_\alpha$.
We claim that $D$ is disjoint from $\{\alpha\in S\mid f(\eta)(\alpha)\neq f(\xi)(\alpha)\}$,
so that indeed $f(\eta)=_Sf(\xi)$. To see this, let $\alpha\in D$ be arbitrary.

$\br\br$ If $\alpha\not\in S'$, then $f(\eta)(\alpha)=0=f(\xi)(\alpha)$.

$\br\br$ If $\alpha\in S'$, then for $W:=C\cap\alpha$, we have that $W\in\mathcal F_\alpha$
and $W\cap X\subseteq\{\beta<\alpha\mid \eta(\beta)=\xi(\beta)\}$,
so that $[\eta\restriction\alpha]_{\sim_\alpha}=[\xi\restriction\alpha]_{\sim_\alpha}$
and $f(\eta)(\alpha)=f(\xi)(\alpha)$.

$\br$ If $\eta\neq_X\xi$, then $Y:=\{\beta\in X\mid \eta(\beta)\neq\xi(\beta)\}$ is stationary.
In effect, $T:=\{\alpha\in S'\mid Y\cap\alpha\in\mathcal F_\alpha^+\}$ is stationary.
Now, for every $\alpha\in T$,
it must be the case that any $W\in \mathcal F_\alpha$ meets $Y$,
so that $W\cap X\nsubseteq\{\beta<\alpha\mid \eta(\beta)=\xi(\beta)\}$,
and $[\eta\restriction\alpha]_{\sim_\alpha}\neq[\xi\restriction\alpha]_{\sim_\alpha}$.
It follows that $\{\alpha\in S'\mid f(\eta)(\alpha)\neq f(\xi)(\alpha)\}$ covers the stationary set $T$,
so that $f(\eta)\neq_Sf(\xi)$.
\end{proof}

\begin{lemma}\label{redstrongf} If $X$ strongly $\mathfrak f$-reflects to $S$, then, for every $\theta\in[2,\kappa]$, ${=^\theta_X}\reduO{=^\theta_S}$.
\end{lemma}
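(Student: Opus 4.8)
The plan is to imitate the proof of Lemma~\ref{lemma53}. For $\theta=\kappa$ there is nothing new: by Proposition~\ref{prop23}, strong $\mathfrak f$-reflection implies $\mathfrak f$-reflection, so ${=^\kappa_X}\reduO{=^\kappa_S}$ is Lemma~\ref{lemma53}. The point is to handle $2\le\theta<\kappa$, where the $\kappa$-valued codes $\myceil{[\eta\restriction\alpha]_{\sim_\alpha}}$ of Lemma~\ref{lemma53} no longer fit into a single $\theta$-valued coordinate; spreading each code over a block of coordinates is precisely what forces the passage from $\mathfrak f$-reflection to its strong form.

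First I would fix a stationary $S'\s S$ and $\vec{\mathcal F}=\langle\mathcal F_\alpha\mid\alpha\in S'\rangle$ witnessing that $X$ strongly $\mathfrak f$-reflects to $S$. Shrinking $S'$, I may assume $X\cap\alpha\in\mathcal F_\alpha$ for all $\alpha\in S'$: taking $Y:=X$ in Definition~\ref{fake-ref}(2) shows $\{\alpha\in S'\mid X\cap\alpha\in\mathcal F_\alpha\}$ is stationary, and for \emph{every} stationary $Y\s X$ the set $\{\alpha\in S'\mid Y\cap\alpha\in\mathcal F_\alpha\}$ is contained in it (since $Y\cap\alpha\s X\cap\alpha$), so the restricted sequence still strongly $\mathfrak f$-reflects, while capturing clubs is inherited by the smaller set. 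For $\alpha\in S'$ let $\sim_\alpha$ be the equivalence relation on $\theta^\alpha$ from the proof of Lemma~\ref{lemma53}; it has at most $|\theta^\alpha|\le\kappa$ classes.

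Next I would choose, for each $\alpha\in S'$, a ``block'' $B_\alpha\s S$ and an injection $c_\alpha$ from the set of $\sim_\alpha$-classes into $\theta^{B_\alpha}$, subject to: (i) $\min(B_\alpha)\ge\alpha$; (ii) $\theta^{|B_\alpha|}\ge|\theta^\alpha|$, for which $|B_\alpha|\ge|\alpha|$ suffices; (iii) the $B_\alpha$ are pairwise disjoint; and (iv) two opposite stationarity properties, namely that $\bigcup_{\alpha\in E}B_\alpha$ is nonstationary for every nonstationary $E\s S'$, while for every stationary $T\s S'$ and every choice of distinct $\sim_\alpha$-classes $k_\alpha\neq k'_\alpha$ ($\alpha\in T$) the set $\bigcup_{\alpha\in T}\{\gamma\in B_\alpha\mid c_\alpha(k_\alpha)(\gamma)\neq c_\alpha(k'_\alpha)(\gamma)\}$ is stationary. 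Granting this, define $f:\theta^\kappa\to\theta^\kappa$ by $f(\eta)(\gamma):=c_\alpha([\eta\restriction\alpha]_{\sim_\alpha})(\gamma)$ when $\gamma\in B_\alpha$ for the unique such $\alpha\in S'$, and $f(\eta)(\gamma):=0$ otherwise. Since $f(\eta)(\gamma)$ depends only on $\eta\restriction\alpha$ and $\alpha\le\gamma$ on $B_\alpha$, $f$ is $1$-Lipschitz. The reduction property is checked as in Lemma~\ref{lemma53}: if $\eta=^\theta_X\xi$, fix a club $C$ with $C\cap X\s\{\beta\mid\eta(\beta)=\xi(\beta)\}$ and, by capturing, a club $D$ with $C\cap\alpha\in\mathcal F_\alpha$ for all $\alpha\in D\cap S'$; then $\eta\restriction\alpha\sim_\alpha\xi\restriction\alpha$ for such $\alpha$, so $\{\gamma\mid f(\eta)(\gamma)\neq f(\xi)(\gamma)\}\s\bigcup_{\alpha\in S'\setminus D}B_\alpha$, which is nonstationary by~(iv), giving $f(\eta)=^\theta_S f(\xi)$. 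If $\eta\neq^\theta_X\xi$, then $Y:=\{\beta\in X\mid\eta(\beta)\neq\xi(\beta)\}$ is stationary, $T:=\{\alpha\in S'\mid Y\cap\alpha\in\mathcal F_\alpha\}$ is stationary by strong $\mathfrak f$-reflection, and for $\alpha\in T$ the set $Y\cap\alpha$ meets every member of $\mathcal F_\alpha$, whence $\eta\restriction\alpha\not\sim_\alpha\xi\restriction\alpha$ and $c_\alpha([\eta\restriction\alpha]_{\sim_\alpha})\neq c_\alpha([\xi\restriction\alpha]_{\sim_\alpha})$; now~(iv) makes $\{\gamma\in S\mid f(\eta)(\gamma)\neq f(\xi)(\gamma)\}$ stationary, so $f(\eta)\neq^\theta_S f(\xi)$.

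The main obstacle is item~(iv): its two halves pull in opposite directions — the first wants the blocks ``localized near their index'', the second wants the codewords ``spread across each block'' — and reconciling them is the real content of the argument. I would attempt it by a bookkeeping construction exploiting $\kappa^{<\kappa}=\kappa$ (for instance, first thinning $S'$ by a club so that consecutive members are far enough apart that a block of order type $\ge|\alpha|$ fits above $\alpha$ while staying inside $S$, and then choosing the $c_\alpha$ so that distinct codewords differ on a suitably cofinal part of $B_\alpha$), with the verification of the second half of~(iv) resting on a Fodor-type argument.
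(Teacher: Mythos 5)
There is a genuine gap, and it sits exactly where you place it: item~(iv) is the entire content of the argument, and it is left unproved. Worse, there are concrete reasons to doubt it can be arranged. For the second half of~(iv) you need any two distinct codewords $c_\alpha(k)\neq c_\alpha(k')$ to differ on a subset $D_\alpha\subseteq B_\alpha$ large enough that $\bigcup_{\alpha\in T}D_\alpha$ is stationary; but an injection of the (up to $\kappa$ many) $\sim_\alpha$-classes into $\theta^{B_\alpha}$ with $|B_\alpha|=|\alpha|$ must send some pairs of distinct classes to codewords whose difference set is very small --- a single coordinate distinguishes only $\theta$ values, and (say for $\theta=2$) any three codewords $x,y,z$ with $x\symdiff y$ and $x\symdiff z$ co-small have $y\symdiff z$ small, so ``all pairwise differences are large'' caps the code at two words. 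Meanwhile the first half of~(iv) forces the blocks to be localized, say $B_\alpha\subseteq[\alpha,\beta_\alpha)$ with $\beta_\alpha$ below the next point of $S'$; then $C:=\kappa\setminus\bigcup_{\alpha\in S'}(\alpha,\beta_\alpha)$ is a club, and intersecting with $C$ shows that $\bigcup_{\alpha\in T}D_\alpha$ can only be stationary if $\{\alpha\in T\mid \alpha\in D_\alpha\}$ already is --- i.e., only if the codewords are separated at the single coordinate $\alpha$, which is precisely the obstacle the blocks were introduced to circumvent. A further warning sign: in your verification of the non-reduction direction you only ever use $Y\cap\alpha\in\mathcal F_\alpha^+$, so your scheme, if completed, would derive the conclusion from plain $\mathfrak f$-reflection; the ``strong'' hypothesis never enters your argument, yet it is what the lemma is about.

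The paper's proof does not code equivalence classes at all, and this is where ``strong'' is used. For $\theta<\kappa$ one sets $f(\eta)(\alpha)$ to be the unique $i<\theta$ (if any) for which some $W\in\mathcal F_\alpha$ satisfies $W\cap X\subseteq\{\beta<\alpha\mid\eta(\beta)=i\}$, and $0$ otherwise; a single $\theta$-valued coordinate suffices, no blocks are needed, and $f$ is $1$-Lipschitz. If $\eta\neq^\theta_X\xi$ then, since $\theta<\kappa$, there are $i\neq j$ for which $Y:=\{\beta\in X\mid\eta(\beta)=i\ \&\ \xi(\beta)=j\}$ is stationary; strong $\mathfrak f$-reflection gives stationarily many $\alpha$ with $Y\cap\alpha\in\mathcal F_\alpha$ (membership in the filter, not mere positivity), and then $W:=Y\cap\alpha$ itself witnesses $f(\eta)(\alpha)=i$ while $f(\xi)(\alpha)=j$. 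Your preliminary normalization (shrinking $S'$ so that $X\cap\alpha\in\mathcal F_\alpha$) is correct and is actually a useful supplement to that argument, as it guarantees the uniqueness of the value $i$; but the block-coding machinery should be discarded.
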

\begin{proof} By the preceding lemma and by the implication $(2)\implies(3)$ of Proposition~\ref{prop23}, we may assume that $\theta\in[2,\kappa)$.
Suppose $\vec{\mathcal F}=\langle \mathcal F_\alpha\mid\alpha\in S'\rangle$ is a sequence witnessing that $X$ strongly $\mathfrak f$-reflects to $S$.
Define a map $f:\theta^\kappa\rightarrow\theta^\kappa$ as follows.
For every $\alpha\in S'$ and $\eta\in\theta^\kappa$,
if there exists $W\in \mathcal F_\alpha$ and $i<\theta$ such that $W\cap X\subseteq \{\beta<\alpha\mid\eta(\beta)=i\}$,
then it is unique (since $\mathcal F_\alpha$ is a filter), and so we let $f(\eta)(\alpha):=i$.
If there is no such $i$ or if $\alpha\notin S'$, then we simply let $f(\eta)(\alpha):=0$.

As, for all $\eta\in \kappa^\kappa$ and $\alpha<\kappa$, $f(\eta)(\alpha)$ depends only on $\eta\restriction\alpha$, $f$ is $1$-Lipschitz.
To see that it forms a reduction, let $\eta,\xi$ be arbitrary elements of $\kappa^\kappa$. There are two cases to consider:

$\br$ If $\eta=^\theta_X\xi$, then since $\vec{\mathcal F}$ captures clubs, we infer that $f(\eta)=^\theta_Sf(\xi)$,
very much like the proof of this case in Lemma~\ref{lemma53}.

$\br$ If $\eta\neq^\theta_X\xi$, then, as $\theta<\kappa$, we may find $i\neq j$ for which $Y:=\{\beta\in X\mid \eta(\beta)=i\ \&\ \xi(\beta)=j\}$ is stationary.
In effect, $T:=\{\alpha\in S'\mid Y\cap\alpha\in\mathcal F_\alpha\}$ is stationary.
Now, for every $\alpha\in T$,
we have that $f(\eta)(\alpha)=i$ while  $f(\xi)(\alpha)=j$.
It follows that $f(\eta)\neq_Sf(\xi)$.
\end{proof}

Given Lemmas \ref{lemma53} and \ref{redstrongf}, it is natural to ask whether ${=^2_X}\reduO{=^2_S}$ implies ${=_X}\reduO{=_S}$. The following provides a sufficient condition.

\begin{lemma} If $\kappa$ is not a strong limit,
then ${=^2_X}\reduO{=^2_S}$ implies ${=_X}\reduO{=_S}$.
\end{lemma}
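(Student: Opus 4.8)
The plan is to code $\kappa$-valued functions as $\mu$-indexed families of $2$-valued functions, for a suitable $\mu<\kappa$, and to run the hypothesized $2$-valued reduction on all of these coordinates in parallel. First I would record the relevant cardinal arithmetic: $\kappa^{<\kappa}=\kappa$ forces $\kappa$ to be regular, and since $\kappa$ is not a strong limit while $\kappa^{<\kappa}=\kappa$, there is an infinite $\mu<\kappa$ with $2^\mu=\kappa$; fix such a $\mu$ together with a bijection $\varphi\colon\kappa\to{}^\mu 2$. For $\eta\in\kappa^\kappa$ and $i<\mu$, let $\eta^i\in 2^\kappa$ be defined by $\eta^i(\alpha):=\varphi(\eta(\alpha))(i)$. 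The key elementary observation, which I would apply for $Z\in\{X,S\}$, is that for every stationary $Z\s\kappa$ and all $\eta,\xi\in\kappa^\kappa$,
$$\eta\mathrel{=_Z}\xi\iff \eta^i\mathrel{=^2_Z}\xi^i\ \text{for all}\ i<\mu;$$
indeed $\{\alpha\in Z\mid\eta(\alpha)\neq\xi(\alpha)\}=\bigcup_{i<\mu}\{\alpha\in Z\mid\eta^i(\alpha)\neq\xi^i(\alpha)\}$ since $\varphi$ is injective, and $\ns_\kappa$ is $\kappa$-complete while $\mu<\kappa$.

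Next I would fix a $1$-Lipschitz $g\colon 2^\kappa\to 2^\kappa$ witnessing ${=^2_X}\reduO{=^2_S}$ and define $h\colon\kappa^\kappa\to\kappa^\kappa$ by
$$h(\eta)(\alpha):=\varphi^{-1}\bigl(\langle g(\eta^i)(\alpha)\mid i<\mu\rangle\bigr),$$
so that $h(\eta)^i=g(\eta^i)$ for all $i<\mu$. Chasing the displayed equivalence through $g$ then yields, for all $\eta,\xi\in\kappa^\kappa$, that $h(\eta)=_S h(\xi)$ iff $g(\eta^i)=^2_S g(\xi^i)$ for all $i<\mu$ iff $\eta^i=^2_X\xi^i$ for all $i<\mu$ iff $\eta=_X\xi$; hence $h$ is a reduction of $=_X$ to $=_S$.

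It then remains to verify that $h$ is $1$-Lipschitz. I would first note that $\Delta(\eta,\xi)\le\Delta(\eta^i,\xi^i)$ for each $i<\mu$: if $\eta^i$ and $\xi^i$ first disagree at $\delta$, then $\varphi(\eta(\delta))(i)\neq\varphi(\xi(\delta))(i)$ and so $\eta(\delta)\neq\xi(\delta)$. Since $g$ is $1$-Lipschitz this gives $\Delta(\eta,\xi)\le\Delta(g(\eta^i),g(\xi^i))$ for every $i<\mu$; and because $h(\eta)$ and $h(\xi)$ disagree at an ordinal $\alpha$ exactly when $g(\eta^i)(\alpha)\neq g(\xi^i)(\alpha)$ for some $i<\mu$, we get $\Delta(h(\eta),h(\xi))=\min_{i<\mu}\Delta(g(\eta^i),g(\xi^i))\ge\Delta(\eta,\xi)$, as needed. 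I do not foresee a real obstacle; the only points requiring care are the cardinal arithmetic pinning down $2^\mu=\kappa$ with $\mu$ infinite, and the observation that breaking a $\kappa$-valued function into its $\mu$-many ``bit coordinates'' preserves both $1$-Lipschitzness and the reduction property — each of which rests exactly on $\mu$ lying below the regular cardinal $\kappa$.
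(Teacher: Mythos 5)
Your proposal is correct and follows essentially the same route as the paper: both exploit the non-strong-limit hypothesis together with $\kappa^{<\kappa}=\kappa$ to fix a cardinal $\lambda<\kappa$ with $2^\lambda=\kappa$, decompose each $\eta\in\kappa^\kappa$ into $\lambda$-many binary coordinates, apply the given $2$-valued reduction coordinatewise, and use $\kappa$-completeness of $\ns_\kappa$ plus the coordinatewise computation of $\Delta$ to verify the reduction and $1$-Lipschitz properties. Your write-up is in fact somewhat more explicit than the paper's on the Lipschitz verification, but the underlying argument is the same.
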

\begin{proof} Suppose that $\kappa$ is not a strong limit.
As we are working under the hypothesis that $\kappa^{<\kappa}=\kappa$,
this altogether means that we may fix a bijection $h:\kappa\leftrightarrow 2^\lambda$ for some cardinal $\lambda<\kappa$.
In effect, there is a bijection $\pi:\kappa^\kappa\leftrightarrow (2^\kappa)^\lambda$
satisfying that, for every $\eta\in\kappa^\kappa$,
$\pi(\eta)=\langle \eta_i\mid i<\lambda\rangle$  iff, for all $i<\lambda$ and $\alpha<\kappa$:
$$\eta_i(\alpha)=h(\eta(\alpha))(i).$$

Suppose that $f:2^\kappa\rightarrow2^\kappa$ is a continuous reduction from $=^2_X$ to $=^2_S$.
Define a map $g:\kappa^\kappa\rightarrow\kappa^\kappa$ via $g(\eta):=\zeta$ iff
$\pi(\zeta)=\langle f(\eta_i)\mid i<\lambda\rangle$.
\begin{claim} $g$ is $1$-Lipschitz.
\end{claim}
\begin{proof} Notice that $\pi$ is $1$-Lipschitz and $g$ is equivalent to the function $\pi^{-1}\circ f\circ\pi$. The claim follows from the fact that $f$ is $1$-Lipschitz.
\end{proof}
To see that $g$ is a reduction from $=_X$ to $=_S$, let $\eta,\xi\in\kappa^\kappa$ be arbitrary.
Denote $\langle \eta_i\mid i<\lambda\rangle:=\pi(\eta)$
and $\langle \xi_i\mid i<\lambda\rangle:=\pi(\xi)$.

$\br$ If $\eta=_X\xi$, then let us fix a club $C\s\kappa$ such that $C\cap X\s\{\alpha<\kappa\mid \eta(\alpha)=\xi(\alpha)\}$.
It easy to see that for every $i<\lambda$,
we have $C\cap X\s\{\alpha<\kappa\mid \eta_i(\alpha)=\xi_i(\alpha)\}$,
so that ${\eta_i}=_X^2{\xi_i}$ and we may pick a club $D_i\s\kappa$
such that $D_i\cap S\s\{\alpha<\kappa\mid f(\eta_i)(\alpha)=f(\xi_i)(\alpha)\}$.
Then $D:=\bigcap_{i<\lambda}D_i$ is a club,
and we have $D\cap S\s\{\alpha<\kappa\mid g(\eta)(\alpha)=g(\xi)(\alpha)\}$,
so that ${g(\eta)}=_S{g(\xi)}$.

$\br$ If $\neg(\eta=_X\xi)$, then $Y:=\{\alpha\in X\mid \eta(\alpha)\neq\xi(\alpha)\}$ is stationary.
Since $\pi$ is a bijection, for every $\alpha\in Y$, there is some $i<\lambda$ such that $\eta_i(\alpha)\neq\xi_i(\alpha)$.
It follows that there exists some $i<\lambda$ such that $Y_i:=\{\alpha\in X\mid \eta_i(\alpha)\neq\xi_i(\alpha)\}$ is stationary. Consequently,
the set $T:=\{\alpha\in S\mid f(\eta_i)(\alpha)\neq f(\xi_i)(\alpha)\}$ is stationary.
In particular, the set $\{\alpha\in S\mid g(\eta)(\alpha)\neq g(\xi)(\alpha)\}$
covers $T$, so that $\neg({g(\eta)}=_S{g(\xi)})$.
\end{proof}
\begin{remark} The same holds true once replacing $\reduO$ in the preceding by $\redup$ for any choice of $\mathbf p\in\{c,B,BM\}$.
\end{remark}

\begin{defn}
\begin{enumerate}
\item The quasi-order $\sq S$ over $\kappa^\kappa$
is defined via
$${\eta}\sq{S}{\xi}\text{  iff }\{\alpha\in S\mid \eta(\alpha)>\xi(\alpha)\}\text{ is non-stationary}.$$
\item The quasi-order $\sqc S$ over $2^\kappa$  is defined via:
$$\eta\sqc S\xi\text{ iff }\{\alpha\in S\mid \eta(\alpha)>\xi(\alpha)\}\text{ is non-stationary}.$$
\end{enumerate}
\end{defn}
\begin{remark} Note that  $\sqc S$ is nothing but ${\sq S}\cap(2^\kappa\times 2^\kappa)$.
\end{remark}

In \cite{FMR}, we addressed the problem of universality of $\sqc S$. The next theorem addresses a specific instance of this problem.

\begin{thm}\label{lemma43} If $X$ $\mathfrak f$-reflects with $\diamondsuit$ to $S$, then ${\sq X}\reduO{\sqc S}$.
\end{thm}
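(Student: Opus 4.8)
The plan is to adapt the construction of Lemma~\ref{redstrongf} to the ordered setting, exploiting the extra information provided by the $\diamondsuit$-sequence. Suppose $\vec{\mathcal F}=\langle\mathcal F_\alpha\mid\alpha\in S'\rangle$ together with $\langle Y_\alpha\mid\alpha\in S'\rangle$ witness that $X$ $\mathfrak f$-reflects with $\diamondsuit$ to $S$; by Proposition~\ref{prop23} (really the argument of its proof) we may assume $Y_\alpha\in\mathcal F_\alpha^+$ for all $\alpha\in S'$, replacing each $\mathcal F_\alpha$ by the filter it generates with $Y_\alpha$, so that in fact $X$ strongly $\vec{\mathcal F}$-reflects to $S'$ via a diamond sequence. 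The point of retaining the diamond is that a reduction of the quasi-order $\sq X$ must, at a reflection point $\alpha$, correctly compare $\eta\restriction\alpha$ and $\xi\restriction\alpha$ in a way that depends on \emph{both} functions, not just on one of them; so unlike the equivalence-relation case we cannot define $f(\eta)(\alpha)$ as a code for an equivalence class of $\eta\restriction\alpha$ alone, and instead the guessed set $Y_\alpha$ will be used to decide which of the two initial segments dominates the other on a large set.

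Concretely, first I would, for each $\alpha\in S'$, attach to every $\eta\in\kappa^\alpha$ a code $\myceil{[\eta]_\alpha}\in\kappa$ for the $\mathcal F_\alpha$-equivalence class of $\eta$ under the relation ``agree on a set in $\mathcal F_\alpha$'', exactly as before, but now also fix a well-ordering of these codes that respects $\sq{}$ in the following sense: when $\eta,\xi\in\kappa^\alpha$ satisfy $\{\beta<\alpha\mid\eta(\beta)>\xi(\beta)\}\notin\mathcal F_\alpha^+$ — equivalently $\{\beta<\alpha\mid\eta(\beta)\le\xi(\beta)\}\in\mathcal F_\alpha$ — we want the code assigned to $\eta$ to be $\le$ the code assigned to $\xi$. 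Such a choice is possible because on the set $\{\eta\restriction\alpha\mid\eta\in\kappa^\kappa\}$ the relation ``$\eta$ is $\mathcal F_\alpha$-dominated by $\xi$'' is a quasi-order whose strict part is well-founded modulo the equivalence (since $\mathcal F_\alpha$ is a filter, one cannot have an infinite $\mathcal F_\alpha$-descending sequence of ordinal-valued functions below a fixed function), so one linearizes it and transfers the linearization to $\kappa$. Then define $f:\kappa^\kappa\to\kappa^\kappa$ by $f(\eta)(\alpha):=\myceil{[\eta\restriction\alpha]_\alpha}$ for $\alpha\in S'$ and $f(\eta)(\alpha):=0$ otherwise; as each value depends only on $\eta\restriction\alpha$, $f$ is $1$-Lipschitz.

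For the reduction property, the case $\eta\sq X\xi$ goes through as in Lemma~\ref{lemma53}: fixing a club $C$ with $C\cap X\s\{\beta\mid\eta(\beta)\le\xi(\beta)\}$ and, using capturing, a club $D$ with $C\cap\alpha\in\mathcal F_\alpha$ for $\alpha\in D\cap S'$, one gets that for every $\alpha\in D\cap S'$ the function $\eta\restriction\alpha$ is $\mathcal F_\alpha$-dominated by $\xi\restriction\alpha$, hence $f(\eta)(\alpha)\le f(\xi)(\alpha)$ by the choice of coding; together with $f(\eta)(\alpha)=0=f(\xi)(\alpha)$ off $S'$ this shows $\{\alpha\in S\mid f(\eta)(\alpha)>f(\xi)(\alpha)\}$ is disjoint from $D$, so $f(\eta)\sqc S f(\xi)$. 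For the converse, suppose $\neg(\eta\sq X\xi)$, so $Y:=\{\beta\in X\mid\eta(\beta)>\xi(\beta)\}$ is stationary; here is where the diamond enters. Apply $\mathfrak f$-reflection with $\diamondsuit$ to the stationary set $Y\s X$: the set $T:=\{\alpha\in S'\mid Y_\alpha=Y\cap\alpha\ \&\ Y\cap\alpha\in\mathcal F_\alpha^+\}$ is stationary, and for $\alpha\in T$ we have that $Y\cap\alpha=Y_\alpha$ is $\mathcal F_\alpha$-positive, which means $\eta\restriction\alpha$ is \emph{not} $\mathcal F_\alpha$-dominated by $\xi\restriction\alpha$ on a set in $\mathcal F_\alpha$; by the coding this forces $f(\eta)(\alpha)>f(\xi)(\alpha)$ — this is precisely the point requiring that our linearization be faithful in the strict direction too, i.e.\ that $\eta$ not dominated by $\xi$ yields strictly larger code, which is why we must first symmetrize: replace the plain ``$\le$ on a set in $\mathcal F_\alpha$'' quasi-order by recording, for each $\mathcal F_\alpha$-class, whether $Y_\alpha$ is positive, and assign codes so that a class with $Y_\alpha$-positive representative gets a strictly larger code than the class of any function it is compared against via $Y_\alpha$. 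Hence $\{\alpha\in S\mid f(\eta)(\alpha)>f(\xi)(\alpha)\}\supseteq T$ is stationary, so $\neg(f(\eta)\sqc S f(\xi))$. The main obstacle, and the place where I expect the real work to be, is the construction of the ordinal coding in the second paragraph: one must verify that the $\mathcal F_\alpha$-domination quasi-order on $\{\eta\restriction\alpha\}$ is well-founded (so it can be linearized into $\kappa$) and that the diamond-guessed $Y_\alpha$ can be woven into this linearization to guarantee \emph{strict} increase of codes precisely at reflection points — getting the two directions simultaneously from a single coding is the crux, and it may well be cleaner to let $f(\eta)(\alpha)$ code the pair consisting of the $\mathcal F_\alpha$-class of $\eta\restriction\alpha$ and a bit/ordinal extracted from comparing $\eta\restriction\alpha$ with the canonical $Y_\alpha$-data, rather than a single equivalence-class code.
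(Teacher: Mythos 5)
Your overall strategy diverges from the paper's at the crucial point, and the divergence opens a genuine gap that your own closing sentences half-acknowledge but do not close. The problem is the coding in your second paragraph. You ask for an ordinal-valued code on $\kappa^\alpha$ such that (i) if $\eta\restriction\alpha$ is $\mathcal F_\alpha$-dominated by $\xi\restriction\alpha$ then $\myceil{[\eta\restriction\alpha]_\alpha}\le\myceil{[\xi\restriction\alpha]_\alpha}$, and (ii) if $\{\beta<\alpha\mid \eta(\beta)>\xi(\beta)\}\cap X$ is $\mathcal F_\alpha$-positive (so that $\eta\restriction\alpha$ is \emph{not} dominated by $\xi\restriction\alpha$) then the code of $\eta\restriction\alpha$ is strictly larger. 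Since ordinals are linearly ordered, (i) and (ii) together force the domination quasi-order to be total, which it is not: for a non-ultra filter $\mathcal F_\alpha$ one easily has $\eta,\xi$ with both $\{\beta\mid\eta(\beta)>\xi(\beta)\}$ and $\{\beta\mid\xi(\beta)>\eta(\beta)\}$ positive, and then (ii) demands that each code be strictly larger than the other. (The well-foundedness you invoke is also unjustified for arbitrary filters, but that is beside the point.) Your proposed repair --- weaving $Y_\alpha$ into the coding --- cannot succeed either, for a structural reason: $f(\eta)(\alpha)$ must be computed from $\eta\restriction\alpha$ and the fixed data alone, and $f(\xi)(\alpha)$ from $\xi\restriction\alpha$ alone, while the guessed set $Y_\alpha=\{\beta<\alpha\mid\eta(\beta)>\xi(\beta)\}\cap X$ depends on the \emph{pair}; comparing each function separately against a guessed subset of $\alpha$ cannot in general certify that $\eta>\xi$ on that set.

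The paper resolves exactly this difficulty by making the diamond guess a function rather than a set: from $\langle Y_\alpha\rangle$ it extracts (Claim~\ref{claim2131}) a sequence $\langle\eta_\alpha\mid\alpha\in S''\rangle$ such that for every stationary $Y\s X$ and every $\eta\in\kappa^\kappa$, stationarily many $\alpha$ satisfy both $\eta_\alpha=\eta\restriction\alpha$ and $Y\cap\alpha\in\bar{\mathcal F}_\alpha$, where $\bar{\mathcal F}_\alpha$ is the filter generated by $\mathcal F_\alpha\cup\{Y_\alpha\}$ as in your first paragraph. Then $f(\eta)(\alpha)\in 2$ simply records whether $\eta_\alpha\preceq_\alpha\eta\restriction\alpha$; at a point where $\eta_\alpha=\eta\restriction\alpha$ and $\{\beta\in X\mid\eta(\beta)>\xi(\beta)\}\cap\alpha\in\bar{\mathcal F}_\alpha$, reflexivity gives $f(\eta)(\alpha)=1$ while $f(\xi)(\alpha)=0$, and the forward direction uses transitivity of $\preceq_\alpha$ exactly as in your third paragraph. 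Note also that this map lands in $2^\kappa$, as required by the target $\sqc S$ (a relation on $2^\kappa$); your map lands in $\kappa^\kappa$ and so would at best witness ${\sq X}\reduO{\sq S}$. To repair your write-up you need to supply something like Claim~\ref{claim2131} and replace the ordinal coding by the single-bit comparison against the guessed function.
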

\begin{proof}
Let $\vec{\mathcal F}=\langle \mathcal F_\alpha\mid\alpha\in S'\rangle$ and $\langle Y_\alpha\mid\alpha\in S'\rangle$
witness together that $X$ $\mathfrak f$-reflects with $\diamondsuit$ to $S$.
Let $S'':=\{ \alpha\in S'\mid Y_\alpha\in\mathcal F_\alpha^+\}$.
For each $\alpha\in S''$, let $\bar{\mathcal F}_\alpha$ be the filter over $\alpha$ generated by $\mathcal{F}_\alpha\cup\{Y_\alpha\}$.

\begin{claim}\label{claim2131} There exists a sequence $\langle \eta_\alpha\mid\alpha\in S''\rangle$ such that,
for every stationary $Y\s X$ and every $\eta\in\kappa^\kappa$, the set $\{\alpha\in S''\mid \eta_\alpha=\eta\restriction\alpha\ \&\ Y\cap\alpha\in \bar{\mathcal F}_\alpha\}$ is stationary.
\end{claim}
\begin{proof}  Let $C:=\acc^+(X)$ and $B:=X\setminus C$,
so that $C$ is a club in $\kappa$ and $B$ is a non-stationary subset of $\kappa$ of cardinality $\kappa$.
As $|B|=\kappa^{<\kappa}$,\footnote{By convention, we always assume that $\kappa^{<\kappa}=\kappa$, but this time this also follows from the hypothesis that $X$ $\mathfrak f$-reflects with $\diamondsuit$ to $S$.}
 let us fix an injective enumeration  $\{ a_\beta\mid \beta\in B\}$ of the elements of $\kappa^{<\kappa}$.
Then, for each $\alpha\in S''$, let $\eta_\alpha:=(\bigcup\{ a_\beta\mid \beta\in Y_\alpha\cap B\})\cap(\alpha\times\alpha)$.

To see that $\langle \eta_\alpha\mid\alpha\in S''\rangle$ is as sought, fix an arbitrary $\eta\in\kappa^\kappa$ and an arbitrary stationary $Y\s X$.
Let $f:\kappa\rightarrow B$ be the unique function to satisfy that, for all $\epsilon<\kappa$, $a_{f(\epsilon)}=\eta\restriction\epsilon$.
Evidently, $Y\cap C$ is a stationary subset of $X$ disjoint from $\im(f)$.
In particular, $Y':=(Y\cap C)\uplus\im(f)$ is a stationary subset of $X$,
and hence $G:=\{\alpha\in S'\mid Y_\alpha=Y'\cap\alpha\ \&\ Y'\cap\alpha\in\mathcal F_\alpha^+\}$ is a stationary subset of $S''$.
Now, as $\vec{\mathcal F}$ captures clubs, let us fix a club $D\s\kappa$ such that, for all $\alpha\in D\cap S'$, $C\cap\alpha\in\mathcal F_\alpha$.
Consider $T:=\{\alpha\in G\cap D\mid f[\alpha]\s\alpha\ \&\ \eta[\alpha]\s\alpha\}$
which is a stationary subset of $S''$.
Let $\alpha\in T$ be arbitrary.

$\bullet$ As $\alpha\in D$, $C\cap\alpha\in\mathcal F_\alpha\s\bar{\mathcal F}_\alpha$,
and as $\alpha\in G$, $Y'\cap\alpha=Y_\alpha\in\bar{\mathcal F}_\alpha$.
Therefore, the intersection $Y'\cap C\cap\alpha$ is in $\bar{\mathcal F}_\alpha$.
But $Y'\cap C\cap\alpha=Y\cap C\cap\alpha$,
and hence the superset $Y\cap\alpha$ is in $\bar{\mathcal F}_\alpha$, as well.

$\bullet$ As $\alpha\in G$, $Y_\alpha=Y'\cap\alpha$, and we infer that $Y_\alpha\cap B=\im(f)\cap\alpha$.
As $f[\alpha]\s\alpha$, we get that $f[\alpha]\s Y_\alpha\cap B\s\im(f)$.
As $\eta[\alpha]\s\alpha$, we get that $\eta\restriction\alpha=\eta\cap(\alpha\times\alpha)$.
Recalling the definition of $f$ and the definition of $\eta_\alpha$,
it follows that $\eta\restriction\alpha\s \eta_\alpha\s\eta$,
so that $\eta_\alpha=\eta\restriction\alpha$.
\end{proof}

For every $\alpha\in S''$, define a quasi-order $\preceq_\alpha$ over $\kappa^\alpha$ by letting
$\eta\preceq_\alpha\xi$ iff there is $W\in\bar{\mathcal F}_\alpha$ such that $W\cap X\subseteq\{\beta<\alpha\mid \eta(\beta)\leq\xi(\beta)\}$.
Let $\langle \eta_\alpha\mid\alpha\in S''\rangle$ be given by the preceding claim.
Define a map $f:\kappa^\kappa\rightarrow 2^\kappa$ by letting
for all $\eta\in\kappa^\kappa$ and $\alpha<\kappa$:
$$    f(\eta)(\alpha) := \begin{cases}
              1,& \text{if } \alpha\in S''\ \&\  \eta_\alpha\mathrel{\preceq_\alpha}\eta\restriction\alpha;\\
               0,& \text{otherwise}.
                      \end{cases}$$
As, for all $\eta\in \kappa^\kappa$ and $\alpha<\kappa$, $f(\eta)(\alpha)$ depends only on $\eta\restriction\alpha$, $f$ is $1$-Lipschitz.
To see that it forms a reduction from  ${\sq X}$ to ${\sqc S}$, let $\eta,\xi$ be arbitrary elements of $\kappa^\kappa$. There are two cases to consider:

$\br$ If $\eta\sq X\xi$, then let us fix a club $C$ such that $C\cap X\subseteq \{\beta<\kappa\mid \eta(\beta)\leq\xi(\beta)\}$.
Since $\vec{\mathcal F}$ captures clubs, let us a fix a club $D\s\kappa$ such that, for all $\alpha\in D\cap S'$, $C\cap\alpha\in\mathcal F_\alpha$.
We claim that $D$ is disjoint from $\{\alpha\in S\mid f(\eta)(\alpha)>f(\xi)(\alpha)\}$,
so that indeed $f(\eta)\sqc Sf(\xi)$.
Let $\alpha\in D$ with $f(\eta)(\alpha)=1$.
Then for $W:=C\cap\alpha$, we have that $W\in\mathcal F_\alpha\s\bar{\mathcal F}_\alpha$
and $W\cap X\subseteq\{\beta<\alpha\mid \eta(\beta)\le\xi(\beta)\}$,
so that $\eta_\alpha\preceq_\alpha\eta\restriction\alpha\preceq_\alpha\xi\restriction\alpha$,
and $f(\xi)(\alpha)=1$.

$\br$ If $\neg ({\eta}\sq X{\xi})$, then $Y:=\{\beta\in X\mid \eta(\beta)>\xi(\beta)\}$ is stationary.
In effect, $T:=\{\alpha\in S''\mid \eta_\alpha=\eta\restriction\alpha\ \&\ Y\cap\alpha\in\bar{\mathcal F}_\alpha\}$ is stationary.
Now, for every $\alpha\in T$,
the set $W:=Y\cap\alpha$ is in $\bar{\mathcal F}_\alpha$ so that $f(\eta)(\alpha)=1$
and $f(\xi)(\alpha)=0$. Consequently, $\neg(f(\eta)\sqc S f(\xi))$.
\end{proof}
\begin{remark}
The notion of ``$X$ $\diamondsuit$-reflects to $S$'' from \cite[Definition~57]{FHK}
is the special case of ``$X$ $\vec{\mathcal F}$-reflects with $\diamondsuit$ to $S$''
in which, for every $\alpha\in S$, $\cf(\alpha)>\omega$ and $\mathcal F_\alpha=\cub(\alpha)$.
By \cite[Theorem~58]{FHK}, if $X$ $\diamondsuit$-reflects to $S$, then ${=^2_X}\reduc{=^2_S}$.
So the preceding theorem is an improvement not only because $\cub$ is replaced by abstract filters (allowing the case $S\s\cof(\omega)$),
but also because ${\sq X}\reduO{\sqc S}$ entails ${=_X}\reduO{=^2_S}$, so that ${=^2_X}\reduO{=_X}\reduO{=^2_S}$ and hence also ${=^2_X}\reduc{=^2_S}$.
\end{remark}
\begin{remark} Claim~\ref{claim2131} establishes another interesting fact:
If there exists a stationary $X\s\kappa$ such that $X$ $\mathfrak f$-reflects with $\diamondsuit$ to $S$, then $\diamondsuit_S$ holds.
\end{remark}

\subsection{Nilpotent reductions}
Now we turn to prove Theorem~D.
It is clear from Theorem~\ref{lemma43} that if $X$ $\mathfrak{f}$-reflects with $\diamondsuit$ to X, then there exists a 1-Lipschitz map which is different from the identity,
yet, it witnesses ${=_X}\reduO{=_X}$.
In certain cases, we can obtain a similar result with only $\mathfrak{f}$-reflection (without the need for $\diamondsuit$).

\begin{lemma}\label{onefunction}
Suppose $X,Y,Z$ are stationary subsets of $\kappa$, with $X\cap Y$ non-stationary.
\begin{enumerate}
\item If $X$ $\mathfrak{f}$-reflects to $Y$ and $Y$ $\mathfrak{f}$-reflects to $X$, then there is a function simultaneously witnessing $${=_X}\reduO{=_Y}\ \&\ {=_Y}\reduO{=_X}.$$
\item If $Z$ $\mathfrak{f}$-reflects to $Y$ and $Z$ $\mathfrak{f}$-reflects to $X$, then there is a function simultaneously witnessing $${=_Z}\reduO{=_Y}\ \&\ {=_Z}\reduO{=_X}.$$
\end{enumerate}
\end{lemma}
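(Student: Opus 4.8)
The plan is to superimpose two copies of the $1$-Lipschitz reduction built in the proof of Lemma~\ref{lemma53}, using the hypothesis that $X\cap Y$ is non-stationary to place the two copies on disjoint blocks of coordinates that do not interfere with one another.

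In both parts, first unfold the hypotheses: there are a stationary $S'\s Y$ and a club-capturing filter sequence $\vec{\mathcal F}=\langle\mathcal F_\alpha\mid\alpha\in S'\rangle$ such that the first ``source'' set --- call it $A$, where $A:=X$ in part~(1) and $A:=Z$ in part~(2) --- $\vec{\mathcal F}$-reflects to $S'$; and symmetrically a stationary $T'\s X$ with a club-capturing $\vec{\mathcal G}=\langle\mathcal G_\alpha\mid\alpha\in T'\rangle$ such that the second source set $B$ ($B:=Y$ in part~(1), $B:=Z$ in part~(2)) $\vec{\mathcal G}$-reflects to $T'$. Put $S'':=S'\setminus X$ and $T'':=T'\setminus Y$. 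Since $S'\s Y$, $T'\s X$, and $X\cap Y$ is non-stationary, the sets $S''$ and $T''$ are still stationary, the sets $S''\s Y\setminus X$ and $T''\s X\setminus Y$ are disjoint, and --- the point that makes the construction work --- $S''\cap X=\emptyset=T''\cap Y$. Moreover, deleting the non-stationary set $S'\cap X$ from $S'$ preserves club-capturing and preserves, for every stationary subset of $A$, the stationarity of the associated reflection set; likewise for $T''$. Hence $\vec{\mathcal F}\restriction S''$ and $\vec{\mathcal G}\restriction T''$ still witness the respective $\mathfrak f$-reflections.

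Next, mimic Lemma~\ref{lemma53}: for $\alpha\in S''$ let $\sim_\alpha$ be the equivalence relation on $\kappa^\alpha$ with $\eta\sim_\alpha\xi$ iff some $W\in\mathcal F_\alpha$ satisfies $W\cap A\s\{\beta<\alpha\mid\eta(\beta)=\xi(\beta)\}$, and for $\alpha\in T''$ define $\approx_\alpha$ on $\kappa^\alpha$ analogously from $\mathcal G_\alpha$ and $B$. Using $\kappa^{<\kappa}=\kappa$, fix codes in $\kappa$ for the (at most $\kappa$ many) equivalence classes, and define $f\colon\kappa^\kappa\to\kappa^\kappa$ by $f(\eta)(\alpha):=\myceil{[\eta\restriction\alpha]_{\sim_\alpha}}$ for $\alpha\in S''$, $f(\eta)(\alpha):=\myceil{[\eta\restriction\alpha]_{\approx_\alpha}}$ for $\alpha\in T''$, and $f(\eta)(\alpha):=0$ otherwise. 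Since $f(\eta)(\alpha)$ depends only on $\eta\restriction\alpha$, $f$ is $1$-Lipschitz; this single $f$ is the function sought in both parts.

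Finally, I would check that $f$ witnesses ${=_A}\reduO{=_Y}$ and ${=_B}\reduO{=_X}$ by running the two cases of the Lemma~\ref{lemma53} argument coordinate-block by coordinate-block. For ${=_A}\reduO{=_Y}$: if $\eta=_A\xi$, fix a club $C$ with $C\cap A\s\{\beta<\kappa\mid\eta(\beta)=\xi(\beta)\}$ and, by club-capturing of $\vec{\mathcal F}$, a club $D$ with $C\cap\alpha\in\mathcal F_\alpha$ for all $\alpha\in D\cap S''$; then the $f$-values of $\eta$ and $\xi$ agree on $S''\cap D$, the $T''$-coordinates are irrelevant since $T''\cap Y=\emptyset$, and every remaining coordinate of $Y$ carries the value $0$, so $\{\alpha\in Y\mid f(\eta)(\alpha)\ne f(\xi)(\alpha)\}\s S''\setminus D$ is non-stationary and $f(\eta)=_Yf(\xi)$. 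Conversely, if $\neg(\eta=_A\xi)$ then $W_0:=\{\beta\in A\mid\eta(\beta)\ne\xi(\beta)\}$ is stationary, so $T:=\{\alpha\in S''\mid W_0\cap\alpha\in\mathcal F_\alpha^+\}$ is stationary; for $\alpha\in T$ every $W\in\mathcal F_\alpha$ meets $W_0$ below $\alpha$, forcing $[\eta\restriction\alpha]_{\sim_\alpha}\ne[\xi\restriction\alpha]_{\sim_\alpha}$ and hence $f(\eta)(\alpha)\ne f(\xi)(\alpha)$, and since $S''\s Y$ this gives $\neg(f(\eta)=_Yf(\xi))$. The verification of ${=_B}\reduO{=_X}$ is the same after replacing $S'',\vec{\mathcal F},A,Y$ with $T'',\vec{\mathcal G},B,X$ and using $S''\cap X=\emptyset$ in place of $T''\cap Y=\emptyset$. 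The only delicate point --- and the thing one must get right --- is exactly this bookkeeping of supports: the two coordinate blocks must be disjoint and each must avoid the target of the \emph{other} reduction, which is precisely what the non-stationarity of $X\cap Y$ buys.
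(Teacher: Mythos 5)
Your proposal is correct and follows essentially the same route as the paper: glue two copies of the Lemma~\ref{lemma53} reduction onto disjoint stationary blocks of coordinates, using the non-stationarity of $X\cap Y$ to keep the blocks from interfering (the paper supports $f$ on the symmetric difference of the two witnessing sets, while you trim each witnessing set by the other source set, which is the same idea and if anything makes the bookkeeping at the targets slightly cleaner).
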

\begin{proof} We only prove Clause~(1). The proof of Clause~(2) is very similar.

Let $X'\s X$ and $Y'\s Y$ be stationary subsets such that there are $\vec{\mathcal H}=\langle \mathcal H_\alpha\mid\alpha\in Y'\rangle$ and  $\vec{\mathcal G}=\langle \mathcal G_\alpha\mid\alpha\in X'\rangle$
witnessing that $X$ $\mathfrak f$-reflects to $Y$, and $Y$ $\mathfrak f$-reflects to $X$, respectively. Let us define $\vec{\mathcal F}=\langle \mathcal F_\alpha\mid\alpha\in X'\symdiff Y'\rangle$ via
$$\mathcal{F}_\alpha:=\begin{cases}
\mathcal H_\alpha,&\text{if }\alpha\in X';\\
\mathcal G_\alpha,&\text{if } \alpha\in Y'.
\end{cases}$$

For every $\alpha\in X'\symdiff Y'$,
define the equivalence relation $\sim_\alpha$ over $\kappa^\alpha$ and the codes $\myceil{[\eta]_{\sim_\alpha}}$ as in Lemma~\ref{lemma53}.
Next, define a map $f:\kappa^\kappa\rightarrow\kappa^\kappa$ by letting
for all $\eta\in\kappa^\kappa$ and $\alpha<\kappa$:
$$f(\eta)(\alpha):=\begin{cases}
\myceil{[\eta\restriction\alpha]_{\sim_\alpha}},&\text{if }\alpha\in X'\symdiff Y';\\
0,&\text{otherwise}.\end{cases}$$

At this point, the analysis is the same as in the proof of Lemma~\ref{lemma53}.
\end{proof}

\begin{cor}
Suppose that $X,Y$ are stationary subsets of $\kappa$, with $X\cap Y$ non-stationary.
If $X$ $\mathfrak{f}$-reflects to $Y$ and $Y$ $\mathfrak{f}$-reflects to $X$, then there is a function $f:\kappa^\kappa\rightarrow\kappa^\kappa$ different from the identity witnessing ${=_X}\reduO{=_X}$.
\end{cor}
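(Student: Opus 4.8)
The plan is to derive this corollary from Lemma~\ref{onefunction}(1) by observing that the map $f$ constructed there cannot be the identity. Recall that Lemma~\ref{onefunction}(1) produces, under the hypotheses that $X$ $\mathfrak f$-reflects to $Y$ and $Y$ $\mathfrak f$-reflects to $X$ (with $X\cap Y$ non-stationary), a single $1$-Lipschitz $f:\kappa^\kappa\to\kappa^\kappa$ witnessing both ${=_X}\reduO{=_Y}$ and ${=_Y}\reduO{=_X}$. Applying Monotonicity (Lemma~\ref{monotonicity}(1)) or simply feeding $X$ in for the target, one sees $X$ $\mathfrak f$-reflects to $X$, and more to the point the very same $f$ already witnesses ${=_X}\reduO{=_X}$: indeed $f$ is $1$-Lipschitz, and for $\eta,\xi\in\kappa^\kappa$ the proof of Lemma~\ref{lemma53} (which is the analysis invoked at the end of Lemma~\ref{onefunction}) shows that $\eta=_X\xi$ iff $f(\eta)=_X f(\xi)$, once one notes that the reduction argument there goes through $X'\symdiff Y'$ being a stationary subset of both the relevant index sets up to club. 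So the only thing left is to check that $f\neq\id$.

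First I would recall the explicit definition of $f$ from Lemma~\ref{onefunction}: for $\alpha\in X'\symdiff Y'$, $f(\eta)(\alpha)=\myceil{[\eta\restriction\alpha]_{\sim_\alpha}}$, where $\sim_\alpha$ is the equivalence relation on $\kappa^\alpha$ with $\eta\sim_\alpha\xi$ iff some $W\in\mathcal F_\alpha$ has $W\cap X\s\{\beta<\alpha\mid\eta(\beta)=\xi(\beta)\}$, and where $\myceil{\cdot}$ is an arbitrary but fixed injective coding of the $\sim_\alpha$-classes into $\kappa$; and $f(\eta)(\alpha)=0$ for $\alpha\notin X'\symdiff Y'$. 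The key point is that $X'\symdiff Y'$ is stationary (it contains the stationary set $X'$, since $X'\cap Y'\s X\cap Y$ is non-stationary), so $f(\eta)(\alpha)$ genuinely depends on the code of $[\eta\restriction\alpha]_{\sim_\alpha}$ for stationarily many $\alpha$. Now I would argue that there is $\eta\in\kappa^\kappa$ with $f(\eta)\neq\eta$: otherwise $\eta(\alpha)=\myceil{[\eta\restriction\alpha]_{\sim_\alpha}}$ for every $\eta\in\kappa^\kappa$ and every $\alpha\in X'\symdiff Y'$, which is absurd for cardinality or genericity reasons — for instance, fix $\alpha\in X'\symdiff Y'$, pick any $s\in\kappa^\alpha$, and choose $\eta\supseteq s$ with $\eta(\alpha)\neq\myceil{[s]_{\sim_\alpha}}$ (possible since $\kappa$ has more than one element), giving $f(\eta)(\alpha)\neq\eta(\alpha)$.

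The main (and essentially only) obstacle is bookkeeping: one must make sure the function handed back by Lemma~\ref{onefunction}(1) is literally the one whose values are the codes $\myceil{[\eta\restriction\alpha]_{\sim_\alpha}}$, so that the ``$f\neq\id$'' witness can be exhibited concretely; this is immediate from inspecting that proof. Alternatively — and this is the cleanest route if one wants to avoid unpacking $f$ at all — one can precompose or postcompose by a harmless nontrivial bijection: if $g$ is any of the maps witnessing ${=_X}\reduO{=_X}$ and $\sigma:\kappa\to\kappa$ is a fixed-point-free bijection, consider $\alpha\in X'\symdiff Y'$ and modify the coding $\myceil{\cdot}$ on the single class $[\mathbf 0\restriction\alpha]_{\sim_\alpha}$ so that $f(\mathbf 0)(\alpha)\neq 0$ where $\mathbf 0$ is the constant-$0$ function; this changes nothing about the reduction property (the codes remain injective on classes) while forcing $f\neq\id$. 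Either way the corollary follows with no further work.
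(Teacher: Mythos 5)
Your overall plan---derive the corollary from Lemma~\ref{onefunction}(1) and then check that the resulting map is not the identity---is the right one, but its central step fails: the function $f$ handed back by Lemma~\ref{onefunction}(1) does \emph{not} witness ${=_X}\reduO{=_X}$. That lemma produces a single $f$ satisfying $\eta=_X\xi\iff f(\eta)=_Yf(\xi)$ and $\eta=_Y\xi\iff f(\eta)=_Xf(\xi)$. The second equivalence says that $f(\eta)=_Xf(\xi)$ is equivalent to $\eta=_Y\xi$, not to $\eta=_X\xi$; so your assertion that ``$\eta=_X\xi$ iff $f(\eta)=_Xf(\xi)$'' would force the relations $=_X$ and $=_Y$ to coincide, which is exactly what the hypothesis that $X\cap Y$ is non-stationary rules out. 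Concretely, let $\eta:=\vec 0$ and let $\xi$ be the characteristic function of $Y\setminus X$ (a stationary set, since $Y$ is stationary and $X\cap Y$ is not). Then $\eta$ and $\xi$ agree on all of $X$, so $\eta=_X\xi$, while they differ on the stationary set $Y\setminus X$, so $\eta\neq_Y\xi$ and hence $f(\eta)\neq_Xf(\xi)$. The same source/target confusion infects your parenthetical appeal to Lemma~\ref{monotonicity}: monotonicity only permits shrinking the source and enlarging the target, so ``$Y$ $\mathfrak f$-reflects to $X$'' gives no information about ``$X$ $\mathfrak f$-reflects to $X$'', and the latter need not hold here.

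The paper's proof is the one-line repair: take the \emph{square} $f\circ f$. Since $f$ simultaneously reduces $=_X$ to $=_Y$ and $=_Y$ to $=_X$, the composition reduces $=_X$ to $=_X$, and it is $1$-Lipschitz as a composition of $1$-Lipschitz maps. Your second paragraph, verifying non-identity, is essentially sound and transfers to $f\circ f$ with no extra work: for each $\alpha$ the value $(f\circ f)(\eta)(\alpha)$ still depends only on $\eta\restriction\alpha$, so fixing any $\alpha\in X'\setminus Y'$ (a stationary, hence nonempty, set, as $X'\cap Y'\s X\cap Y$ is non-stationary) and any $s\in\kappa^\alpha$, one may choose $\eta\supseteq s$ with $\eta(\alpha)$ different from the value that $(f\circ f)(\eta)(\alpha)$ is thereby forced to take. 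Your alternative suggestion of re-coding a single $\sim_\alpha$-class would also work, but is unnecessary once one passes to the square.
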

\begin{proof} Take the square of a function witnessing Lemma~\ref{onefunction}(1).
\end{proof}

\begin{cor} Suppose $\textsf{V}=\textsf{L}$ and $\mu\in\reg(\kappa)$.
Then there is a function simultaneously witnessing ${=_{\kappa\cap \cof (\mu)}}\reduO{=_{\kappa\cap \cof (\nu)}}$ for all $\nu\in\reg(\kappa)$.
\end{cor}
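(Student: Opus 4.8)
The plan is to reduce the statement to the two-set symmetric situation handled by Lemma~\ref{onefunction}(1), by diagonalizing over the (at most $\kappa$-many) regular cardinals below $\kappa$. First I would recall the ambient hypothesis: $\kappa^{<\kappa}=\kappa$, so $|\reg(\kappa)|<\kappa$, and under $\textsf{V}=\textsf{L}$ we have $\gch$ together with $\square$-principles and strong diamonds at every regular cardinal. The key classical input is that in $\textsf{L}$, for any two regular $\mu,\nu<\kappa$, the set $\kappa\cap\cof(\mu)$ $\mathfrak f$-reflects to $\kappa\cap\cof(\nu)$; indeed $\textsf{V}=\textsf{L}$ yields the strong form of fake reflection advertised in the introduction (``a strong form of fake reflection is a consequence of ${\sf V=L}$''), which in particular gives $\mathfrak f$-reflection between any two such stationary sets, and these sets are pairwise disjoint when $\mu\neq\nu$. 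Fixing one distinguished regular $\mu$, I would therefore apply Lemma~\ref{onefunction}(1) to the pair $X:=\kappa\cap\cof(\mu)$ and $Y_\nu:=\kappa\cap\cof(\nu)$ for each $\nu\in\reg(\kappa)\setminus\{\mu\}$, obtaining a $1$-Lipschitz $f_\nu:\kappa^\kappa\to\kappa^\kappa$ witnessing ${=_X}\reduO{=_{Y_\nu}}$ (and simultaneously the reverse, though only the forward direction is needed here).

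The second step is to amalgamate the maps $\{f_\nu\mid\nu\in\reg(\kappa)\}$ into a single $1$-Lipschitz map. Since $|\reg(\kappa)|\le\kappa$ and $\kappa^{<\kappa}=\kappa$, I would fix a partition of $\kappa$ into $|\reg(\kappa)|$-many pairwise disjoint stationary (or merely unbounded, club-respecting) pieces — more cleanly, use a bijection $\kappa\leftrightarrow\kappa\times\reg(\kappa)$ to split each $\eta\in\kappa^\kappa$ into coordinates $\langle\eta^{(\nu)}\mid\nu\in\reg(\kappa)\rangle$ — exactly as in the proof of the ``$\kappa$ not a strong limit'' lemma above, where a bijection $\pi:\kappa^\kappa\leftrightarrow(2^\kappa)^\lambda$ is used to bundle $\lambda$-many reductions into one $1$-Lipschitz map. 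Setting $F(\eta)$ to be the element whose $\nu$-th coordinate is $f_\nu(\eta)$ under the analogous bijection, $F$ is $1$-Lipschitz because each $f_\nu$ is and because the bundling bijection is $1$-Lipschitz in both directions; and for each fixed $\nu$, projecting onto the $\nu$-th coordinate recovers $f_\nu$, so $F$ itself witnesses ${=_X}\reduO{=_{Y_\nu}}$ for that $\nu$. That delivers a single function simultaneously witnessing ${=_{\kappa\cap\cof(\mu)}}\reduO{=_{\kappa\cap\cof(\nu)}}$ for all $\nu\in\reg(\kappa)$.

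The main obstacle is the amalgamation bookkeeping: I must verify that projecting a $1$-Lipschitz map through the coding bijection preserves both the Lipschitz property and the reduction property on each coordinate, and that the coding bijection $\kappa\leftrightarrow\kappa\times\reg(\kappa)$ can be chosen so that $\Delta$ is respected — i.e.\ that the first $\alpha$ coordinates of $\eta$ determine, and are determined by, the first $\alpha$ coordinates of each $\eta^{(\nu)}$ on a club of $\alpha$'s, so that ``depends only on $\eta\restriction\alpha$'' transfers correctly. This is routine (it is the same maneuver already executed in the ``not a strong limit'' lemma, only with $\reg(\kappa)$ in place of $\lambda$), but it is where the care lies. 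A secondary point to state explicitly rather than grind through is the precise citation that $\textsf{V}=\textsf{L}$ implies $\kappa\cap\cof(\mu)$ $\mathfrak f$-reflects to $\kappa\cap\cof(\nu)$ for all regular $\mu,\nu<\kappa$; this is the strong-fake-reflection consequence of constructibility promised in the introduction and established in Section~\ref{Section3}, and I would simply invoke it together with the Corollary immediately preceding this one. Everything else is a direct appeal to Lemma~\ref{onefunction}(1).
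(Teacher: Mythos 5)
Your first step is exactly the paper's: Corollary~\ref{V=L-fref} gives that under $\textsf{V}=\textsf{L}$, $\kappa$ $\mathfrak f$-reflects (with $\diamondsuit$) to every stationary $S\s\kappa$, and monotonicity then yields that $\kappa\cap\cof(\mu)$ $\mathfrak f$-reflects to $\kappa\cap\cof(\nu)$ for every $\nu\in\reg(\kappa)$. The gap is in your amalgamation step. You propose to take the separate $1$-Lipschitz reductions $f_\nu$ and bundle them via a bijection $\kappa\leftrightarrow\kappa\times\reg(\kappa)$, "exactly as in" the not-a-strong-limit lemma. But that lemma's bijection $h:\kappa\leftrightarrow2^\lambda$ acts on the \emph{value} space --- $\eta_i(\alpha)=h(\eta(\alpha))(i)$ --- so the domain coordinate $\alpha$ is untouched and stationarity of the disagreement sets transfers verbatim. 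Your bijection re-indexes the \emph{domain}, whereas the target relations $=_{\kappa\cap\cof(\nu)}$ are tied to fixed stationary subsets of that domain: after re-indexing, the set $\{\beta\in\kappa\cap\cof(\nu)\mid F(\eta)(\beta)\neq F(\xi)(\beta)\}$ consists of codes $\langle\alpha,\nu'\rangle$ for assorted $\nu'$, so disagreements of $f_{\nu'}$ with $\nu'\neq\nu$ pollute $\kappa\cap\cof(\nu)$ and the forward direction of the reduction fails. Your "main obstacle" paragraph only addresses the Lipschitz property, not this. (A secondary gap: you build $f_\nu$ only for $\nu\neq\mu$ but then claim $F$ works for all $\nu\in\reg(\kappa)$; the case $\nu=\mu$ needs the single function $F$ to also reduce $=_X$ to $=_X$, which does not follow from the others.)

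The repair is to do no re-indexing at all, which is what the paper's appeal to Lemma~\ref{onefunction} amounts to. Each witnessing sequence lives on a stationary $S'_\nu\s\kappa\cap\cof(\nu)$, and these are pairwise disjoint for distinct $\nu$ (including $\nu=\mu$, since $\kappa\cap\cof(\mu)$ also $\mathfrak f$-reflects to itself). One therefore glues the filter sequences into a single $\vec{\mathcal F}=\langle\mathcal F_\alpha\mid\alpha\in\bigcup_\nu S'_\nu\rangle$ and runs the coding construction of Lemma~\ref{lemma53} once, obtaining one map $f$ with $f(\eta)(\alpha)=\myceil{[\eta\restriction\alpha]_{\sim_\alpha}}$ on $\bigcup_\nu S'_\nu$ and $0$ elsewhere. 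Since the disagreements of $f(\eta)$ and $f(\xi)$ inside $\kappa\cap\cof(\nu)$ are exactly those occurring on $S'_\nu$, this single $f$ witnesses ${=_{\kappa\cap\cof(\mu)}}\reduO{=_{\kappa\cap\cof(\nu)}}$ for every $\nu$ simultaneously. Equivalently: your $f_\nu$'s already vanish off $S'_\nu$, so the correct amalgamation is superposition over the disjoint targets, not a coding bijection.
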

\begin{proof}
By Lemma~\ref{onefunction} and Corollary~\ref{V=L-fref} below.
\end{proof}

\section{Strong and simultaneous forms of filter reflection}\label{Section3}

\begin{defn} For stationary sets $X,S\s\kappa$ and a cardinal $\theta\le\kappa$,
the principle $\mathfrak{f}$-$\refl(\theta,X,S)$ asserts
the existence of a sequence $\vec{\mathcal F}=\langle\mathcal F_\alpha\mid\alpha\in S'\rangle$
with $S'\s S$,
such that each $\mathcal F_\alpha$ is a filter over $\alpha$,
$\vec{\mathcal F}$ captures clubs, and,
for every sequence $\langle Y_i\mid i<\theta\rangle$ of stationary subsets of $X$,
the set $\{\alpha\in S'\mid \forall i<\max\{\theta,\alpha\}~(Y_i\cap\alpha\in\mathcal F_\alpha^+)\}$ is stationary.
\end{defn}
\begin{remark}
\begin{enumerate}
\item In the special case $\theta=1$, we shall omit $\theta$, writing $\mathfrak{f}$-$\refl(X,S)$.
Note that the latter coincides with the notion of ``$X$ $\mathfrak f$-reflects to $S$".
\item In the special case in which $\mathcal F_\alpha=\cub(\alpha)$ for all $\alpha\in S$,
we shall omit $\mathfrak f$, writing, e.g., ``$\refl(\theta,X,S)$" and
``$X$ reflects with $\diamondsuit$ to $S$".
\end{enumerate}
\end{remark}

Another standard notion of reflection is \emph{Friedman's problem}, $\fp(\kappa)$, asserting that
every stationary subset of $\kappa\cap\cof(\omega)$ contains a closed copy of $\omega_1$.

\begin{lemma} If $\fp(\kappa)$ holds, then
$\kappa\cap\cof(\omega)$ strongly $\mathfrak f$-reflects to $\kappa\cap\cof(\omega_1)$.
\end{lemma}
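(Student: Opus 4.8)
The plan is to check that the most economical conceivable witness already works: take $S':=\kappa\cap\cof(\omega_1)$ and, for each $\alpha\in S'$, let $\mathcal F_\alpha:=\cub(\alpha)$. Since $\cf(\alpha)=\omega_1$ for every $\alpha\in S'$, each $\cub(\alpha)$ is a proper filter over $\alpha$, so it remains only to verify that $\vec{\mathcal F}:=\langle\mathcal F_\alpha\mid\alpha\in S'\rangle$ captures clubs and that, for every stationary $Y\s\kappa\cap\cof(\omega)$, the set $T_Y:=\{\alpha\in S'\mid Y\cap\alpha\in\cub(\alpha)\}$ is stationary.

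The first point is routine: given a club $C\s\kappa$, for every $\alpha\in\acc(C)$ of uncountable cofinality the set $C\cap\alpha$ is a club in $\alpha$, hence lies in $\cub(\alpha)$; therefore $\{\alpha\in S'\mid C\cap\alpha\notin\cub(\alpha)\}\s S'\setminus\acc(C)$ is non-stationary. For the second point, fix a stationary $Y\s\kappa\cap\cof(\omega)$ together with an arbitrary club $C\s\kappa$, and aim to produce $\delta\in C\cap T_Y$. The device is to first pass to $Y':=Y\cap\acc(C)$, which is still a stationary subset of $\kappa\cap\cof(\omega)$, and then invoke $\fp(\kappa)$ to obtain $A\s Y'$ with $\otp(A)=\omega_1$ and $A$ closed in $\delta:=\sup(A)$. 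Now $\cf(\delta)=\omega_1$ (since $\otp(A)=\omega_1$), so $\delta\in S'$; the set $A$ is a club in $\delta$ contained in $Y$, so $Y\cap\delta\in\cub(\delta)$ and hence $\delta\in T_Y$; and since $A\s\acc(C)$ and $\acc(C)$ is closed, $\delta=\sup(A)\in\acc(C)\s C$. As $C$ was an arbitrary club, $T_Y$ is stationary, which completes the verification.

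I do not foresee a genuine obstacle here. The only thing to get right is the standard ``aiming'' step in the last paragraph — passing from $Y$ to $Y\cap\acc(C)$ before applying $\fp(\kappa)$ so that the supremum of the resulting copy of $\omega_1$ is forced into the prescribed club $C$ — together with the elementary observations that a closed copy of $\omega_1$ is precisely a club subset of its supremum and that such a supremum necessarily has cofinality $\omega_1$. In fact this argument yields slightly more, namely that $\kappa\cap\cof(\omega)$ reflects in $\kappa\cap\cof(\omega_1)$ in the classical sense, with the full club-filter sequence $\langle\cub(\alpha)\mid\alpha\in\kappa\cap\cof(\omega_1)\rangle$ as witness.
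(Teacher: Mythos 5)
Your proposal is correct and coincides with the paper's proof, which simply declares the witness $\mathcal F_\alpha:=\cub(\alpha)$ for $\alpha\in\kappa\cap\cof(\omega_1)$ and leaves the verification implicit. The details you supply — capturing clubs via $\acc(C)$, and the standard step of intersecting $Y$ with $\acc(C)$ before applying $\fp(\kappa)$ so that the supremum of the closed copy of $\omega_1$ lands in the prescribed club — are exactly the intended routine argument.
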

\begin{proof} For each $\alpha\in\kappa\cap\cof(\omega_1)$, let $\mathcal F_\alpha:=\cub(\alpha)$.
\end{proof}

The following forms a partial converse to the implication $(1)\implies(2)$ of Proposition~\ref{prop23}.

\begin{prop} Suppose $X$ strongly $\mathfrak f$-reflects to $S$.

If $\diamondsuit_X$ holds, then so does $\diamondsuit_S$.
\end{prop}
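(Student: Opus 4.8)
The plan is to begin with a witness $\vec{\mathcal F}=\langle\mathcal F_\alpha\mid\alpha\in S'\rangle$, $S'\s S$, to the fact that $X$ strongly $\vec{\mathcal F}$-reflects to $S'$, together with a $\diamondsuit_X$-sequence $\langle A_\beta\mid\beta\in X\rangle$, and to manufacture from these data a $\diamondsuit$-sequence indexed by a stationary subset of $S$. Since a $\diamondsuit$-sequence along a stationary subset of $S$ trivially upgrades to a $\diamondsuit_S$-sequence (pad it by $\emptyset$ outside that subset), it suffices to produce $\diamondsuit_{S'}$, and we are free to shrink $S'$ to a smaller stationary set along the way.

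The first step is a normalization of $\vec{\mathcal F}$. For each $\gamma<\kappa$ the set $\kappa\setminus\gamma$ is a club of $\kappa$, so, as $\vec{\mathcal F}$ captures clubs, we may fix a club $D_\gamma\s\kappa$ with $\alpha\setminus\gamma\in\mathcal F_\alpha$ for all $\alpha\in D_\gamma\cap S'$. Replacing $S'$ by its intersection with $\diagonal_{\gamma<\kappa}D_\gamma$ and with the club of limit ordinals below $\kappa$ --- which leaves $S'$ a stationary subset of $S$, and, as is readily checked, leaves $X$ strongly $\vec{\mathcal F}$-reflecting to the shrunk $S'$ --- we may assume that for every $\alpha\in S'$ and every $\gamma<\alpha$, the final segment $\alpha\setminus\gamma$ lies in $\mathcal F_\alpha$.

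Next, for $\alpha\in S'$, call a set $B\s\alpha$ \emph{$\alpha$-good} if $\{\beta\in X\cap\alpha\mid A_\beta=B\cap\beta\}\in\mathcal F_\alpha$. I would first argue that there is at most one $\alpha$-good set: were $B\neq B'$ both $\alpha$-good, set $\gamma:=\min(B\symdiff B')<\alpha$; then no $\beta\in(\gamma,\alpha)$ can satisfy both $A_\beta=B\cap\beta$ and $A_\beta=B'\cap\beta$, so the two members of $\mathcal F_\alpha$ witnessing goodness of $B$ and of $B'$ have intersection contained in $\gamma+1$, and intersecting this once more with $\alpha\setminus(\gamma+1)\in\mathcal F_\alpha$ would put $\emptyset$ into $\mathcal F_\alpha$ --- impossible. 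Accordingly, define $B_\alpha$ to be the unique $\alpha$-good set when one exists and $B_\alpha:=\emptyset$ otherwise.

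Finally I would verify that $\langle B_\alpha\mid\alpha\in S'\rangle$ is a $\diamondsuit_{S'}$-sequence. Given $B\s\kappa$, the set $Y_B:=\{\beta\in X\mid A_\beta=B\cap\beta\}$ is a stationary subset of $X$ by the choice of the $\diamondsuit_X$-sequence, so, as $X$ strongly $\vec{\mathcal F}$-reflects to $S'$, the set $E:=\{\alpha\in S'\mid Y_B\cap\alpha\in\mathcal F_\alpha\}$ is stationary. For any $\alpha\in E$ we have $Y_B\cap\alpha=\{\beta\in X\cap\alpha\mid A_\beta=(B\cap\alpha)\cap\beta\}\in\mathcal F_\alpha$, so $B\cap\alpha$ is $\alpha$-good, whence $B_\alpha=B\cap\alpha$ by uniqueness; thus $\{\alpha\in S'\mid B_\alpha=B\cap\alpha\}\supseteq E$ is stationary. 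The delicate point, which I expect to be the crux, is precisely the normalization step guaranteeing that each $\mathcal F_\alpha$ contains all final segments of $\alpha$: this is what makes the $\alpha$-good set unique and hence makes $B_\alpha$ a legitimate, $B$-independent guess. Everything else is bookkeeping.
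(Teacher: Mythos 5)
Your proof is correct and follows essentially the same route as the paper's: both use capturing of clubs applied to the tail clubs $\kappa\setminus\gamma$ to ensure that (on a stationary subset of $S'$) the filters contain no bounded sets, deduce from this that the set $B$ with $\{\beta\in X\cap\alpha\mid A_\beta=B\cap\beta\}\in\mathcal F_\alpha$ is unique, and then verify the guessing property directly from strong $\mathfrak f$-reflection. The only cosmetic difference is that you normalize via a diagonal intersection so that every tail of $\alpha$ lies in $\mathcal F_\alpha$, whereas the paper argues by contradiction (a Fodor-style argument) that the set of $\alpha$ whose filter contains a bounded set is non-stationary.
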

\begin{proof} Fix $\vec{\mathcal F}=\langle \mathcal F_\alpha\mid\alpha \in S'\rangle$ with $S'\subseteq S$
such that $X$ strongly $\vec{\mathcal F}$-reflects to $S'$.
\begin{claim}\label{claim341} The set $A:=\{ \alpha\in S'\mid \exists B\in\mathcal F_\alpha(\sup(B)<\alpha)\}$ is non-stationary.
\end{claim}
\begin{proof} Suppose not. Fix $\epsilon<\kappa$ for which
$A_\epsilon:=\{ \alpha\in S'\mid \exists B\in\mathcal F_\alpha(\sup(B)=\epsilon)\}$
is stationary. Now, consider the club $C:=\kappa\setminus(\epsilon+1)$.
As $\vec{\mathcal F}$ captures clubs and $A_\epsilon$ is stationary, there must exist $\alpha\in A_\epsilon$ such that $C\cap\alpha\in\mathcal F_\alpha$.
As $\alpha\in A_\epsilon$, let us pick $B\in\mathcal F_\alpha$ with $\sup(B)=\epsilon$.
Then $(C\cap\alpha)\cap B\in\mathcal F_\alpha$, contradicting the fact the former is empty.
\end{proof}

Suppose that $\diamondsuit_X$ holds.
Fix a $\diamondsuit_X$-sequence $\langle Z_\beta\mid \beta\in X\rangle$.
For every $\alpha\in S'\setminus A$, let $\mathcal Z_\alpha:=\{ Z\s\alpha\mid \{ \beta\in X\cap\alpha\mid Z\cap\beta=Z_\beta\}\in\mathcal F_\alpha\}$.
\begin{claim}$\mathcal Z_\alpha$ contains at most a single set.
\end{claim}
\begin{proof} Towards a contradiction, suppose that $Z\neq Z'$ are elements of $\mathcal Z_\alpha$.
Let $B:=\{ \beta\in X\cap\alpha\mid Z\cap\beta=Z_\beta\}$ and $B':=\{ \beta\in X\cap\alpha\mid Z'\cap\beta=Z_\beta\}$.
Fix $\zeta\in Z\symdiff Z'$. As $Z,Z'\in\mathcal Z_\alpha$, $B\cap B'\in\mathcal F_\alpha$.
As $\alpha\notin A$, we may find some $\beta\in B\cap B'$ above $\zeta$.
But then $Z\cap\beta=Z'\cap\beta$, contradicting the fact that $\zeta\in (Z\cap\beta)\symdiff(Z'\cap\beta)$.
\end{proof}

For each $\alpha\in S$, fix a subset $Z_\alpha\s \alpha$ such that,
if $\alpha\in S'\setminus A$, then $\mathcal Z_\alpha\s\{Z_\alpha\}$.
To see that $\langle Z_\alpha\mid\alpha\in S\rangle$ is a $\diamondsuit_{S}$-sequence,
let $Z$ be an arbitrary subset of $\kappa$.
Consider the stationary set $Y:=\{\beta\in X\mid Z\cap\beta=Z_\beta\}$.
Pick $\alpha\in S'\cap\acc^+(Y)\setminus A$ such that $Y\cap\alpha\in\mathcal F_\alpha$. Then $Z_\alpha=Z\cap\alpha$.
\end{proof}

The preceding provides a way of separating $\mathfrak f$-reflection from strong $\mathfrak f$-reflection.
Indeed, by the main result of \cite{MR1164732}, it is consistent that for some weakly compact cardinal $\kappa$, $\diamondsuit_\kappa$ holds, but $\diamondsuit_{\reg(\kappa)}$ fails. It follows that in any such model, $\kappa$ does not strongly $\mathfrak f$-reflects to $\reg(\kappa)$,
while $\refl(\kappa,\reg(\kappa))$  does hold (see Lemma~\ref{wc}(1) below).

A similar configuration can also be obtained at the level of accessible cardinals as small as $\aleph_2$.
In \cite[\S5]{jingzhang}, assuming the consistency of a weakly compact cardinal, Zhang constructed a model of $\gch$ in which $\refl(S^2_0,S^2_1)$ holds, but $\diamondsuit_{S^2_1}$ fails.
By \cite[Lemma~2.1]{MR485361}, $\gch$ implies that $\diamondsuit_{X}$ holds for any stationary $X\s S^2_0$. Thus:
\begin{cor} Assuming the consistency of a weakly compact cardinal, it is consistent that the two hold together:
\begin{itemize}
\item Every stationary subset of $S^2_0$ reflects in $S^2_1$;
\item There exists no stationary subset of $S^2_0$ that strongly $\mathfrak f$-reflects to $S^2_1$.\qed
\end{itemize}
\end{cor}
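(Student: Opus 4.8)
The plan is to assemble the corollary from three ingredients already laid out in the excerpt, plus two external results that the surrounding text explicitly cites. First, I would invoke Zhang's construction from \cite[\S5]{jingzhang}: starting from the consistency of a weakly compact cardinal, one obtains a model of $\gch$ in which $\refl(S^2_0,S^2_1)$ holds while $\diamondsuit_{S^2_1}$ fails. Working inside that model, the first bullet of the corollary is immediate, since $\refl(S^2_0,S^2_1)$ is by definition the assertion that every stationary subset of $S^2_0$ reflects in $S^2_1$.

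For the second bullet, I would argue by contradiction. Suppose $X\s S^2_0$ is stationary and $X$ strongly $\mathfrak f$-reflects to $S^2_1$; by Monotonicity (Lemma~\ref{monotonicity}(2)) we may as well assume $X=S^2_0$, so that $S^2_0$ strongly $\mathfrak f$-reflects to $S^2_1$. Now apply the Proposition immediately preceding the corollary: since $\diamondsuit_X$ holds for every stationary $X\s S^2_0$ — this is exactly what $\gch$ buys us via \cite[Lemma~2.1]{MR485361} — the Proposition yields $\diamondsuit_{S^2_1}$. This contradicts the defining feature of Zhang's model. Hence no stationary subset of $S^2_0$ strongly $\mathfrak f$-reflects to $S^2_1$, which is the second bullet.

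The two bullets together hold in a single model (Zhang's), so the displayed conjunction is consistent relative to a weakly compact cardinal, as claimed. In writing this up I would simply chain the references: $\gch$ $\Rightarrow$ $\diamondsuit$ on subsets of $S^2_0$ via \cite[Lemma~2.1]{MR485361}; strong $\mathfrak f$-reflection of such a subset $\Rightarrow$ $\diamondsuit_{S^2_1}$ via the preceding Proposition; and the failure of $\diamondsuit_{S^2_1}$ together with the success of $\refl(S^2_0,S^2_1)$ in Zhang's model closes the loop.

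The one point requiring a moment's care — and the only plausible obstacle — is the reduction "may assume $X=S^2_0$". Lemma~\ref{monotonicity}(2) states that strong $\mathfrak f$-reflection is upward monotone in the target and downward monotone in the source, i.e. if $X$ strongly $\mathfrak f$-reflects to $S$ and $Y\s X$, $S\s T$, then $Y$ strongly $\mathfrak f$-reflects to $T$. This goes the wrong way to enlarge $X$ to $S^2_0$. So instead I would \emph{not} enlarge: I would apply the preceding Proposition directly to the given stationary $X\s S^2_0$, using that $\diamondsuit_X$ holds (again by \cite[Lemma~2.1]{MR485361}, since $X\s S^2_0$ and $\gch$ holds) to conclude $\diamondsuit_{S^2_1}$, and derive the contradiction from there. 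With that small adjustment the argument is entirely routine.
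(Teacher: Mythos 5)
Your proposal is correct and matches the paper's (implicit) argument exactly: Zhang's model gives the first bullet and the failure of $\diamondsuit_{S^2_1}$, Gregory's lemma gives $\diamondsuit_X$ for any stationary $X\s S^2_0$ under \gch, and the preceding Proposition applied directly to the given $X$ (no enlargement needed, as you rightly note in your final paragraph) yields the contradiction with $\neg\diamondsuit_{S^2_1}$. Your self-correction about the direction of monotonicity is exactly the right fix.
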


The rest of this section is motivated by Theorem~\ref{lemma43}.

Evidently, if $X$ $\mathfrak{f}$-reflects with $\diamondsuit$ to $S$, then $\mathfrak{f}$-$\refl(X,S)$ and $\diamondsuit_S$ both hold.
The next lemma deals with the converse implication.

\begin{lemma}\label{addonecohen}
Let $X\s\kappa$ and $S\s\kappa\cap\cof({>}\omega)$.
For $\mathbb P:=\add(\kappa,1)$, the following are equivalent:
\begin{enumerate}
\item $V^{\mathbb P}\models\refl(X,S)$;
\item $V^{\mathbb P}\models X$ reflects with $\diamondsuit$ to $S$.
\end{enumerate}
\end{lemma}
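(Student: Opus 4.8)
The implication $(2)\implies(1)$ is trivial, so the content is in $(1)\implies(2)$. The plan is to work in $V$ and argue that if $\mathbb P=\add(\kappa,1)$ forces $\refl(X,S)$, then $\mathbb P$ also forces that $X$ reflects with $\diamondsuit$ to $S$. The key observation is that $\add(\kappa,1)$ is ${<}\kappa$-closed and of size $\kappa$ (using $\kappa^{<\kappa}=\kappa$), hence $\kappa^+$-cc, so it preserves cardinals and cofinalities, and every club of $\kappa$ in the extension contains a club from the ground model; in particular $S$ remains a subset of $\cof({>}\omega)$ and stationary reflection of $X$ in $S$ is meaningful in $V^{\mathbb P}$. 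Moreover, $\add(\kappa,1)$ is forcing-equivalent to a product $\add(\kappa,1)\times\add(\kappa,1)$, and, more to the point, after forcing with $\add(\kappa,1)$ the tail still adds a Cohen subset of $\kappa$; this self-absorption is what will let us extract a $\diamondsuit$-sequence.

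First I would recall the standard fact that $\add(\kappa,1)$ forces $\diamondsuit_\kappa$, and more generally, that for any stationary $T\s\kappa$ which remains stationary in the extension, $\add(\kappa,1)$ forces $\diamondsuit_T$ — the generic Cohen function, read off along any fixed partition of $\kappa$ into intervals or via a fixed bijection $\kappa\leftrightarrow{}^{<\kappa}2$, guesses every new subset of $\kappa$ on a stationary set, and a density/genericity argument localizes the guessing to $T$. Then, in $V^{\mathbb P}$, we are given that $\refl(X,S)$ holds: every stationary subset of $X$ reflects in $S$. We want to combine this with a $\diamondsuit_S$-sequence to witness ``$X$ reflects with $\diamondsuit$ to $S$'', i.e.\ produce $\langle Y_\alpha\mid\alpha\in S\rangle$ such that for every stationary $Y\s X$ the set $\{\alpha\in S\mid Y_\alpha=Y\cap\alpha\ \&\ Y\cap\alpha\text{ stationary in }\alpha\}$ is stationary. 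The idea is: fix a $\diamondsuit_S$-sequence $\langle Y_\alpha\mid\alpha\in S\rangle$ in $V^{\mathbb P}$; given a stationary $Y\s X$, the guessing set $G:=\{\alpha\in S\mid Y_\alpha=Y\cap\alpha\}$ is stationary, and also $\tr(Y)=\{\alpha\in S\mid Y\cap\alpha\text{ stationary in }\alpha\}$ contains a club relative to... — no, that is exactly what fails in general. This is the main obstacle: a $\diamondsuit_S$-sequence alone does not know that $Y$ reflects at the guessed points. Here is where $\refl(X,S)$ must be used, and used in a genuinely two-step way.

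The fix I would pursue: use the factorization $\mathbb P\cong\mathbb P_0\ast\mathbb P_1$ where each $\mathbb P_i$ is (a name for) $\add(\kappa,1)$, run $\refl(X,S)$ in the full extension $V^{\mathbb P}=V^{\mathbb P_0\ast\mathbb P_1}$, and simultaneously use the second coordinate to build the $\diamondsuit$-sequence so that its guessing is \emph{synchronized} with reflection. Concretely, in $V^{\mathbb P_0}$ let $\langle Y_\alpha\mid\alpha\in S\rangle$ be the canonical $\mathbb P_1$-name-guessing sequence; for a stationary $Y\s X$ in $V^{\mathbb P}$, by $\refl(X,S)$ (applied in $V^{\mathbb P}$) the set $\{\alpha\in S\mid Y\cap\alpha\text{ stationary}\}$ is stationary, and by genericity of $\mathbb P_1$ over $V^{\mathbb P_0}$ and a density argument, one shows that $\{\alpha\in S\mid Y_\alpha=Y\cap\alpha\ \&\ Y\cap\alpha\text{ stationary}\}$ is stationary: given a condition forcing a name $\dot Y$ to be a stationary subset of $X$ that reflects stationarily often, and given a club name $\dot C$, one finds $\alpha\in S$ in the intersection of $\dot C$ with the reflection set at which the $\mathbb P_1$-generic has written exactly $Y\cap\alpha$. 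The delicate point — and the step I expect to fight with — is showing this density argument can meet \emph{both} the club $\dot C$ and the reflection requirement at a single $\alpha$ of the right cofinality; this uses that $\add(\kappa,1)$ is ${<}\kappa$-closed (so conditions have length $<\kappa$ and there is room to diagonalize below any $\alpha\in\cof({>}\omega)$ along a cofinal sequence in $\alpha$ to force $Y\cap\alpha$ stationary in $\alpha$) together with the $\kappa^+$-cc / club-capturing of the ground model to reflect the reflection statement itself downward. Once this synchronized guessing claim is established, $\langle Y_\alpha\mid\alpha\in S\rangle$ witnesses ``$X$ reflects with $\diamondsuit$ to $S$'' in $V^{\mathbb P}$, completing $(1)\implies(2)$.
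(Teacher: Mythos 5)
Your plan is correct and is essentially the paper's proof: the paper likewise reads the guess at $\alpha$ off a disjoint block of the single Cohen generic (the interval $[\alpha,\alpha+\alpha)$, rather than an explicit second factor $\mathbb P_1$), and resolves your ``delicate point'' exactly as you anticipate --- by applying $\refl(X,S)$ in the extension to find stationarily many $\alpha\in S$ at which $g\restriction\alpha$ already decides $\dot Y\cap\alpha$ (and $\dot C\cap\alpha$) and decides the former to be a stationary subset of $\alpha$, then writing that set on the block above $\alpha$. The one imprecision is that stationarity of $\dot Y\cap\alpha$ in $\alpha$ is not something you ``diagonalize to force''; it comes for free from the reflection point in $V[G]$ together with absoluteness of stationarity below $\kappa$ under ${<}\kappa$-closed forcing, since $g\restriction\alpha$ decides $\dot Y\cap\alpha$ to be a fixed ground-model set.
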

\begin{proof} It is clear from the definition that $(2)\implies(1)$.

$(1)\implies(2)$: 
For all $\alpha<\kappa$, we define a $\mathbb P$-name for a subset of $\alpha$, as follows: 
$$\dot D_\alpha:=\{ (\check\beta,p)\mid \beta<\alpha,~ p\in{2^{<\kappa}}\setminus{2^{\le\alpha+\beta}},~p(\alpha+\beta)=1\}.$$
Now, suppose that $\dot Y$ is a $\mathbb P$-name, $p\in\mathbb P$ and $p$ forces that $\dot Y$ is a stationary subset of $X$;
we shall find an extension $r$ of $p$ and an ordinal $\alpha\in S$ such that $r$ forces that $\dot D_\alpha$ is a stationary subset of $\alpha$ which is also an initial segment of $\dot Y$.

\begin{claim} There are a condition $q$ extending $p$ and an ordinal $\alpha\in S$ such that:
\begin{itemize}
\item $\dom(q)=\alpha$;
\item $q$ decides $\dot Y$ up to $\alpha$, and decides it to be a stationary subset of $\alpha$.
\end{itemize}
\end{claim}
\begin{proof} Let $G$ be a $\mathbb P$-generic over $V$, with $p\in G$.
Work in $V[G]$. Let $g:=\bigcup G$. As $\mathbb P$ does not add bounded subsets of $\kappa$,
we may define a function $f:\kappa\rightarrow\kappa$ such that, for all $\epsilon<\kappa$, $g\restriction(f(\epsilon))$ decides $\dot Y$ up to $\epsilon$.
Consider the club $C:=\{\alpha<\kappa\mid f[\alpha]\s\alpha\}$, and note that, for all $\alpha\in C$,
$g\restriction\alpha$ decides $\dot Y$ up to $\alpha$.
As $V^{\mathbb P}\models\refl(X,S)$,
$p\in G$, and $p$ forces that $\dot Y$ is a stationary subset of $X$,
$R:=\{\alpha\in C\cap S\mid (\dot Y_G)\cap\alpha\text{ is stationary in }\alpha\}$ is stationary in $V[G]$.
Fix $\alpha\in R$ with $\alpha>\dom(p)$. Then $\alpha$ and $q:=g\restriction\alpha$ are as sought.
\end{proof}

Let $\alpha$ and $q$ be given by the claim. Fix a stationary $d\s\alpha$ such that $q$ forces that $\dot Y$ up to $\alpha$ is equal to $\check d$.
Define a function $r:\alpha+\alpha\rightarrow2$ by letting, for all $\epsilon<\alpha+\alpha$,
$$r(\epsilon):=\begin{cases}
q(\epsilon),&\text{if }\epsilon<\alpha;\\
1,&\text{if }\epsilon=\alpha+\beta\ \&\ \beta\in d;\\
0,&\text{otherwise}.
\end{cases}$$

Then $r$ extends $p$ and forces that $\dot D_\alpha$ is a stationary subset of $\alpha$ which is also an initial segment of $\dot Y$.
\end{proof}

The following is well-known, the second item is pointed out in \cite{magidor1982reflecting}.
As we will need the proof later on (in proving Theorem~\ref{dl-to-fref}), we do include it.
\begin{lemma}\label{wc} Suppose that $\kappa$ is weakly compact. Then:
\begin{enumerate}
\item $\refl(\kappa,\kappa,\reg(\kappa))$ holds;
\item For every $\lambda\in\reg(\kappa)$, in the forcing extension by $\Col(\lambda,{<}\kappa)$,
$\kappa=\lambda^+$ and
$\refl(\kappa,\kappa\cap\cof({<}\lambda),\kappa)$ holds.
\end{enumerate}
\end{lemma}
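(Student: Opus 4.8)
For Clause (1), the plan is to invoke the standard $\Pi^1_1$-indescribability characterization of weak compactness. Given a stationary $S \subseteq \kappa$ and a club $C \subseteq \kappa$, I would fix a witnessing structure for ``$S$ is stationary and $C$ is club'' by coding $S$, $C$, and the club filter as second-order objects over $(H(\kappa), \in)$. Since $\kappa$ is inaccessible, $H(\kappa) = V_\kappa$, and the statement ``$S$ is stationary and $C$ is a club'' is $\Pi^1_1$ over $V_\kappa$. By $\Pi^1_1$-indescribability, there is a stationary (indeed, a $\Pi^1_1$-indescribable, hence stationary) set of $\alpha < \kappa$ such that $(V_\alpha, \in, S \cap \alpha, C \cap \alpha) $ satisfies the same statement; for such $\alpha$ one gets $\alpha$ inaccessible (so $\alpha \in \reg(\kappa)$), $C \cap \alpha$ club in $\alpha$, and $S \cap \alpha$ stationary in $\alpha$. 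Running this for an arbitrary club $C$ shows that $\{\alpha \in \reg(\kappa) \mid S \cap \alpha \text{ is stationary in } \alpha\}$ meets every club, which is exactly $\refl(\kappa, \kappa, \reg(\kappa))$.

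For Clause (2), fix $\lambda \in \reg(\kappa)$ and force with $\mathbb{Q} := \Col(\lambda, {<}\kappa)$. That $\kappa = \lambda^+$ in the extension and that $\mathbb{Q}$ is $\lambda$-closed and $\kappa$-cc is classical. The heart of the matter is to show reflection survives. Let $\dot{S}$ be a $\mathbb{Q}$-name forced (by some $p$) to be a stationary subset of $\kappa \cap \cof({<}\lambda)$, and let $\dot{C}$ be a name for a club; I must find (below some extension of $p$) an $\alpha \in \kappa$ at which $\dot{S}$ reflects and $\dot{C}$ passes through. Here I would use the $\kappa$-cc to capture a club $E \subseteq \kappa$ of $\alpha$ such that $\mathbb{Q}_{<\alpha} := \Col(\lambda, {<}\alpha)$ is a complete suborder with $\mathbb{Q} \cong \mathbb{Q}_{<\alpha} \times \mathbb{Q}_{[\alpha, \kappa)}$, and such that $\dot{S} \cap \alpha$, $\dot C \cap \alpha$ are (forced to be) $\mathbb{Q}_{<\alpha}$-names; this uses a Löwenheim–Skolem / reflection argument on names together with the product structure of the Lévy collapse. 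Then I would apply weak compactness of $\kappa$ in $V$ — via $\Pi^1_1$-indescribability applied to a structure coding $p$, $\dot S$, $\dot C$, $\mathbb Q$, and the relevant elementarity — to find $\alpha \in E$ of inaccessible type such that, below the corresponding condition, $\mathbb{Q}_{<\alpha}$ forces ``$\dot S \cap \alpha$ is stationary in $\alpha$'' while $\mathbb{Q}_{[\alpha,\kappa)}$ is sufficiently closed (it is $\lambda$-closed, and as $\alpha$ becomes $\lambda^+$ in $V[\mathbb Q_{<\alpha}]$, the tail poset adds no bounded subsets of $\alpha$, hence preserves stationarity of $\dot S \cap \alpha$ in $\alpha$). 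Finally, $\cf^{V[\mathbb Q]}(\alpha) = \lambda$ since $\mathbb{Q}_{<\alpha}$ collapses everything below $\alpha$ down to $\lambda$, so $\alpha \in \kappa \cap \cof(\lambda) \subseteq \kappa$; and $\dot C$ passes through $\alpha$ because $\alpha \in E$. This yields $\refl(\kappa, \kappa \cap \cof({<}\lambda), \kappa)$ in $V[\mathbb Q]$.

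The main obstacle I expect is the bookkeeping in Clause (2): simultaneously arranging (a) the factorization $\mathbb{Q} \cong \mathbb{Q}_{<\alpha} \times \mathbb{Q}_{[\alpha,\kappa)}$ with $\dot S \cap \alpha$, $\dot C \cap \alpha$ living on the lower factor, (b) the closure of the upper factor being enough to preserve stationarity of a subset of $\alpha$ (which needs $\alpha$ to have become a successor cardinal, equivalently of cofinality $\lambda$, in the intermediate model), and (c) an application of weak compactness that actually reflects the \emph{forcing} statement ``$\Vdash_{\mathbb Q_{<\alpha}} \dot S \cap \alpha$ is stationary'' rather than a statement about the ground model. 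The standard move is to run the indescribability argument inside a transitive model $(V_\kappa, \in, \mathbb Q, p, \dot S, \dot C)$ and observe that these forcing assertions are absolute between $V_\kappa$ and $V$ because $\mathbb Q$ and its initial segments sit inside $V_\kappa$ by inaccessibility; I would cite the relevant folklore (e.g., the treatment in \cite{magidor1982reflecting}) for the precise verification and include only the salient points. The preservation-of-stationarity step in Clause (2) is the one piece I will state carefully, as it is exactly where the cofinality ${<}\lambda$ hypothesis on the reflecting set is used, and it is what will be reused in the proof of Theorem~\ref{dl-to-fref}.
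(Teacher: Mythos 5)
Your overall strategy --- $\Pi^1_1$-indescribability over $V_\kappa$ for Clause~(1), and for Clause~(2) the factorization $\Col(\lambda,{<}\kappa)\cong\mathbb Q_{<\alpha}\times\mathbb Q_{[\alpha,\kappa)}$ combined with an indescribability argument applied to the forcing relation and stationarity preservation by the ${<}\lambda$-closed tail --- is the same as the paper's. But there are two genuine gaps. First, in both clauses you reflect only a \emph{single} stationary set, whereas $\refl(\kappa,X,S)$ is a simultaneous reflection principle: given a $\kappa$-sequence $\langle Y_\iota\mid\iota<\kappa\rangle$ of stationary subsets of $X$, one must find stationarily many $\alpha\in S$ such that $Y_\iota\cap\alpha$ is stationary for \emph{all} $\iota<\alpha$. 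Your conclusion that reflecting one set ``is exactly $\refl(\kappa,\kappa,\reg(\kappa))$'' misreads the definition. The repair is routine --- code the whole sequence as a single predicate $A=\{(\iota,\beta)\mid \beta\in Y_\iota\}\s\kappa\times\kappa$ and reflect the $\Pi^1_1$ statement ``every club meets every fiber of $A$,'' which is exactly what the paper does --- but as written your argument only yields $\refl(1,\cdot,\cdot)$.

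Second, in Clause~(2) the reason you give for the preservation of stationarity of $\dot S\cap\alpha$ is false: $\mathbb Q_{[\alpha,\kappa)}=\Col(\lambda,[\alpha,\kappa))$ certainly \emph{does} add bounded subsets of $\alpha$, since it adds new subsets of $\lambda$ (e.g.\ surjections from $\lambda$ onto ordinals in $[\alpha,\kappa)$) and $\lambda<\alpha$. What is actually needed is Shelah's theorem that, for regular $\lambda$, every stationary subset of $\lambda^+\cap\cof({<}\lambda)$ is preserved by ${<}\lambda$-closed forcing; this nontrivial result (cited in the paper from \cite{Sh:108} and \cite{Sh:351}) is precisely where the hypothesis that the reflecting sets concentrate on $\cof({<}\lambda)$ is used, and it cannot be replaced by a ``no new bounded sets'' argument. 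You correctly identify this as the delicate step, but the only justification you offer for it does not work. A smaller point: arranging that $\dot S\cap\alpha$ is literally a $\mathbb Q_{<\alpha}$-name is not automatic, since maximal antichains of $\Col(\lambda,{<}\kappa)$ deciding membership of ordinals below $\alpha$ need not lie in $\mathbb Q_{<\alpha}$; the paper sidesteps this by reflecting a density statement about the forcing relation itself and then invoking only the $\alpha$-cc of $\mathbb Q_{<\alpha}$ to handle clubs of the intermediate extension.
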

\begin{proof} (1)
We shall consider a few first-order sentences in the language with unary predicate symbols $\mathbb O$ and $\mathbb D$, and
binary predicate symbols $\epsilon, \mathbb A$ and $\mathbb F$.
Specifically, let $\varphi_0$ and $\varphi_1$ denote first-order sentences such that:
\begin{itemize}
\item $\langle V_\alpha,{\in},O,F\rangle\models\varphi_0$ iff $F$ is a function from some $\beta\in O$ to $O$;
\item $\langle V_\alpha,{\in},O,F\rangle\models\varphi_1$ iff $F$ is a function with $\im(F)\s\gamma$ for some $\gamma\in O$.
\end{itemize}
Evidently, $\Phi:=\forall F(\varphi_0\rightarrow\varphi_1)$ is a $\Pi^1_1$-sentence such that $\langle V_\alpha,{\in},{\ord}\cap\alpha\rangle\models\phi$
iff $\alpha$ is a regular cardinal.

Also, let $\psi$ be a first-order sentence such that, for every $\alpha\in\cof({>}\omega)$, $\langle V_\alpha,{\in},{\ord}\cap\alpha,D\rangle\models\psi$ iff $D$ is a club in $\alpha$.
Then, let $\Psi$ be the following $\Pi^{1}_{1}$-sentence:
$$\forall D\forall \iota((\psi(D)\wedge\mathbb O(\iota))\rightarrow \exists \beta( D(\beta)\wedge  (\mathbb A(\iota,\beta)) )).$$

Now, to verify $\refl(\kappa,\kappa,\reg(\kappa))$,
fix an arbitrary sequence  $\langle Y_\iota \mid \iota < \kappa \rangle $ of stationary subsets of $\kappa$,
and we shall find an $\alpha\in\reg(\kappa)$ such that $Y_\iota\cap\alpha$ is stationary for all $\iota<\alpha$.

Set $A:=\{(\iota,\beta) \mid \iota <\kappa\ \&\ \beta \in Y_\iota \}$.
A moment reflection makes it clear that $$\langle V_\kappa,{\in},{\ord}\cap\kappa,A\rangle\models\Phi\wedge\Psi.$$
As weak compactness is equivalent to $\Pi_{1}^{1}$-indescribability (cf.~\cite[Theorem~6.4]{Kana}),
there exists an uncountable $\alpha<\kappa$ such that:
$$ \langle V_{\alpha}, \in,{\ord}\cap\alpha,A \cap (\alpha \times \alpha) \rangle \models \Phi\wedge\Psi.$$
Clearly, $\alpha$ is as sought.

(2) Let $\lambda\in\reg(\kappa)$.
As in \cite[Claim 2.11.1]{AHKM},
we work with a partial order $\mathbb P$ which is isomorphic to $\Col(\lambda, {<}\kappa)$,
but, in addition,  $\mathbb P\s V_\kappa$. Namely, $\mathbb P=(P,\le)$, where
$$ P:=\{ r\restriction(\sup(\supp(r))+1)  \mid r \in \Col(\lambda, {<}\kappa) \},$$
and $q\le p$ iff $q\supseteq p$.
Note that, for every $\alpha\in\reg(\kappa)$ above $\lambda$,
$\mathbb P$ is also isomorphic to $\mathbb P_\alpha\times\mathbb P^{\ge\alpha}$, where
\begin{itemize}
\item $\mathbb{P}_\alpha:=(P_\alpha,\le)$,  with
$P_\alpha:=\{ r\restriction(\sup(\supp(r))+1)  \mid r \in \Col(\lambda, {<}\alpha)\}$, and
\item $\mathbb{P}^{\ge\alpha}:=(P^{\ge\alpha},\le)$,  with
$P^{\ge\alpha}:=\{ r\restriction(\sup(\supp(r))+1)  \mid r \in \Col(\lambda, [\alpha,\kappa)) \}$.
\end{itemize}

Now, suppose $p_0\in P$ and $\dot{f}$ is a $\mathbb P$-name such that $p_0$ forces
$\dot{f}$ is a function with domain $\kappa$, and, for each $\iota<\kappa$, $\dot f(\iota)$ is a stationary subset of $\kappa\cap\cof({<}\lambda)$.

Define a set $H$ to consist of all quadruples $(\iota,\beta,p,q)$
such that:
\begin{itemize}
\item $\iota<\kappa$,
\item $\beta\in\kappa\cap\cof({<}\lambda)$,
\item $p,q\in V_\kappa$;
\item if $p\in P$ and $p\le p_0$, then $q\in P$, $q\le p$ and  $q\forces_{\mathbb P}\check\beta\in\dot f(\check\iota)$.
\end{itemize}
Let $\op(\dot x,\dot y)$ denote the canonical name for the ordered pair whose left element is $\dot x$ and right element is $\dot y$,
and set $\dot A:=\{(\op(\check\iota,\check\beta),q)\mid \exists p\le p_0~(\alpha,\beta,p,q)\in H\}$,
so that  the interpretation of $\dot A$ plays the role of the set $A$ from the proof of the previous clause. Note that $p_0\forces \dot{A}=\dot{f}$.

Let $\psi$ be a first-order sentence as in the previous clause.
Then, let $\Psi$ be the following $\Pi^{1}_{1}$-sentence:
$$\forall p\forall D\forall\iota((\psi(D)\wedge\mathbb O(\iota))\rightarrow \exists q\exists \beta( D(\beta)\wedge  (\mathbb H(\iota,\beta,p,q))).$$

Recalling that for any condition $p\le p_0$, any club $D$ in $\kappa$, and any $\iota<\kappa$, there is a condition $q\le p$ deciding the existence of an ordinal in $\dot f(\iota)$ and in $D$, we have:
$$\langle V_\kappa,{\in},{\ord}\cap\kappa,H\rangle\models \Psi.$$

Since $\kappa$ is $\Pi^{1}_{1}$-indescribable, we may fix a strongly inaccessible cardinal $\alpha<\kappa$ such that
$$(V_{\alpha},\in, {\ord}\cap\alpha, H \cap V_{\alpha}) \models \Psi.$$

As $\mathbb P_\alpha$ has the $\alpha$-cc, every club in $\alpha$ in $V^{\mathbb P_\alpha}$ covers a club in $\alpha$ from $V$. Therefore:
$$p_{0} \forces_{\mathbb P_\alpha} \langle \dot{f}(\iota)\cap \alpha \mid \iota < \alpha \rangle  \text{ is an } \alpha\text{-sequence of stationary subsets of } \alpha\cap\cof({<}\lambda).$$

By \cite[Theorem~20]{Sh:108} and \cite[Lemma~4.4(1)]{Sh:351} (see also \cite[\S2]{Todd}), for $\lambda$ regular, every stationary subset of $\lambda^+\cap\cof({<}\lambda)$ is preserved by a ${<}\lambda$-closed notion of forcing.
In $V^{\mathbb P_\alpha}$, $\alpha=\lambda^+$ for the regular cardinal $\lambda$,
and $\mathbb P^{\ge\alpha}$ is ${<}\lambda$-closed. Therefore:
\[p_{0} \forces_{\mathbb P_\alpha\times\mathbb P^{\ge\alpha}} \langle \dot{f}(\iota)\cap \alpha \mid \iota < \alpha \rangle  \text{ is an } \alpha\text{-sequence of stationary subsets of } \alpha .\qedhere\]
\end{proof}

The upcoming corollary was announced first by Shelah and V\"a\"an\"anen in \cite{SV05} without a proof. It amounts to the special case of ``$\kappa$ reflects with $\diamondsuit$ to $\reg(\kappa)$'', when $\kappa$ is weakly compact (see \cite[Definition~9]{SV05}).

\begin{cor}[{Hellsten, \cite[Lemma~5.2.3]{hellsten2003diamonds}}]\label{hellsten} If $\kappa$ is weakly compact, then, in some cofinality-preserving forcing extension,
$\kappa$ reflects with $\diamondsuit$ to $\reg(\kappa)$.
\end{cor}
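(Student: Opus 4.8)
The plan is to apply Lemma~\ref{addonecohen} together with Lemma~\ref{wc}(1). Since $\kappa$ is weakly compact, in particular $\kappa$ is strongly inaccessible, so $\kappa^{<\kappa}=\kappa$ and $\add(\kappa,1)$ is a ${<}\kappa$-closed, $\kappa^+$-cc notion of forcing; hence it preserves all cofinalities and cardinals. By Lemma~\ref{wc}(1), $\refl(\kappa,\kappa,\reg(\kappa))$ holds in $V$, and in particular $\refl(\kappa,\reg(\kappa))$ (the instance with a single stationary set). The first thing I would check is that this single-set reflection is preserved by $\add(\kappa,1)$: since $\mathbb P:=\add(\kappa,1)$ adds no bounded subsets of $\kappa$ and is $\kappa^+$-cc, every stationary subset of $\kappa$ in $V^{\mathbb P}$, restricted along a club, agrees with a stationary set coded by a $\mathbb P$-name below some condition, and the standard argument (as used inside the claim in the proof of Lemma~\ref{addonecohen}) shows $V^{\mathbb P}\models\refl(\kappa,\reg(\kappa))$. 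Note $\reg(\kappa)\subseteq\kappa\cap\cof({>}\omega)$, so the hypothesis of Lemma~\ref{addonecohen} is met with $X:=\kappa$ and $S:=\reg(\kappa)$.

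Next I would invoke Lemma~\ref{addonecohen} with these $X$ and $S$: the equivalence $(1)\Leftrightarrow(2)$ gives that $V^{\mathbb P}\models\kappa$ reflects with $\diamondsuit$ to $\reg(\kappa)$. Finally, since $\mathbb P$ preserves cofinalities, $V^{\mathbb P}$ is the desired cofinality-preserving forcing extension, completing the proof.

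The main obstacle is the preservation step: one must be careful that ``$\refl(\kappa,\reg(\kappa))$'' (equivalently, every stationary subset of $\kappa$ reflects at a regular cardinal) survives the Cohen forcing. This is not entirely automatic from weak compactness being indestructible under ${<}\kappa$-directed-closed forcing, but it follows directly from the argument embedded in the proof of Lemma~\ref{addonecohen} (the Claim there), which only uses that $\mathbb P$ adds no bounded subsets of $\kappa$ and that the ground model satisfies the relevant reflection principle; one applies it verbatim. Everything else is bookkeeping: noting $\kappa$ inaccessible $\Rightarrow$ $\add(\kappa,1)$ is ${<}\kappa$-closed and $\kappa^+$-cc $\Rightarrow$ cofinalities preserved, and that $\reg(\kappa)\subseteq\cof({>}\omega)$.
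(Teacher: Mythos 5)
Your overall strategy (combine Lemma~\ref{wc}(1) with Lemma~\ref{addonecohen} for $X:=\kappa$, $S:=\reg(\kappa)$) is the right one, but there is a genuine gap at the preservation step, and it is exactly the step the paper's proof is designed to avoid. Lemma~\ref{addonecohen} requires as its hypothesis~(1) that $\refl(X,S)$ holds \emph{in} $V^{\add(\kappa,1)}$; it does not convert ground-model reflection into reflection in the extension. Your justification for why $\refl(\kappa,\reg(\kappa))$ survives the Cohen forcing is circular: the Claim inside the proof of Lemma~\ref{addonecohen} that you cite begins with ``As $V^{\mathbb P}\models\refl(X,S)$\dots'', i.e.\ it \emph{uses} reflection in the extension as an assumption rather than deriving it. The real difficulty is that $\add(\kappa,1)$ adds new stationary subsets of $\kappa$, and the ground-model principle says nothing about those; a name $\dot Y$ for a stationary set does not ``agree along a club'' with any ground-model stationary set, and reflecting the outer hull $\{\alpha\mid\exists q\le p\ (q\forces\check\alpha\in\dot Y)\}$ does not give a point where $\dot Y_G$ itself is stationary. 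Indeed, Section~\ref{sectionFFR} of the paper is devoted to showing that a single Cohen subset of $\kappa$ \emph{can} destroy reflection-type properties (Theorem~\ref{CohenEffect2}), so preservation here cannot be taken for granted.

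The paper closes this gap by first invoking Silver's preparation: there is a (cofinality-preserving) extension $V[G]$ such that $\kappa$ remains weakly compact in $V[G][H]$ for any $\add(\kappa,1)$-generic $H$ over $V[G]$. One then applies Lemma~\ref{wc}(1) \emph{inside} $V[G][H]$ to get $\refl(\kappa,\reg(\kappa))$ there, which is precisely hypothesis~(1) of Lemma~\ref{addonecohen} over the ground model $V[G]$; the lemma then yields reflection with $\diamondsuit$ in $V[G][H]$. To repair your proof you would need either to insert this indestructibility preparation or to supply an actual argument that $\add(\kappa,1)$ preserves $\refl(\kappa,\reg(\kappa))$ over your particular ground model; the latter is not available in general.
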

\begin{proof}
Recall that Silver proved that for every weakly compact cardinal $\kappa$,
there is a forcing extension $V[G]$ such that $V[G][H]\models\kappa\text{ is weakly compact}$,
whenever $H$ is an $\add(\kappa,1)$-generic over $V[G]$.
(cf.~\cite[Example~16.2]{MR2768691}).
Now, appeal to Lemma~\ref{wc}(1) and Lemma~\ref{addonecohen}.
\end{proof}

The preceding took care of weakly compact cardinals.
Building on the work of Hayut and Lambie-Hanson from \cite{MR3730566},
it is also possible to obtain reflection at the level of successor of singulars and small inaccessibles.
It is of course also possible to obtain reflection with diamond at the level of successors of regulars,
but this is easier, and will be done in Corollary~\ref{succregulars} below.

\begin{cor}
\begin{enumerate}
\item  If the existence of infinitely many supercompact cardinals is consistent, then it is consistent that $\aleph_{\omega+1}$ reflects with $\diamondsuit$ to $\aleph_{\omega+1}$;
\item  If the existence of an inaccessible limit of supercompact cardinals is consistent, then it is consistent that, letting $\kappa$ be the least inaccessible cardinal, $\kappa$ reflects with $\diamondsuit$ to  $\kappa$.
\end{enumerate}
\end{cor}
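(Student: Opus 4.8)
The plan is to reduce the statement, via Lemma~\ref{addonecohen}, to a pure stationary-reflection statement in a Cohen extension, and then to invoke the work of Hayut and Lambie-Hanson \cite{MR3730566}. Fix $\kappa$ as in the relevant clause ($\aleph_{\omega+1}$ for Clause~(1), the least inaccessible cardinal for Clause~(2)), and set $\mathbb P:=\add(\kappa,1)$. Since a stationary subset of $\kappa$ reflects only at ordinals of uncountable cofinality, and since $\langle\cub(\alpha)\mid\alpha\in\kappa\cap\cof({>}\omega)\rangle$ captures clubs (as observed in the introduction), the assertion ``every stationary subset of $\kappa$ reflects'' amounts to $\refl(\kappa,\kappa\cap\cof({>}\omega))$. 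Hence, by Lemma~\ref{addonecohen} applied with $X:=\kappa$ and $S:=\kappa\cap\cof({>}\omega)$, it suffices to produce a model $V_0$ in whose extension $V_0^{\mathbb P}$ every stationary subset of $\kappa$ reflects: such a $V_0^{\mathbb P}$ then models ``$\kappa$ reflects with $\diamondsuit$ to $\kappa\cap\cof({>}\omega)$'', and therefore ``$\kappa$ reflects with $\diamondsuit$ to $\kappa$'' by Lemma~\ref{monotonicity}(3).

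For Clause~(1), I would let $V_0$ be the model obtained from $\omega$ supercompact cardinals by the construction of \cite{MR3730566}, arranged so that its full stationary reflection at $\aleph_{\omega+1}$ is indestructible under $\mathbb P=\add(\aleph_{\omega+1},1)$; the expected route is to preface (or absorb into) their iteration a Silver/Laver-style preparation of the supercompacts used, in direct analogy with the role of Silver's preparation in the proof of Corollary~\ref{hellsten}. Granting this, one checks that $\mathbb P$ is ${<}\aleph_{\omega+1}$-closed and, since $\aleph_{\omega+1}^{<\aleph_{\omega+1}}=\aleph_{\omega+1}$ in $V_0$, $\aleph_{\omega+2}$-cc; hence $\mathbb P$ preserves all cardinals and cofinalities, adds no new bounded subsets of $\aleph_{\omega+1}$ (so for any, possibly new, stationary $S\s\aleph_{\omega+1}$, each initial segment $S\cap\alpha$ already lies in $V_0$ and ``$S\cap\alpha$ is stationary in $\alpha$'' is absolute), and leaves $H_{\aleph_{\omega+1}}$ untouched. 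This is exactly the configuration in which the full reflection of $V_0$ is expected to lift to $V_0^{\mathbb P}$, so that in $V_0^{\mathbb P}$ every stationary subset of $\aleph_{\omega+1}$ reflects, and the first paragraph then gives Clause~(1). Clause~(2) is handled identically, starting instead from the \cite{MR3730566} model produced from an inaccessible limit of supercompacts in which the least inaccessible cardinal $\kappa$ carries full reflection; here one observes in addition that $\add(\kappa,1)$, being ${<}\kappa$-closed of size $2^{<\kappa}=\kappa$, preserves the inaccessibility of $\kappa$ and adds no new inaccessible cardinals ${\le}\kappa$, so that $\kappa$ remains the least inaccessible cardinal in $V_0^{\mathbb P}$.

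The cardinal-arithmetic bookkeeping and the absoluteness of stationarity below $\kappa$ across the ${<}\kappa$-closed forcing $\mathbb P$ are routine; the real work --- and the reason the corollary is phrased as ``building on'' \cite{MR3730566} rather than as a direct citation --- is in arranging that the stationary reflection manufactured there is indestructible under $\add(\kappa,1)$. I expect this to amount to re-running their construction with a suitable preparatory forcing rendering the supercompacts responsible for reflection robust against a later ${<}\kappa$-closed Cohen forcing, mirroring Silver's preparation behind Corollary~\ref{hellsten}; once that is in place, Lemma~\ref{addonecohen} supplies the $\diamondsuit$ at no extra cost.
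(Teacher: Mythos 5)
Your first step --- reducing, via Lemma~\ref{addonecohen} applied with $S:=\kappa\cap\cof({>}\omega)$ and then Lemma~\ref{monotonicity}, to the statement that every stationary subset of $\kappa$ reflects in the extension by $\mathbb P:=\add(\kappa,1)$ --- is exactly the paper's route. The gap is in the second step. You assert that, because $\mathbb P$ is ${<}\kappa$-closed and adds no bounded subsets of $\kappa$, ``this is exactly the configuration in which the full reflection of $V_0$ is expected to lift to $V_0^{\mathbb P}$''. That heuristic is false: $\mathbb P$ adds new unbounded subsets of $\kappa$, and nothing in the closure of the forcing or the absoluteness of stationarity below $\kappa$ prevents one of them from being a non-reflecting stationary set. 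Indeed, Theorem~\ref{CohenEffect2} and Corollary~\ref{corollary4122} of this very paper show that $\add(\kappa,1)$ and $\add(\kappa,\kappa^+)$ can destroy (filter) reflection under suitable hypotheses on $I[\kappa-X]$, so indestructibility of reflection under $\add(\kappa,1)$ is a substantive requirement, not bookkeeping. You do acknowledge this at the end, but you only promise to secure it by ``re-running'' the Hayut--Lambie-Hanson construction with a Silver/Laver-style preparation, without carrying that out; as written, the argument is incomplete precisely at its crucial point.

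The observation that closes the gap with no extra work is that Theorems~3.23 and~3.24 of \cite{MR3730566} are already stated in the indestructible form: they yield the consistency of the principle $\refl^*(X)$, asserting that $\refl(X,\kappa)$ holds after forcing with \emph{any} ${<}\kappa$-directed-closed poset of size $\le\kappa$. Since $\add(\kappa,1)$ is ${<}\kappa$-directed-closed of size $\kappa^{<\kappa}=\kappa$, one obtains $V_0^{\mathbb P}\models\refl(\aleph_{\omega+1},\aleph_{\omega+1})$ for Clause~(1), and likewise $V_0^{\mathbb P}\models\refl(\kappa,\kappa)$ for the least inaccessible $\kappa$ in Clause~(2), by direct citation; Lemma~\ref{addonecohen} then supplies the $\diamondsuit$ exactly as you describe. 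No preparatory forcing needs to be designed, and no modification of the construction in \cite{MR3730566} is required.
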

\begin{proof} Following \cite{MR3730566}, for a stationary $X\s\kappa$,
we let $\refl^*(X)$ assert that whenever $\mathbb{P}$ is a ${<}\kappa$-closed-directed forcing notion of size $\le\kappa$,
$\Vdash_\mathbb{P} \refl(X,\kappa)$.
In particular, if $\refl^*(X)$ holds and $\kappa^{<\kappa}=\kappa$,
then for $\mathbb P:=\add(\kappa,1)$,
we would have that $V^{\mathbb P}\models\refl(X,\kappa)$,
and then by Lemma~\ref{addonecohen}, furthermore, $V^{\mathbb P}\models X$ reflects with $\diamondsuit$ to $\kappa$.

Now, by  \cite[Theorem~3.23]{MR3730566}, the hypothesis of Clause~(1) yields the consistency of $\refl^*(\aleph_{\omega+1})$ holds.
Likewise, by  \cite[Theorem~3.24]{MR3730566}, the hypothesis of Clause~(2) yields the consistency of the statement that,
there is an inaccessible cardinal and the least inaccessible cardinal $\kappa$ satisfies $\refl^*(\kappa)$.
\end{proof}

Another approach for adjoining diamond to reflection is as follows.

\begin{lemma}\label{diag}
Suppose $\mathfrak{f}$-$\refl(\kappa,X,S)$ holds for stationary subsets $X,S$ of $\kappa$.
If $\diamondsuit^*_S$ holds, then $X$ $\mathfrak{f}$-reflects with $\diamondsuit$ to $S$.
\end{lemma}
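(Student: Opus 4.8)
The plan is to fuse together the $\mathfrak{f}$-$\refl(\kappa,X,S)$-witnessing sequence of filters with a $\diamondsuit^*_S$-sequence, exactly as in the proof of Lemma~\ref{diag}'s cousins above. So fix $S'\s S$ and $\vec{\mathcal F}=\langle\mathcal F_\alpha\mid\alpha\in S'\rangle$ witnessing $\mathfrak{f}$-$\refl(\kappa,X,S)$, and fix a $\diamondsuit^*_S$-sequence $\langle \mathcal A_\alpha\mid\alpha\in S\rangle$, where each $\mathcal A_\alpha$ is a family of $\le|\alpha|$-many subsets of $\alpha$ such that for every $Z\s\kappa$ there is a club $E\s\kappa$ with $Z\cap\alpha\in\mathcal A_\alpha$ for all $\alpha\in E\cap S$. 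The idea is that at each $\alpha\in S'$ the filter $\mathcal F_\alpha$ already tells us, club-many times, the correct value of "is $Y\cap\alpha$ positive?", while $\mathcal A_\alpha$ provides a small guessing list; we just have to pick, for each $\alpha$, a single guess $Y_\alpha$ out of $\mathcal A_\alpha$ so that the guessing happens stationarily often and simultaneously with membership in $\mathcal F_\alpha^+$.

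First I would take advantage of the freedom to shrink $S'$ and to modify the filters. For each $\alpha\in S'$, enumerate $\mathcal A_\alpha$ as $\langle A^\alpha_i\mid i<|\alpha|\rangle$. The natural move is to re-index using the $\theta=\kappa$ strength of $\mathfrak{f}$-$\refl(\kappa,X,S)$: given any $\kappa$-sequence $\langle Y_i\mid i<\kappa\rangle$ of stationary subsets of $X$, the set $\{\alpha\in S'\mid \forall i<\alpha~(Y_i\cap\alpha\in\mathcal F_\alpha^+)\}$ is stationary. So for a fixed stationary $Y\s X$, apply this to the constant-ish sequence built from $Y$ together with a bookkeeping of the guesses: more precisely, I would want to arrange that for stationarily many $\alpha\in S'$ both $Y\cap\alpha\in\mathcal F_\alpha^+$ and $Y\cap\alpha$ equals one of the listed sets $A^\alpha_i$. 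Since $\langle\mathcal A_\alpha\rangle$ is a $\diamondsuit^*$-sequence, $Y\cap\alpha\in\mathcal A_\alpha$ on a club; intersecting with the stationary set coming from filter reflection gives stationarily many $\alpha$ where $Y\cap\alpha=A^\alpha_{i(\alpha)}$ for some index $i(\alpha)<|\alpha|$ and $Y\cap\alpha\in\mathcal F_\alpha^+$.

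The remaining—and I expect main—difficulty is to collapse the list $\mathcal A_\alpha$ down to a single $Y_\alpha$ while preserving stationarity of the good set for every $Y$ at once. This is the standard diagonalization step in $\diamondsuit^*\Rightarrow\diamondsuit$-style arguments, and the trick is to absorb the index into the filter. For each $\alpha\in S'$ and each $i<|\alpha|$, consider whether $A^\alpha_i\in\mathcal F_\alpha^+$; if so, let $\bar{\mathcal F}^i_\alpha$ be the filter generated by $\mathcal F_\alpha\cup\{A^\alpha_i\}$. Running over all $\alpha\in S'$ and all such $i$ gives a sequence of filters indexed by a stationary set $S''$ of pairs (or, after a pairing function $\kappa\times\kappa\leftrightarrow\kappa$ and the hypothesis $\kappa^{<\kappa}=\kappa$, indexed by a stationary subset of $\kappa$ contained in $S$ up to re-indexing the underlying ordinal via a club), and one sets $Y_\alpha:=A^\alpha_{i(\alpha)}$ correspondingly. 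A club-intersection argument, exactly as in the proof of Proposition~\ref{prop23}$(1)\Rightarrow(2)$ and of Theorem~\ref{lemma43}'s Claim~\ref{claim2131}, then shows $\langle Y_\alpha\rangle$ together with $\langle\bar{\mathcal F}^{i(\alpha)}_\alpha\rangle$ witnesses that $X$ $\mathfrak{f}$-reflects with $\diamondsuit$ to $S$: capturing of clubs is inherited since $\bar{\mathcal F}^{i}_\alpha\supseteq\mathcal F_\alpha$, and for each stationary $Y\s X$ the set of $\alpha$ where $Y_\alpha=Y\cap\alpha$ and $Y\cap\alpha\in\bar{\mathcal F}^{i(\alpha)+}_\alpha$ is stationary by the discussion above. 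The one genuinely delicate point is checking that after re-indexing the pairs $(\alpha,i)$ as ordinals below $\kappa$, the "underlying set" of the new filter is still an initial segment of $\kappa$ of the right form and lies inside $S$ after pulling back along a club — this is exactly the kind of cosmetic but necessary bookkeeping that I would carry out carefully, using $\kappa^{<\kappa}=\kappa$ and the definition of $\mathfrak{f}$-reflection which only requires the witnessing set $S'$ to be a stationary subset of $S$.
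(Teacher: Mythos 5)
You correctly isolate the main difficulty --- at each $\alpha$ the $\diamondsuit^*_S$-sequence offers a list of up to $|\alpha|$ candidate guesses, and one must select a \emph{single} $Y_\alpha$ per ordinal so that the selection succeeds stationarily often for every stationary $Y\s X$ simultaneously --- but the resolution you propose does not work, and the step you dismiss as ``cosmetic but necessary bookkeeping'' is exactly where it breaks. Indexing the modified filters $\bar{\mathcal F}^i_\alpha$ by pairs $(\alpha,i)$ and then re-coding pairs as ordinals cannot produce a witness to ``$X$ $\mathfrak f$-reflects with $\diamondsuit$ to $S$'': the definition demands a stationary $S''\s S$ with one filter over $\beta$ and one guess $Y_\beta\s\beta$ for each $\beta\in S''$, the guessing condition reading $Y_\beta=Y\cap\beta$. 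If the pair $(\alpha,i)$ is re-coded as an ordinal $\beta\neq\alpha$, then $\bar{\mathcal F}^i_\alpha$ is a filter over $\alpha$ rather than over $\beta$, the candidate $A^\alpha_i$ is a subset of $\alpha$ rather than of $\beta$, and the required equation $Y_\beta=Y\cap\beta$ has nothing to do with $Y\cap\alpha$; moreover $\beta$ need not lie in $S$. No club-pullback repairs this. You are also left with an unresolved circularity: your index function $i(\alpha)$ is defined relative to a fixed stationary $Y$, yet the sequence $\langle Y_\alpha\rangle$ must be fixed before $Y$ is given. (The ``absorb into the filter'' trick from Proposition~\ref{prop23} works there precisely because a $\diamondsuit$-with-reflection witness already supplies \emph{one} guess per ordinal; here producing that single guess is the whole problem.)

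The missing idea is a global diagonalization that selects one column index uniformly. Fix a bijection $\pi:\kappa\times\kappa\leftrightarrow\kappa$, write the $\diamondsuit^*_S$-data as a matrix $\langle Z^i_\alpha\mid i<\alpha<\kappa\rangle$, and decode $Y^i_\alpha:=\{\beta<\alpha\mid \pi(\beta,i)\in Z^i_\alpha\}$. One then shows that some single $i^*<\kappa$ is such that $\langle Y^{i^*}_\alpha\mid\alpha\in S'\rangle$, together with the \emph{unmodified} filters $\mathcal F_\alpha$, already witnesses the conclusion. Supposing otherwise, each $i<\kappa$ admits a bad stationary $Y_i\s X$; encode them all into the single set $Z:=\{\pi(\beta,i)\mid i<\kappa,\ \beta\in Y_i\}$. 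This is the one place where the full strength $\theta=\kappa$ of $\mathfrak f$-$\refl(\kappa,X,S)$ is used --- not on a ``constant-ish'' sequence, which gains nothing over $\theta=1$ --- namely to get stationarily many $\alpha\in S'$ at which \emph{all} of the sets $Y_i\cap\alpha$ for $i<\alpha$ are $\mathcal F_\alpha$-positive, while $\diamondsuit^*$ guesses $Z\cap\alpha$ as $Z^{i}_\alpha$ for some $i<\alpha$. Fodor's lemma applied to this regressive choice of $i$ stabilizes it at some $i^*$ on a stationary set, on which $Y^{i^*}_\alpha=Y_{i^*}\cap\alpha\in\mathcal F_\alpha^+$, contradicting the badness of $Y_{i^*}$.
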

\begin{proof} Let $\vec{\mathcal{F}}=\langle \mathcal F_\alpha\mid\alpha\in S'\rangle$ be a sequence witnessing $\mathfrak{f}$-$\refl(\kappa,X,S')$, so that $S'\s S$.
Suppose that $\diamondsuit^*_{S}$ (or just $\diamondsuit^*_{S'}$) holds.
It follows that we may fix a matrix $\langle Z_{\alpha}^i\mid i<\alpha<\kappa\rangle$
such that, for every $Z\s\kappa$, for club many $\alpha\in S'$, there is $i<\alpha$ with $Z_{\alpha}^i=Z\cap\alpha$.
Fix a bijection $\pi:\kappa\times\kappa\leftrightarrow\kappa$.
For any pair $i<\alpha<\kappa$, let $Y_\alpha^i:=\{\beta<\alpha\mid \pi(\beta,i)\in Z_{\alpha}^i\}$.
\begin{claim} There exists $i<\kappa$ such that for every stationary $Y\s X$, the set
$\{\alpha\in S'\mid Y_\alpha^i=Y\cap\alpha ~ \& ~ Y\cap \alpha\in \mathcal{F}_\alpha^+\}$ is stationary.
\end{claim}
\begin{proof}Suppose not. Then, for every $i<\kappa$, we may find a stationary $Y_i\s X$
for which $\{\alpha\in S\mid Y_\alpha^i=Y_i\cap\alpha ~ \& ~ Y_i\cap \alpha\in \mathcal{F}_\alpha^+\}$ is non-stationary.
Let $Z:=\{ \pi(\beta,i)\mid i<\kappa, \beta\in Y_i\}$.
Fix a club $C\s\kappa$ such that, for all $\alpha\in C\cap S$:
\begin{itemize}
\item $\pi[\alpha\times\alpha]=\alpha$, and
\item there exits $i<\alpha$ with $Z_{\alpha}^i=Z\cap\alpha$.
\end{itemize}
By the hypothesis, $T:=\{\alpha\in S'\cap C\mid \forall i<\alpha~(Y_i\cap\alpha\in \mathcal{F}_\alpha^+)\}$ must be stationary.
By Fodor's lemma, we may fix $i^*<\kappa$ and a stationary $T'\s T$ such that, for all $\alpha\in T'$, $Z_{\alpha}^{i^*}=Z\cap\alpha$.
For all $\alpha\in T'$, we have:
\begin{itemize}
\item $\alpha\in T$, so that, in particular, $Y_{i^*}\cap\alpha\in\mathcal F_\alpha^+$;
\item $\alpha\in C$, so that $Y_\alpha^{i^*}=\{\beta<\alpha\mid \pi(\beta,i^*)\in Z_{\alpha}^{i^*}\}=\{\beta<\alpha\mid \pi(\beta,i^*)\in Z\cap\alpha\}=Y_{i^*}\cap\alpha.$
\end{itemize}

It thus follows that $\{\alpha\in S'\mid Y_\alpha^{i^*}=Y_{i^*}\cap\alpha ~ \& ~Y_{i^*}\cap\alpha\in \mathcal{F}_\alpha^+\}$ covers the stationary set $T'$,
contradicting the choice of $Y_{i^*}$.
\end{proof}

This clearly completes the proof.
\end{proof}

\begin{remark}\label{diag2}
\begin{enumerate}
\item Note that by the definition of $\diamondsuit^*_S$, if $\kappa=\lambda^+$ is a successor cardinal, then the above argument establishes that for stationary subsets $X,S$ of $\kappa$:
If $\mathfrak{f}$-$\refl(\lambda,X,S)$ and $\diamondsuit^*_S$ both hold, then $X$ $\mathfrak{f}$-reflects with $\diamondsuit$ to $S$.
\item  Note that the same proof establishes the corresponding fact for the $\mathfrak f$-free (that is, genuine) versions of reflection.
\end{enumerate}
\end{remark}

\begin{cor} Suppose that $\lambda$ is a regular uncountable cardinal, and $\diamondsuit^*_{\lambda^+\cap\cof(\lambda)}$ holds.
For every stationary $X\s \lambda^+\cap\cof({<}\lambda)$, if $\refl(\lambda,X,\lambda^+)$ holds, then $X$ reflects with $\diamondsuit$ to $\lambda^+\cap\cof(\lambda)$.
\end{cor}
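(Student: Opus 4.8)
The plan is to reduce the statement to the genuine (i.e., $\mathfrak f$-free) incarnation of Lemma~\ref{diag} recorded in Remark~\ref{diag2}, applied with $\lambda^+$ in the role of the ambient cardinal; this is legitimate, since $\diamondsuit^*_{\lambda^+\cap\cof(\lambda)}$ entails $2^\lambda=\lambda^+$ and hence $(\lambda^+)^{<\lambda^+}=\lambda^+$. Granting this, it suffices to establish that $\refl(\lambda,X,\lambda^+\cap\cof(\lambda))$ holds: then, by Remark~\ref{diag2}(1)--(2) together with the assumed $\diamondsuit^*_{\lambda^+\cap\cof(\lambda)}$, we conclude that $X$ reflects with $\diamondsuit$ to $\lambda^+\cap\cof(\lambda)$. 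So the real work is to upgrade the hypothesis $\refl(\lambda,X,\lambda^+)$ to $\refl(\lambda,X,\lambda^+\cap\cof(\lambda))$, and here the constraint $X\s\lambda^+\cap\cof({<}\lambda)$ must be used.

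First I would fix a stationary $S'\s\lambda^+$ witnessing $\refl(\lambda,X,\lambda^+)$ and put $S'':=S'\cap\cof(\lambda)$; I claim $\langle\cub(\alpha)\mid\alpha\in S''\rangle$ witnesses $\refl(\lambda,X,\lambda^+\cap\cof(\lambda))$. Capturing clubs is automatic for club filters, so only the reflection property needs attention. Given an arbitrary sequence $\langle Y_i\mid i<\lambda\rangle$ of stationary subsets of $X$, I would appeal to the classical theorem of Solovay to split each $Y_i$ into $\lambda$-many pairwise disjoint stationary sets $\langle Y_{i,j}\mid j<\lambda\rangle$, re-enumerate $\langle Y_{i,j}\mid i,j<\lambda\rangle$ as a single $\lambda$-indexed sequence of stationary subsets of $X$, and feed it to the hypothesis, obtaining a stationary set $R:=\{\alpha\in S'\mid Y_{i,j}\cap\alpha\in\cub(\alpha)^+\text{ for all }i,j<\lambda\}$.

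The crux is that $R\s\cof(\lambda)$. Fix $\alpha\in R$; as $\cub(\alpha)^+\neq\emptyset$ we have $\cf(\alpha)>\omega$, so we may fix a club $C\s\alpha$ with $\otp(C)=\cf(\alpha)$. For any fixed $i<\lambda$, the sets $Y_{i,j}\cap\alpha$ for $j<\lambda$ are $\lambda$-many pairwise disjoint stationary subsets of $\alpha$; intersecting each with $C$ preserves stationarity, hence non-emptiness, so $C$ contains $\lambda$-many pairwise disjoint nonempty sets, forcing $\cf(\alpha)=|C|\ge\lambda$, and therefore $\cf(\alpha)=\lambda$ because $\alpha<\lambda^+$. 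Hence $R\s S''$, so $S''$ is stationary; and since for every $\alpha\in R$ and $i<\lambda$ we have $Y_i\cap\alpha\supseteq Y_{i,0}\cap\alpha\in\cub(\alpha)^+$, the set $\{\alpha\in S''\mid Y_i\cap\alpha\in\cub(\alpha)^+\text{ for all }i<\lambda\}$ covers the stationary set $R$ and is thus stationary, completing the verification. I expect the only genuinely new point is the observation that, because $X$ consists of points of cofinality below $\lambda$, simultaneous reflection of $\lambda$-many stationary subsets of $X$ is possible only at points of cofinality exactly $\lambda$; the rest is routine bookkeeping on top of Remark~\ref{diag2} and Solovay's splitting theorem.
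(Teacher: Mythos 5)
Your proposal is correct and follows essentially the same route as the paper: reduce via Remark~\ref{diag2} to showing $\refl(\lambda,X,\lambda^+\cap\cof(\lambda))$, and then exploit the fact that $\lambda$-many pairwise disjoint stationary subsets of $X\s\lambda^+\cap\cof({<}\lambda)$ can only reflect simultaneously at a point of cofinality exactly $\lambda$. The only (cosmetic) difference is that the paper appends a single fixed Solovay partition of $X$ into $\lambda$ stationary pieces to the given sequence, whereas you split each $Y_i$ individually; both hinge on the identical counting argument.
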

\begin{proof} Suppose we are given $X\s \lambda^+\cap\cof({<}\lambda)$ for which $\mathfrak{f}$-$\refl(\lambda,X,\lambda^+)$ holds.
By Remark~\ref{diag2}, it suffices to prove that  $\refl(\lambda,X,\lambda^+\cap\cof(\lambda))$ holds.
For this, let $\langle Y_i\mid i<\lambda\rangle$ be some sequence of stationary subsets of $X$.
Fix an arbitrary partition $\vec X=\langle X_i\mid i<\lambda\rangle$ of $X$ into stationary sets.
As $\refl(\lambda,X,\lambda^+)$ holds, the set $$A:=\{\alpha\in\lambda^+\mid \forall i<\lambda(Y_i\cap\alpha\text{ and }X_i\cap\alpha\text{ are stationary in }  \alpha)\}$$ is stationary.
As the elements of $\vec X$ are pairwise disjoint, it follows that $A\s\lambda^+\cap\cof(\lambda)$.
\end{proof}

\begin{cor}\label{succregulars} Suppose that $\kappa$ is a weakly compact cardinal.

For every $\lambda\in\reg(\kappa)$, in the forcing extension by $\Col(\lambda,{<}\kappa)$,
$\lambda^+\cap\cof({<}\lambda)$ reflects with $\diamondsuit$ to $\lambda^+\cap\cof(\lambda)$.
\end{cor}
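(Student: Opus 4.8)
The plan is to derive this from Lemma~\ref{wc}(2) and the immediately preceding corollary, the only genuine work being to check that $\diamondsuit^*$ survives the L\'evy collapse. If $\lambda=\omega$, then $\lambda^+\cap\cof({<}\lambda)=\emptyset$ and there is nothing to prove, so assume $\lambda$ is uncountable and pass to $V[G]$, where $G$ is $\Col(\lambda,{<}\kappa)$-generic over $V$. By Lemma~\ref{wc}(2), in $V[G]$ we have $\kappa=\lambda^+$ and $\refl(\lambda^+,\lambda^+\cap\cof({<}\lambda),\lambda^+)$ holds; since shortening the sequence of stationary sets only weakens the principle, $\refl(\lambda,\lambda^+\cap\cof({<}\lambda),\lambda^+)$ holds as well, and $\lambda$ remains a regular uncountable cardinal because $\Col(\lambda,{<}\kappa)$ is ${<}\lambda$-closed. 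Also, $\lambda^+\cap\cof({<}\lambda)$ is stationary in $V[G]$, as it contains $\lambda^+\cap\cof(\omega)$. Granting $\diamondsuit^*_{\lambda^+\cap\cof(\lambda)}$ in $V[G]$, the preceding corollary applied with $X:=\lambda^+\cap\cof({<}\lambda)$ then yields exactly the desired conclusion.

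So the heart of the matter is to verify $\diamondsuit^*_{\lambda^+}$ in $V[G]$ (its restriction to $\lambda^+\cap\cof(\lambda)$ is then immediate). This is a classical consequence of collapsing an inaccessible, and $\kappa$, being weakly compact, is inaccessible; I would reprove it as follows. For each $\alpha<\kappa$, use the standard factorization $\Col(\lambda,{<}\kappa)\cong\mathbb P_\alpha\times\mathbb P^{\ge\alpha}$ with $\mathbb P_\alpha:=\Col(\lambda,{<}\alpha)$, let $G_\alpha$ be the $\mathbb P_\alpha$-generic derived from $G$, and put $\mathcal A_\alpha:=\mathcal P(\alpha)\cap V[G_\alpha]$ for $\alpha<\lambda^+$. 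Inaccessibility gives $|\mathbb P_\alpha|^V<\kappa$, hence there are fewer than $\kappa$ nice $\mathbb P_\alpha$-names for subsets of $\alpha$, so $|\mathcal A_\alpha|\le\lambda$ in $V[G]$. For the guessing clause, fix $A\in\mathcal P(\lambda^+)^{V[G]}$ and a nice name $\dot A=\bigcup_{\beta<\lambda^+}\{\check\beta\}\times A_\beta$; by the $\kappa$-cc each $A_\beta$ has size ${<}\kappa$, so by regularity of $\kappa$ the ordinals appearing in the supports of conditions in $A_\beta$ are bounded below $\kappa$, and the closure points of the map sending $\beta$ to this bound form a club $C\s\kappa=\lambda^+$. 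For every $\alpha\in C$, all conditions occurring in $\bigcup_{\beta<\alpha}\{\check\beta\}\times A_\beta$ lie in $\mathbb P_\alpha$, so this is a $\mathbb P_\alpha$-name whose $G_\alpha$-interpretation is exactly $A\cap\alpha$; hence $A\cap\alpha\in\mathcal P(\alpha)\cap V[G_\alpha]=\mathcal A_\alpha$. Therefore the set $\{\alpha<\lambda^+\mid A\cap\alpha\in\mathcal A_\alpha\}$ contains the club $C$, and $\langle\mathcal A_\alpha\mid\alpha<\lambda^+\rangle$ witnesses $\diamondsuit^*_{\lambda^+}$. I expect this $\diamondsuit^*$ computation to be the only nontrivial step; the remainder is a direct appeal to results already established.
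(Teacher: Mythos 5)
Your proof is correct and follows essentially the same route as the paper's: apply Lemma~\ref{wc}(2) together with the preceding corollary, the only substantive point being that $\diamondsuit^*_{\lambda^+}$ holds after the L\'evy collapse. Where the paper simply cites \cite[Example~1.26]{brodsky2017microscopic} for that fact, you reprove it via the factorization $\Col(\lambda,{<}\kappa)\cong\mathbb P_\alpha\times\mathbb P^{\ge\alpha}$, counting nice names, and a closure-point club; this is a correct, self-contained rendition of the same classical computation.
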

\begin{proof}
Let $\lambda\in\reg(\kappa)$, and work in the forcing extension by $\Col(\lambda,{<}\kappa)$.
As the proof of \cite[Example~1.26]{brodsky2017microscopic} shows, $\diamondsuit^*_{\lambda^+}$ holds.
In addition, by Lemma~\ref{wc}(2), $\refl(\lambda^+,\lambda^+\cap\cof({<}\lambda),\lambda^+)$ holds.
So, by the preceding corollary, $\lambda^+\cap\cof({<}\lambda)$ reflects with $\diamondsuit$ to $\lambda^+\cap\cof(\lambda)$.
\end{proof}
\begin{remark} It thus follows from Theorem~\ref{lemma43} that in the model of the preceding,
for every $\mu\in\reg(\lambda)$,
${\sq{\lambda^+\cap\cof(\mu)}}\reduO{\sqc{\lambda^+\cap\cof(\lambda)}}$, and hence also ${=_{\lambda^+\cap\cof(\mu)}}\reduO{=^2_{\lambda^+\cap\cof(\lambda)}}$.
This improves \cite[Theorem~55]{FHK},
as the result is not limited to double successors, and as we do not need to assume that our ground model is $\mathsf{L}$.
\end{remark}

A variation of the proof of Lemma~\ref{diag} yields the following:

\begin{lemma} \label{MM} Suppose Martin's Maximum $(\mm)$ holds,
$\kappa\ge\aleph_2$, $X\s\kappa\cap\cof(\omega)$ is stationary, and $S=\kappa\cap\cof(\omega_1)$.
If $\diamondsuit_{X}$ holds,\footnote{Recall that by a theorem of Shelah \cite{Sh:922},
if $\kappa=\kappa^{<\kappa}$ is the successor of a cardinal uncountable cofinality,
then $\diamondsuit_X$ holds for every stationary $X\s\kappa\cap\cof(\omega)$.}
then $X$ reflects with $\diamondsuit$ to $S$.
\end{lemma}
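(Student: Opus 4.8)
The plan is to run the proof of Lemma~\ref{diag} almost verbatim, performing two substitutions: the role of $\mathfrak f$-$\refl(\kappa,X,S)$ is played by the strong simultaneous stationary reflection that $\mm$ provides at points of cofinality $\omega_1$, and the role of the $\diamondsuit^*_S$-matrix is played by the given $\diamondsuit_X$-sequence. The only feature of $\mm$ I intend to use is the following consequence of the Reflection Principle: for every sequence $\langle W_i\mid i<\kappa\rangle$ of stationary subsets of $\kappa\cap\cof(\omega)$, the set $\{\alpha\in S\mid \forall i<\alpha~(W_i\cap\alpha\text{ is stationary in }\alpha)\}$ is stationary. (For $\kappa=\aleph_2$ this is the classical simultaneous reflection of $\aleph_1$ many stationary sets; for larger $\kappa$ one invokes the full Strong Reflection Principle.) I would record this as a preliminary claim and refer to it as \emph{simultaneous reflection}.

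Next, fix a $\diamondsuit_X$-sequence $\vec Z$ and, by the usual coding, upgrade it so that it guesses pairs: $\vec Z=\langle(Z^0_\beta,Z^1_\beta)\mid\beta\in X\rangle$, and for every pair $(A,E)$ of subsets of $\kappa$ the set $G_{(A,E)}:=\{\beta\in X\mid A\cap\beta=Z^0_\beta\ \&\ E\cap\beta=Z^1_\beta\}$ is stationary. The crucial observation is that $G_{(A,E)}$ is itself a stationary subset of $\kappa\cap\cof(\omega)$; hence, by simultaneous reflection, for stationarily many $\alpha\in S$ the set $G_{(A,E)}\cap\alpha$ is stationary (so, in particular, cofinal) in $\alpha$, and at any such $\alpha$ one has $A\cap\alpha=\bigcup\{Z^0_\beta\mid\beta\in G_{(A,E)}\cap\alpha\}$, because the sets $Z^0_\beta=A\cap\beta$ ($\beta\in G_{(A,E)}\cap\alpha$) form a coherent cofinal chain. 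Using this I would attach to each $\alpha\in S$ an $\alpha$-indexed family $\langle W^i_\alpha\mid i<\alpha\rangle$ of ``candidate guesses'', each of the form $\bigcup\{Z^0_\beta\mid\beta\in\sigma\}$ for a coherent, stationary-in-$\alpha$ branch $\sigma\s X\cap\alpha$, where the auxiliary second coordinates $Z^1_\beta$ are used both to isolate the branch $\sigma$ and to keep the index of the resulting candidate bounded below $\alpha$. The target property is: for every stationary $Y\s X$ there is an accompanying club $E$ (coding enough information about the $\diamondsuit_X$-guessing set of $Y$ to single it out) such that, at stationarily many $\alpha\in S$, the branch selected by $E\cap\alpha$ is precisely $G_{(Y,E)}\cap\alpha$, whence $W^i_\alpha=Y\cap\alpha$ for the corresponding $i<\alpha$.

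Granted the existence of such a candidate family, the argument concludes exactly as in Lemma~\ref{diag}: assuming no single index works uniformly, for each $i<\kappa$ choose a stationary $Y_i\s X$ and an accompanying club $E_i$ witnessing the failure of index $i$; code $\langle(Y_i,E_i)\mid i<\kappa\rangle$ into a single pair $(A,E)$; apply simultaneous reflection to the family $\langle Y_i\mid i<\kappa\rangle$ together with $\langle G_{(Y_i,E_i)}\mid i<\kappa\rangle$ to obtain a stationary $T\s S$ at which all of these sets reflect; and then run Fodor's lemma on $T$ to pin down a single $i^*<\kappa$ with $W^{i^*}_\alpha=Y_{i^*}\cap\alpha$ for stationarily many $\alpha\in T$, contradicting the choice of $Y_{i^*}$. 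I expect the main obstacle to be the construction of the candidate family in the previous paragraph: unlike a $\diamondsuit^*_S$-matrix, the restriction $\vec Z\restriction(X\cap\alpha)$ of a $\diamondsuit_X$-sequence need not determine the reflecting branch $G_{(Y,E)}\cap\alpha$ uniquely, so the family must simultaneously be \emph{complete} (containing $Y\cap\alpha$, for every globally stationary $Y$, at stationarily many $\alpha$) and \emph{regressively indexed} (each relevant candidate appearing as $W^i_\alpha$ for some $i<\alpha$). Designing the disambiguation by means of the auxiliary clubs $E$, and verifying that it survives the simultaneous-reflection step — which is where the hypothesis $\diamondsuit_X$ (rather than mere reflection) is genuinely used — is the technical heart of the proof.
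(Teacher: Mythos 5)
Your overall architecture matches the paper's: attach to each $\alpha\in S$ a small indexed family of candidate guesses extracted from the $\diamondsuit_X$-sequence, then run the Fodor/coding argument of Lemma~\ref{diag} to extract a single index $i^*$ that works uniformly. You have also correctly isolated where the argument is incomplete. But the disambiguation device you propose --- auxiliary clubs $E$ coded into a second coordinate of the diamond sequence --- does not close the gap, and the ingredient you are missing is not more coding but a second, independent consequence of $\mm$. The problem is cardinality: for a fixed $\alpha\in S$, the candidates are exactly those $Z\s\alpha$ whose guessing set $G_Z=\{\beta\in X\cap\alpha\mid Z\cap\beta=Z_\beta\}$ is stationary in $\alpha$, and distinct such $Z,Z'$ yield \emph{almost disjoint} stationary subsets of $\alpha$. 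Adding a second coordinate $Z^1_\beta$ only refines this almost-disjoint family further; it cannot shrink it. A priori there can be $2^{\aleph_1}$ pairwise almost disjoint stationary subsets of an ordinal of cofinality $\omega_1$, so no regressive indexing $\langle W^i_\alpha\mid i<\alpha\rangle$ can be guaranteed to list all candidates, and since the family must be fixed in advance of the stationary $Y$ being guessed, ``selecting the right branch via $E\cap\alpha$'' is not available --- the candidates may not depend on external data about $Y$.

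The paper closes this gap by using that $\mm$ implies $\ns_{\omega_1}$ is saturated: fixing a club-embedding $\pi_\alpha:\omega_1\rightarrow\alpha$, the preimages $\pi_\alpha^{-1}[G_Z]$ for distinct candidates $Z$ form an almost disjoint family of stationary subsets of $\omega_1$, hence by saturation there are at most $\aleph_1$ candidates; these are then enumerated as $\{Z^i_\alpha\mid i<\omega_1\}$ and the rest of the argument proceeds essentially as you describe (with simultaneous reflection of only $\omega_1$ many stationary sets, i.e.\ $\refl(\omega_1,X,S)$, plus Fodor). So your declaration that simultaneous reflection is ``the only feature of $\mm$'' you will use is exactly what makes the proposal unrepairable as stated: you need saturation of the nonstationary ideal on $\omega_1$ (or some other bound on almost disjoint stationary families in $\omega_1$) to make the candidate family regressively indexed.
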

\begin{proof}
Suppose that $\diamondsuit_X$ holds, as witnessed by $\langle Z_\gamma\mid\gamma\in X\rangle$.
For every $Z\s\kappa$, let $$G_Z:=\{\gamma\in\sup(Z)\cap X\mid Z\cap\gamma=Z_\gamma\}.$$
Let $\alpha\in S$ be arbitrary. Fix a strictly increasing function $\pi_\alpha:\omega_1\rightarrow\alpha$ whose image is a club in $\alpha$,
and then let $$\mathcal Z_\alpha:=\{ Z\s\alpha\mid \pi_\alpha^{-1}[G_Z]\text{ is stationary in }\omega_1\}.$$
Evidently, for all two distinct $Z,Z'\in \mathcal Z_\alpha$, $\pi_\alpha^{-1}[G_Z]$ and $\pi_\alpha^{-1}[G_{Z'}]$ are almost disjoint stationary subsets of $\omega_1$.
As $\mm$ implies that $\ns_{\omega_1}$ is saturated, we infer that $|\mathcal Z_\alpha|\le\aleph_1$.
Thus, let $\{ Z_\alpha^i\mid i<\omega_1\}$ be some enumeration (possibly, with repetitions) of $\mathcal Z_\alpha$.
Fix a bijection $\pi:\kappa\times\omega_1\leftrightarrow\kappa$.
For each $\alpha\in S$ and $i<\omega_1$, let $Y_\alpha^i:=\{\beta<\alpha\mid \pi(\beta,i)\in Z_{\alpha}^i\}$.
\begin{claim} There exists $i<\omega_1$ such that, for every stationary $Y\s\kappa$,
$\{ \alpha\in S \cap \tr(Y) \mid Y\cap\alpha=Y^i_\alpha\}$ is stationary.
\end{claim}
\begin{proof}Suppose not.  For every $i<\omega_1$, fix a stationary $Y_i\s X$
for which $\{\alpha\in S \cap \tr(Y_{i}) \mid Y_i\cap\alpha=Y_\alpha^i\}$ is non-stationary.
Let $Z:=\{ \pi(\beta,i)\mid i<\omega_1, \beta\in Y_i\}$.
Consider the stationary set $G:=\{\gamma\in X\mid Z\cap\gamma=Z_\gamma\}$
and the club $C:=\{\alpha\in\acc^+(Z)\mid \pi[\alpha\times\omega_1]=\alpha\}$.
As $\mm$ implies $\refl(\omega_1,X,S)$, the following set is stationary
$$S':=\{\alpha\in C\cap S\mid G\cap\alpha\text{ is stationary, and, for all }i<\omega_1~Y_i\cap\alpha\text{ is statioanry}\}.$$

For every $\alpha\in S'$, since  $\alpha\in\tr(G)\cap\acc^+(Z)$, $G_{Z\cap\alpha}$ covers the stationary set $G\cap\alpha$,
so there exists $i<\omega_1$ such that $Z_\alpha^i=Z\cap\alpha$.
Now, fix $i^*<\omega_1$ and a stationary $T\s S'$ such that, for all $\alpha\in T$, $Z_{\alpha}^{i^*}=Z\cap\alpha$.
For all $\alpha\in T$, we have:
\begin{itemize}
\item $\alpha\in S'$, and hence $\alpha\in S\cap\tr(Y_{i^*})$;
\item $\alpha\in C$, and hence $Y^{i^*}_\alpha=\{\beta<\alpha\mid \pi(\beta,i^*)\in Z_{\alpha}^{i^*}\}=\{\beta<\alpha\mid \pi(\beta,i^*)\in Z\cap\alpha\}=Y_{i^*}\cap\alpha.$
\end{itemize}

So $\{\alpha\in S \cap \tr(Y_{i^{*}}) \mid Y_{i^*}\cap\alpha=Y_\alpha^{i^*}\}$ covers the stationary set $T$,
contradicting the choice of $Y_{i^*}$.
\end{proof}
Let $i$ be given by the preceding claim.
Then $\langle Y_\alpha^i\mid \alpha\in S\rangle$ witnesses that $X$ $\langle \cub(\alpha)\mid\alpha\in S\rangle$-reflects with $\diamondsuit$ to $S$.
\end{proof}

\begin{defn}\label{pointer-ineffable} A stationary subset $S$ of $\kappa$ is said to be:
\begin{enumerate}
\item \emph{ineffable} iff for every sequence $\langle A_\alpha\mid\alpha\in S\rangle$, there exists $A\s\kappa$ for which $\{\alpha\in S\mid A\cap\alpha=A_\alpha\cap\alpha\}$ is stationary.
\item \emph{weakly ineffable} iff for every sequence $\langle A_\alpha\mid\alpha\in S\rangle$, there exists $A\s\kappa$ for which $\{\alpha\in S\mid A\cap\alpha=A_\alpha\cap\alpha\}$ is cofinal in $\kappa$.
\item \emph{weakly compact} iff for every $\Pi^1_1$-sentence $\phi$ and every $A\s V_\kappa$ such that $\langle V_\kappa,\in,A\rangle\models\phi$, there exists $\alpha\in S$ such that $\langle V_\alpha,\in,A\cap V_\alpha\rangle\models\phi$.
\end{enumerate}
\end{defn}

\begin{defn}[Sun, \cite{sun}] For a weakly compact subset $S\s\kappa$, $\diamondsuit^1_S$ asserts the existence of a sequence $\langle Z_\alpha\mid\alpha\in S\rangle$ such that,
for every $Z\s\kappa$, the set $\{\alpha\in S\mid Z\cap\alpha=Z_\alpha\}$ is weakly compact.
\end{defn}

The proof of \cite[Theorem~2.11]{sun} makes clear that, for every weakly ineffable $S\s\kappa$, $\diamondsuit^1_S$ holds.
It is also easy to see that $\diamondsuit^1_S$ implies that $\kappa$ reflects with $\diamondsuit$ to $S$. Therefore:

\begin{cor}\label{cor45} For every weakly ineffable $S\s\kappa$, $\kappa$ reflects with $\diamondsuit$ to $S$.\qed
\end{cor}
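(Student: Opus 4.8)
The plan is to derive the corollary in two steps. \emph{Step~1:} As indicated in the paragraph preceding the statement, invoke \cite[Theorem~2.11]{sun} to fix, for the given weakly ineffable $S\s\kappa$, a $\diamondsuit^1_S$-sequence $\vec Z=\langle Z_\alpha\mid\alpha\in S\rangle$, so that for every $Z\s\kappa$ the set $\{\alpha\in S\mid Z_\alpha=Z\cap\alpha\}$ is weakly compact; here one uses that a weakly ineffable subset of $\kappa$ is in particular weakly compact, so that $\diamondsuit^1_S$ is meaningful. Since a weakly compact subset of $\kappa$ concentrates on inaccessible cardinals, we may shrink $S$ and assume that every $\alpha\in S$ is a limit ordinal, so that $\cub(\alpha)$ is a (proper) filter over $\alpha$. \emph{Step~2:} Show that $\langle\cub(\alpha)\mid\alpha\in S\rangle$, together with $\langle Z_\alpha\mid\alpha\in S\rangle$ in the role of the $\diamondsuit$-sequence, witnesses that $\kappa$ reflects with $\diamondsuit$ to $S$ in the sense of Definition~\ref{fake-ref}(3) with $X=\kappa$.

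For Step~2, first note that $\langle\cub(\alpha)\mid\alpha\in S\rangle$ captures clubs: if $C\s\kappa$ is a club, then for every $\alpha\in\acc(C)$ the set $C\cap\alpha$ is a club in $\alpha$ and hence lies in $\cub(\alpha)$, so that $\{\alpha\in S\mid C\cap\alpha\notin\cub(\alpha)\}$ is disjoint from the club $\acc(C)$. It remains to verify the $\diamondsuit$-clause. Fix a stationary $Y\s\kappa$; we must show that $A:=\{\alpha\in S\mid Z_\alpha=Y\cap\alpha\ \&\ Y\cap\alpha\in\cub(\alpha)^+\}$ is stationary. By the choice of $\vec Z$, the set $W:=\{\alpha\in S\mid Z_\alpha=Y\cap\alpha\}$ is weakly compact, and for a limit $\alpha$ the condition $Y\cap\alpha\in\cub(\alpha)^+$ just says that $Y\cap\alpha$ is stationary in $\alpha$; thus it suffices to show that $\{\alpha\in W\mid Y\cap\alpha\text{ is stationary in }\alpha\}$ meets every club. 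So let $E\s\kappa$ be a club. Coding the pair $(Y,E)$ into a single predicate $P\s V_\kappa$, there is a $\Pi^1_1$-sentence $\phi$ (over the language expanded by a symbol for $P$) asserting ``$Y$ is stationary and $E$ is a club'', and clearly $\langle V_\kappa,\in,P\rangle\models\phi$. Since $W$ is weakly compact, pick $\alpha\in W$ with $\langle V_\alpha,\in,P\cap V_\alpha\rangle\models\phi$; unwinding the coding, $Y\cap\alpha$ is stationary in $\alpha$ and $E\cap\alpha$ is closed and unbounded in $\alpha$, whence $\alpha\in\acc(E)\s E$. Thus $\alpha\in A\cap E$, as required.

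The only point that calls for a little care is the passage from ``$W$ reflects $\Pi^1_1$-sentences to some member of $W$'' to ``the reflection points are stationary'': this is the standard trick of absorbing an arbitrary club $E$ into the reflected sentence and then reading off membership in $E$ from the reflected closed-unboundedness of $E\cap\alpha$. Everything else is routine bookkeeping --- that $\cub(\alpha)^+$-positivity is stationarity in $\alpha$, that $\acc(C)$ is a club, and that ``$Y$ stationary $\wedge$ $E$ club'' is $\Pi^1_1$ (the universal second-order closure of a first-order statement about clubs meeting $Y$ and about $E$ being closed and unbounded). So no genuine obstacle arises here; the substance of the corollary is entirely in the cited theorem of Sun used in Step~1.
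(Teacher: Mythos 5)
Your proposal is correct and takes essentially the same route as the paper: weak ineffability of $S$ yields a $\diamondsuit^1_S$-sequence via Sun's theorem, and then $\diamondsuit^1_S$ implies that $\kappa$ reflects with $\diamondsuit$ to $S$. The paper dismisses the second implication as ``easy to see'', and your Step~2 --- absorbing the club $E$ together with the stationarity of $Y$ into a single $\Pi^1_1$-sentence reflected by the weakly compact set $\{\alpha\in S\mid Z_\alpha=Y\cap\alpha\}$ --- is exactly the intended verification.
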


We conclude this section by proving that $\diamondsuit$-reflection is equivalent to various seemingly stronger statements.
For instance, the concept of Clause~(2) of the next lemma
is implicit in \cite{AHKM}, as the principle $\text{WC}^*_\kappa$ from \cite[Lemma~3.4]{AHKM} is equivalent to the instance $X:=\kappa$ and $S:=\reg(\kappa)$.
Likewise, the question of whether Clause~(1) implies Clause~(3) is implicit in the statement of \cite[Claim~2.11.1]{AHKM}.
The fact that the two clauses are equivalent allows to reduce the hypothesis of ``there is a $\Pi_1^{\lambda^+}$-indescribable cardinal''
of \cite[Theorem~2.11]{AHKM} down to ``there is a weakly compact cardinal'' via Corollary~\ref{succregulars} above.

\begin{lemma}\label{diamond-equiv} Let $X\s\kappa$ and $S\s\kappa\cap\cof({>}\omega)$.
Then the following are equivalent:
\begin{enumerate}
\item  $X$ reflects with $\diamondsuit$ to $S$;
\item there exists a sequence $\langle f_\alpha\mid\alpha\in S\rangle$
such that, for every $g\in\kappa^\kappa$ and every stationary $Y\s X$, the set $\{\alpha\in S\cap\tr(Y)\mid g\restriction\alpha=f_\alpha\}$ is stationary;
\item there exists a partition $\langle S_i\mid i<\kappa\rangle$ of $S$ such that, for all $i<\kappa$, $X$ reflects to $S_i$ with $\diamondsuit$.
\end{enumerate}
\end{lemma}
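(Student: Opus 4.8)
The plan is to establish the cycle of implications $(1)\Rightarrow(2)\Rightarrow(3)\Rightarrow(1)$. The last link is immediate: given a partition $\langle S_i\mid i<\kappa\rangle$ as in~(3), any data witnessing that $X$ reflects with $\diamondsuit$ to $S_0$ also witnesses~(1), because $S_0\s S$ (if one insists the guessing sequence be indexed by all of $S$, pad it with $\emptyset$'s on $S\setminus S_0$). So the work is in $(1)\Rightarrow(2)$ and $(2)\Rightarrow(3)$.

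For $(1)\Rightarrow(2)$ I would rerun the proof of Claim~\ref{claim2131}, now with $\mathcal F_\alpha:=\cub(\alpha)$. Starting from a sequence $\langle Y_\alpha\mid\alpha\in S'\rangle$ witnessing~(1), set $S'':=\{\alpha\in S'\mid Y_\alpha\text{ is stationary in }\alpha\}$ (this is stationary, by applying~(1) to $Y:=X$), and for $\alpha\in S''$ let $\bar{\mathcal F}_\alpha$ be the filter on $\alpha$ generated by $\cub(\alpha)\cup\{Y_\alpha\}$. With $C:=\acc^+(X)$, $B:=X\setminus C$, an injective enumeration $\{a_\beta\mid\beta\in B\}$ of $\kappa^{<\kappa}$, and $\eta_\alpha:=(\bigcup\{a_\beta\mid\beta\in Y_\alpha\cap B\})\cap(\alpha\times\alpha)$ for $\alpha\in S''$, the argument of Claim~\ref{claim2131} shows that for every stationary $Y\s X$ and every $g\in\kappa^\kappa$, the set $\{\alpha\in S''\mid\eta_\alpha=g\restriction\alpha\text{ and }Y\cap\alpha\in\bar{\mathcal F}_\alpha\}$ is stationary. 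Since $Y_\alpha$ is stationary in $\alpha$ for $\alpha\in S''$, every member of $\bar{\mathcal F}_\alpha$ is stationary in $\alpha$, so $Y\cap\alpha\in\bar{\mathcal F}_\alpha$ forces $\alpha\in\tr(Y)$. Hence, setting $f_\alpha:=\eta_\alpha$ whenever $\eta_\alpha\in\kappa^\alpha$ and $f_\alpha:=$ the constant-zero map on $\alpha$ otherwise (and $f_\alpha:=$ that map for $\alpha\in S\setminus S''$), the sequence $\langle f_\alpha\mid\alpha\in S\rangle$ witnesses~(2).

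For $(2)\Rightarrow(3)$ I would decode. Fix $\langle f_\alpha\mid\alpha\in S\rangle$ as in~(2), a bijection $\pi:\kappa\leftrightarrow\kappa\times\kappa$ with coordinate functions $\pi_0,\pi_1$, and for $i<\kappa$ and $Y\s\kappa$ let $g_{i,Y}\in\kappa^\kappa$ be the map $\xi\mapsto\pi^{-1}(i,\chi_Y(\xi))$, where $\chi_Y$ is the characteristic function of $Y$. For $\alpha\in S$, put $i(\alpha):=\pi_0(f_\alpha(0))$, $Y_\alpha:=\{\xi<\alpha\mid\pi_1(f_\alpha(\xi))=1\}$, and $S_i:=\{\alpha\in S\mid i(\alpha)=i\}$; clearly $\langle S_i\mid i<\kappa\rangle$ partitions $S$. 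Now fix $i<\kappa$ and stationary $Y\s X$ and apply~(2) to $g_{i,Y}$: on the stationary set $\{\alpha\in S\cap\tr(Y)\mid f_\alpha=g_{i,Y}\restriction\alpha\}$ one reads off $i(\alpha)=\pi_0(g_{i,Y}(0))=i$ and $Y_\alpha=\{\xi<\alpha\mid\pi_1(g_{i,Y}(\xi))=1\}=Y\cap\alpha$, so this set is contained in $\{\alpha\in S_i\mid Y_\alpha=Y\cap\alpha\text{ and }Y\cap\alpha\in\cub(\alpha)^+\}$, which is therefore stationary; taking $Y:=X$ in particular shows each $S_i$ is stationary. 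Since $S_i\s S\s\cof({>}\omega)$, the sequence $\langle\cub(\alpha)\mid\alpha\in S_i\rangle$ captures clubs, so $\langle Y_\alpha\mid\alpha\in S_i\rangle$ witnesses that $X$ reflects with $\diamondsuit$ to $S_i$.

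I do not anticipate a genuine obstacle, since the combinatorial core is already isolated in Claim~\ref{claim2131}; the two points asking for care are: in $(1)\Rightarrow(2)$, making sure the argument outputs membership in $\tr(Y)$ rather than merely in $S$ — which is exactly why one passes to the filter $\bar{\mathcal F}_\alpha$ instead of working with $\cub(\alpha)$ directly — and in $(2)\Rightarrow(3)$, confirming that no piece $S_i$ of the partition degenerates to a non-stationary set, which is secured by feeding the stationary set $X$ into the guessing property.
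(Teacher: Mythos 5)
Your proof is correct and follows essentially the same route as the paper: the same cycle $(1)\Rightarrow(2)\Rightarrow(3)\Rightarrow(1)$, with $(1)\Rightarrow(2)$ obtained as the $\mathcal F_\alpha=\cub(\alpha)$ special case of Claim~\ref{claim2131} (including the key observation that passing to the filter generated by $\cub(\alpha)\cup\{Y_\alpha\}$ is what yields membership in $\tr(Y)$) and $(3)\Rightarrow(1)$ trivial. The only difference is cosmetic: in $(2)\Rightarrow(3)$ you encode the index $i$ and the characteristic function of $Y$ via a pairing bijection $\pi:\kappa\leftrightarrow\kappa\times\kappa$, whereas the paper uses an ad hoc coding by the values $f_\alpha(0),f_\alpha(1)$ and the constant $4$; both decodings work for exactly the same reason.
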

\begin{proof} $(1)\implies(2)$: This is a special case of the proof of Claim~\ref{claim2131}.

$(2)\implies(3)$ Let $\langle f_\alpha\mid\alpha\in S\rangle$ be as in Clause~(3). Without loss of generality, for all $\alpha\in S$, $f_\alpha$ is a function from $\alpha$ to $\kappa$.
For all $i<\kappa$, let $S_i:=\{\alpha\in S\mid f_\alpha(0)=i\}$, so that $\langle S_i\mid i<\kappa\rangle$ is a partition of $S$.
For each $\alpha\in S$, let $$Y_\alpha:=\{\beta<\kappa\mid (\beta=0\wedge f_\alpha(1)\text{ is odd})\vee(\beta=1\wedge f_\alpha(1)\ge 2)\vee(\beta>1\wedge f_\alpha(\beta)=4)\}.$$

Let $i<\kappa$ be arbitrary.
We claim that $\langle Y_\alpha\mid\alpha\in S_i\rangle$ witnesses that $X$ reflects with $\diamondsuit$ to $S_i$.
To see this, fix an arbitrary stationary subset $Y$ of $X$. Define a function $g:\kappa\rightarrow\kappa$ as follows:
$$g(\beta):=\begin{cases}
i,&\text{if }\beta=0;\\
1,&\text{if }\beta=1\text{ and }Y\cap \{0,1\}=\{0\};\\
2,&\text{if }\beta=1\text{ and }Y\cap \{0,1\}=\{1\};\\
3,&\text{if }\beta=1\text{ and }Y\cap \{0,1\}=\{0,1\};\\
4,&\text{if }\beta>1\text{ and }\beta\in Y;\\
0,&\text{otherwise};\\
\end{cases}$$

Consider the stationary set $G:=\{\alpha\in S\cap\tr(Y)\mid g\restriction\alpha=f_\alpha\}$.
Let $\alpha\in G\cap\acc(\kappa)$ be arbitrary. We have $f_\alpha(0)=g(0)=i$, so that $\alpha\in S_i$.
Finally, $g\restriction(\kappa\setminus\{0,1\})$ forms the characteristic function of $Y\setminus\{0,1\}$
and     $g(1)$ encodes $Y\cap\{0,1\}$, so that $Y_\alpha=Y\cap\alpha$.

$(3)\implies(1)$: This is trivial.
\end{proof}
\begin{question} Suppose $X$ (resp.~strongly) $\mathfrak f$-reflects to $S$. Must there exist a partition $\langle S_i\mid i<\kappa\rangle$ of $S$ into stationary sets such that, for all $i<\kappa$,
$X$ (resp.~strongly) $\mathfrak f$-reflects to $S_i$?
\end{question}

\section{Forcing fake reflection}\label{section4}

In this section, we focus on the consistency of \emph{fake reflection}, i.e., $X$ $\mathfrak f$-reflects to $S$ and yet there exists a stationary subset of $X$ that does not reflect (in the classical sense) to $S$.
By the work of Jensen \cite{jensen}, in G\"odel's constructible universe, $\mathsf L$, stationary reflection fails at any non weakly compact cardinal, but, as we will see, filter reflection holds everywhere in $\mathsf L$. We will also show that fake reflection is forceable.

\subsection{A diamond reflecting second-order formulas}

A $\Pi^{1}_{n}$-sentence $\phi$ is a formula of the form $\forall X_1\exists X_2\cdots X_n\varphi$ where $\varphi$ is a first-order sentence over a relational language $\mathcal L$ as follows:
\begin{itemize}
\item $\mathcal L$ has a predicate symbol $\epsilon$ of arity $2$;
\item $\mathcal L$ has a predicate symbols $\mathbb X_i$, $i\leq n$, of arity $m({\mathbb X}_i)$;
\item $\mathcal L$ has infinitely many predicate symbols $(\mathbb A_n)_{n\in \omega}$, each $\mathbb A_m$ is of arity $m(\mathbb A_m)$.
\end{itemize}

\begin{defn} For sets $N$ and $x$, we say that \emph{$N$ sees $x$} iff
$N$ is transitive, p.r.-closed, and $x\cup\{x\}\s N$.
\end{defn}

Suppose that a set $N$ sees an ordinal $\alpha$,
and that $\phi=\forall X_1\exists X_2\cdots\varphi$ is a $\Pi^{1}_{n}$-sentence, where $\varphi$ is a first-order sentence in the above-mentioned language $\mathcal L$.
For every sequence $(A_m)_{m\in\omega}$ such that, for all $m\in\omega$, $A_m\s \alpha^{m(\mathbb A_m)}$,
we write
$$\langle \alpha, \in, (A_{m})_{m\in \omega} \rangle \models_N \phi$$
to express that the two hold:
\begin{enumerate}
\item $(A_{m})_{m\in \omega} \in N$;
\item $N\models (\forall X_1\subseteq \alpha^{m(\mathbb X_1)})(\exists X_2\subseteq \alpha^{m(\mathbb X_2)})\cdots[\langle \alpha, \in, (A_{m})_{m\in \omega}, X_1, X_2,\ldots \rangle\models \varphi]$,
where:
\begin{itemize}
\item $\in$ is the interpretation of $\epsilon$;
\item $X_i$ is the interpretation of $\mathbb X_i$;
\item for all $m\in\omega$,  $A_m$ is the interpretation of $\mathbb A_m$.
\end{itemize}
\end{enumerate}
\begin{conv}\label{conv23}
We write $\alpha^+$ for $|\alpha|^+$,
and write $\langle \alpha,{\in}, (A_{n})_{n\in \omega} \rangle \models \phi$ for
$$\langle \alpha,{\in}, (A_{n})_{n\in \omega} \rangle \models_{H_{\alpha^+}} \phi.$$
\end{conv}

\begin{defn}[Fernandes-Moreno-Rinot, \cite{FMR}]\label{reflectingdiamond}
For a stationary $S\s\kappa$ and a positive integer $n$, $\dl^*_S(\Pi^1_n)$ asserts the existence of a sequence $\vec N=\langle N_\alpha\mid\alpha\in S\rangle$ satisfying the following:

\begin{enumerate}
\item for every $\alpha\in S$, $N_\alpha$ is a set of cardinality $<\kappa$ that sees $\alpha$;
\item for every $X\s\kappa$, there exists a club $C\s\kappa$ such that, for all $\alpha\in C \cap S$, $X\cap\alpha\in N_\alpha$;
\item whenever $\langle \kappa,{\in},(A_m)_{m\in\omega}\rangle\models\phi$,
with $\phi$ a $\Pi^1_n$-sentence,
there are stationarily many $\alpha\in S$ such that $|N_\alpha|=|\alpha|$ and
$\langle \alpha,{\in},(A_m\cap(\alpha^{m(\mathbb A_m)}))_{m\in\omega}\rangle\models_{N_\alpha}\phi$.
\end{enumerate}
\end{defn}
\begin{remark} The principle $\dl^+_S(\Pi^1_n)$ is defined by strengthening Clause~(2)
in the definition of $\dl^*_S(\Pi^1_n)$ to require that $C\cap\alpha$ be in $N_\alpha$, as well.

The Todorcevic-V{\"a}{\"a}n{\"a}nen principle $\diamondsuit^+_S(\Pi^1_n)$ from \cite{TodoVaan} is
obtained by strengthening Clause~(1)
in the definition of $\dl^+_S(\Pi^1_n)$ to require that $|N_\alpha|=\max\{\aleph_0,|\alpha|\}$.
\end{remark}

\begin{lemma}\label{dl-to-fref}
Suppose $S\s\kappa$ is stationary for which $\dl^*_S(\Pi^1_1)$ holds. Then:
\begin{enumerate}
\item $\mathfrak f$-$\refl(\kappa,\kappa,S)$;
\item $\kappa$ $\mathfrak f$-reflects with $\diamondsuit$ to $S$.
\end{enumerate}
\end{lemma}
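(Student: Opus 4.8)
The plan is to distill from the given $\dl^*_S(\Pi^1_1)$-sequence $\vec N=\langle N_\alpha\mid\alpha\in S\rangle$ both a sequence of filters and a guessing sequence, using Clause~(3) to transfer true $\Pi^1_1$-truths into the models $N_\alpha$. The starting observation is that ``the intersection of any two clubs is a club'' is (equivalent to) a $\Pi^1_1$-sentence $\psi_0$ over $\langle\kappa,{\in}\rangle$ that holds there, so Clause~(3) yields a \emph{stationary} set $S':=\{\alpha\in S\mid |N_\alpha|=|\alpha|\ \&\ \langle\alpha,{\in}\rangle\models_{N_\alpha}\psi_0\}$. For $\alpha\in S'$ I would let $\mathcal F_\alpha$ be the filter over $\alpha$ generated in $V$ by $\{A\in N_\alpha\mid N_\alpha\models\text{``}A\in\cub(\alpha)\text{''}\}$. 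The role of $\psi_0$ is precisely to guarantee that this generating family is closed under finite intersections (computed inside $N_\alpha$, hence in $V$, as $N_\alpha$ is p.r.-closed) and that its members are genuinely unbounded in $\alpha$, in particular nonempty (unboundedness being upward absolute from the transitive $N_\alpha$); thus $\mathcal F_\alpha$ is a proper filter \emph{even when} $\cf(\alpha)=\omega$. This is the only point where routing through $N_\alpha$ is essential, and it is the step to get right.

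Next I would check the generic properties of $\vec{\mathcal F}=\langle\mathcal F_\alpha\mid\alpha\in S'\rangle$. \emph{Capturing clubs}: given a club $C\s\kappa$, Clause~(2) furnishes a club $D$ with $C\cap\alpha\in N_\alpha$ for all $\alpha\in D\cap S$; for $\alpha\in D\cap S'\cap\acc^+(C)$ the set $C\cap\alpha$ is a genuine club of $\alpha$ lying in $N_\alpha$, so by absoluteness $N_\alpha\models\text{``}C\cap\alpha\in\cub(\alpha)\text{''}$, whence $C\cap\alpha\in\mathcal F_\alpha$; therefore $\{\alpha\in S'\mid C\cap\alpha\notin\mathcal F_\alpha\}$ misses a club. \emph{Positivity}: if $\alpha\in S'$, $B\in N_\alpha\cap\mathcal P(\alpha)$ and $N_\alpha\models\text{``}B\text{ is stationary in }\alpha\text{''}$, then $B\in\mathcal F_\alpha^+$, because every generator $A$ of $\mathcal F_\alpha$ is, inside $N_\alpha$, a club of $\alpha$, so $N_\alpha\models A\cap B\neq\emptyset$, hence (transitivity) $A\cap B\neq\emptyset$ in $V$; so $B$ meets every member of $\mathcal F_\alpha$, i.e.\ $\alpha\setminus B\notin\mathcal F_\alpha$.

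For Clause~(1), fix stationary $\langle Y_i\mid i<\kappa\rangle$, put $A_0:=\{(i,\beta)\mid \beta\in Y_i\}$, and reflect the $\Pi^1_1$-sentence $\psi_1$ over $\langle\kappa,{\in},A_0\rangle$ asserting $\psi_0$ together with ``every section of $\mathbb A_0$ is stationary'' — a $\Pi^1_1$-truth. Clause~(3) produces stationarily many $\alpha$ with $|N_\alpha|=|\alpha|$, $A_0\cap(\alpha\times\alpha)\in N_\alpha$, and $\langle\alpha,{\in},A_0\cap(\alpha\times\alpha)\rangle\models_{N_\alpha}\psi_1$. Each such $\alpha$ lies in $S'$, and for every $i<\alpha$ we have $Y_i\cap\alpha\in N_\alpha$ with $N_\alpha\models\text{``}Y_i\cap\alpha\text{ stationary in }\alpha\text{''}$, so by the positivity observation $Y_i\cap\alpha\in\mathcal F_\alpha^+$; this is exactly $\mathfrak f$-$\refl(\kappa,\kappa,S)$ witnessed over $S'$. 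For Clause~(2) I would run the diagonalization from the proof of Lemma~\ref{diag}/Claim~\ref{claim2131} in this setting: for $\alpha\in S'$ fix an enumeration $\langle a^\alpha_i\mid i<\alpha\rangle$ of $\mathcal P(\alpha)\cap N_\alpha$ (padding by $\emptyset$, using $|N_\alpha|=|\alpha|$) and a bijection $\pi:\kappa\times\kappa\leftrightarrow\kappa$, and set $W^i_\alpha:=\{\beta<\alpha\mid \pi(\beta,i)\in a^\alpha_i\}$. If no $i^*<\kappa$ makes $\langle W^{i^*}_\alpha\mid\alpha\in S'\rangle$ a witnessing $\diamondsuit$-sequence for $\vec{\mathcal F}$, choose for each $i<\kappa$ a stationary $Y_i\s\kappa$ exemplifying the failure, let $Z:=\{\pi(\beta,i)\mid i<\kappa,\ \beta\in Y_i\}$ and $A_0$ as above, and reflect the $\Pi^1_1$-sentence over $\langle\kappa,{\in},A_0,Z\rangle$ asserting $\psi_0$ and ``every section of $\mathbb A_0$ is stationary''. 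Since $\models_{N_\alpha}$ requires the parameters to lie in $N_\alpha$, the stationarily many reflection points $\alpha$ satisfy $\alpha\in S'$, $Z\cap\alpha\in N_\alpha$ (so $Z\cap\alpha=a^\alpha_{j(\alpha)}$ for some $j(\alpha)<\alpha$), and $Y_i\cap\alpha\in\mathcal F_\alpha^+$ for all $i<\alpha$; intersecting with the club $\{\alpha\mid\pi[\alpha\times\alpha]=\alpha\}$ and applying Fodor's lemma to $j$ freezes $j(\alpha)=i^*$ on a stationary set, on which (for $\alpha>i^*$) one computes $W^{i^*}_\alpha=Y_{i^*}\cap\alpha$ while $Y_{i^*}\cap\alpha\in\mathcal F_\alpha^+$, contradicting the choice of $Y_{i^*}$. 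Hence such $i^*$ exists, and $\langle W^{i^*}_\alpha\mid\alpha\in S'\rangle$ together with $\vec{\mathcal F}$ witnesses that $\kappa$ $\mathfrak f$-reflects with $\diamondsuit$ to $S$.

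The main obstacle is the properness of the $\mathcal F_\alpha$ discussed in the first paragraph: because $S$ may consist entirely of points of countable cofinality, genuine club filters are unavailable, and the construction only works because each $N_\alpha$, being of size $|\alpha|$, can be forced (by reflecting $\psi_0$) to carry a club-filter-like structure that is a bona fide filter. The remaining ingredients — the absoluteness computations, bundling finitely many $\Pi^1_1$-truths into one sentence so that a single application of Clause~(3) suffices, and the Fodor bookkeeping — are routine and patterned on Lemma~\ref{wc}(1) and Lemma~\ref{diag}.
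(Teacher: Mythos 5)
Your proposal is correct and follows essentially the same route as the paper: reflect a $\Pi^1_1$-truth forcing each $N_\alpha$ to carry a proper club filter (the paper uses ``$\alpha$ is a regular cardinal'' where you use ``the intersection of two clubs is a club''), let $\mathcal F_\alpha$ be the filter generated by the clubs of $\alpha$ lying in $N_\alpha$, get capturing from Clause~(2) of $\dl^*_S(\Pi^1_1)$ and positivity by reflecting ``all sections of $\mathbb A_0$ are stationary'', and obtain Clause~(2) of the lemma by the $\diamondsuit^*$-plus-Fodor diagonalization. The only differences are cosmetic: the paper derives Clause~(2) by observing that $\langle N_\alpha\cap\mathcal P(\alpha)\mid\alpha\in S'\rangle$ is a $\diamondsuit^*_{S'}$-sequence and citing Lemma~\ref{diag}, whose proof you inline, and your explicit formulation of $\mathcal F_\alpha$ as the filter generated by the $N_\alpha$-clubs is the correct reading of the paper's terser ``$\cub(\alpha)\cap N_\alpha$''.
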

\begin{proof}
(1) Let $\langle N_\alpha\mid\alpha\in S\rangle$ witness the validity of $\dl^*_S(\Pi^1_1)$.
As in the proof of Lemma~\ref{wc}(1), let $\Phi$ be a $\Pi^1_1$-sentence, such that, for every ordinal $\alpha$,
($\langle \alpha,{\in}\rangle \models \Phi$) iff ($\alpha$ is a regular cardinal).\footnote{Recall Convention~\ref{conv23}.}
Likewise, let $\Psi$ be a $\Pi^1_1$-sentence such that for every ordinal $\alpha$ and every $A\s\alpha\times\alpha$,
$\langle \alpha,{\in},A\rangle \models \Phi$ iff
$$\text{for every }\iota<\alpha, \{\beta<\alpha\mid (\iota,\beta)\in A\}\text{ is stationary in }\alpha.$$

Now, let $S'$ denote the set of all $\alpha\in S$ such that:
\begin{enumerate}
\item[(i)] $\alpha>\omega$;
\item[(ii)] $|N_\alpha|=|\alpha|$;
\item[(iii)] $\langle \alpha,{\in}\rangle\models_{N_\alpha}\Phi$.
\end{enumerate}
For each $\alpha \in S'$, by Clause~(iii), $\mathcal{F}_{\alpha}:=\cub(\alpha)\cap N_\alpha$ is a filter over $\alpha$.

\begin{claim}
Suppose $\langle Y_\iota\mid \iota<\kappa\rangle$ is a sequence of stationary subsets of $\kappa$.
Then there exist stationarily many  $\alpha\in S'$ such that,
for all $\iota<\alpha$, $Y_\iota\cap\alpha\in\mathcal F_\alpha^+$.
\end{claim}
\begin{proof}  Set $A:=\{(\iota,\beta) \mid \iota <\kappa\ \&\ \beta \in Y_\iota\}$.
Clearly, $\langle \kappa,{\in},A\rangle\models\Phi\land\Psi$,
Thus, recalling Clause~(3) of Definition~\ref{reflectingdiamond},
the set $T$ of all $\alpha\in S$ such that $|N_\alpha|=|\alpha|$ and
$\langle \alpha,{\in},A\cap(\alpha\times\alpha)\rangle\models_{N_\alpha}\Phi\land\Psi$ is stationary.
Evidently, every $\alpha\in T\setminus(\omega+1)$ is an element of $S'$ satisfying that, for all $\iota<\alpha$, $Y_\iota\cap\alpha\in\mathcal F_\alpha^+$.
\end{proof}

It follows in particular that $S'$ is stationary.
Finally, recalling Clause~(2) of Definition~\ref{reflectingdiamond},
$\vec{\mathcal{F}}:=\langle\mathcal F_\alpha\mid\alpha\in S'\rangle$ captures clubs.

(2) Continuing the proof of Clause~(1), we see that $\langle N_\alpha\cap\mathcal P(\alpha)\mid \alpha\in S'\rangle$ is a $\diamondsuit^*_{S'}$-sequence,
and $\vec{\mathcal{F}}$ witnesses that $\mathfrak f$-$\refl(\kappa,\kappa,S')$ holds.
The conclusion now follows from Lemma~\ref{diag}.
\end{proof}

In \cite{FMR}, we proved that $\dl^*_S(\Pi^1_2)$ holds in $\mathsf L$  for any stationary subset $S$ of any regular uncountable cardinal $\kappa$. Therefore:
\begin{cor}\label{V=L-fref}
Suppose $\mathsf V=\mathsf L$. Then, for every stationary  $S\s\kappa$,
$\kappa$ $\mathfrak f$-reflects with $\diamondsuit$ to $S$. In particular,
fake reflection holds at any non weakly compact cardinal. \qed
\end{cor}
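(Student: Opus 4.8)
The plan is to read the corollary off from the two facts that precede it: the theorem of \cite{FMR} asserting that $\dl^*_S(\Pi^1_2)$ holds in $\mathsf L$ for every stationary subset $S$ of every regular uncountable cardinal, and Lemma~\ref{dl-to-fref}, which converts $\dl^*_S(\Pi^1_1)$ into $\mathfrak f$-reflection with $\diamondsuit$. First I would record the purely syntactic observation that every $\Pi^1_1$-sentence is (equivalent to) a $\Pi^1_2$-sentence, obtained by prefixing a vacuous second-order existential quantifier; since Clauses~(1) and~(2) of Definition~\ref{reflectingdiamond} do not refer to the complexity of the sentences involved, while Clause~(3) for $\Pi^1_2$ quantifies over a superclass of the sentences occurring in Clause~(3) for $\Pi^1_1$, this yields the implication $\dl^*_S(\Pi^1_2)\implies\dl^*_S(\Pi^1_1)$. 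Therefore, under $\mathsf V=\mathsf L$, $\dl^*_S(\Pi^1_1)$ holds for every stationary $S\s\kappa$, and an application of Lemma~\ref{dl-to-fref}(2) gives that $\kappa$ $\mathfrak f$-reflects with $\diamondsuit$ to $S$, establishing the first assertion.

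For the ``in particular'' clause, I would invoke Jensen's theorem (\cite{jensen}) that in $\mathsf L$ every regular uncountable cardinal $\kappa$ which is not weakly compact carries a non-reflecting stationary set, i.e., a stationary $E\s\kappa$ with $E\cap\alpha$ non-stationary in $\alpha$ for all $\alpha<\kappa$. Fix such an $E$ and put $X:=\kappa$ and $S:=\kappa$. By the first part, $\kappa$ $\mathfrak f$-reflects with $\diamondsuit$ to $\kappa$, hence, via the implications $(1)\implies(2)\implies(3)$ of Proposition~\ref{prop23}, $\kappa$ $\mathfrak f$-reflects to $\kappa$; meanwhile $E$ is a stationary subset of $X$ that does not reflect to $S$ in the classical sense. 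This is precisely the configuration termed fake reflection at the opening of this section. (One could equally take $S:=E$.)

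There is no genuine obstacle to overcome here: all of the work sits in \cite{FMR} (the construction of a $\dl^*_S(\Pi^1_2)$-sequence in $\mathsf L$) and in the already-proved Lemma~\ref{dl-to-fref}; the only additions are the harmless reduction of complexity from $\Pi^1_2$ to $\Pi^1_1$ and a citation of Jensen's non-reflection theorem to supply the witnessing non-reflecting stationary set. If anything merits care, it is only checking that the $\models_N$ relation of Definition~\ref{reflectingdiamond} is insensitive to the vacuous quantifier, which is immediate from the definition.
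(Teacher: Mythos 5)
Your proposal is correct and follows the paper's own route exactly: the corollary is read off from the $\mathsf L$-theorem of \cite{FMR} giving $\dl^*_S(\Pi^1_2)$ (hence $\dl^*_S(\Pi^1_1)$) together with Lemma~\ref{dl-to-fref}(2), and the ``in particular'' clause is supplied by Jensen's non-reflecting stationary sets, just as the paper indicates at the opening of Section~\ref{section4}. The only addition you make --- the explicit remark that a $\Pi^1_1$-sentence is a $\Pi^1_2$-sentence with a vacuous quantifier --- is a harmless piece of bookkeeping the paper leaves implicit.
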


Furthermore, in \cite{FMR}, we proved that $\dl^*_S(\Pi^1_2)$ follows from a forceable condensation principle called ``Local Club Condensation'' ($\lcc$). In effect,
$\mathfrak f$-reflection is forceable (without assuming any large cardinals).
In this short section, we shall present an alternative and simpler poset for forcing
$\dl^*_S(\Pi^1_2)$ to hold. The idea is to connect the latter with the following strong form of diamond due to Sakai.

\begin{defn}[Sakai, \cite{sakai}] $\diamondsuit^{++}$ asserts the existence of a sequence $\langle K_\alpha\mid \alpha<\omega_1\rangle$ satisfying the following:
\begin{enumerate}
\item for every $\alpha<\omega_1$, $K_\alpha$ is a countable set;
\item for every $X\s\omega_1$, there exists a club $C\s\omega_1$ such that, for all $\alpha\in C$, $C\cap\alpha,X\cap\alpha\in K_\alpha$;
\item the following set is stationary in $[H_{\omega_2}]^\omega$:
$$\{M\in [H_{\omega_2}]^\omega\mid M\cap\omega_1\in\omega_1\land\clps(M,{\in})=(K_{M\cap\omega_1},{\in})\}.$$
\end{enumerate}
\end{defn}

First, we generalize Sakai's principle in the obvious way.
\begin{defn}\label{sakaidiamond}
For a stationary $S\s\kappa$, $\diamondsuit_S^{++}$
asserts the existence of a sequence $\langle K_\alpha\mid \alpha\in S\rangle$ satisfying the following:
\begin{enumerate}
\item for every infinite $\alpha\in S$, $K_\alpha$ is a set of size $|\alpha|$;
\item for every $X\s\kappa$, there exists a club $C\s\kappa$ such that, for all $\alpha\in C \cap S$, $C\cap\alpha,X\cap\alpha\in K_\alpha$;
\item the following set is stationary in $[H_{\kappa^+}]^{<\kappa}$:
$$\{M\in [H_{\kappa^+}]^{<\kappa}\mid M\cap \kappa\in S\ \&\ \clps(M,{\in})= (K_{M\cap\kappa},{\in})\}.$$
\end{enumerate}
\end{defn}
\begin{remark}
For a structure $\mathfrak M$, $\clps(\mathfrak M)$ denotes its Mostowski collapse.
Hereafter, $\zf^{-}$ denotes $\zf$ without the powerset axiom.
\end{remark}

\begin{lemma}\label{lemma49}
For every stationary $S\s\kappa$,
$\diamondsuit_S^{++}$ implies $\diamondsuit^+_S(\Pi^1_2)$.
\end{lemma}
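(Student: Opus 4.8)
The plan is to extract from a $\diamondsuit^{++}_S$-sequence $\langle K_\alpha\mid\alpha\in S\rangle$ a sequence $\vec N=\langle N_\alpha\mid\alpha\in S\rangle$ witnessing $\diamondsuit^+_S(\Pi^1_2)$, taking essentially $N_\alpha:=K_\alpha$. First comes a normalization: we may assume $S$ consists of limit ordinals, so that $\max\{\aleph_0,|\alpha|\}=|\alpha|$ for $\alpha\in S$. For $\alpha\in S$ we set $N_\alpha:=K_\alpha$ whenever $K_\alpha$ happens to be a transitive, p.r.-closed set seeing $\alpha$ --- this is automatic for every $\alpha$ arising from Clause~(3) of $\diamondsuit^{++}_S$, since there $K_\alpha=\clps(M,{\in})$ for some $M\prec(H_{\kappa^+},{\in})$ --- and otherwise let $N_\alpha$ be a canonical transitive p.r.-closed set of size $|\alpha|$ containing $(\alpha+1)\cup\{z\in K_\alpha\mid z\s\alpha\}$. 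Either way $N_\alpha$ sees $\alpha$, $|N_\alpha|=|\alpha|=\max\{\aleph_0,|\alpha|\}$, and the conclusion of Clause~(2) of $\diamondsuit^{++}_S$ transfers verbatim to $\vec N$ (as $N_\alpha$ retains the relevant members of $K_\alpha$), so Clauses~(1) and~(2) in the definition of $\dl^+_S(\Pi^1_1)$ together with the cardinality demand of $\diamondsuit^+$ are immediate. This bookkeeping is the only reason to distinguish $N_\alpha$ from $K_\alpha$; the substance is Clause~(3).

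For Clause~(3), fix a $\Pi^1_2$-sentence $\phi=\forall X_1\exists X_2\varphi$ over the relational language $\mathcal L$, and a sequence $\vec A=(A_m)_{m\in\omega}$ with $\langle\kappa,{\in},\vec A\rangle\models\phi$; given a club $E\s\kappa$ we must produce $\alpha\in E\cap S$ with $\langle\alpha,{\in},(A_m\cap\alpha^{m(\mathbb A_m)})_m\rangle\models_{N_\alpha}\phi$. The key observation is that, since $\phi$ has a single second-order alternation and every subset of a finite power of $\kappa$ lies in $H_{\kappa^+}$, the assertion $\langle\kappa,{\in},\vec A\rangle\models\phi$ is equivalent to a single first-order sentence $\Theta(\kappa,\vec A)$ holding in $(H_{\kappa^+},{\in})$, namely ``for every $X_1\s\kappa^{m(\mathbb X_1)}$ there is $X_2\s\kappa^{m(\mathbb X_2)}$ with $\langle\kappa,{\in},\vec A,X_1,X_2\rangle\models\varphi$'', where the innermost satisfaction relation for set-sized structures is computed identically in any transitive p.r.-closed set containing the structure. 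Now, by Clause~(3) of $\diamondsuit^{++}_S$ the set $D:=\{M\in[H_{\kappa^+}]^{<\kappa}\mid M\cap\kappa\in S\ \&\ \clps(M,{\in})=(K_{M\cap\kappa},{\in})\}$ is stationary, while the collection of $M\prec(H_{\kappa^+},{\in})$ with $\{\kappa,\vec A,E\}\s M$ and $M\cap\kappa$ a limit ordinal (hence, as $E\in M$, a point of $E$) contains a club; choose $M$ in the intersection and set $\alpha:=M\cap\kappa\in E\cap S$. Then $(H_{\kappa^+},{\in})\models\Theta(\kappa,\vec A)$, so $M\models\Theta(\kappa,\vec A)$, and applying the transitive collapse $\pi\colon M\to K_\alpha=N_\alpha$ --- which sends $\kappa$ to $\alpha$ and each $A_m$ to $A_m\cap\alpha^{m(\mathbb A_m)}$ --- yields $N_\alpha\models\Theta(\alpha,(A_m\cap\alpha^{m(\mathbb A_m)})_m)$. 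Unwinding $\Theta$ inside the transitive p.r.-closed set $N_\alpha$, and invoking the absoluteness of the satisfaction relation noted above, this is exactly $\langle\alpha,{\in},(A_m\cap\alpha^{m(\mathbb A_m)})_m\rangle\models_{N_\alpha}\phi$; moreover $|N_\alpha|=|\alpha|$ by Clause~(1) of $\diamondsuit^{++}_S$. As $E$ was arbitrary, Clause~(3) holds, completing the proof that $\vec N$ witnesses $\diamondsuit^+_S(\Pi^1_2)$.

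I expect the main obstacle to be the careful verification that the first-order rendering $\Theta$ of the $\Pi^1_2$-statement descends correctly under the Mostowski collapse: that $\pi(\vec A)$ is the expected restriction, that ``$\models\varphi$'' is evaluated the same way in $N_\alpha$ as in $V$, and that after collapsing, the quantifiers relativized to $N_\alpha$ in the definition of $\models_{N_\alpha}$ agree with the (now unrelativized) quantifiers of $\Theta$ in $N_\alpha$. This is precisely the point where one can afford a single second-order alternation but not a third --- a $\Pi^1_3$-statement would require quantification over $H_{\kappa^{++}}$-level objects not available inside $N_\alpha$ --- which is why the lemma is stated for $\Pi^1_2$. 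The secondary (routine) point is to carry out the normalization of the $K_\alpha$'s into genuinely transitive, p.r.-closed sets seeing $\alpha$ in a way that is compatible with Clause~(2), so that ``$\models_{N_\alpha}$'' is meaningful for every $\alpha\in S$.
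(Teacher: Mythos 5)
Your proof is correct and follows essentially the same route as the paper's: replace each $K_\alpha$ by a p.r.-closed transitive set seeing $\alpha$ (the paper uniformly takes the p.r.-closure of $K_\alpha\cup(\alpha+1)$ and observes that $N_\alpha=K_\alpha$ at the relevant points), then for Clause~(3) intersect the stationary set of collapsing models with the club of $M\prec H_{\kappa^+}$ containing $\vec A$ and the given club, and push the first-order rendering of $\phi$ through the Mostowski collapse using $\pi\restriction\alpha=\id$, $\pi(\kappa)=\alpha$ and $\pi(A_m)=A_m\cap\alpha^{m(\mathbb A_m)}$. One correction to your closing remark: nothing breaks at $\Pi^1_3$ --- every second-order quantifier, no matter how many alternations, still ranges over subsets of powers of $\kappa$, all of which lie in $H_{\kappa^+}$ and collapse correctly, and indeed the paper notes that the very same $\vec N$ witnesses $\diamondsuit^+_S(\Pi^1_n)$ for every positive integer $n$.
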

\begin{proof}
Suppose $\langle K_\alpha\mid\alpha\in S\rangle$ is  a $\diamondsuit_S^{++}$-sequence. Define a sequence $\vec{N}=\langle N_\alpha\mid\alpha\in S\rangle$ by letting $N_\alpha$ be the p.r.-closure of $K_\alpha\cup(\alpha+1)$. By the way the sequence $\vec{N}$ was constructed, $N_\alpha$ sees $\alpha$ for all $\alpha\in S$, and by Clause~(1) of Definition~\ref{sakaidiamond}, for every infinite $\alpha\in S$, $|N_\alpha|=|\alpha|$.
In addition, for every $X\s\kappa$, there exists a club $C\s\kappa$ such that
$C\cap\alpha,X\cap\alpha\in K_\alpha\s N_\alpha$  for all $\alpha\in C \cap S$.

Let us show that $\vec{N}$ satisfies Clause~(3) of Definition~\ref{reflectingdiamond} with $n=2$.
To this end, let $\phi=\forall X\exists Y\varphi$ be a $\Pi^1_2$-sentence and $(A_m)_{m\in\omega}$ be such that $\langle \kappa,\in,(A_m)_{m\in\omega}\rangle\models \phi$.
Given an arbitrary club $C\s\kappa$, we consider the following set
$$\mathcal C:=\{ M\prec H_{\kappa^+} \mid M\cap \kappa\in C\ \&\ (A_m)_{m\in\omega}\in M \}.$$
\begin{claim}$\mathcal C$ is a club in $[H_{\kappa^+}]^{<\kappa}$.
\end{claim}
\begin{proof}
By the L\"owenheim--Skolem theorem, for every $B\in[H_{\kappa^+}]^{<\kappa}$, we know that
$$\mathcal M_B=\{M\in [H_{\kappa^+}]^{<\kappa}\mid B\s M\prec H_{\kappa^+}\ \&\ (A_m)_{m\in\omega}\in M \}$$
is a club in $[H_{\kappa^+}]^{<\kappa}$,
so that $\kappa\cap\{M\cap\kappa\mid M\in\mathcal M_B\}$ is a club in $\kappa$.
Consequently, $\mathcal C$ is cofinal in $[H_{\kappa^+}]^{<\kappa}$.

To see that $\mathcal C$ is closed, assume we are given a chain $M_0\subseteq M_1\subseteq\cdots$ of length $\alpha<\kappa$ of elements of $\mathcal C$.
As this is a chain of elementary submodels of $H_{\kappa^+}$ of size smaller than $\kappa$,  $M^*:=\bigcup_{i<\alpha}M_i$ is an elementary submodel of $H_{\kappa^+}$ with $M^*\cap\kappa\in C$, so that $M^*\in\mathcal C$.
\end{proof}
Since $\vec{K}$ a $\diamondsuit_S^{++}$-sequence,
we may now pick $M$ in the following intersection
$$\mathcal C\cap \{M\in [H_{\kappa^+}]^{<\kappa}\mid M\cap \kappa\in S\ \&\ \clps (M,{\in})=(K_{M\cap\kappa},{\in})\}.$$
So, $M\prec H_{\kappa^+}$,
$(A_m)_{m\in\omega}\in M$,
$\alpha:=M\cap\kappa$ is in $S\cap C$,
and $\clps (M,{\in})=(K_\alpha,{\in})$.

As $M\cap(\kappa+1)=\alpha\cup\{\kappa\}$, $\alpha\cup\{\alpha\}$ is a subset of the collapse of $M$,
so that $K_\alpha$ sees $\alpha$ and $N_\alpha=K_\alpha$.
Let $\pi:M\rightarrow N_\alpha$ denote the transitive collapsing map.
Note that \begin{itemize}
\item[(i)] $\pi\restriction\alpha=\id_\alpha$,
\item[(ii)] $\pi(\kappa)=\alpha$, and
\item[(iii)]  $\pi(A_m)=A_m\cap\alpha$, for all $m\in\omega$.
\end{itemize}

Since $\langle \kappa,\in,(A_m)_{m\in\omega}\rangle\models \phi$, by definition, $\langle \kappa,\in,(A_m)_{m\in\omega}\rangle\models_{H_{\kappa^+}} \forall X\exists Y\varphi$.
That is,  $$H_{\kappa^+}\models ``\forall X\subseteq \kappa^{m(\mathbb{X})}\ \exists Y\subseteq \kappa^{m(\mathbb{Y})}\ \langle \kappa,\in,(A_m)_{m\in\omega}\rangle\models \varphi".$$
By elementarity and the fact that $``\forall X\subseteq \kappa^{m(\mathbb{X})}\exists Y\subseteq \kappa^{m(\mathbb{Y})}(\langle \kappa,\in,(A_m)_{m\in\omega}\rangle\models \varphi)" $  is equivalent to
\begin{gather*}
\forall X ((\forall x  (x \in X \rightarrow x \in \kappa^{m(\mathbb X)}))\rightarrow\hspace{135pt}\\\hspace{100pt}(\exists Y((\forall y (y \in Y \rightarrow y \in \kappa^{m(\mathbb Y)}))\land\langle \kappa,\in,(A_m)_{m\in\omega}\rangle\models \varphi ))),
\end{gather*}
which is a first-order formula in the parameters $m(\mathbb X)$, $m(\mathbb Y)$, $\kappa$, $\langle \kappa, \in, \vec{A} \rangle$ and  $\varphi$, we have
$$M\models ``\forall X\subseteq \kappa^{m(\mathbb X)}\exists Y\subseteq \kappa^{m(\mathbb Y)}(\langle \kappa,\in,(A_m)_{m\in\omega}\rangle\models \varphi)".$$
Since $\pi$ is an elementary embedding,
$$\pi[M]\models ``\forall X\subseteq \pi(\kappa^{m(\mathbb X)}) \exists Y\subseteq \pi(\kappa^{m(\mathbb Y)})(\langle \pi(\kappa),\in,(\pi(A_m))_{m\in\omega}\rangle\models \varphi)".$$

By the properties (i),(ii) and (iii) of $\pi$ it follows that
$$N_\alpha\models ``\forall X\subseteq \alpha^{m(\mathbb X)} \exists Y\subseteq \alpha^{m(\mathbb Y)}(\langle \alpha,\in,(A_m\cap(\alpha^{m(\mathbb A_m)}))_{m\in\omega}\rangle\models \varphi)".$$
We conclude $\langle \alpha,{\in},(A_n\cap(\alpha^{m(\mathbb A_n)}))_{n\in\omega}\rangle\models_{N_\alpha}\phi$, as sought.
\end{proof}
\begin{remark} An obvious tweaking of the above proof shows that
$\langle N_\alpha\mid\alpha\in S\rangle$ in fact witnesses
$\diamondsuit^+_S(\Pi^1_n)$ for every positive integer $n$.
\end{remark}

The following answers a question of Thilo Weinert:\footnote{Communicated in person to the third author in 2017.}

\begin{cor}\label{PlusNotPlusPlus}
It is consistent that $\diamondsuit^+_S$ holds, but $\diamondsuit_S^{++}$ fails.
\end{cor}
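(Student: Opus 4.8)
The plan is to observe that Sakai's $\diamondsuit^{++}_S$ is so strong that it entails filter reflection into $S$ from below, and then to confront this with the consistency of a strong failure of filter reflection that is nonetheless compatible with $\diamondsuit^+$. To this end, I would first record the following chain of implications, valid for an arbitrary stationary $S\s\kappa$: by Lemma~\ref{lemma49} and the remark immediately following it, $\diamondsuit^{++}_S$ implies $\diamondsuit^+_S(\Pi^1_1)$; unwinding the definitions, $\diamondsuit^+_S(\Pi^1_1)$ implies $\dl^+_S(\Pi^1_1)$, and hence $\dl^*_S(\Pi^1_1)$; by Lemma~\ref{dl-to-fref}(2), the latter gives that $\kappa$ $\mathfrak{f}$-reflects with $\diamondsuit$ to $S$, so by Proposition~\ref{prop23} $\kappa$ $\mathfrak{f}$-reflects to $S$; and by Monotonicity (Lemma~\ref{monotonicity}(1)), every stationary $Y\s\kappa$ then $\mathfrak{f}$-reflects to $S$. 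Summarizing: if $\diamondsuit^{++}_S$ holds and $\kappa\setminus S$ is stationary, then $\kappa\setminus S$ $\mathfrak{f}$-reflects to $S$.

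Next I would pass to the model furnished by Theorem~B, a cofinality-preserving forcing extension in which no two disjoint stationary subsets of $\kappa$ $\mathfrak{f}$-reflect to one another. Split $\kappa$ into two stationary pieces and let $S$ be one of them; then $\kappa\setminus S$ is stationary and disjoint from $S$, so by the conclusion of Theorem~B it does not $\mathfrak{f}$-reflect to $S$, whence by the previous paragraph $\diamondsuit^{++}_S$ fails in this model. To finish, I would use that $\diamondsuit^+_\kappa$ implies $\diamondsuit^+_S$ for every stationary $S\s\kappa$ (just restrict a $\diamondsuit^+_\kappa$-sequence to $S$), together with the refinement of Theorem~B recorded in Corollary~\ref{cor525}, where the witnessing poset is run over $\mathsf{L}$ and shown to preserve $\diamondsuit^+_\kappa$. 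This yields a model of $\diamondsuit^+_S\wedge\neg\diamondsuit^{++}_S$, as desired; the case $\kappa=\aleph_1$ recovers the form in which the question was originally posed.

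The routine ingredients are the implication chain of the first paragraph and the restriction remark for $\diamondsuit^+$. The one genuinely delicate point is that the anti-reflection poset preserves $\diamondsuit^+_\kappa$; I would treat this as part of the analysis of that poset carried out in Section~\ref{sectionFFR} (Corollary~\ref{cor525}), and it is the step I expect to be the main obstacle.
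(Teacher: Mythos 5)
Your proof is correct, but it takes a genuinely different route from the paper's. The paper's own proof is a two-line affair: it cites a model constructed in \cite[\S4]{FMR} in which $\diamondsuit^+_S$ holds for $S:=\omega_2\cap\cof(\omega)$ while $\dl^*_S(\Pi^1_2)$ fails, and then invokes Lemma~\ref{lemma49} to conclude that $\diamondsuit^{++}_S$ fails there too. You instead push the implication chain further --- $\diamondsuit^{++}_S\implies\dl^*_S(\Pi^1_1)\implies$ ``$\kappa$ $\mathfrak f$-reflects to $S$'' via Lemmas \ref{lemma49} and \ref{dl-to-fref} and Proposition~\ref{prop23} --- and then confront it with the anti-reflection machinery of Section~\ref{sectionFFR}. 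That chain is exactly right, and the paper itself signals this route when it calls Corollary~\ref{cor525} ``a slight improvement of Corollary~\ref{PlusNotPlusPlus}''; there is no circularity, since Corollary~\ref{cor525} does not depend on Corollary~\ref{PlusNotPlusPlus}. What your approach buys is self-containment (no appeal to the external model of \cite{FMR}) at the cost of needing the Section~\ref{sectionFFR} material. Two caveats on your second paragraph: first, the clean way to finish is not via the full Theorem~B model (the paper never establishes any form of $\diamondsuit^+$ there) but directly via the model of Corollary~\ref{cor525}, where $\diamondsuit^+_S$ holds for $S=\aleph_2\cap\cof(\omega)$ and $X=\aleph_2\cap\cof(\omega_1)$ does not $\mathfrak f$-reflect to $S$ --- by your implication chain and Lemma~\ref{monotonicity} this already refutes $\diamondsuit^{++}_S$. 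Second, your gloss on how $\diamondsuit^+_S$ is secured there is inaccurate: it is not a preservation of $\diamondsuit^+_\kappa$ over $\mathsf L$, but an application of Gregory's theorem \cite[Lemma~2.1]{MR485361} in the $\add(\kappa,\kappa^+)$-extension under the arithmetic hypothesis $\kappa=\aleph_2=2^{2^{\aleph_0}}$, and it only yields $\diamondsuit^+$ on the $\omega$-cofinal points, not on all of $\kappa$; likewise the closing remark about $\kappa=\aleph_1$ is not supported by anything in the paper. Neither caveat affects the validity of the argument once it is routed through Corollary~\ref{cor525} as stated.
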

\begin{proof} In \cite[\S4]{FMR}, we identified a model in which $\diamondsuit_S^+$ holds
for $S:=\omega_2\cap\cof(\omega)$ but  $\dl^*_S(\Pi^1_2)$ fails. By Lemma~\ref{lemma49},
$\diamondsuit^{++}_S$ fails in this model, as well.
\end{proof}

In \cite[Definition~3.1]{sakai}, Sakai presented a poset for forcing $\diamondsuit^{++}$ to hold.
The following is an obvious generalization (and a minor simplification) of Sakai's poset.

\begin{defn}\label{sakai-forcing}
Let $\mathbb S$ be the poset of all pairs $(k,\mathcal{B})$ with the following properties:
\begin{enumerate}
\item $k$ is a function such that \text{dom}$(k)<\kappa$;
\item for each $\alpha\in \text{dom}(k), k(\alpha)$ is a transitive model of $\zf^-$ of size $\le\max\{\aleph_0,|\alpha|\}$, with $k\restriction\alpha\in k(\alpha)$;
\item $\mathcal{B}$ is a subset of $\mathcal P(\kappa)$ of size $\le\dom(k)$;
\end{enumerate}
$(k',\mathcal{B}')\leq(k,\mathcal{B})$ in $\mathbb P$ if the following holds:
\begin{enumerate}
\item[(i)] $k'\supseteq k$, and $\mathcal{B}'\supseteq \mathcal{B}$;
\item[(ii)] for any $B\in \mathcal{B}$ and any $\alpha\in \text{dom}(k')\setminus \text{dom}(k)$, $B\cap\alpha\in k'(\alpha)$.
\end{enumerate}
\end{defn}

It is clear that $\mathbb S$ is ${<}\kappa$-closed.
Also, since we assume $\kappa^{<\kappa}=\kappa$, $\mathbb S$ has the $\kappa^+$-cc.
Finally, Sakai's proof of \cite[Lemma~3.4]{sakai} makes clear that the following holds.
\begin{prop} For every stationary $S\s\kappa$, $V^{\mathbb S}\models \diamondsuit^{++}_S$.\qed
\end{prop}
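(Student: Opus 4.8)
The plan is to verify, along the lines of Sakai's proof of \cite[Lemma~3.4]{sakai}, that the generic function read off from $G$ is the desired $\diamondsuit_S^{++}$-sequence. So, let $G$ be $\mathbb S$-generic over $V$, and set $k_G:=\bigcup\{k\mid \exists\mathcal B\,((k,\mathcal B)\in G)\}$. A routine density argument together with the ${<}\kappa$-closure of $\mathbb S$ (using that $\kappa$ is regular, so that a union of fewer than $\kappa$ conditions whose domains stay below $\kappa$ is again a condition) shows that $\dom(k_G)=\kappa$. For each infinite $\alpha\in S$, put $K_\alpha:=k_G(\alpha)$.

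Clause~(1) of Definition~\ref{sakaidiamond} is then immediate: by requirement~(2) in Definition~\ref{sakai-forcing}, $K_\alpha$ is a transitive model of $\zf^-$ of cardinality $\le\max\{\aleph_0,|\alpha|\}$ with $k_G\restriction\alpha\in K_\alpha$; as $K_\alpha$ is transitive and $k_G\restriction\alpha$ has domain $\alpha$, we get $\alpha\s K_\alpha$, whence $|K_\alpha|=|\alpha|$. For Clause~(2), I would argue by density. First, for any $B\s\kappa$ lying in $V$, the set of conditions $(k,\mathcal B)$ with $B\in\mathcal B$ is dense in $\mathbb S$; picking such a condition in $G$ and invoking requirement~(ii) of the ordering, one gets $B\cap\alpha\in k_G(\alpha)$ for every $\alpha$ above the domain of that condition. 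Iterating this through a ground-model club together with the given set, and closing off, yields for each $X\s\kappa$ in $V$ a club $C$ with $C\cap\alpha,X\cap\alpha\in K_\alpha$ for all $\alpha\in C\cap S$. For $X\s\kappa$ added by the forcing, one instead runs a ``decide and insert'' recursion: using ${<}\kappa$-closure one builds a descending sequence of conditions with increasing domains, each step deciding a longer initial segment of a name for $X$ and explicitly placing that (by then ground-model) segment into the models attached to the newly added coordinates; the union, suitably closed off, again provides the required club. Both of these are special cases of the corresponding step in \cite[Lemma~3.4]{sakai}.

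Clause~(3) is the heart of the matter. Fix a sufficiently large regular $\theta$ (with $\mathbb S,S\in H_\theta$), an $\mathbb S$-name $\dot{\mathcal D}$ for a club in $[H_{\kappa^+}]^{<\kappa}$, and a condition $p$. Since $S$ is stationary in $\kappa$, one may choose $M\prec H_\theta$ of size ${<}\kappa$ with $p,\mathbb S,S,\dot{\mathcal D}\in M$ and $M\cap\kappa=:\alpha\in S$. Using the ${<}\kappa$-closure of $\mathbb S$ and the fact that every condition in $M$ has domain ${<}\alpha$, one constructs (with some care at limit stages of an enumeration of length ${<}\kappa$ of the dense subsets of $\mathbb S$ lying in $M$, interleaved with domain-increasing steps) a condition $q'\le p$ that is $(M,\mathbb S)$-generic and satisfies $\dom(k_{q'})=\alpha$. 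In particular, $q'$ decides $\dot G\cap M$ to be a fixed filter $g\in V$, and hence decides the transitive collapse of $M[\dot G]\cap H_{\kappa^+}^{V[\dot G]}$ to be a fixed transitive model $N\in V$ computed from $M$ and $g$; a brief check shows $N\models\zf^-$, $|N|=|\alpha|$ and $k_{q'}\restriction\alpha\in N$. Now set $q:=(k_{q'}\cup\{(\alpha,N)\},\mathcal B_{q'})$; this is a legitimate condition below $p$, and, by $(M,\mathbb S)$-genericity, it forces that $M[\dot G]\cap\kappa=\alpha\in S$, that $M[\dot G]\cap H_{\kappa^+}^{V[\dot G]}\in\dot{\mathcal D}$, and that $\clps(M[\dot G]\cap H_{\kappa^+}^{V[\dot G]})=N=K_\alpha$. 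Thus $q$ forces that the set in Clause~(3) of Definition~\ref{sakaidiamond} meets $\dot{\mathcal D}$, which is what we need.

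I expect the main obstacle to be this last paragraph, and specifically the demand that the transitive collapse of $M[\dot G]\cap H_{\kappa^+}^{V[\dot G]}$ coincide \emph{on the nose} with $K_\alpha=k_G(\alpha)$: this is why it is essential that $q'$ be built with $\dom(k_{q'})$ equal to $\alpha$ exactly (so that the whole of $\dot G\cap M$, and hence the collapse, is already decided by $q'$) and that $q$ be obtained by appending precisely the predicted collapse $N$ at coordinate $\alpha$; arranging the bookkeeping of the $(M,\mathbb S)$-generic condition to cooperate simultaneously with the size constraint $|\mathcal B_q|\le\dom(k_q)$ and with the requirement $k_q\restriction\alpha\in k_q(\alpha)$ is the delicate point. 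All of this is carried out in \cite[Lemma~3.4]{sakai}, and only notational changes are needed here: ${<}\kappa$ in place of ``countable'', $[H_{\kappa^+}]^{<\kappa}$ in place of $[H_{\omega_2}]^{\omega}$, and an arbitrary stationary $S\s\kappa$ in place of $\kappa=\omega_1$ throughout.
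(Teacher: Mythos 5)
Your proposal is correct and follows essentially the same route as the paper, which simply observes that Sakai's proof of \cite[Lemma~3.4]{sakai} carries over verbatim after replacing $\omega_1$ by $\kappa$, ``countable'' by ``of size ${<}\kappa$'', and $[H_{\omega_2}]^\omega$ by $[H_{\kappa^+}]^{<\kappa}$. Your expanded account of the master-condition argument for Clause~(3) --- building an $(M,\mathbb S)$-generic condition with $\dom(k_{q'})=\alpha$ and appending the predicted collapse $N$ at coordinate $\alpha$ --- is exactly the content of Sakai's argument that the paper is invoking.
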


Note that while Sakai's forcing is considerably simpler than the poset to force $\lcc$ to hold,
it only yields ``$\kappa$ $\mathfrak f$-reflects to $S$'' for stationary subsets $S\s\kappa$ from the ground model,
whereas, $\lcc$ imply that $\kappa$ $\mathfrak f$-reflects to $S$ for any stationary $S\s\kappa$.

\begin{remark} For stationary subsets $X,S$ of $\kappa$,
if $X$ $\mathfrak f$-reflects to $S$,
then for every notion of forcing $\mathbb P$ of size ${<}\kappa$,
$V^{\mathbb P}\models X\ \mathfrak f\text{-reflects to }S$.
It takes a little more effort, but it can be shown that
if $X$ $\mathfrak f$-reflects with $\diamondsuit$ to $S$,
then for every notion of forcing $\mathbb P$ of size ${<}\kappa$,
$V^{\mathbb P}\models X\ \mathfrak f\text{-reflects with }\diamondsuit\text{ to }S$.
\end{remark}

In this section and in the previous one, we have collected a long list of sufficient conditions for filter reflection to hold.
We have seen it is compatible with large cardinals, strong forcing axioms, but also with inner models like $\mathsf{L}$, in which anti-reflection principles like $\square_\lambda$ hold.
This suggests it is not trivial to destroy filter reflection. The next section is dedicated to demonstrating it is nevertheless possible.

\section{Killing fake reflection}\label{sectionFFR}

\begin{defn}\label{appideal}    Let $ X\s \kappa$.
We define a collection $I[\kappa-X]$, as follows.

A set $Y$ is in $I[\kappa - X ]$ iff $Y\s\kappa$ and there exists a sequence
$\langle a_\beta\mid\beta<\kappa\rangle$ of elements of $[\kappa]^{<\kappa}$ along with a club $C\s\kappa$
such that, for every $\delta \in Y\cap C$, there is a cofinal subset $A\s\delta$ of order-type $\cf(\delta)$ such that
\begin{enumerate}
\item $ \{A \cap \gamma\mid \gamma< \delta \} \s \{a_{\beta} \mid \beta < \delta\}$, and
\item $\acc^+(A)\cap X=\emptyset$.
\end{enumerate}
\end{defn}
\begin{remark}
Note that $I[\kappa-X]$ is an ideal, and that $X\s X'$ entails $I[\kappa-X]\supseteq I[\kappa-X']$.
Shelah's \emph{approachability ideal} $I[\kappa]$ is equal to $I[\kappa-\emptyset]\restriction\sing$ (cf.~\cite{Todd}).
In particular, for every $\mu\in\reg(\kappa)$, $I[\kappa]\restriction\cof(\mu)$ coincides with $I[\kappa-\emptyset]\restriction\cof(\mu)$.
\end{remark}

\begin{fact}[folklore]\label{Cohen}
Every separative ${<}\kappa$-closed notion of forcing of size $\kappa$ is forcing equivalent to $\add(\kappa,1)$.
\end{fact}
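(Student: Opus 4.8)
The plan is to reduce the claim to a statement about dense subsets and then build the required isomorphism by a recursion of length $\kappa$. Recall that a separative poset and each of its dense subsets determine the same regular open (Boolean) completion, and that two notions of forcing are forcing equivalent precisely when their Boolean completions are isomorphic; hence it suffices to find a dense $D\s\mathbb P$ and a dense $E\s\add(\kappa,1)$ admitting an order-isomorphism $D\to E$. We will use the following properties of $\add(\kappa,1)=(2^{<\kappa},{\supseteq})$: it is separative; it is atomless, since every $s\in 2^{<\kappa}$ has two incompatible extensions; it has size $2^{<\kappa}=\kappa$; and it is ${<}\kappa$-closed, since $\kappa^{<\kappa}=\kappa$ forces $\kappa$ to be regular and hence the union of a $\subseteq$-increasing sequence of fewer than $\kappa$ binary strings of length ${<}\kappa$ is again such a string. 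Finally, we may assume $\mathbb P$ itself is atomless: a separative ${<}\kappa$-closed $\mathbb P$ of size $\kappa$ possessing a minimal condition would carry an atom in its Boolean completion and so could not be forcing equivalent to the atomless $\add(\kappa,1)$; thus, as is implicit in the phrase ``notion of forcing'' here, $\mathbb P$ is atomless, and by separativity this upgrades to: below every condition there are two incompatible conditions.

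Next I would run a back-and-forth between $\mathbb P$ and $\mathbb Q:=\add(\kappa,1)$. Fix enumerations $\langle p_\xi\mid\xi<\kappa\rangle$ of $\mathbb P$ and $\langle r_\xi\mid\xi<\kappa\rangle$ of $\mathbb Q$, together with a bookkeeping function $\kappa\to\kappa\times 2$ meeting each pair cofinally often. By recursion on $\beta<\kappa$, construct $\subseteq$-increasing chains of partial maps $\langle g_\beta\mid\beta<\kappa\rangle$, where each $g_\beta$ is an order- and incompatibility-preserving bijection from a subset $D_\beta\s\mathbb P$ onto a subset $E_\beta\s\mathbb Q$, each of size ${\le}|\beta|+\aleph_0$, maintaining the invariant that $D_\beta$ is predense in $\mathbb P$ and $E_\beta$ is predense in $\mathbb Q$. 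At limit stages take unions, which preserves all the requirements (predensity of a union of predense sets is automatic). At successor stage $\beta+1$, if the bookkeeping points to $p_\xi$ we use predensity of $D_\beta$ to pick $d\in D_\beta$ compatible with $p_\xi$; if $d\le p_\xi$ nothing need be done, and otherwise we pick $d'\le d,p_\xi$ and, using atomlessness and separativity of $\mathbb Q$ below $g_\beta(d)$, choose an image for $d'$ together with finitely many auxiliary conditions so as to preserve being an order- and $\perp$-preserving bijection; the case pointing to $r_\xi$ is handled symmetrically on the $\mathbb Q$-side. After $\kappa$ steps, $D:=\bigcup_{\beta<\kappa}D_\beta$ is dense in $\mathbb P$, $E:=\bigcup_{\beta<\kappa}E_\beta$ is dense in $\mathbb Q$, and $g:=\bigcup_{\beta<\kappa}g_\beta$ is an isomorphism $D\to E$; passing to Boolean completions yields $\mathrm{RO}(\mathbb P)\cong\mathrm{RO}(\add(\kappa,1))$, which is the desired conclusion.

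The part I expect to require real care — and the reason this is a folklore fact rather than a one-liner — is the coherent extension at the successor substeps together with the assimilation of chains formed at the cofinally many limit substeps. Concretely, when a fresh refinement $d'$ is thrown into the domain one must be able to locate on the other side a matching cluster of conditions realizing exactly the prescribed pattern of $\le$ and $\perp$ relative to the already-placed conditions; the standard remedy is to carry a stronger invariant (e.g.\ that $D_\beta$ and $E_\beta$ are closed under suitable finite meets/complements within the separative structure and that each meets every ``relevant'' condition), so that the extension reduces to repeatedly splitting a condition below a fixed one — which atomlessness always permits — while ${<}\kappa$-closure on both sides supplies lower bounds for the ${<}\kappa$-chains generated along the way, allowing them to be absorbed into the domain without destroying the size or predensity constraints. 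Once that bookkeeping is set up, verifying that $g$ is a dense isomorphism is routine.
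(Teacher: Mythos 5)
The paper states this Fact without proof, so I am measuring your proposal against the standard folklore argument. Your reading of the implicit atomlessness hypothesis, your verification of the relevant properties of $\add(\kappa,1)$, and your reduction of the problem to finding order-isomorphic dense subsets are all correct. The gap is in the back-and-forth engine itself, and it sits exactly where you flag it but is not repaired by the remedy you propose. At a successor step you must find, below $g_\beta(d)$, a condition realizing the \emph{entire} pattern of $\le$ and $\perp$ that the new condition $d'$ bears to the ${<}\kappa$-sized set $D_\beta$ --- not merely to finitely many previously placed conditions. Here is a concrete obstruction: suppose some $A\s D_\beta$ is a maximal antichain of $\mathbb P$ while $g_\beta[A]$, though an antichain of $\mathbb Q$, is not maximal in $\mathbb Q$; nothing in your invariants (order- and $\perp$-preservation, predensity of $D_\beta$ and $E_\beta$, closure under finite meets) rules this out, since maximality of an infinite antichain is not a finitary or partial-isomorphism-invariant property. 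When the bookkeeping later hands you a $q^*\in\mathbb Q$ incompatible with every element of $g_\beta[A]$, every $e'\le q^*$ that you might add to $E$ is incompatible with all of $g_\beta[A]$, so its preimage would have to be incompatible with all of $A$ --- impossible, as $A$ is maximal in $\mathbb P$. Thus either $E$ fails to be dense or the construction halts. ``Closure under finite meets/complements'' plus atomless splitting only lets you realize finite Boolean configurations; it does not control which infinite antichains have become maximal on one side but not the other, and that is the actual crux.

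The standard proof avoids this by never running an unstructured back-and-forth. Instead, inside each of $\mathbb P$ and $\add(\kappa,1)$ one builds a dense subset which is a tree of a single canonical isomorphism type: enumerate the poset as $\{q_\alpha\mid\alpha<\kappa\}$; at successor levels, below each node choose a maximal antichain of size exactly $\kappa$ all of whose members either extend or are incompatible with $q_\alpha$ (this uses separativity and atomlessness, and guarantees density of the tree); at limit levels, for each branch use ${<}\kappa$-closure to see that its set of lower bounds is nonempty and place there a maximal antichain of size $\kappa$ \emph{of that set of lower bounds} (rather than a single lower bound --- this is what keeps limit levels predense). One checks that the induced order on the tree coincides with the tree order, that each level is a maximal antichain of the ambient poset, and that any two trees built this way are isomorphic by a level-by-level induction. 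The only maximal antichains that ever need to be matched across the two posets are the levels and the branch-successor families, and these are maximal \emph{by construction} on both sides --- which is precisely the control your version lacks. If you want to keep a back-and-forth flavor, the invariant you must carry is essentially that $D_\beta$ and $E_\beta$ are isomorphic initial segments of such trees; with only the invariants you state, the argument does not go through.
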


\begin{thm}\label{CohenEffect2} Suppose $X,S$ are disjoint stationary subsets of $\kappa$,
with $S \in\allowbreak{I[\kappa - X]}$.
For every  $\vec{\mathcal F}=\langle\mathcal F_\alpha\mid \alpha\in S\rangle$, $V^{\add(\kappa,1)}\models X\text{ does not }\vec{\mathcal F}\text{-reflect to }S$.
\end{thm}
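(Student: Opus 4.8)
Here is how I would try to prove Theorem~\ref{CohenEffect2}.

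\medskip

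\noindent\textbf{Reduction.} By Fact~\ref{Cohen}, $\add(\kappa,1)$ is forcing equivalent to ${}^{<\kappa}2$ ordered by extension; in particular it is ${<}\kappa$-closed (hence adds no bounded subsets of $\kappa$ and every club of $\kappa$ in the extension contains a club of $\kappa$ from the ground model) and, since $\kappa^{<\kappa}=\kappa$, it has the $\kappa^+$-cc, so cofinalities and the stationarity of $X$ and of $S$ are preserved. Fix a generic $g\colon\kappa\to 2$. Working in $V[g]$, let $\vec{\mathcal F}=\langle\mathcal F_\alpha\mid\alpha\in S\rangle$ be arbitrary; if it fails to capture clubs we are done by definition, so assume it captures clubs. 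It then suffices to exhibit a stationary $Y\subseteq X$ for which $\{\alpha\in S\mid Y\cap\alpha\in\mathcal F_\alpha^+\}$ is non-stationary.

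\medskip

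\noindent\textbf{Preparing the approachability data.} Work in $V$ and fix a club $C\subseteq\kappa$ and a sequence $\langle a_\beta\mid\beta<\kappa\rangle$ witnessing $S\in I[\kappa-X]$. Since $X$ is stationary and $S$ is unbounded, $\delta\setminus X$ is cofinal in $\delta$ for all $\delta$ outside a non-stationary set (otherwise, by Fodor's lemma, $X$ would contain a tail of $\kappa$, contradicting that the disjoint set $S$ is unbounded). Using this, for every $\alpha$ outside a non-stationary subset of $S\cap C$ one may choose the witnessing cofinal $A_\alpha\subseteq\alpha$ of order-type $\cf(\alpha)$ so that, in addition to the requirements of Definition~\ref{appideal}, also $A_\alpha\cap X=\emptyset$. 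Shrink $C$ so that for some stationary $S_0\subseteq S$ with $S\setminus S_0$ non-stationary, $A_\alpha$ is defined for all $\alpha\in S_0$ with $\acc^+(A_\alpha)\cap X=\emptyset$, $A_\alpha\cap X=\emptyset$, and $\{A_\alpha\cap\gamma\mid\gamma<\alpha\}\subseteq\{a_\beta\mid\beta<\alpha\}$. The key consequence is that, for every $\alpha\in S_0$, the set $\cl(A_\alpha)\cap\alpha=A_\alpha\cup(\acc^+(A_\alpha)\cap\alpha)$ is a \emph{club in $\alpha$ disjoint from $X$}, hence disjoint from every subset of $X$; therefore, for any $Y\subseteq X$ and any $\alpha\in S_0$,
$$Y\cap\alpha\in\mathcal F_\alpha^+\ \Longrightarrow\ \cl(A_\alpha)\cap\alpha\notin\mathcal F_\alpha .$$

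\medskip

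\noindent\textbf{The generic witness.} Let $Y:=\{\gamma\in X\mid g(\gamma)=1\}$; a routine density argument (using ${<}\kappa$-closure and the stationarity of $X$ in $V[g]$) shows $Y$ is a stationary subset of $X$. The plan is to show $R:=\{\alpha\in S\mid Y\cap\alpha\in\mathcal F_\alpha^+\}$ is non-stationary. Assume otherwise, and fix $p\in g$ and a name $\dot{\vec{\mathcal F}}$ with $p\Vdash$``$\dot{\vec{\mathcal F}}$ captures clubs and $\dot R$ is stationary''. Since $S\setminus S_0$ is non-stationary, $R\cap S_0$ is stationary, so by the displayed implication the set $\{\alpha\in S_0\mid \cl(A_\alpha)\cap\alpha\notin\mathcal F_\alpha\}$ is stationary; for each such $\alpha$ the club $\cl(A_\alpha)\cap\alpha$ of $\alpha$ is built from $\langle a_\beta\mid\beta<\alpha\rangle$, and it is this ``approachable'' failure of capture that is meant to be leveraged. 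The remaining task --- and, I expect, the main obstacle --- is to use the Cohen genericity of $Y$ together with the $<\alpha$-coded structure of the traces $A_\alpha\cap\gamma$ to derive a contradiction with ``$\dot{\vec{\mathcal F}}$ captures clubs'': concretely, one runs a fusion/density recursion of length $\kappa$ down the tree of the sets $a_\beta$, at each stage using closure of the forcing to either bound $Y\cap\alpha$ below $\alpha$ (so that a captured tail of $\mathcal F_\alpha$ witnesses $Y\cap\alpha\notin\mathcal F_\alpha^+$) or else diagonalize to produce a single club of $\kappa$ whose trace $\vec{\mathcal F}$ must, but cannot, capture. The delicate point is that the sets $A_\alpha$ need not cohere, so there is no ground-model object carrying the needed information; the Cohen generic is exactly what supplies it, and making this transfer precise is where the real work lies.
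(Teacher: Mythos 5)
There is a genuine gap, and you have in fact located it yourself: the ``transfer'' you defer to the end is the entire content of the theorem, and the device you are missing is a factorization of the forcing. The paper does not derive a contradiction inside a single extension $V[g]$ by analysing the local clubs $\cl(A_\alpha)\cap\alpha$ (these play no role in the argument, and indeed a failure of \emph{local} capture at stationarily many $\alpha$ is not a failure of capturing clubs, which quantifies over global clubs $C\subseteq\kappa$; as you note, the $A_\alpha$ do not cohere into one). Instead, it introduces the poset $\mathbb R$ of pairs $(p,q)\in 2^{<\kappa}\times 2^{<\kappa}$ with $\dom(p)=\dom(q)$ a successor, $\{\alpha\mid p(\alpha)=q(\alpha)=1\}$ disjoint from $X$, and $\{\alpha\mid q(\alpha)=1\}$ closed. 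This is ${<}\kappa$-closed of size $\kappa$, hence (Fact~\ref{Cohen}) forcing equivalent to $\add(\kappa,1)$, and it factors as a first coordinate $\mathbb P$ (also equivalent to $\add(\kappa,1)$) adding a stationary $Y\subseteq X$, followed by a quotient $\mathbb Q$ shooting a \emph{global} club $C\subseteq\kappa$ disjoint from $Y$. By almost-homogeneity the assumed reflection holds in the final model $V[G_0][G_1]$, where one fixes $\alpha\in T:=\{\alpha\in S\mid Y\cap\alpha\in\mathcal F_\alpha^+\}$ with $C\cap\alpha\in\mathcal F_\alpha$, so that $C\cap Y\cap\alpha\neq\emptyset$ --- contradicting $C\cap Y=\emptyset$. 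The hypothesis $S\in I[\kappa-X]$ is used in exactly one place: to show that $\mathbb Q$ preserves the stationarity of $T$, via a recursion along the approachable set $A$ inside an elementary submodel, where the limit stages land in $\acc^+(A)\subseteq\kappa\setminus X$ and the condition may therefore legally close off with value $1$. None of this machinery --- the factorization, the preservation of $T$ under the club-shooting, the homogeneity step that lets the reflection hypothesis be applied in $V[G_0][G_1]$ rather than in $V[g]$ --- appears in your proposal, so the proof is not complete.

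A secondary point: your claim that the witnessing sets can be re-chosen with $A_\alpha\cap X=\emptyset$ is not justified (modifying $A_\alpha$ destroys the requirement that its initial segments be listed among $\{a_\beta\mid\beta<\alpha\}$), but it is also unnecessary; the paper only ever uses $\acc^+(A)\cap X=\emptyset$, and only at limit stages of the recursion just described.
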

\begin{proof} Towards a contradiction, suppose that $\vec{\mathcal F}$ is a counterexample.
As $\add(\kappa,1)$ is almost homogeneous and $X,S,\vec{\mathcal F}$ live in the ground model,
it follows that, in fact, $V^{\add(\kappa,1)}\models X~\vec{\mathcal F}\text{-reflects to }S$.

Let $R$ denote the set of all pairs $(p,q)\in 2^{<\kappa}\times 2^{<\kappa}$ such that:
\begin{itemize}
\item $\dom(p)=\dom(q)$ is in $\nacc(\kappa)$;
\item $\{\alpha\in \dom(p)\mid p(\alpha)=q(\alpha)=1\}$ is disjoint from $X$;
\item $\{\alpha\in \dom(q)\mid q(\alpha)=1\}$ is a closed set of ordinals.
\end{itemize}

We let $\mathbb R:=(R,\le)$ where $(p',q')\le (p,q)$ iff $p'\supseteq p$ and $q'\supseteq q$.
\begin{claim}$\mathbb R$ is ${<}\kappa$-closed.
\end{claim}
\begin{proof} Given $\theta\in\acc(\kappa)$ and a strictly decreasing sequence $\langle (p_i,q_i)\mid i<\theta\rangle$
of conditions in $\mathbb R$, let $p:=(\bigcup_{i<\theta}p_i){}^\curvearrowright0$
and $q:=(\bigcup_{i<\theta}q_i){}^\curvearrowright1$.
Clearly, $(p,q)$ is a legitimate condition extending $(p_i,q_i)$ for all $i<\theta$.
\end{proof}

It thus follows from Fact~\ref{Cohen} that $\mathbb R$ is forcing equivalent to $\add(\kappa,1)$.
Also, let $P:=\{p\mid \exists q~(p,q)\in R\}$. It is easy to see that $\mathbb P:=(P,\supseteq)$ is ${<}\kappa$-closed,
so that $\mathbb P$ is, as well, forcing equivalent to $\add(\kappa,1)$.
Next, let $G$ be $\mathbb R$-generic over $V$.
Let $G_0$ denote the projection of $G$ to the first coordinate,
so that $G_0$ is $\mathbb P$-generic over $V$.
In $V[G_0]$, let $Q:=\{q\in{}2^{<\kappa}\mid \exists p\in G_0~(p,q)\in R\}$.
Clearly, $\mathbb Q:=(Q,\supseteq)$ is isomorphic to the quotient forcing $\mathbb R/G_0$.
It follows that, in $V[G]$, we may read a $\mathbb Q$-generic set $G_1$ over $V[G_0]$
such that, in particular, $V[G]=V[G_0][G_1]$.

Denote $\eta:=\bigcup G_0$ and let $Y:=\{\alpha\in X\mid \eta(\alpha)=1\}$.

\begin{claim}\label{cohen2} In $V[G_0]$, $Y$ is stationary.
\end{claim}
\begin{proof}
We run a density argument for $\mathbb P$ in $V$.
Let $\dot Y$ be the $\mathbb P$-name for $Y$, that is,
$$\dot Y:=\{ (\check\alpha,p)\mid p\in P, \alpha\in X\cap\dom(p), p(\alpha)=1\}.$$
Let $p$ be an arbitrary condition that $\mathbb P$-forces that some $\dot D$ is a $\mathbb P$-name for a club in $\kappa$;
we shall find $p^\bullet\supseteq p $ such that $p^\bullet\forces_{\mathbb{P}}\dot{D}\cap\dot Y\neq\emptyset$.

Recursively define a sequence $\langle (p_i,\alpha_i)\mid i<\kappa\rangle$ as follows:
\begin{itemize}
\item[$\br$] Let $(p_0,\alpha_0)$ be such that $p_0\supseteq p$ and $p_0\forces_{\mathbb P}\check\alpha_0\in \dot D$.

\item[$\br$] Suppose that $i<\kappa$ for which $\langle (p_j,\alpha_{j}) \mid j \leq i \rangle $ has already been defined.
Set $\varepsilon_i:=\max\{\alpha_i,\dom(p_i)\} +1$.
Then pick $p_{i+1}\supseteq p_i$ and $\alpha_{i+1}<\kappa$
such that $\varepsilon_i\in \dom(p_{i+1})$ and $p_{i+1}\forces_{\mathbb P}\check\alpha_{i+1}\in \dot D\setminus\check\varepsilon_i$.

\item[$\br$] Suppose that $i\in\acc(\kappa)$ and that $\langle (p_j,\alpha_{j}) \mid j < i \rangle $ has already been defined.
Evidently, $$\sup_{j<i}\varepsilon_j=\sup_{j<i}(\dom(p_j))=\sup_{j<i}\alpha_j,$$
so we let $\alpha_i$ denote the above common value.

Finally, set $p_i:=(\bigcup_{j<i}p_j){}^\curvearrowright 1$,
so that $p_i$ is a legitimate condition satisfying $\dom(p_i)=\alpha_i+1$ and $p_i(\alpha_i)=1$.
\end{itemize}
This completes the recursive construction.
Evidently, $E:=\{ \alpha_i\mid i<\kappa\}$ is a club,
so as  $X$ is stationary,
we may pick $\beta\in X$ such that $\alpha_\beta=\beta$.
Then $p_\beta\forces_{\mathbb{P}}\check\beta\in \dot D\cap \check X$,
so that, from $p_\beta(\beta)=1$, we infer that $p_\beta\forces_{\mathbb{P}}\dot{D}\cap\dot Y\neq\emptyset$.
\end{proof}

Work in $V[G_0]$. Since $X$ $\vec{\mathcal F}$-reflects to $S$,
$T:=\{\alpha\in S\mid Y\cap\alpha\in\mathcal F_\alpha^+\}$ is stationary.

\begin{claim}\label{cohen3} In $V[G_0][G_1]$, $T$ is stationary.
\end{claim}
\begin{proof}
Fix $\vec{a}$, $C$ in $V$ that witness together that $S$ is in $I[\kappa - X]$.
As $\mathbb P$ is cofinality-preserving, in $V[G_0]$, the above two still witness together that $S$ is in $I[\kappa-X]$.
Work in $V[G_0]$.
As $T$ is a subset of $S$,
$\vec{a}$, $C$ also witness together that $T$ is in $I[\kappa-X]$.

We now run a density argument for $\mathbb Q$ in $V[G_0]$.
Let $q$ be an arbitrary condition that $\mathbb Q$-forces that some $\dot D$ is a $\mathbb Q$-name for a club in $\kappa$;
we shall find $q^\bullet \supseteq q $ such that $q^\bullet\forces_{\mathbb{Q}}\dot{D}\cap\check T\neq\emptyset$.

Fix a large enough regular cardinal $\Theta$ and some well-ordering $<_\Theta$ of $H_\Theta$.
By Claim~\ref{cohen2}, $T$ is stationary,
so we may find an elementary submodel $N\prec\left(H_\Theta,<_\Theta\right)$ such that $\vec a,C,\mathbb Q,q,\dot D\in N$ and $\delta:=N\cap \kappa$ is in $T$.

As $C\in N$, we altogether have $\delta\in C\cap T$.
Thus, we may pick a cofinal subset $ A\s \delta $ with $\otp(A)=\cf(\delta)$ and $\acc^+(A) \cap X = \emptyset $ such that:
$$\{ A \cap \gamma\mid \gamma< \delta\}\s \{a_{\beta} \mid \beta < \delta \}.$$
In particular, any proper initial segment of $A$ is in $N$.

Let $\langle \delta_i \mid {i} <\cf(\delta)\rangle$ be the increasing enumeration of $A$.
For every initial segment $a$ of $A$,
we recursively define the following sequence $\langle (q_{i},\alpha_{i})\mid{i}\le\sigma(a)\rangle$,
where $\sigma(a)$ will the length of the recursion (see the second case below).

\begin{itemize}
\item[$\br$] Let $q_{0}$ be the $<_\Theta$-least condition in $\mathbb{Q}$ extending $q$ for which there is $\alpha<\kappa$ such that $q_{0} \forces_{\mathbb Q}\check{\alpha} \in \dot D$.
Now, let  $\alpha_{0}$ be the $<_\Theta$-least ordinal $\alpha$ such that $q_{0}\forces_{\mathbb Q}\check{\alpha} \in \dot D$.
\item[$\br$]  Suppose that $\langle (q_{j},\alpha_{j}) \mid j \leq i \rangle$ has already been defined. If $a\setminus\max\{\alpha_i,\allowbreak\dom(q_{i}),\delta_{i}\}$
is empty, then we terminate the recursion, and set  $\sigma(a):={i}$. Otherwise, let
$\varepsilon_i$ be the $<_\Theta$-least element of $a\setminus\max\{\alpha_i,\dom(q_{i}),\delta_{i}\}$,
and then let  $q_{{i}+1}$ be the $<_\Theta$-least condition in $\mathbb{Q}$ extending $q_i$ satisfying $\varepsilon_{i}\in\dom(q_{{i}+1})$ and
satisfying that there is $\alpha <\kappa$ such that $q_{{i}+1} \forces_{\mathbb Q}\check{\alpha} \in \dot D \setminus \varepsilon_i$.
Now, let $\alpha_{{i}+1}$ be the $<_\Theta$-least ordinal $\alpha$ such that $q_{{i}+1} \forces_{\mathbb Q}\check{\alpha} \in \dot D \setminus \varepsilon_i$.
\item[$\br$] Suppose that ${i}$ is a limit ordinal and that $\langle (q_{j},\alpha_{j}) \mid j<{i}\rangle$ has already been defined.
Evidently, $$\sup_{j<i}\varepsilon_j=\sup_{j<i}(\dom(q_j))=\sup_{j<i}\alpha_j,$$
so we let $\alpha_i$ denote the above common value. As $\{ \varepsilon_j\mid j<i\}\s a\s A$ and as $\acc^+(A)\cap X=\emptyset$, we infer that $\alpha_i\notin  X$.
So, $q_i:=(\bigcup_{j<{i}}q_{j}){}^\curvearrowright 1$ is a legitimate condition
satisfying $\dom(q_i)=\alpha_i+1$ and $q_i(\alpha_i)=1$.
\end{itemize}

This completes the recursive construction.
Since every proper initial segment of $A$ is in $N$,
for every $\gamma<\cf(\delta)$, $\langle (q_i,\alpha_i)\mid i\le\sigma(A\cap\gamma)\rangle$ is in $N$,
so that $\sigma(A)=\cf(\delta)$ and $\alpha_{\cf(\delta)}=\delta$. Altogether, $q_{\cf(\delta)}\forces_{\mathbb Q}\check\delta\in\dot D$.
Recalling that $\delta\in T$, our proof is complete.
\end{proof}
\begin{claim}\label{cohen4} In $V[G_0][G_1]$, $Y$ is non-stationary.
\end{claim}
\begin{proof}
It is clear that $C:=\{ \alpha<\kappa\mid \exists q\in G_1(q(\alpha)=1)\}$ is a closed subset of $\kappa$
which is disjoint from $Y$. Thus, we are left with proving that $C$ is unbounded in $\kappa$.
To this end, we run a density argument for $\mathbb Q$ in $V[G_0]$.
For every condition $q$ in $\mathbb Q$,
find $\delta\in S$ above $\dom(q)$,
and then define $q^\bullet:\delta+1\rightarrow 2$ via:
$$q^\bullet(\alpha):=\begin{cases}
q(\alpha),&\text{if }\alpha\in\dom(q);\\
1,&\text{if }\alpha=\delta;\\
0,&\text{otherwise}.\end{cases}$$

As $X\cap S=\emptyset$, $q^\bullet$ is a legitimate condition extending $q$,
and, in addition, $\{\alpha<\kappa\mid q^\bullet(\alpha)=1\}$ is a proper end-extension of
$\{\alpha<\kappa\mid q(\alpha)=1\}$.
\end{proof}

Work in $V[G_0][G_1]$. Fix a club $C$ disjoint from $Y$.
Since $X$ $\vec{\mathcal F}$-reflects to $S$, in particular, $\vec{\mathcal F}$ capture clubs,
so that $\{\alpha\in S\mid C\cap\alpha\notin\mathcal F_\alpha\}$ is non-stationary.
Recalling that $T$ is stationary, we now fix $\alpha\in T$ such that $C\cap\alpha\in\mathcal F_\alpha$.
By definition of $T$, we also have $Y\cap\alpha\in\mathcal F_\alpha^+$, so that $(C\cap\alpha)\cap(Y\cap\alpha)$ is nonempty,
contradicting the fact that $C$ is disjoint from $Y$.
\end{proof}

\begin{cor}\label{corollary4122} Suppose $X,S$ are disjoint stationary subsets of $\kappa$,
with $S \in I[\kappa - X]$.
After forcing with $\add(\kappa,\kappa^{+})$, $X$ does not $\mathfrak f$-reflect to $S$.
\end{cor}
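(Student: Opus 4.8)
The plan is to reduce the statement to Theorem~\ref{CohenEffect2} by exploiting the product structure of $\mathbb A:=\add(\kappa,\kappa^{+})$. Recall that $\mathbb A$ is ${<}\kappa$-closed and, since $\kappa^{<\kappa}=\kappa$, has the $\kappa^{+}$-cc; hence it preserves cofinalities, preserves stationary subsets of $\kappa$, and preserves the identity $\kappa^{<\kappa}=\kappa$. Moreover, for every $I\s\kappa^{+}$ there is the canonical isomorphism $\mathbb A\cong\add(\kappa,I)\times\add(\kappa,\kappa^{+}\setminus I)$, so that forcing with $\mathbb A$ factors as a two-step iteration with either factor taken first. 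I would use this to isolate a single Cohen coordinate over an intermediate model in which a putative $\mathfrak f$-reflection witness already lives.

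So assume towards a contradiction that, in $V^{\mathbb A}$, $X$ $\mathfrak f$-reflects to $S$, and fix a stationary $S'\s S$ together with a sequence $\vec{\mathcal F}=\langle\mathcal F_{\alpha}\mid\alpha\in S'\rangle$ witnessing this. Working in $V^{\mathbb A}$, we still have $\kappa^{<\kappa}=\kappa$, so $2^{|\alpha|}\le\kappa$ for every $\alpha<\kappa$, and therefore the pair $(S',\vec{\mathcal F})\in H_{\kappa^{+}}$ can be coded by a single set $A\s\kappa$. By the $\kappa^{+}$-cc of $\mathbb A$, $A$ admits a nice name $\dot A$, namely a union of $\kappa$-many antichains, each of size $\le\kappa$, each of whose conditions has support of size $<\kappa$; hence the set $I\s\kappa^{+}$ of all coordinates occurring in (the domains of conditions occurring in) $\dot A$ has size $\le\kappa$. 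Since $|I|\le\kappa<\kappa^{+}$, fix $j\in\kappa^{+}\setminus I$ and factor $\mathbb A\cong\add(\kappa,\kappa^{+}\setminus\{j\})\times\add(\kappa,1)$, where the first factor consists of the conditions of $\mathbb A$ not mentioning the coordinate $j$. Put $W:=V^{\add(\kappa,\kappa^{+}\setminus\{j\})}$, so that $V^{\mathbb A}=W^{\add(\kappa,1)}$. Since $\dot A$ is an $\add(\kappa,I)$-name with $I\s\kappa^{+}\setminus\{j\}$, already $A\in W$, and hence $(S',\vec{\mathcal F})\in W$.

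It then remains to check that $W$ satisfies the hypotheses of Theorem~\ref{CohenEffect2} for the pair $(X,S')$ and the sequence $\vec{\mathcal F}$. In $W$ the sets $X$ and $S$ remain disjoint and stationary and $S'\s S$ remains stationary, as $\add(\kappa,\kappa^{+}\setminus\{j\})$ is cofinality-preserving and preserves stationary subsets of $\kappa$; and $W$ still satisfies $\kappa^{<\kappa}=\kappa$. Also $S\in I[\kappa-X]$ is preserved to $W$: the ground-model witness $\langle a_{\beta}\mid\beta<\kappa\rangle$ together with its club $C$ still works, since $\add(\kappa,\kappa^{+}\setminus\{j\})$ is ${<}\kappa$-closed (so it adds no bounded subsets of $\kappa$ and preserves $\cf(\delta)$ for all $\delta<\kappa$), exactly as in the proof of Claim~\ref{cohen3}; and since $I[\kappa-X]$ is an ideal, $S'\in I[\kappa-X]$ as well. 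Thus Theorem~\ref{CohenEffect2}, applied in $W$ to the disjoint stationary pair $(X,S')$ and the sequence $\vec{\mathcal F}$, yields $W^{\add(\kappa,1)}\models X\text{ does not }\vec{\mathcal F}\text{-reflect to }S'$. But $W^{\add(\kappa,1)}=V^{\mathbb A}$, contradicting the choice of $\vec{\mathcal F}$; hence, in $V^{\mathbb A}$, $X$ does not $\mathfrak f$-reflect to $S$.

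The only point that needs care is the passage from an arbitrary $\mathfrak f$-reflection witness, which is an object of $H_{\kappa^{+}}$, to a subset of $\kappa$ captured by a subforcing of $\mathbb A$ of size $\le\kappa$; this is exactly where $\kappa^{<\kappa}=\kappa$ (to code each filter $\mathcal F_{\alpha}$ by a subset of $\kappa$) and the $\kappa^{+}$-cc (to bound the support of a nice name) are both needed. Everything else is routine: ``product forcing equals iterated forcing'' for $\add$, and the absoluteness of $I[\kappa-X]$-membership under ${<}\kappa$-closed forcing are precisely the ingredients already used in the proof of Theorem~\ref{CohenEffect2}.
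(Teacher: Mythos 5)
Your proof is correct, and it follows the same overall strategy as the paper's: use the $\kappa^+$-cc to capture the putative witness $\vec{\mathcal F}$ inside a subextension generated by at most $\kappa$ many coordinates, and then invoke Theorem~\ref{CohenEffect2} over that intermediate model. The difference lies in the factorization. The paper takes the intermediate model to be an \emph{initial} stage $V[G_\iota]$ containing $\vec{\mathcal F}$, applies Theorem~\ref{CohenEffect2} to conclude that $X$ does not $\vec{\mathcal F}$-reflect to $S'$ in $V[G_{\iota+1}]$, and must then transfer this failure upward to $V[G_{\kappa^+}]$; that last step costs an extra argument, namely that the quotient $\add(\kappa,\kappa^+)/G_{\iota+1}$ is again isomorphic to $\add(\kappa,\kappa^+)$ and preserves stationary subsets of $\kappa$, so that a counterexample (a club not captured, or a stationary $Y\s X$ whose trace is non-stationary) survives to the top model. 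You instead peel off a single coordinate $j$ \emph{outside} the support of the nice name and write $V^{\mathbb A}=W^{\add(\kappa,1)}$ with $\vec{\mathcal F}\in W$, so Theorem~\ref{CohenEffect2} applied over $W$ lands directly in the final model and the contradiction is immediate, with no upward-transfer step. What you pay for this is having to re-verify the hypotheses of Theorem~\ref{CohenEffect2} in the large intermediate model $W$ --- in particular that $S\in I[\kappa-X]$ still holds there, which you correctly handle via the ground-model witness and ${<}\kappa$-closure (the same observation the paper makes inside Claim~\ref{cohen3}); note also that the stationarity of $S'$ in $W$ is most cleanly seen by downward absoluteness from $V^{\mathbb A}$ rather than by preservation from $V$, since $S'$ need not belong to $V$. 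Both routes are sound; yours trades the tail-preservation argument for the product rearrangement.
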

\begin{proof} Let $G$ be $\add(\kappa,\kappa^+)$-generic over $V$.
For every $\iota\le\kappa^+$, let $G_\iota$ denote the projection of $G$ into the $\iota^{th}$ stage.

Work in $V[G_{\kappa^+}]$.
Towards a contradiction, suppose that $S'$ is a stationary subset of $S$,
and $\vec{\mathcal F}=\langle \mathcal F_\alpha\mid\alpha\in S'\rangle$ is a sequence such that $X$ $\vec{\mathcal F}$-reflects to $S'$.
As $\add(\kappa,\kappa^+)$ does not add bounded subsets of $\kappa$, $\vec{\mathcal F}\s(H_\kappa)^V$.
In addition, $\add(\kappa,\kappa^+)$ has the $\kappa^{+}$-cc, so that, altogether, $\vec{\mathcal F}$ admits a nice name of size $\kappa$.
It follows that we may find a large enough $\iota<\kappa^+$ such that
$\vec{\mathcal F}$ is in $V[G_\iota]$.
Now, by Theorem~\ref{CohenEffect2},
$V[G_{\iota+1}]\models``X\text{ does not }\vec{\mathcal F}\text{-reflect to }S'"$.
Recalling that $V[G_{\kappa^+}]\models``X~\vec{\mathcal F}\text{-reflect to }S'"$,
it must be the case that there exists a stationary subset of $\kappa$ in $V[G_{\iota+1}]$ that ceases to be stationary in $V[G_{\kappa^+}]$.
However, the quotient forcing $\add(\kappa,\kappa^+)/G_{\iota+1}$ is isomorphic to $\add(\kappa,\kappa^+)$
and the latter preserves stationary subsets of $\kappa$. This is a contradiction.
\end{proof}

\begin{lemma}\label{lemma515} Suppose that $\kappa$ is strongly inaccessible or $\kappa=\lambda^+$ with $\lambda^{<\lambda}=\lambda$.
For every stationary $X,Y\s\kappa$ such that $\tr(X)\cap Y$ is non-stationary, $Y\in I[\kappa-X]$.
\end{lemma}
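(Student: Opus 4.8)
The plan is to construct, uniformly in the two cases, a sequence $\langle a_\beta\mid\beta<\kappa\rangle$ and a club $C$ witnessing $Y\in I[\kappa-X]$: the cardinal arithmetic of $\kappa$ will take care of Clause~(1) of Definition~\ref{appideal}, while the hypothesis that $\tr(X)\cap Y$ is non-stationary will take care of Clause~(2). First I would isolate the relevant family of ``small'' sets by setting $\mathcal D:=[\kappa]^{<\kappa}$ in case $\kappa$ is strongly inaccessible, and $\mathcal D:=[\kappa]^{<\lambda}$ in case $\kappa=\lambda^+$ with $\lambda^{<\lambda}=\lambda$. In both cases, since $\kappa$ is regular, every member of $\mathcal D$ is a bounded subset of $\kappa$, and a routine computation gives $|\mathcal D|=\kappa$ together with $|\{a\in\mathcal D\mid a\s\gamma\}|<\kappa$ for every $\gamma<\kappa$ --- in the inaccessible case because $2^{|\gamma|}<\kappa$, and in the successor case because $|\gamma|^{<\lambda}\le\lambda^{<\lambda}=\lambda<\kappa$ while $\kappa^{<\lambda}=\kappa$. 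Fix an enumeration $\langle a_\beta\mid\beta<\kappa\rangle$ of $\mathcal D$, let $g(\gamma):=\sup\{\beta+1\mid a_\beta\s\gamma\}$ for $\gamma<\kappa$ (so $g(\gamma)<\kappa$ by the displayed bound and the regularity of $\kappa$), and let $C_0$ be the club of all limit ordinals $\delta<\kappa$ such that $g(\gamma)<\delta$ for every $\gamma<\delta$; then for each $\delta\in C_0$, every bounded subset of $\delta$ lying in $\mathcal D$ is of the form $a_\beta$ for some $\beta<\delta$. Finally, since $\tr(X)\cap Y$ is non-stationary, fix a club $C_1$ disjoint from $\tr(X)\cap Y$, and set $C:=C_0\cap C_1$.

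To verify that $\langle a_\beta\mid\beta<\kappa\rangle$ and $C$ witness $Y\in I[\kappa-X]$, fix an arbitrary $\delta\in Y\cap C$; then $\delta$ is a limit ordinal and $\delta\notin\tr(X)$. If $\cf(\delta)=\omega$, take $A$ to be the range of some strictly increasing $\omega$-sequence cofinal in $\delta$, so that every proper initial segment of $A$ is finite and $\acc^+(A)=\emptyset$. If $\cf(\delta)>\omega$, then $X\cap\delta$ is non-stationary in $\delta$, so fix a club $E\s\delta$ with $E\cap X=\emptyset$ and let $A$ be the range of a strictly increasing cofinal map from $\cf(\delta)$ into $E$; since $E$ is closed in $\delta$, $\acc^+(A)\s\acc^+(E)\s E$, whence $\acc^+(A)\cap X=\emptyset$. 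In either case $A\s\delta$ is cofinal of order type $\cf(\delta)$ with $\acc^+(A)\cap X=\emptyset$, which is Clause~(2). For Clause~(1), fix $\gamma<\delta$: then $A\cap\gamma\s\gamma$ is a bounded subset of $\delta$, and it is a proper initial segment of $A$, so $\otp(A\cap\gamma)<\cf(\delta)$. In the inaccessible case this already gives $A\cap\gamma\in[\kappa]^{<\kappa}=\mathcal D$; in the successor case, $\cf(\delta)\le|\delta|\le\lambda$, so $|A\cap\gamma|<\lambda$ and again $A\cap\gamma\in[\kappa]^{<\lambda}=\mathcal D$. Since $\delta\in C_0$ and $A\cap\gamma$ is a bounded subset of $\delta$ lying in $\mathcal D$, we conclude $A\cap\gamma\in\{a_\beta\mid\beta<\delta\}$, so that $\{A\cap\gamma\mid\gamma<\delta\}\s\{a_\beta\mid\beta<\delta\}$, completing the proof.

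I expect the only step requiring genuine care to be the verification, in the successor case, that every proper initial segment of the chosen $A$ lies in $\mathcal D$: this rests on $A\cap\gamma$ being a \emph{proper} initial segment of $A$ (so that $\otp(A\cap\gamma)<\cf(\delta)$) and on $\cf(\delta)\le\lambda$ holding for all $\delta<\lambda^+$, together with the identity $\kappa^{<\lambda}=\kappa$. Otherwise this is just the familiar argument showing $\kappa\cap\cof({<}\lambda)\in I[\kappa]$ when $\lambda^{<\lambda}=\lambda$ (and its analogue for inaccessible $\kappa$), enhanced by using the hypothesis $\tr(X)\cap Y\in\ns_\kappa$ exactly where needed --- to pick the approximating set $A$ with all its accumulation points outside $X$.
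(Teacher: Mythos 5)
Your proof is correct and follows essentially the same route as the paper's: the paper unifies your two cases by setting $\theta:=\sup(\reg(\kappa))$ and enumerating $[\kappa]^{<\theta}$, which is exactly your $\mathcal D$, and then uses the same closure-point club together with the non-stationarity of $\tr(X)\cap Y$ to select an approximating set $A$ avoiding $X$ in its accumulation points. Your explicit split between $\cf(\delta)=\omega$ (where $\acc^+(A)=\emptyset$ automatically) and $\cf(\delta)>\omega$ is, if anything, a slightly more careful rendering of the same argument.
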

\begin{proof} Let $\theta:=\sup(\reg(\kappa))$, so that $\kappa\in\{\theta,\theta^+\}$,
and, for every $\gamma<\kappa$, $|[\gamma]^{<\theta}|<\kappa$.
Let $\langle a_\beta\mid \beta<\kappa\rangle$
be some enumeration of $[\kappa]^{<\theta}$,
and then define a function $f:\kappa\rightarrow\kappa$ via:
$$f(\gamma):=\min\{\tau<\kappa\mid \mathcal [\gamma]^{<\theta}\s\{a_\beta\mid \beta<\tau\}\}.$$

Now, fix arbitrary $X,Y\s\kappa$ for which $\tr(X)\cap Y$ is non-stationary.
To see that $Y\in I[\kappa-X]$,
fix a subclub $C$ of $\{\delta<\kappa \mid f[\delta]\s\delta\}$ disjoint from
$\tr(X)\cap Y$. Let $\delta\in Y\cap C$ be arbitrary.
Fix a club $A$ in $\delta$ of order-type $\cf(\delta)$ such that $A\cap X=\emptyset$.
By definition of $\theta$, we have $\cf(\delta)\le\theta$, so that, for every $\gamma<\delta$,
we have $\otp(A\cap\gamma)<\theta$ and $f(\gamma)<\delta$,
and hence there must exist some $\beta<\delta$ with $A\cap\gamma=a_\beta$.
\end{proof}

We are now ready to derive Theorem~C:

\begin{cor}
If $\kappa$ is strongly inaccessible, then in the forcing extension by $\add(\kappa,\kappa^+)$, for all two disjoint stationary subsets $X,S$ of $\kappa$,
the following are equivalent:
\begin{enumerate}
\item $X$ $\mathfrak f$-reflects to $S$;
\item every stationary subset of $X$ reflects in $S$.
\end{enumerate}
\end{cor}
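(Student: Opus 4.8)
The plan is to prove the two implications separately, with only $(1)\Rightarrow(2)$ making use of the forcing; the implication $(2)\Rightarrow(1)$ is a $\zfc$ triviality. For $(2)\Rightarrow(1)$, I will set $S':=\tr(X)\cap S$. Since $X$ is itself a stationary subset of $X$, clause~(2) guarantees that $X$ reflects in $S$, so $S'$ is a stationary subset of $S$; moreover $S'\s\cof({>}\omega)$ by the very definition of $\tr$, so that $\cub(\alpha)$ is a genuine filter over $\alpha$ for each $\alpha\in S'$. Taking $\mathcal F_\alpha:=\cub(\alpha)$ for all $\alpha\in S'$, the sequence $\vec{\mathcal F}=\langle\mathcal F_\alpha\mid\alpha\in S'\rangle$ captures clubs (as $C\cap\alpha\in\cub(\alpha)$ whenever $\alpha\in\acc(C)$), and for every stationary $Z\s X$, clause~(2) gives that $\tr(Z)\cap S$ is stationary, this set is contained in $\tr(X)\cap S=S'$ since $Z\s X$, and $Z\cap\alpha\in\cub(\alpha)^+=\mathcal F_\alpha^+$ for each $\alpha$ in it; so $X$ $\vec{\mathcal F}$-reflects to $S'$, i.e.~$X$ $\mathfrak f$-reflects to $S$. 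Note that this argument uses neither inaccessibility nor the forcing.

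For $(1)\Rightarrow(2)$, I will argue in $V[G]$, where $G$ is $\add(\kappa,\kappa^+)$-generic over $V$, and suppose toward a contradiction that $X$ $\mathfrak f$-reflects to $S$ while some stationary $Y\s X$ fails to reflect in $S$, i.e.~$\tr(Y)\cap S$ is non-stationary. By Monotonicity (Lemma~\ref{monotonicity}(1)), $Y$ also $\mathfrak f$-reflects to $S$. Since $\add(\kappa,\kappa^+)$ is ${<}\kappa$-closed, has the $\kappa^+$-cc, and $\kappa^{<\kappa}=\kappa$, the sets $Y$ and $S$ admit nice names of size $\le\kappa$, so there is $\iota<\kappa^+$ with $Y,S\in V[G_\iota]$, where $G_\iota$ denotes the restriction of $G$ to the first $\iota$ coordinates. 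The forcing $\add(\kappa,\iota)$ adds no bounded subsets of $\kappa$ and preserves cardinals, so in $V[G_\iota]$ the cardinal $\kappa$ is still strongly inaccessible and satisfies $\kappa^{<\kappa}=\kappa$; and the quotient $\add(\kappa,\kappa^+)/G_\iota$ is isomorphic to $\add(\kappa,\kappa^+)$, hence ${<}\kappa$-closed, so it adds no bounded subsets of $\kappa$ (whence $\tr(Y)$ is computed identically in $V[G_\iota]$ and $V[G]$) and preserves stationary subsets of $\kappa$ (whence a subset of $\kappa$ lying in $V[G_\iota]$ is stationary there iff it is stationary in $V[G]$). In particular, in $V[G_\iota]$ the sets $Y$ and $S$ are disjoint stationary subsets of $\kappa$ with $\tr(Y)\cap S$ non-stationary.

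Working now inside $V[G_\iota]$: by Lemma~\ref{lemma515}, with $Y$ in the role of ``$X$'' and $S$ in the role of ``$Y$'' --- this uses that $\kappa$ is strongly inaccessible and that $\tr(Y)\cap S$ is non-stationary --- we obtain $S\in I[\kappa-Y]$. Since $Y$ and $S$ are disjoint stationary with $S\in I[\kappa-Y]$, Corollary~\ref{corollary4122} tells us that after forcing with $\add(\kappa,\kappa^+)$, $Y$ does not $\mathfrak f$-reflect to $S$. As $V[G]$ is exactly such an extension of $V[G_\iota]$, we conclude that $V[G]\models$``$Y$ does not $\mathfrak f$-reflect to $S$'', contradicting the fact, derived above, that $Y$ $\mathfrak f$-reflects to $S$ in $V[G]$. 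Hence no such $Y$ exists, which establishes $(1)\Rightarrow(2)$ and completes the proof.

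The step I expect to be the main obstacle is the shuttling between $V[G_\iota]$ and $V[G]$: one must verify carefully that $\kappa$ retains strong inaccessibility in $V[G_\iota]$, that stationarity of subsets of $\kappa$ --- and hence the failure of reflection of $Y$ in $S$ --- is absolute between the two models, and that $V[G]$ really is an $\add(\kappa,\kappa^+)$-generic extension of $V[G_\iota]$. The conceptual heart of the matter, however, is spotting that the failure of $Y$ to reflect in $S$ is precisely the hypothesis ``$\tr(Y)\cap S$ non-stationary'' required by Lemma~\ref{lemma515} with $Y$ --- rather than $X$ --- in the role of the set to be avoided, thereby converting anti-reflection into the membership $S\in I[\kappa-Y]$ that Corollary~\ref{corollary4122} needs in order to kill fake reflection.
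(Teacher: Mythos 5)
Your proof is correct and follows essentially the same route as the paper's: the $(2)\Rightarrow(1)$ direction via the club filters on a stationary set of uncountable-cofinality points of $S$ (the paper uses $S\setminus\cof(\omega)$ where you use $\tr(X)\cap S$, an immaterial difference), and the other direction by locating the non-reflecting $Y$ and $S$ in an intermediate extension $V[G_\iota]$, applying Lemma~\ref{lemma515} to get $S\in I[\kappa-Y]$, invoking Corollary~\ref{corollary4122} for the quotient, and finishing with monotonicity. Your explicit attention to the absoluteness of stationarity and of $\tr$ between $V[G_\iota]$ and $V[G]$ is a point the paper leaves implicit, but the argument is the same.
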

\begin{proof}
The implication $(2)\implies(1)$ holds true in any model,
since if $X$ reflects in $S$, then
$S':=S\setminus\cof(\omega)$ must be stationary,
and $X$ $\langle \mathcal \cub(\alpha)\mid \alpha\in S'\rangle$-reflects to $S'$.
In particular, $X$ $\mathfrak f$-reflects to $S$.
Thus, we shall focus on the other implication.

Let $G$ be $\add(\kappa,\kappa^+)$-generic over $V$.
For every $\iota\le\kappa^+$, we let $G_\iota$ denote the projection of $G$ into the $\iota^{th}$ stage.
Work in $V[G]$.
We verify that  $\neg(2)\implies\neg(1)$.

Suppose that $X$ and $S$ are disjoint stationary subsets of $\kappa$ such that $X$ admits a stationary subset $Z\subseteq X$ that does not reflect in $S$.
As $Z$ and $S$ are elements of $H_{\kappa^+}$
and as $\add(\kappa,\kappa^+)$ has the $\kappa^{+}$-cc, we may find a large enough $\iota<\kappa^+$
such that $Z$ and $S$ are in $V[G_\iota]$.
As the quotient forcing $\add(\kappa,\kappa^+)/G_{\iota}$ is isomorphic to $\add(\kappa,\kappa^+)$
and the latter does not add bounded subsets of $\kappa$ and does preserve stationary subsets of $\kappa$,
also, in $V[G_\iota]$, $Z$ does not reflect in $S$.
Now, by Lemma~\ref{lemma515}, $S\in I[\kappa-Z]$.
So, since $\add(\kappa,\kappa^+)/G_{\iota}$ is isomorphic to $\add(\kappa,\kappa^+)$,
Corollary~\ref{corollary4122} implies that
$Z$ does not $\mathfrak f$-reflect to $S$  in $V[G]$.
But $Z\s X$, contradicting Monotonicity Lemma~\ref{monotonicity}.
\end{proof}

\begin{defn} Let $\vec C=\langle C_\alpha\mid \alpha\in\Gamma\rangle$  be some sequence,
with $\Gamma\s\ord$.
\begin{itemize}
\item  $\vec C$  is said to be a \emph{$C$-sequence over $\Gamma$}
iff, for every $\alpha\in\Gamma$,
$C_\alpha$ is a closed subset of $\alpha$ with $\sup(C_\alpha)=\sup(\alpha)$;
\item $\vec C$ is said to be \emph{coherent} iff, for all $\alpha\in\Gamma$ and $\bar\alpha\in\acc(C_\alpha)$, $\bar\alpha\in\Gamma$ and $C_{\bar\alpha}=C_\alpha\cap\bar\alpha$;
\item $\vec C$ is said to be \emph{regressive} iff $\otp(C_\alpha)<\alpha$ for all $\alpha\in\Gamma$.
\end{itemize}
\end{defn}
\begin{remark}
\begin{enumerate}
\item Jensen proved \cite{jensen} that if $V=L$, then there exists a coherent regressive $C$-sequence over $\sing$ (the class of infinite singular ordinals).
\item Jensen's principle $\square_\lambda$ is equivalent to the assertion that
there exists a coherent regressive $C$-sequence over a club in $\lambda^+$.
\item By \cite{Sh:351}, for every regular uncountable cardinal $\lambda$,
there exists a sequence $\langle \Gamma_i\mid i<\lambda\rangle$ such that $\bigcup_{i<\lambda}\Gamma_i=\acc(\lambda^+\setminus\lambda)\cap\cof({<}\lambda)$ and, for all $i<\lambda$, there exists a coherent regressive $C$-sequence over $\Gamma_i$.
\item By \cite{Sakaisquare}, $\mm$ implies the existence of a coherent regressive $C$-sequence over some $\Gamma\s\omega_2$ for which $\Gamma\cap\cof(\omega_1)$ is stationary.
\end{enumerate}
\end{remark}
\begin{lemma}\label{usingsquare}
Let $\Gamma\s\sing(\kappa)$ be stationary. If there exists a coherent regressive
$C$-sequence over $\Gamma$,
then, for every stationary $X\s\Gamma$, there exists a stationary $Z\s X$ with $\Gamma\in I[\kappa-Z]$.
\end{lemma}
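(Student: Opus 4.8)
The plan is to combine Fodor's lemma with the coherence of the $C$-sequence. First I would fix a coherent regressive $C$-sequence $\vec C=\langle C_\alpha\mid\alpha\in\Gamma\rangle$ and, for $\alpha\in\Gamma$, write $e_\alpha$ for the increasing enumeration of $C_\alpha$. Since $\alpha\mapsto\otp(C_\alpha)$ is regressive on the stationary set $X$, Fodor's lemma yields a stationary $Z\s X$ and an ordinal $\rho<\kappa$ with $\otp(C_\alpha)=\rho$ for all $\alpha\in Z$; note that $\rho$ is a limit ordinal, since each $C_\alpha$ is cofinal in the limit ordinal $\alpha$. The claim will be that this $Z$ works.

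The heart of the argument is the observation that, for every $\delta\in\Gamma$, the set $\acc(C_\delta)\cap Z$ has at most one element. Indeed, if $\beta\in\acc(C_\delta)\cap Z$, then $\beta=e_\delta(\xi)$ for a limit $\xi<\otp(C_\delta)$, and coherence gives $C_\beta=C_\delta\cap\beta$, so $\rho=\otp(C_\beta)=\otp(C_\delta\cap\beta)=\xi$; hence $\beta=e_\delta(\rho)$. Accordingly I would set $\gamma_\delta:=e_\delta(\rho)$ when $\otp(C_\delta)>\rho$ and $\gamma_\delta:=0$ otherwise, so that $\gamma_\delta<\delta$ and $\acc(C_\delta)\cap Z\s\{\gamma_\delta\}$. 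Then let $B_\delta:=C_\delta\setminus(\gamma_\delta+1)$, still a closed cofinal subset of $\delta$, which now satisfies $\acc^+(B_\delta)=\acc(B_\delta)\s\acc(C_\delta)\setminus(\gamma_\delta+1)$, and hence $\acc^+(B_\delta)\cap Z=\emptyset$. Since $B_\delta$ is cofinal in $\delta$ we have $\cf(\otp(B_\delta))=\cf(\delta)$, so I may fix a cofinal $A_\delta\s B_\delta$ of order type $\cf(\delta)$; then $A_\delta$ is cofinal in $\delta$ and $\acc^+(A_\delta)\s\acc^+(B_\delta)$ is disjoint from $Z$.

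It remains to exhibit the data witnessing $\Gamma\in I[\kappa-Z]$ in the sense of Definition~\ref{appideal}. For each $\delta\in\Gamma$, the family $\{A_\delta\cap\gamma\mid\gamma<\delta\}$ has cardinality at most $\cf(\delta)+1$, so $\mathcal S:=\bigcup_{\delta\in\Gamma}\{A_\delta\cap\gamma\mid\gamma<\delta\}$ has size at most $\kappa$; let $\langle a_\beta\mid\beta<\kappa\rangle$ enumerate $\mathcal S$ (padded with $\emptyset$ if necessary). Let $C$ be the set of all $\delta<\kappa$ such that every member of $\bigcup_{\alpha\in\Gamma\cap\delta}\{A_\alpha\cap\gamma\mid\gamma<\alpha\}$ already appears among $\{a_\beta\mid\beta<\delta\}$. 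Then $C$ is a club: closedness is immediate, and for unboundedness one notes that for any $\alpha_0<\kappa$ the set $\bigcup_{\alpha\in\Gamma\cap\alpha_0}\{A_\alpha\cap\gamma\mid\gamma<\alpha\}$ has size $<\kappa$, so by regularity of $\kappa$ its members all occur before some stage below $\kappa$; iterating this $\omega$ times produces an element of $C$ above $\alpha_0$. Finally, for every $\delta\in\Gamma\cap C$ the set $A_\delta$ witnesses both clauses of Definition~\ref{appideal}: clause~(1) by the definition of $C$, and clause~(2) by the previous paragraph. Hence $\Gamma\in I[\kappa-Z]$.

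I expect the main obstacle to be the key observation of the second paragraph — namely, recognizing that thinning $X$ so that $\otp(C_\alpha)$ becomes constant, together with coherence, confines $\acc(C_\delta)\cap Z$ to the single point $e_\delta(\rho)$, which can then be excised by passing from $C_\delta$ to $B_\delta$. By contrast, clause~(1) of the ideal should cost essentially nothing, because the definition of $I[\kappa-Z]$ permits restricting to a club $C$, so no delicate bookkeeping of the enumeration $\langle a_\beta\rangle$ is required; and the case $\cf(\delta)=\omega$ is automatic, since then $\acc^+(A_\delta)=\emptyset$.
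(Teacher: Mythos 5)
Your treatment of clause~(2) of Definition~\ref{appideal} is correct and is in fact a clean variant of what the paper does: after Fodor stabilizes $\otp(C_\alpha)=\rho$ on $Z$, coherence forces $\acc(C_\delta)\cap Z\s\{e_\delta(\rho)\}$, and excising that one point kills all accumulation points in $Z$. (The paper achieves the same effect by intersecting $C_\delta$ with a pattern $d_{\eta,\epsilon}\s\acc(\eta)$ chosen to omit the ordinal $\epsilon$.) The gap is in clause~(1), which you dismiss as costing ``essentially nothing'' --- it is where all the work lies. Definition~\ref{appideal} requires that for $\delta\in\Gamma\cap C$ the proper initial segments of the witness $A_\delta$ \emph{at} $\delta$ all appear among $\{a_\beta\mid\beta<\delta\}$. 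Your club $C$ only closes the enumeration under the initial segments of $A_\alpha$ for $\alpha\in\Gamma\cap\delta$; it says nothing about the $\cf(\delta)$-many sets $A_\delta\cap\gamma$, $\gamma<\delta$, and since your $A_\delta$ is an \emph{arbitrary} cofinal subset of $B_\delta$ of order type $\cf(\delta)$, chosen with no coherence across different $\delta$'s, there is no reason any of these should equal some $a_\beta$ with $\beta<\delta$. If clause~(1) really cost nothing, the same argument would show $\kappa\in I[\kappa-\emptyset]$ outright, i.e., every ordinal is approachable --- but the approachability property is known to fail consistently, so the bookkeeping cannot be free.

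The repair is to choose the $A_\delta$'s coherently, so that each proper initial segment $A_\delta\cap\gamma$ is a set determined by parameters strictly below $\delta$ and can therefore be pre-listed. This is exactly what the paper does: fixing patterns $d_{\eta,\epsilon}$ in advance and setting $A_\delta:=C_\delta^{\eta,\epsilon}=\{\zeta\in C_\delta\mid\otp(C_\delta\cap\zeta)\in d_{\eta,\epsilon}\}$ with $\eta:=\otp(C_\delta)<\delta$ (here regressiveness is used), one gets from coherence that $A_\delta\cap\alpha=C_\alpha^{\eta,\epsilon}$ for $\alpha=\min(A_\delta\setminus\gamma)\in\acc(C_\delta)$, a set indexed by the triple $(\alpha,\eta,\epsilon)$ of ordinals below $\delta$, hence already enumerated before stage $\delta$ once one closes off. (Your $\cf(\delta)=\omega$ case has the same formal defect, though it is trivially fixable by adding $[\kappa]^{<\omega}$ to the enumeration, since there the initial segments are finite.)
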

\begin{proof} Suppose that $\langle C_\alpha\mid\alpha\in\Gamma\rangle$ is a coherent regressive $C$-sequence.
For every $\epsilon<\kappa$, let $\Gamma_\epsilon:=\{\alpha\in\Gamma\mid \otp(C_\alpha)=\epsilon\}$.
By Fodor's lemma, for every stationary $X\s\Gamma$, there must exist some $\epsilon<\kappa$ such that $X\cap\Gamma_\epsilon$ is stationary.
Thus, we shall focus on proving that $\Gamma\in I[\kappa-\Gamma_\epsilon]$ for all $\epsilon<\kappa$.

For all $\eta\in \kappa\cap\cof({>}\omega)$ and $\epsilon<\kappa$, fix a subclub $d_{\eta,\epsilon}$ of $\acc(\eta)$ of order-type $\cf(\eta)$
such that $\epsilon\notin d_{\eta,\epsilon}$;
then, for every $\alpha\in\Gamma$, let
$$C_\alpha^{\eta,\epsilon}:=\{ \zeta\in C_\alpha\mid \otp(C_\alpha\cap\zeta)\in d_{\eta,\epsilon}\}.$$
To help the reader digest the above definition, we mention that each such a set $C_\alpha^{\eta,\epsilon}$ is a closed (possibly empty) subset of $\acc(C_\alpha)$ of order-type $\le\otp(d_{\eta,\epsilon})$.

Next, let $\vec a=\langle a_\beta\mid\beta<\kappa\rangle$ be some enumeration of
$$[\kappa]^{<\omega}\cup\{ C_\alpha^{\eta,\epsilon}\mid \alpha\in\Gamma, \eta\in\kappa\cap\cof({>}\omega), \epsilon<\kappa\}.$$
Fix a club $D$ in $\kappa$ such that, for all $\delta<\kappa$:
\begin{itemize}
\item $[\delta]^{<\omega}\s \{ a_\beta\mid \beta<\delta\}$, and
\item $\{ C_\alpha^{\eta,\epsilon}\mid \alpha\in\Gamma\cap\delta, \eta\in\delta\cap\cof({>}\omega), \epsilon<\delta\}\s \{ a_\beta\mid \beta<\delta\}$.
\end{itemize}

Let $\epsilon<\kappa$. We claim that $\vec a$ and $D\setminus(\epsilon+1)$ witness together that $\Gamma\in I[\kappa-\Gamma_\epsilon]$.
To this end, let $\delta\in\Gamma\cap D\setminus(\epsilon+1)$ be arbitrary. There are two cases to consider:

$\br$ If $\cf(\delta)=\omega$, then let $A$ be an arbitrary cofinal subset of $\delta$ of order-type $\omega$.
Clearly, $\acc^+(A)=\emptyset$. In addition, by $\delta\in D$, any proper initial segment of $A$ is indeed listed in $\{a_\beta\mid \beta<\delta\}$.

$\br$ If $\cf(\delta)>\omega$, then let $\eta:=\otp(C_\delta)$, so that $\eta<\delta$.
As $C_\delta$ is a club in $\delta$,
$\cf(\eta)=\cf(\delta)$.
As $d_{\eta,\epsilon}$ is a subclub of $\acc(\eta)$ of order-type $\cf(\eta)=\cf(\delta)$,
$A:=C_\delta^{\eta,\epsilon}$ is a subclub of $\acc(C_\delta)$ of order-type $\cf(\delta)$.
Now, if $\alpha\in A\cap \Gamma_\epsilon$, then $\alpha\in\acc(C_\delta)$, so that $\alpha\in\Gamma$ and $C_\delta\cap\alpha=C_\alpha$,
and also $\alpha\in \Gamma_\epsilon$ so that $\otp(C_\delta\cap\alpha)=\otp(C_\alpha)=\epsilon$.
Recalling that $\alpha\in A=C_\delta^{\eta,\epsilon}=\{ \zeta\in C_\delta\mid \otp(C_\delta\cap\zeta)\in d_{\eta,\epsilon}\}$, this means that $\epsilon\in d_{\eta,\epsilon}$,
contradicting the choice of $d_{\eta,\epsilon}$. It follows in particular that $\acc^+(A)\cap \Gamma_\epsilon=\emptyset$.

Finally, let $\gamma<\delta$, and we shall show that $A\cap\gamma\in\{a_\beta\mid \beta<\delta\}$.
Put $\alpha:=\min(A\setminus\gamma)$, so that $A\cap\gamma=A\cap\alpha$. As observed earlier, the fact that $\alpha\in A$
entails $C_\alpha=C_\delta\cap\alpha$.
But then $C_\alpha^{\eta,\epsilon}=C_\delta^{\eta,\epsilon}\cap\alpha=A\cap\gamma$. Recalling that $\delta\in D$
and $\max\{\alpha,\eta,\epsilon\}<\delta$,
we infer that the proper initial segment $A\cap\gamma$ is indeed listed in $\{a_\beta\mid\beta<\delta\}$.
\end{proof}

\begin{cor}\label{cor5182}  Suppose that there exists a coherent regressive
$C$-sequence over a stationary subset $\Gamma$ of $\kappa$.
After forcing with $\add(\kappa,\kappa^{+})$,
for all two disjoint stationary subsets $X,S$ of $\Gamma$,
$X$ does not $\mathfrak f$-reflect to $S$.
\end{cor}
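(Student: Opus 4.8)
The plan is to derive the corollary by combining Lemma~\ref{usingsquare} with Corollary~\ref{corollary4122}, following the template of the derivation of Theorem~C above. The wrinkle is that $X$ and $S$ are subsets of $\kappa$ that need only exist in the final generic extension, so they first have to be reflected back into an initial segment of $\add(\kappa,\kappa^+)$. I would begin with a routine reduction. Coherence and regressivity of $\vec C$ force every limit point $\alpha$ of $\Gamma$ to be singular: $C_\alpha$ is then a club in $\alpha$, so if $\alpha$ were regular we would get $\otp(C_\alpha)=\alpha$, violating regressivity. Hence $\Gamma^*:=\Gamma\cap\Lim(\kappa)$ is a stationary subset of $\sing(\kappa)$ and $\langle C_\alpha\mid\alpha\in\Gamma^*\rangle$ is a coherent regressive $C$-sequence over $\Gamma^*$. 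Since the $C_\alpha$ are bounded in $\kappa$, coherence and regressivity of this sequence are absolute, and $\Gamma^*$ stays stationary in every ${<}\kappa$-closed forcing extension considered below.

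Next, let $G$ be $\add(\kappa,\kappa^+)$-generic over $V$, and for $\iota\le\kappa^+$ let $G_\iota$ be its restriction to the first $\iota$ coordinates. Working in $V[G]$, fix disjoint stationary $X,S\s\Gamma$ and suppose towards a contradiction that $X$ $\mathfrak f$-reflects to $S$. As $\add(\kappa,\kappa^+)$ has the $\kappa^+$-cc, $X$ and $S$ admit nice names of size $\le\kappa$, so we may fix $\iota<\kappa^+$ with $X,S\in V[G_\iota]$. The quotient forcing $\add(\kappa,\kappa^+)/G_\iota$ is isomorphic to $\add(\kappa,\kappa^+)$, and being ${<}\kappa$-closed it adds no bounded subsets of $\kappa$ and preserves stationary subsets of $\kappa$. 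Hence, back in $V[G_\iota]$: $\kappa^{<\kappa}=\kappa$ still holds, $X$ and $S$ are still disjoint and stationary, and $\langle C_\alpha\mid\alpha\in\Gamma^*\rangle$ is still a coherent regressive $C$-sequence over the stationary set $\Gamma^*$.

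Continuing in $V[G_\iota]$: since $X\setminus\Gamma^*\s\kappa\setminus\Lim(\kappa)$ is non-stationary, $X\cap\Gamma^*$ is a stationary subset of $\Gamma^*$, so Lemma~\ref{usingsquare} yields a stationary $Z\s X\cap\Gamma^*$ with $\Gamma^*\in I[\kappa-Z]$. As $I[\kappa-Z]$ is an ideal, $S\cap\Gamma^*\in I[\kappa-Z]$; and inspecting Definition~\ref{appideal} shows that every subset of $\kappa\setminus\Lim(\kappa)$ lies in $I[\kappa-Z]$, because for a successor ordinal $\delta$ the singleton $A:=\{\sup(\delta)\}$ is cofinal in $\delta$ of order-type $\cf(\delta)=1$, has $\acc^+(A)=\emptyset$, and has all of its proper initial segments equal to $\emptyset$ (one only needs the fixed enumeration of $[\kappa]^{<\kappa}$ to list $\emptyset$). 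Therefore $S=(S\cap\Gamma^*)\cup(S\setminus\Gamma^*)\in I[\kappa-Z]$, while $Z\cap S=\emptyset$ as $Z\s X$. Now Corollary~\ref{corollary4122}, applied in the ground model $V[G_\iota]$ to the pair $(Z,S)$, tells us that in any $\add(\kappa,\kappa^+)$-extension of $V[G_\iota]$ --- in particular in $V[G]$ --- the set $Z$ does not $\mathfrak f$-reflect to $S$. But $Z$ remains stationary in $V[G]$ and $Z\s X$, so Monotonicity Lemma~\ref{monotonicity}(1) turns the assumption ``$X$ $\mathfrak f$-reflects to $S$'' in $V[G]$ into ``$Z$ $\mathfrak f$-reflects to $S$'' in $V[G]$ --- a contradiction.

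I expect the main obstacle to be not in any single step but in the bookkeeping across the three models $V\s V[G_\iota]\s V[G]$: checking that the ground-model sequence $\vec C$ still witnesses the hypothesis of Lemma~\ref{usingsquare} inside $V[G_\iota]$ (absoluteness, plus ${<}\kappa$-closure keeping $\Gamma^*$ stationary), that the set $Z$ constructed in $V[G_\iota]$ survives as a stationary set in $V[G]$ (again ${<}\kappa$-closure of the tail forcing), and that the membership $S\in I[\kappa-Z]$ demanded by Corollary~\ref{corollary4122} is exactly what gets verified in $V[G_\iota]$. The only genuinely new ingredient beyond the Theorem~C argument is the observation that non-limit ordinals are trivially approachable, which is what allows covering all of $S$ rather than just $S\cap\Gamma^*$.
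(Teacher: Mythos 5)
Your proof is correct and follows essentially the same route as the paper's: reflect $X,S$ into an initial segment $V[G_\iota]$ via the $\kappa^+$-cc, apply Lemma~\ref{usingsquare} in $V[G_\iota]$ to get a stationary $Z\s X$ with $S\in I[\kappa-Z]$, invoke Corollary~\ref{corollary4122} for the quotient forcing, and finish with the Monotonicity Lemma. Your preliminary reduction to $\Gamma^*=\Gamma\cap\Lim(\kappa)\s\sing(\kappa)$ (together with the observation that successor ordinals are trivially approachable) is a welcome extra bit of care that the paper's own proof silently elides when it applies Lemma~\ref{usingsquare}, whose hypothesis formally requires $\Gamma\s\sing(\kappa)$.
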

\begin{proof}
Let $G$ be  $\add(\kappa,\kappa^+)$-generic over $V$.
As in the proof of Corollary~\ref{corollary4122},
for every $\iota\le\kappa^+$, we let $G_\iota$ denote the projection of $G$ into the $\iota^{th}$ stage.

Work in $V[G]$. Suppose that $X$ and $S$ are disjoint stationary subsets of $\Gamma$.
As $X$ and $S$ are elements of $H_{\kappa^+}$
and as $\add(\kappa,\kappa^+)$ has the $\kappa^{+}$-cc, we may find a large enough $\iota<\kappa^+$
such that $X$ and $S$ are in $V[G_\iota]$.
Now, in $V$, and hence also in $V[G_\iota]$, there exists a coherent regressive $C$-sequence over $\Gamma$.
So, by Lemma~\ref{usingsquare}, there is a stationary $Z\subseteq X$ such that $Y\in I[\kappa-Z]$.
Finally, since $\add(\kappa,\kappa^+)/G_{\iota}$ is isomorphic to $\add(\kappa,\kappa^+)$,
Corollary~\ref{corollary4122} implies that, in $V[G]$, $Z$ does not $\mathfrak f$-reflect to $S$.
As $Z\s X$, it follows from Monotonicity Lemma~\ref{monotonicity} that,  in $V[G]$, $X$ does not $\mathfrak f$-reflect to $S$.
\end{proof}

We can now derive Theorem~B:

\begin{cor}[Dense non-reflection]\label{corollary514} There exists a cofinality-preserving forcing extension
in which:
\begin{enumerate}
\item For all stationary subsets $X,S$ of $\kappa$, there exist stationary subsets $X'\s X$ and $S'\s S$
such that $X'$ does not $\mathfrak f$-reflect to $S'$;
\item There exists an injection $h:\mathcal P(\kappa)\rightarrow\ns_\kappa^+$ such that, for all $X,S\in\mathcal P(\kappa)$,
$X\s S$ iff $h(X)$ $\mathfrak f$-reflect to $h(S)$;
\item For all two disjoint stationary subsets $X,S$ of $\kappa$, $X$ does not $\mathfrak f$-reflect to $S$.
\end{enumerate}
\end{cor}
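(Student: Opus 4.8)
The plan is to realise the model as a two-step iteration $W:=V^{\mathbb Q_0\ast\add(\kappa,\kappa^+)}$. The first factor $\mathbb Q_0$ is a cofinality-preserving, $\kappa^{<\kappa}=\kappa$-preserving poset (${<}\kappa$-distributive and $\kappa^+$-cc) whose only job is to install, in $V_1:=V^{\mathbb Q_0}$, a coherent regressive $C$-sequence over some \emph{club} $\Gamma\s\sing(\kappa)$. If $\kappa=\lambda^+$, take $\mathbb Q_0$ to force a $\square_\lambda$-sequence and keep the attached $C$-sequence on the club of ordinals above $\lambda$ (all of which are singular); if $\kappa$ is inaccessible, let $\mathbb Q_0$ first shoot a club through the fat set $\kappa\setminus\reg(\kappa)$ and then add, by bounded approximations, a coherent regressive $C$-sequence over that club (a localised form of the usual $\square(\kappa)$-forcing). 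Such a preparation is unavoidable: by Lemma~\ref{wc}(1), if $\kappa$ remained weakly compact then every stationary subset of $\kappa$ would reflect in $\reg(\kappa)$, so $\mathfrak f$-reflection would survive for many disjoint pairs. The second factor $\add(\kappa,\kappa^+)$ is also cofinality-preserving, so $W$ is a cofinality-preserving extension of $V$, and it is this factor that destroys $\mathfrak f$-reflection via the analysis of Section~\ref{sectionFFR}.

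The core is clause~(3). I would first record the elementary reduction: for a club $\Gamma\s\kappa$ and arbitrary stationary $X,S\s\kappa$, $X$ $\mathfrak f$-reflects to $S$ iff $X\cap\Gamma$ $\mathfrak f$-reflects to $S\cap\Gamma$. Here $(\Leftarrow)$ is Monotonicity Lemma~\ref{monotonicity}; for $(\Rightarrow)$, given a witnessing stationary $S'\s S$ and $\vec{\mathcal F}=\langle\mathcal F_\alpha\mid\alpha\in S'\rangle$, the restriction of $\vec{\mathcal F}$ to the stationary set $S'\cap\Gamma\s S\cap\Gamma$ still captures clubs, and, each $\mathcal F_\alpha^+$ being upward closed, every stationary $Y\s X\cap\Gamma$ — which is in particular a stationary subset of $X$ — is $\mathcal F_\alpha$-positive for stationarily many $\alpha\in S'\cap\Gamma$. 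Now work in $W$ with disjoint stationary $X,S\s\kappa$: since $\Gamma$ is still a club of $W$, $X\cap\Gamma$ and $S\cap\Gamma$ are disjoint stationary subsets of $\Gamma$; as the coherent regressive $C$-sequence over $\Gamma$ already lives in $V_1$ and $W=V_1^{\add(\kappa,\kappa^+)}$, Corollary~\ref{cor5182} yields that $X\cap\Gamma$ does not $\mathfrak f$-reflect to $S\cap\Gamma$ in $W$, and hence, by the reduction, $X$ does not $\mathfrak f$-reflect to $S$. Clause~(1) follows at once: given stationary $X,S$, if $X\cap S$ is non-stationary apply clause~(3) to the disjoint stationary sets $X\setminus S\s X$ and $S\setminus X\s S$; if $X\cap S$ is stationary, use Solovay's splitting theorem to write $X\cap S=X'\uplus S'$ into two stationary pieces and apply clause~(3) to $X'\s X$ and $S'\s S$.

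For clause~(2), fix (Solovay again) a partition $\langle\Gamma_\xi\mid\xi\le\kappa\rangle$ of $\Gamma$ into stationary sets, and put $h(X):=\Gamma_\kappa\cup\bigcup_{\xi\in X}\Gamma_\xi$. Then $h$ is an injection of $\mathcal P(\kappa)$ into $\ns_\kappa^+$ with $X\s S\iff h(X)\s h(S)$. If $X\nsubseteq S$, pick $\xi_0\in X\setminus S$; then $\Gamma_{\xi_0}$ is a stationary subset of $h(X)$ disjoint from $h(S)$, so by Monotonicity Lemma~\ref{monotonicity} a witness to ``$h(X)$ $\mathfrak f$-reflects to $h(S)$'' would witness ``$\Gamma_{\xi_0}$ $\mathfrak f$-reflects to $h(S)$'', contradicting clause~(3). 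For the converse, if $X\s S$ then $h(X)\s h(S)$, and by Monotonicity it is enough to show that every $h(X)$ $\mathfrak f$-reflects to \emph{itself}; for this I would equip $h(X)$ with a capturing-clubs filter sequence built from the guessing available in $W$ (note $\add(\kappa,\kappa^+)$ adds no new bounded subsets of $\kappa$, so all relevant ${<}\kappa$-combinatorics is decided in $V_1$, where $2^{<\kappa}=\kappa$, yielding $\diamondsuit^*$-type sequences), and then verify that with these filters every stationary subset of $h(X)$ is positive on stationarily many coordinates.

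I expect the two genuine obstacles to be: (i) the construction of $\mathbb Q_0$ in the inaccessible case, i.e.\ shooting the club through $\kappa\setminus\reg(\kappa)$ and superimposing a coherent regressive $C$-sequence while keeping enough strategic closure for cofinality preservation; and (ii) — the harder one — the forward direction of clause~(2): producing, by hand and inside a model where $\mathfrak f$-reflection fails for \emph{every} disjoint pair, the fake filter sequences witnessing the self-$\mathfrak f$-reflection of each $h(X)$. The tension there is that ``captures clubs'' pushes the filters to be large while positivity of all stationary subsets of $h(X)$ pushes them to be small, and reconciling the two is precisely where the guessing sequences of $W$ must be exploited.
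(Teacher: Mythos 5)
Your treatment of Clause~(3) --- the substantive part --- is the paper's own argument: the same two-step iteration (a cofinality-preserving preparation installing a coherent regressive $C$-sequence over a club, split into the $\square_\lambda$ case for $\kappa=\lambda^+$ and, for $\kappa$ inaccessible, first shooting a club through $\sing(\kappa)$ and then forcing a square-type poset over it), followed by $\add(\kappa,\kappa^+)$ and an appeal to Corollary~\ref{cor5182}. Your explicit reduction ``$X$ $\mathfrak f$-reflects to $S$ iff $X\cap\Gamma$ $\mathfrak f$-reflects to $S\cap\Gamma$, for $\Gamma$ a club'' correctly fills in a step the paper leaves implicit (Corollary~\ref{cor5182} only speaks of stationary subsets of $\Gamma$), and your derivation of Clause~(1) from Clause~(3) by splitting $X\cap S$ (or passing to $X\setminus S$ and $S\setminus X$) is fine.

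The gap is the forward direction of Clause~(2), and you have located it correctly but not closed it. Via Lemma~\ref{monotonicity} you reduce ``$X\s S\implies h(X)$ $\mathfrak f$-reflects to $h(S)$'' to the diagonal instance ``$h(X)$ $\mathfrak f$-reflects to $h(X)$'', and then only gesture at ``capturing-clubs filter sequences built from the guessing available in $W$''. Contrast this with Clause~(2) of Corollary~\ref{DNR}, where the diagonal instance ${=^2_A}\redup{=_A}$ is witnessed outright by the inclusion $2^\kappa\hookrightarrow\kappa^\kappa$, so that clause really does follow from the negative clause together with Lemma~\ref{transitivity}. For filter reflection the diagonal instance is a genuine positive assertion: as the introduction stresses, capturing clubs is never free, nothing in Section~\ref{sectionFFR} produces such filter sequences, and a guessing principle strong enough to do the job uniformly (say, $\dl^*_{h(X)}(\Pi^1_1)$, which would give ``$\kappa$ $\mathfrak f$-reflects to $h(X)$'') would, by Lemma~\ref{monotonicity}, yield positive instances for disjoint pairs whenever $h(X)$ is co-stationary, contradicting Clause~(3). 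So the filters you need must be tailored to each $h(X)$ separately, positive on traces of stationary subsets of $h(X)$ but not of its complement, and your sketch does not say how to produce them. To be fair, the paper's own proof dismisses Clauses~(1) and~(2) with ``a moment reflection makes it clear that they follow from Clause~(3)'', so you are not diverging from its stated route; but as written, your argument for Clause~(2) is incomplete at exactly the point you yourself flagged as the harder obstacle.
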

\begin{proof} A moment reflection makes it clear that Clauses (1) and (2) both follow from Clause~(3), so we focus on proving the latter.

Suppose first that $\kappa$ is a successor cardinal, say, $\kappa=\lambda^+$.
Let $\mathbb S$ denote the standard cofinality-preserving notion of forcing for adding a $\square_\lambda$-sequence (see \cite[\S6.1]{MR1838355}).
Now, work in $V[H][G]$, where $H*G$ is  $\mathbb S*\add(\kappa,\kappa^+)$-generic over $V$.
As, in $V[H]$, there exists a coherent regressive $C$-sequence over a club in $\kappa$,
Corollary~\ref{cor5182} entails that, in $V[H][G]$,
for all two disjoint stationary subsets $X,S$ of $\kappa$, $X$ does not $\mathfrak f$-reflect to $S$.

Next, suppose that $\kappa$ is inaccessible.
Let $\mathbb S_0:=(S_0,\le_0)$, where $c\in S_0$ iff $c$ is closed bounded subset of $\kappa$ disjoint from $\reg(\kappa)$,
and $d\le_0 c$ iff $d$ end-extends $c$. It is easy to see that $\mathbb S_0$ is  ${<}\kappa$-distributive,\footnote{For every condition $c$,
and every $\theta\in\reg(\kappa)$, $d:=c\cup\{\sup(c)+\theta\}$ is a condition extending $c$
such that $\mathbb S_0\mathrel{\downarrow}d$ is $\theta^+$-closed.}
of size $\kappa^{<\kappa}=\kappa$,
and shoots a club through $\sing(\kappa)$, so that, $\mathbb S_0$ is cofinality-preserving, and, in $V^{\mathbb S_0}$, $\kappa$ is not Mahlo.
Now, let $\dot{\mathbb S}_1$ be the $\mathbb S_0$-name for the poset from  \cite[\S6]{MR1942302} for adding a coherent regressive $C$-sequence over $\sing(\kappa)$.
Then $\mathbb S:=\mathbb S_0*\dot{\mathbb S}_1$ is a cofinality-preserving notion of forcing that adds a coherent regressive $C$-sequence over a club in $\kappa$.
So, as in the previous case, any forcing extension by $\mathbb S*\add(\kappa,\kappa^+)$ gives the desired model.
\end{proof}

\begin{lemma}\label{IdealProp2}  Let $X\s\kappa$ be stationary, and $\mu\in\reg(\kappa)$.
\begin{enumerate}
\item $\kappa\cap\cof(\omega)\in I[\kappa-X]$;
\item If  $X\s\cof(\ge\mu)$, then $I[\kappa-\emptyset]\restriction\cof(\le\mu)=I[\kappa - X]\restriction\cof(\le\mu)$;
\item If $\mu^{<\mu}<\kappa$, then $S\in I[\kappa-X]\restriction\cof(\le\mu)$ iff
$S\in I[\kappa-\emptyset]\restriction\cof(\le\mu)$ and $\tr(X)\cap S$ is non-stationary.
\item If $2^{<\lambda}=\lambda$ and $X$ is non-reflecting,
then $I[\lambda^+-\emptyset]=I[\lambda^+-X]$.
\end{enumerate}
\end{lemma}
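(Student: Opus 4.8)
The plan is to take the four clauses in order. In clauses (2), (3), (4) the inclusion ``$\s$'' between the two restricted ideals is immediate from the monotonicity noted after Definition~\ref{appideal} ($X\s X'\Rightarrow I[\kappa-X]\supseteq I[\kappa-X']$, applied with $\emptyset\s X$), so in each case the content is the reverse inclusion, and clauses (3) and (4) will be derived from a single argument. \emph{Clause (1):} fix an injective enumeration $\langle a_\beta\mid\beta<\kappa\rangle$ of $[\kappa]^{<\omega}$ and let $C$ be the club of $\delta<\kappa$ closed under $\gamma\mapsto\min\{\tau<\kappa\mid[\gamma]^{<\omega}\s\{a_\beta\mid\beta<\tau\}\}$ (well defined since $|[\gamma]^{<\omega}|=|\gamma|<\kappa$); for $\delta\in(\kappa\cap\cof(\omega))\cap C$ any cofinal $A\s\delta$ of order-type $\omega$ works, as $\acc^+(A)=\emptyset$ and each proper initial segment $A\cap\gamma$ is a finite subset of $\gamma$, hence one of the $a_\beta$ with $\beta<\delta$. \emph{Clause (2):} if $X\s\cof({\ge}\mu)$ and $\langle a_\beta\mid\beta<\kappa\rangle,C$ witness $S\in I[\kappa-\emptyset]$ with $S\s\cof({\le}\mu)$, then for $\delta\in S\cap C$ the witnessing $A\s\delta$ has $\otp(A)=\cf(\delta)\le\mu$, so every $\alpha\in\acc^+(A)$ satisfies $\cf(\alpha)\le\otp(A\cap\alpha)<\cf(\delta)\le\mu$ (as $A\cap\alpha$ is a proper initial segment of $A$ cofinal in $\alpha$), whence $\alpha\in\cof({<}\mu)$ and $\alpha\notin X$; thus the very same $\langle a_\beta\mid\beta<\kappa\rangle,C$ witness $S\in I[\kappa-X]$.

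For the forward direction of clause~(3), besides monotonicity: if $\langle a_\beta\mid\beta<\kappa\rangle,C$ witness $S\in I[\kappa-X]$ and $\delta\in S\cap C$ has $\cf(\delta)>\omega$, then the witnessing $A\s\delta$ has $\acc^+(A)$ a club in $\delta$ (closedness is clear, and unboundedness holds because a sup of an $\omega$-sequence drawn from $A$ stays below $\delta$) which is disjoint from $X$, so $\delta\notin\tr(X)$; and $\delta\notin\tr(X)$ trivially when $\cf(\delta)=\omega$. Hence $\tr(X)\cap S$ is disjoint from the club $C$, as needed.

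For the reverse direction of clause~(3) --- and, \emph{mutatis mutandis} with $\mu$ replaced by $\lambda$ and $\kappa$ by $\lambda^+$, for clause~(4) (where ``$X$ non-reflecting'' means $\tr(X)=\emptyset$) --- I would argue as follows. By clause~(1) and since $I[\kappa-X]$ is an ideal we may assume $S\s\cof({>}\omega)$, its countable part being in $I[\kappa-X]$ by clause~(1). Fix a witness $\langle a_\beta\mid\beta<\kappa\rangle,C$ for $S\in I[\kappa-\emptyset]$ and shrink $C$ to be disjoint from $\tr(X)\cap S$. For $\delta\in S\cap C$, since $\delta\notin\tr(X)$ while $\cf(\delta)>\omega$, the set $X\cap\delta$ is non-stationary in $\delta$, so --- using that $\cl(A_\delta)\cap\delta$ is a club in $\delta$ --- we may fix a club $E_\delta\s\cl(A_\delta)\cap\delta$ with $E_\delta\cap X=\emptyset$ and then a cofinal $A'_\delta\s E_\delta$ of order-type $\cf(\delta)$; since $E_\delta$ is closed in $\delta$, $\acc^+(A'_\delta)\s E_\delta$ is disjoint from $X$. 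To record the initial segments of the $A'_\delta$ on a single $\kappa$-sequence, enumerate
$$\{a_\beta\mid\beta<\kappa\}\cup\bigcup\bigl\{\mathcal P(\cl(a_\beta))\mid\beta<\kappa,\ \otp(a_\beta)<\mu\bigr\}$$
as $\langle a^*_\beta\mid\beta<\kappa\rangle$, which is possible because $|\cl(a_\beta)|<\mu$ for the $\beta$ in question and $2^{<\mu}\le\mu^{<\mu}<\kappa$ (for clause~(4): $2^{<\lambda}=\lambda<\lambda^+$), so the displayed set has size $\le\kappa$; then let $C'\s C$ be the club of all $\delta$ such that for every $\beta<\delta$ with $\otp(a_\beta)<\mu$ we have $\mathcal P(\cl(a_\beta))\s\{a^*_{\beta'}\mid\beta'<\delta\}$ (a club, as each such $\mathcal P(\cl(a_\beta))$ has size $<\kappa$). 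For $\delta\in S\cap C'$ and $\gamma<\delta$, picking $\beta<\delta$ with $a_\beta=A_\delta\cap\gamma$ (so $\otp(a_\beta)<\cf(\delta)\le\mu$), one has $A'_\delta\cap\gamma\s\cl(A_\delta)\cap\gamma\s\cl(A_\delta\cap\gamma)=\cl(a_\beta)$, hence $A'_\delta\cap\gamma\in\mathcal P(\cl(a_\beta))\s\{a^*_{\beta'}\mid\beta'<\delta\}$; so $\langle a^*_\beta\mid\beta<\kappa\rangle$ and $C'$ witness $S\in I[\kappa-X]$.

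The main obstacle is precisely this last bookkeeping: one must thin the approaching sets $A_\delta$ so as to steer their accumulation points off $X$ while keeping each initial segment of the thinned set available, below $\delta$, in the witnessing enumeration --- which is what forces the passage to subsets of the closures $\cl(a_\beta)$ rather than subsets of $A_\delta$ itself, and it is exactly here that the hypotheses $\mu^{<\mu}<\kappa$ (resp.~$2^{<\lambda}=\lambda$) enter, to keep the enriched sequence of length $\kappa$. The remaining points --- that the various $C,C'$ are clubs, the cardinality and order-type estimates, and the elementary inclusion $\cl(A_\delta)\cap\gamma\s\cl(A_\delta\cap\gamma)$ --- are routine.
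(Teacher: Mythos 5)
Your proof is correct and follows essentially the same route as the paper's: clause (1) via an enumeration of $[\kappa]^{<\omega}$, clause (2) by reusing the same witness, and the reverse direction of clause (3) by enriching the enumeration with $\bigcup_\beta\mathcal P(\cl(a_\beta))$ and steering the approaching sets off $X$ using $\delta\notin\tr(X)$, with clause (4) as the $\mu=\lambda$, $\kappa=\lambda^+$ instance. The only differences are cosmetic elaborations (the explicit split of the $\cof(\omega)$ part, using $\cl(A_\delta)\cap\delta$ where the paper uses $\acc^+(A)$, and spelling out the forward direction that the paper calls clear).
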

\begin{proof} (1) Let $\langle a_\beta\mid\beta<\kappa\rangle$ be any enumeration of all finite subsets of $\kappa$,
and consider the club $C:=\{\delta<\kappa\mid [\delta]^{<\omega}=\{a_\beta\mid \beta<\delta\}\}$.

(2) The witness is the same.

(3) The forward implication is clear, so we focus on the converse.
Suppose $S\in I[\kappa-\emptyset]\restriction\cof(\le\mu)$ and  $\tr(X)\cap S$ is non-stationary.
Fix a list $\langle a_\beta\mid\beta<\kappa\rangle$ and a club $C$ witnessing together that $S\in I[\kappa-\emptyset]$.
By shrinking $C$, we may assume that $\tr(X)\cap S\cap C=\emptyset$.
As $S\s\cof(\le\mu)$, we may also assume that $|a_\beta|<\mu$ for all $\beta<\kappa$.
Thus, assuming $\mu^{<\mu}<\kappa$,
we may let $\langle a_\beta^\bullet\mid\beta<\kappa\rangle$ be some enumeration of $\bigcup\{\mathcal P(\cl(a_\beta))\mid \beta<\kappa\}$.
Now, define a function $f:\kappa\rightarrow\kappa$ by letting for all $\beta<\kappa$:
$$f(\beta):=\min\{\alpha<\kappa\mid \mathcal P(\cl(a_\beta))\s\{a^\bullet_\gamma\mid \gamma<\alpha\}\}.$$

Put $D:=\{\delta\in C\mid f[\delta]\s\delta\}$.
To see that $\langle a_\beta^\bullet\mid\beta<\kappa\rangle$ and $D$ witness together that $S\in I[\kappa-X]$,
let $\delta\in S\cap D$ be arbitrary. In particular, $\delta\in S\cap C$, and we may fix a cofinal subset $A\s\delta$ of order-type $\cf(\delta)$
such that any proper initial segment of $A$ is listed in $\{ a_\beta\mid\beta<\delta\}$.
As $\delta\in S\cap C$, $\delta\notin\tr(X)$, so we may fix a subclub $A^\bullet$ of $\acc^+(A)$ which is disjoint from $X$.
Now, let $\gamma<\delta$ be arbitrary.
Put $\alpha:=\min(A\setminus\gamma)$ and then find $\beta<\delta$ such that $A\cap\alpha=a_\beta$.
It follows that $A^\bullet\cap\gamma\s\cl(a_\beta)$.
So, as $f[\delta]\s\delta$, we infer that $A^\bullet\cap\gamma$ is indeed listed in $\{a_\beta\mid\beta<\delta\}$.

(4) This follows from (3).
\end{proof}

\begin{cor}\label{cor5192}
Suppose $X\subseteq \kappa$ and $S\subseteq \kappa\cap\cof (\omega)$ are stationary sets,
for which $X\setminus S$ is stationary. After forcing with $\add(\kappa,\kappa^{+})$,
$X$ does not $\mathfrak f$-reflect to $S$.
\end{cor}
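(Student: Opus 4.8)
The plan is to derive this as a short corollary of Corollary~\ref{corollary4122}, Lemma~\ref{IdealProp2}(1), and the Monotonicity Lemma~\ref{monotonicity}. The one point requiring care is that $X$ and $S$ are not assumed disjoint, so the first move is to replace $X$ by the stationary set $X':=X\setminus S$, which \emph{is} disjoint from $S$ by construction, and which is contained in $X$.

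Next I would verify the ideal-membership hypothesis needed to feed $(X',S)$ into Corollary~\ref{corollary4122}. Since $S\s\kappa\cap\cof(\omega)$, Lemma~\ref{IdealProp2}(1) (applied with the stationary set $X'$) gives $\kappa\cap\cof(\omega)\in I[\kappa-X']$; as $I[\kappa-X']$ is an ideal (see the Remark following Definition~\ref{appideal}) and $S\s\kappa\cap\cof(\omega)$, it follows that $S\in I[\kappa-X']$. Thus $X'$ and $S$ form a pair of disjoint stationary subsets of $\kappa$ with $S\in I[\kappa-X']$, and Corollary~\ref{corollary4122} applies: after forcing with $\add(\kappa,\kappa^+)$, $X'$ does not $\mathfrak f$-reflect to $S$.

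Finally, in the $\add(\kappa,\kappa^+)$-extension I would invoke Monotonicity Lemma~\ref{monotonicity}(1) with $Y:=X'\s X$ and $T:=S$: were $X$ to $\mathfrak f$-reflect to $S$ in the extension, then $X'$ would $\mathfrak f$-reflect to $S$ there as well, contradicting the previous paragraph. Hence $X$ does not $\mathfrak f$-reflect to $S$ after forcing with $\add(\kappa,\kappa^+)$, as claimed. I do not expect any genuine obstacle here; the whole argument is a matter of assembling already-proved facts, the only mild subtlety being the passage from $X$ to $X\setminus S$ to secure disjointness while preserving stationarity (which is exactly the hypothesis that $X\setminus S$ is stationary).
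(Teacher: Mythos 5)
Your proposal is correct and follows the paper's own proof essentially verbatim: pass to $Z:=X\setminus S$, use Lemma~\ref{IdealProp2}(1) together with downward closure of the ideal to get $S\in I[\kappa-Z]$, apply Corollary~\ref{corollary4122}, and finish with Monotonicity Lemma~\ref{monotonicity}. No gaps.
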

\begin{proof} Consider the stationary set $Z:=X\setminus S$.
By Lemma~\ref{IdealProp2}(1), $\kappa\cap\cof(\omega)\in I[\kappa-Z]$, in particular, $S\in I[\kappa-Z]$.
By Corollary~\ref{corollary4122}, we conclude that after forcing with $\add(\kappa,\kappa^{+})$,  $Z$ does not $\mathfrak f$-reflect to $S$.
Now, Monotonicity Lemma~\ref{monotonicity} finishes the proof.
\end{proof}

Thus, we get a slight improvement of Corollary~\ref{PlusNotPlusPlus}:
\begin{cor}\label{cor525} It is consistent that for $S:=\aleph_2\cap\cof(\omega)$ and $X:=\aleph_2\cap\cof(\omega_1)$, $\diamondsuit^+_S$ holds,
but $X$ does not $\mathfrak f$-reflect to $S$.
\end{cor}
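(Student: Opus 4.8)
The plan is to produce the required model as $V[G]$, where $V$ is a model of $\gch$ (e.g., $V=\mathsf{L}$) and $G$ is $\add(\aleph_2,\aleph_3)$-generic over $V$; this is (a representative of) the model isolated in \cite[\S4]{FMR} and already used in the proof of Corollary~\ref{PlusNotPlusPlus}. Writing $\kappa:=\aleph_2$, we have $\kappa^{<\kappa}=\kappa$ in $V$, and $\add(\kappa,\kappa^+)$ is ${<}\kappa$-closed with the $\kappa^+$-cc, so it preserves all cardinals and cofinalities, adds no new bounded subsets of $\kappa$, and preserves stationary subsets of $\kappa$. Hence $S:=\aleph_2\cap\cof(\omega)$ and $X:=\aleph_2\cap\cof(\omega_1)$ are computed identically in $V$ and in $V[G]$, where they remain disjoint stationary subsets of $\aleph_2$. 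It then suffices to verify that $V[G]$ satisfies both $\diamondsuit^+_S$ and ``$X$ does not $\mathfrak f$-reflect to $S$''.

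The first of these is exactly the property for which the model of \cite[\S4]{FMR} was singled out, so I would cite it; alternatively one checks directly that $\add(\kappa,\kappa^+)$ forces $\diamondsuit^+_\kappa$ --- the $\kappa^+$-many generic Cohen subsets of $\kappa$ generically code a $\diamondsuit^+_\kappa$-sequence, where one uses the $\kappa^+$-cc to see that every subset of $\kappa$ in $V[G]$ has a nice name of size ${\le}\kappa$ and hence appears after ${\le}\kappa$-many coordinates, whereupon the remaining coordinates supply, for each $\alpha<\kappa$, a family of at most $|\alpha|$ subsets of $\alpha$ that catches it together with a club on a club. Restricting the resulting sequence to the stationary set $S$ gives $V[G]\models\diamondsuit^+_S$.

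For the second, I would invoke Corollary~\ref{cor5192} directly: its hypotheses hold in $V$, since $X$ and $S$ are stationary, $S\s\aleph_2\cap\cof(\omega)$, and $X\setminus S=X$ is stationary (as $X\cap S=\emptyset$); therefore, after forcing with $\add(\aleph_2,\aleph_3)$, $X$ does not $\mathfrak f$-reflect to $S$. (Unwinding Corollary~\ref{cor5192}: one sets $Z:=X\setminus S=X$, notes $S\in I[\kappa-Z]$ via Lemma~\ref{IdealProp2}(1) and downward-closure of the ideal, applies Corollary~\ref{corollary4122} to get that $Z$ does not $\mathfrak f$-reflect to $S$ in $V[G]$, and lifts this to the superset $X$ by Monotonicity Lemma~\ref{monotonicity}.) This, incidentally, is why the corollary strengthens Corollary~\ref{PlusNotPlusPlus}: since $\diamondsuit^{++}_S$ implies $\dl^*_S(\Pi^1_1)$, which by Lemma~\ref{dl-to-fref} implies that $\kappa$ --- and hence, by Monotonicity Lemma~\ref{monotonicity}, also $X$ --- $\mathfrak f$-reflects to $S$, the failure of $\mathfrak f$-reflection just established in particular kills $\diamondsuit^{++}_S$ in this model. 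The only genuinely delicate step is the verification of $\diamondsuit^+_S$, which I would take from \cite[\S4]{FMR}; everything else is a direct application of the results of this section.
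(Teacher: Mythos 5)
Your treatment of the non-reflection half is exactly the paper's: set $Z:=X\setminus S=X$, get $S\in I[\kappa-Z]$ from Lemma~\ref{IdealProp2}(1), and apply Corollary~\ref{cor5192} (equivalently, Corollary~\ref{corollary4122} plus Monotonicity). Your model is also the paper's model, up to the choice of ground model: the paper only assumes $\kappa=\aleph_2=2^{2^{\aleph_0}}$ in $V$, which your hypothesis $V\models\gch$ implies, and then forces with $\add(\kappa,\kappa^+)$.

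The divergence is in how $\diamondsuit^+_S$ is certified, and here your ``alternative'' direct argument has a genuine gap. The paper does not claim that the Cohen forcing \emph{adds} a $\diamondsuit^+_S$-sequence; it observes that the cardinal arithmetic $2^{2^{\aleph_0}}=\aleph_2$ is preserved by the ${<}\aleph_2$-closed, $\aleph_2^+$-cc forcing and then applies Gregory's Lemma~2.1 of \cite{MR485361} \emph{in the extension}. Your sketch --- that for each $A\s\kappa$ the coordinates beyond the support of a nice name for $A$ ``supply, for each $\alpha$, a family of at most $|\alpha|$ subsets of $\alpha$ that catches it together with a club on a club'' --- does not describe a construction of a single pre-fixed sequence $\langle\mathcal A_\alpha\mid\alpha\in S\rangle$ working for all $A$ simultaneously; the auxiliary coordinates you invoke depend on $A$. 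Moreover, the blanket claim ``$\add(\kappa,\kappa^+)$ forces $\diamondsuit^+_\kappa$'' is false without ground-model arithmetic: $\diamondsuit^+_S$ for stationary $S\s\aleph_2$ already implies $2^{\aleph_1}=\aleph_2$, and the forcing adds no subsets of $\omega_1$, so it cannot arrange this. (Cohen forcing does generically add $\diamondsuit_S$-sequences, which guess \emph{stationarily} often; the club-guessing required by $\diamondsuit^+$ is a different matter.) Your primary route --- citing \cite[\S4]{FMR} --- is defensible only insofar as that model is verifiably the one you built, which your write-up asserts rather than checks. The clean fix is simply the paper's: note that $2^{\aleph_0}\le\aleph_1$ and $2^{\aleph_1}=\aleph_2$ persist to $V[G]$ and quote Gregory there. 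With that substitution the proof is complete; your closing remark explaining why this kills $\diamondsuit^{++}_S$ (via Lemmas \ref{lemma49} and \ref{dl-to-fref} and Monotonicity) is correct.
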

\begin{proof}  Suppose $\kappa=\aleph_2=2^{2^{\aleph_0}}$,
and work in the forcing extension by $\add(\kappa,\kappa^{+})$.
By \cite[Lemma~2.1]{MR485361}, $\diamondsuit^+_S$ holds. By Corollary~\ref{cor5192}, $X$ does not $\mathfrak f$-reflect to $S$.
\end{proof}

\section{Dense non-reduction}\label{nonreductionsection}

In the previous section, we showed how adding $\kappa^+$ many Cohen subsets of $\kappa$ could
ensure the failure of instances of $\mathfrak f$-refl at the level of $\kappa$. In this section,
motivated by Lemma~\ref{lemma53}, we shall derive a stronger conclusion.

This section builds heavily on the ideas of Chapter~4, Section~4 of \cite{FHK},
where, given $X,Y$ stationary subsets of $\kappa$, it is forced under certain hypothesis that ${=_{X}^{2}}\nredub{=_{Y}^{2}}$.
Here we adapt their arguments to get ${=_{X}^{2}}\nredum{=_{Y}}$,
let alone ${=_{X}}\nredub{=_{Y}}$ or ${=_{X}^{2}}\nredub{=_{Y}^{2}}$.
In addition, our proof takes advantage of the ideal $I[\kappa-X]$ from the previous section,
hence, the findings here are applicable also for $\kappa$ successor of singular in which $Y$ concentrates on points of cofinality above the cofinality of the singular.

\begin{conv} We denote elements of $\kappa^{<\kappa}$ by English letters (e.g., $p$ and $q$),
and elements of $\kappa^\kappa$ by Greek letters (e.g., $\eta$ and $\xi$).
Subsets of $\kappa$ will be denoted by $X,Y,Z$ and $S$.
For all $\eta\in\kappa^\kappa$ and $A\s\kappa$, we denote $A_\eta:=\{\alpha\in A\mid \eta(\alpha)\neq0\}$.
We also let $\vec 0$ denote the constant $\kappa$-sequence with value $0$.
\end{conv}

\begin{defn}\label{encodingmap} A function $F:2^{<\kappa}\times \kappa^{<\kappa} \rightarrow2$ is said to \emph{encode a map from $2^\kappa$ to $\kappa^\kappa$} iff for all $p\in2^{<\kappa}$ and $q,q'\in\kappa^{<\kappa}$,
$F(p,q)=F(p,q')=1$ entails that $q\cup q'\in \kappa^{<\kappa}$.

The interpretation of $F$ is the function $F^*:2^\kappa\rightarrow\kappa^\kappa$
defined as follows. Given $\eta\in2^\kappa$,
if there exists $\xi\in\kappa^\kappa$ satisfying that, for all $\varepsilon<\kappa$,
there is a tail of $\delta<\kappa$, such that $F(\eta\restriction\delta,\xi\restriction\varepsilon)=1$,
then $\xi$ is unique and we let $F^*(\eta):=\xi$. Otherwise, we let $F^*(\eta):=\vec 0$.
\end{defn}

\begin{defn}\label{boundedtopology} Let $\theta\in[2,\kappa]$. A basic open set in the space $\theta^\kappa$
is a set of the form $N_p:=\{\eta\in\theta^\kappa\mid p\s \eta\}$ for some $p\in\theta^{<\kappa}$.
A binary relation $A\subseteq \theta^\kappa\times\theta^\kappa$ is said to be \emph{analytic} iff there is a closed subset $F$ of the product space $\theta^\kappa\times\theta^\kappa\times\theta^\kappa$ such that $A$ is equal to the projection $\pr(F):=\{(\eta,\xi)\in \theta^\kappa\times\theta^\kappa\mid\exists\zeta\in \theta^\kappa~(\eta,\xi,\zeta)\in F\}$.
\end{defn}

\begin{defn} A subset $D\s2^\kappa$ is said to be \emph{comeager} if
$D\supseteq\bigcap\mathcal D$ for some nonempty family $\mathcal D$ of at most $\kappa$-many dense
open subsets of $2^\kappa$.

A subset of $2^\kappa$ is said to be \emph{meager} iff its complement is {comeager}.
\end{defn}
\begin{fact}[Generalized Baire Category Theorem, {\cite[Theorem~9.87]{Jouko}}]\label{bct}
For any nonempty family $\mathcal D$ of at most $\kappa$-many dense
open subsets of $2^\kappa$, $\bigcap\mathcal D$ is dense.
In particular, the collection $\mathcal M_\kappa$ of all meager subsets of $2^\kappa$ forms a $\kappa^+$-additive proper ideal,
and the intersection of a comeager set with a set of the form $N_p$ (as in Definition~\ref{boundedtopology}) is nonempty.
\end{fact}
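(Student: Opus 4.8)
The plan is to prove the first assertion directly by a ${<}\kappa$-closed chain construction inside $2^{<\kappa}$, and then read off the two ``in particular'' clauses as easy corollaries. Throughout I would use that $\kappa^{<\kappa}=\kappa$ forces $\kappa$ to be regular (otherwise $\kappa^{\cf(\kappa)}>\kappa$ by K\"onig's lemma, while $\kappa^{\cf(\kappa)}\le\kappa^{<\kappa}$), so that the union of an $\s$-increasing chain of length ${<}\kappa$ of elements of $2^{<\kappa}$ is again in $2^{<\kappa}$, and that the supremum of fewer than $\kappa$ many ordinals below $\kappa$ stays below $\kappa$.

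\emph{Density of $\bigcap\mathcal D$.} Fix an enumeration $\mathcal D=\{D_i\mid i<\kappa\}$ and an arbitrary basic open set $N_p$. Unravelling the definitions, each $D_i$ is a union of sets of the form $N_q$, and density of $D_i$ means that for every $r\in 2^{<\kappa}$ there is $q\supseteq r$ with $N_q\s D_i$. I would recursively build an $\s$-increasing sequence $\langle p_i\mid i<\kappa\rangle$ in $2^{<\kappa}$: put $p_0:=p$; given $p_i$, use density of $D_i$ to pick $p_{i+1}\supseteq p_i$ with $N_{p_{i+1}}\s D_i$ and $\dom(p_{i+1})>\dom(p_i)$; and at a limit $i$ set $p_i:=\bigcup_{j<i}p_j$, which lies in $2^{<\kappa}$ since its domain is an ordinal below $\kappa$. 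Then $\eta:=\bigcup_{i<\kappa}p_i$ has $\dom(\eta)=\sup_{i<\kappa}\dom(p_i)=\kappa$, so $\eta\in 2^\kappa$; moreover $p\s\eta$, and $p_{i+1}\s\eta$ yields $\eta\in N_{p_{i+1}}\s D_i$ for every $i<\kappa$. Hence $\eta\in N_p\cap\bigcap\mathcal D$, which proves $\bigcap\mathcal D$ is dense.

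\emph{The ideal $\mathcal M_\kappa$.} Closure under subsets is immediate from the definition of meager. For $\kappa^+$-additivity, given meager sets $\langle M_j\mid j<\kappa\rangle$, fix for each $j$ a family $\mathcal D_j$ of at most $\kappa$ many dense open sets with $M_j\cap\bigcap\mathcal D_j=\emptyset$; then $\mathcal D^\ast:=\bigcup_{j<\kappa}\mathcal D_j$ is again a family of at most $\kappa$ many dense open sets, and since $\bigcap\mathcal D^\ast\s\bigcap\mathcal D_j$ for each $j$, we get $\bigl(\bigcup_{j<\kappa}M_j\bigr)\cap\bigcap\mathcal D^\ast=\emptyset$, so $\bigcup_{j<\kappa}M_j$ is meager. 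Properness of $\mathcal M_\kappa$ (i.e. $2^\kappa\notin\mathcal M_\kappa$) is just the assertion that no comeager set is empty, which follows from the first part: a comeager $C$ contains some $\bigcap\mathcal D$, and $\bigcap\mathcal D$ meets every $N_p$. The same observation gives the final clause verbatim: if $C$ is comeager, say $C\supseteq\bigcap\mathcal D$, then by density $\bigcap\mathcal D\cap N_p\neq\emptyset$, whence $C\cap N_p\neq\emptyset$.

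I do not expect a genuine obstacle; the one step demanding attention is the limit stage of the recursion, where one must know that the union of a chain of length ${<}\kappa$ of elements of $2^{<\kappa}$ stays in $2^{<\kappa}$, and this is exactly the regularity of $\kappa$ subsumed under the standing hypothesis $\kappa^{<\kappa}=\kappa$. Everything else is bookkeeping, which is why the statement is recorded here as a Fact with a reference rather than proved in full.
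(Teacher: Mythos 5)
Your proof is correct. The paper records this statement as a Fact with a citation to V\"a\"an\"anen's book and gives no proof of its own, so there is nothing to compare against; your argument is the standard one that the reference contains: the ${<}\kappa$-closed chain construction through $2^{<\kappa}$ (using that $\kappa^{<\kappa}=\kappa$ forces $\kappa$ to be regular, so limit stages stay inside $2^{<\kappa}$ and the final union has domain $\kappa$), followed by the routine deduction of $\kappa^+$-additivity and properness of $\mathcal M_\kappa$ from the density of $\bigcap\mathcal D$.
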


\begin{defn} A function $H:\kappa \times \kappa \rightarrow2^{<\kappa}$
is said to \emph{encode a comeager set} iff, for every $i<\kappa$, the fiber $H[\{i\}\times\kappa]$ is cofinal in $(2^{<\kappa},\s)$.

The interpretation of $H$ is the set:$$H^*:= \bigcap_{i < \kappa}\bigcup_{j<\kappa} N_{H(i,j)}.$$
\end{defn}

The following is obvious.
\begin{lemma}
For any comeager set $D\s 2^\kappa$, there is a function $H:\kappa \times \kappa \rightarrow2^{<\kappa}$ encoding a comeager subset $H^*$ of $D$.\qed
\end{lemma}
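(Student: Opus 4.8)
The plan is to produce the encoding function $H$ directly from a comeager set witnessing the given comeager $D$. Recall from the definition that $D$ comeager means $D\supseteq\bigcap\mathcal D$ for a family $\mathcal D=\langle D_i\mid i<\kappa\rangle$ of dense open subsets of $2^\kappa$ (if the family has fewer than $\kappa$ members, pad it by repeating entries, which changes nothing). So it suffices to find $H:\kappa\times\kappa\to 2^{<\kappa}$ encoding a comeager set contained in $\bigcap_{i<\kappa}D_i$.

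First I would, for each $i<\kappa$, use the density and openness of $D_i$ to choose, for each $p\in 2^{<\kappa}$, an extension $q_p^i\supseteq p$ with $N_{q_p^i}\subseteq D_i$; openness of $D_i$ guarantees such a basic neighborhood below each point, and density guarantees it can be found below any $p$. Since $\kappa^{<\kappa}=\kappa$, fix a bijection $e:\kappa\leftrightarrow 2^{<\kappa}$, and set $H(i,j):=q_{e(j)}^i$. Then for each fixed $i$, the fiber $H[\{i\}\times\kappa]=\{q_{e(j)}^i\mid j<\kappa\}$ is cofinal in $(2^{<\kappa},\subseteq)$, because every $p\in 2^{<\kappa}$ equals $e(j)$ for some $j$, and $H(i,j)=q_p^i\supseteq p$. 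Hence $H$ encodes a comeager set in the sense of the definition. It remains to check $H^*\subseteq D$: for each $i<\kappa$, $\bigcup_{j<\kappa}N_{H(i,j)}\subseteq D_i$ since every $N_{q_p^i}\subseteq D_i$; therefore $H^*=\bigcap_{i<\kappa}\bigcup_{j<\kappa}N_{H(i,j)}\subseteq\bigcap_{i<\kappa}D_i\subseteq D$. By Fact~\ref{bct}, $H^*$ is itself comeager (it contains $\bigcap_{i<\kappa}\bigl(\bigcup_{j<\kappa}N_{H(i,j)}\bigr)$, each of whose terms is a dense open set — dense because the fiber is cofinal, open as a union of basic open sets), so $H^*$ is a comeager subset of $D$, as required.

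There is essentially no obstacle here; the only point needing a word of care is the use of $\kappa^{<\kappa}=\kappa$ (our standing hypothesis) to enumerate $2^{<\kappa}$ in order-type $\kappa$ and thereby make each fiber cofinal, and the mild bookkeeping of padding $\mathcal D$ up to an index set of size exactly $\kappa$. Both are routine, which is why the statement is labeled obvious; I would keep the written proof to two or three sentences along the lines above.

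\begin{proof}
Write $D\supseteq\bigcap_{i<\kappa}D_i$ for dense open sets $D_i\s 2^\kappa$ (repeating entries if necessary so that the index set has size $\kappa$). Using that $\kappa^{<\kappa}=\kappa$, fix a bijection $e:\kappa\leftrightarrow 2^{<\kappa}$. For each $i<\kappa$ and $j<\kappa$, since $D_i$ is dense and open, pick $H(i,j)\in 2^{<\kappa}$ with $e(j)\s H(i,j)$ and $N_{H(i,j)}\s D_i$. Then for each $i<\kappa$ the fiber $H[\{i\}\times\kappa]$ is cofinal in $(2^{<\kappa},\s)$, since any $p\in 2^{<\kappa}$ equals $e(j)$ for some $j$, whence $p\s H(i,j)$; thus $H$ encodes a comeager set. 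Moreover $\bigcup_{j<\kappa}N_{H(i,j)}\s D_i$ for each $i$, so $H^*=\bigcap_{i<\kappa}\bigcup_{j<\kappa}N_{H(i,j)}\s\bigcap_{i<\kappa}D_i\s D$, and $H^*$ is comeager by construction (each $\bigcup_{j<\kappa}N_{H(i,j)}$ is dense open). Hence $H^*$ is a comeager subset of $D$.
\end{proof}
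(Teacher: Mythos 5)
Your proof is correct and is exactly the routine argument the paper has in mind when it labels this lemma as obvious and omits the proof: refine each dense open set $D_i$ along a $\kappa$-enumeration of $2^{<\kappa}$ to basic neighborhoods inside $D_i$, which makes each fiber cofinal and traps $H^*$ inside $\bigcap_i D_i$. All the steps (the padding of $\mathcal D$, the use of $\kappa^{<\kappa}=\kappa$, and the density of each $\bigcup_{j<\kappa}N_{H(i,j)}$) check out.
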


\begin{defn} A subset $B\s 2^\kappa$ is said to have \emph{the Baire property} iff there is an open set $U\s 2^\kappa$ for which the symmetric difference $U\symdiff B$ is meager.

We denote by $\mathcal B_\kappa$ the collection of all subsets of $2^\kappa$ having the Baire property.
\end{defn}
\begin{prop}\label{bairep} $\mathcal B_\kappa$ is an algebra which is closed under unions of length $\le\kappa$.
\end{prop}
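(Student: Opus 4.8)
The plan is to reproduce, in the $\kappa$-setting, the textbook proof that the sets with the Baire property form a $\sigma$-algebra, with ``countable'' everywhere upgraded to ``of size $\le\kappa$''. The only external input needed is Fact~\ref{bct}, which tells us that $\mathcal M_\kappa$ is a $\kappa^+$-additive proper ideal; the rest is elementary point-set topology in $2^\kappa$.

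First I would isolate two auxiliary observations. (a) \emph{Every nowhere dense $N\s2^\kappa$ is meager}: the single set $2^\kappa\setminus\cl(N)$ is dense open and contained in $2^\kappa\setminus N$, so $2^\kappa\setminus N$ is comeager. (b) \emph{For every open $U\s2^\kappa$ the boundary $\partial U:=\cl(U)\setminus U$ is closed and nowhere dense}, hence meager by (a) (if some basic open $N_p$ were contained in $\partial U$ then $N_p$ would be an open set meeting $\cl(U)$ but disjoint from $U$, which is absurd). With these in hand I would verify the defining properties of an algebra, plus closure under $\le\kappa$-sized unions.

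For $\emptyset\in\mathcal B_\kappa$: $\emptyset$ is open and $\emptyset\symdiff\emptyset=\emptyset$ is meager. For complements: given $B\in\mathcal B_\kappa$ witnessed by an open $U$ with $U\symdiff B$ meager, the set $2^\kappa\setminus B$ differs from the closed set $2^\kappa\setminus U$ by the meager set $U\symdiff B$, while $2^\kappa\setminus U$ differs from the open set $\mathrm{int}(2^\kappa\setminus U)$ exactly by $\partial U$, which is meager by (a)--(b); since $\mathcal M_\kappa$ is an ideal and symmetric difference obeys the triangle inequality $A\symdiff C\s(A\symdiff A')\cup(A'\symdiff C)$, it follows that $2^\kappa\setminus B$ differs from $\mathrm{int}(2^\kappa\setminus U)$ by a meager set, so $2^\kappa\setminus B\in\mathcal B_\kappa$. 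For unions of length $\le\kappa$ — which in particular subsumes finite unions and therefore, together with closure under complements and De~Morgan's laws, completes the verification that $\mathcal B_\kappa$ is an algebra — given $\langle B_i\mid i<\theta\rangle$ with $\theta\le\kappa$ and each $B_i\in\mathcal B_\kappa$, fix open $U_i$ with $M_i:=U_i\symdiff B_i$ meager, put $U:=\bigcup_{i<\theta}U_i$, and use the elementary containment $(\bigcup_{i<\theta}B_i)\symdiff U\s\bigcup_{i<\theta}M_i$ together with the $\kappa^+$-additivity of $\mathcal M_\kappa$ supplied by Fact~\ref{bct} to conclude that $(\bigcup_{i<\theta}B_i)\symdiff U$ is meager, i.e.\ $\bigcup_{i<\theta}B_i\in\mathcal B_\kappa$.

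I do not expect a genuine obstacle here; the argument is routine. The only two points that actually exploit the specific definition of meagerness (via \emph{families of dense open sets of size at most} $\kappa$, rather than countable families) are the verification in observation (a) that a single nowhere dense set already counts as meager, and the appeal to the $\kappa^+$-additivity of $\mathcal M_\kappa$ — that is, to Fact~\ref{bct} rather than to mere finite additivity — at the step closing $\mathcal B_\kappa$ under unions of length up to $\kappa$.
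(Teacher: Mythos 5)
Your proof is correct and follows essentially the same route as the paper's: complements are handled by passing to the interior of $U^c$ and absorbing the boundary (a nowhere dense, hence meager, set), and unions of length $\le\kappa$ follow from the containment $\bigcup_i B_i\symdiff\bigcup_i U_i\s\bigcup_i(B_i\symdiff U_i)$ together with the $\kappa^+$-additivity of $\mathcal M_\kappa$. The only difference is that you spell out the auxiliary facts (nowhere dense implies meager, boundaries of open sets are nowhere dense) that the paper leaves implicit.
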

\begin{proof} Given $B\in \mathcal B_\kappa$, to see that $B^c:=2^\kappa\setminus B$ is in $\mathcal B_\kappa$,
fix an open set $U$ such that $U\symdiff B$ is meager, and let $V$ denote the interior of $U^c:=2^\kappa\setminus U$;
then $B^c\symdiff V\s (B^c\symdiff U^c)\cup(U^c\setminus V)=(B\symdiff U)\cup(U^c\setminus V)$ which is the union of two comeager sets.

To see that $\mathcal B_\kappa$ is closed under union of length $\le\kappa$,
note that since $\mathcal M_\kappa$ forms a $\kappa^+$-additive ideal,
this follows from the fact that,
for any sequence of pairs $\langle (B_i,U_i)\mid i<\kappa\rangle$, $\bigcup\{B_i\mid i<\kappa\}\symdiff \bigcup\{U_i\mid i<\kappa\}\subseteq \bigcup\{B_i\symdiff U_i\mid i<\kappa\}$.
\end{proof}

\begin{defn} A function $f:2^{\kappa} \rightarrow\kappa^{\kappa}$ is said to be \emph{Baire measurable} iff for any open set $U\s2^\kappa$, $f^{-1}[U]$ has the Baire property.
The existence of a Baire measurable reduction of $R_0$ to $R_1$ is denoted by $R_0 \redum R_1$.
\end{defn}

\begin{lemma}[folklore]\label{BMcontinuous}
Baire measurable functions are continuous on a comeager set.
\end{lemma}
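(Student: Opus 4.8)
The plan is to exploit that $\kappa^\kappa$ has a base of size $\kappa$, so that Baire-measurability produces a single comeager set on which $f$ simultaneously becomes continuous with respect to every basic open set of the target. First I would recall that, since $\kappa^{<\kappa}=\kappa$, the family $\{N_q\mid q\in\kappa^{<\kappa}\}$ of basic open subsets of $\kappa^\kappa$ (as in Definition~\ref{boundedtopology}) has cardinality $\kappa$ and forms a base for the bounded topology on $\kappa^\kappa$. For each $q\in\kappa^{<\kappa}$, the set $N_q$ is open, so by Baire-measurability of $f$ the preimage $f^{-1}[N_q]$ has the Baire property; hence I may fix an open set $U_q\s2^\kappa$ such that $M_q:=f^{-1}[N_q]\symdiff U_q$ is meager.

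Next I would set $M:=\bigcup_{q\in\kappa^{<\kappa}}M_q$. Since $\mathcal M_\kappa$ is a $\kappa^+$-additive ideal by Fact~\ref{bct} and there are only $\kappa$-many indices $q$, the set $M$ is meager, so $C:=2^\kappa\setminus M$ is comeager. The claim is that $f\restriction C$ is continuous, and to verify this it suffices to check that the preimage under $f\restriction C$ of each member of a base for $\kappa^\kappa$ is relatively open in $C$. So fix $q\in\kappa^{<\kappa}$. Because $f^{-1}[N_q]\symdiff U_q= M_q\s M$ and $C\cap M=\emptyset$, we get $f^{-1}[N_q]\cap C=U_q\cap C$, which is open in the subspace topology on $C$. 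As $\{N_q\mid q\in\kappa^{<\kappa}\}$ is a base for $\kappa^\kappa$, it follows that $(f\restriction C)^{-1}[V]$ is relatively open in $C$ for every open $V\s\kappa^\kappa$; that is, $f\restriction C$ is continuous.

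There is no genuine obstacle here beyond bookkeeping: the two points that need care are that the base of $\kappa^\kappa$ has cardinality $\kappa$ — which is precisely where the standing hypothesis $\kappa^{<\kappa}=\kappa$ enters — and that the meager ideal on $2^\kappa$ is $\kappa^+$-additive, which is exactly the content of Fact~\ref{bct}. Given these, the remainder is the familiar reduction of continuity to basic open sets.
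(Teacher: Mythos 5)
Your proposal is correct and is essentially identical to the paper's own proof: both fix, for each $q\in\kappa^{<\kappa}$, an open $U_q$ with $f^{-1}[N_q]\symdiff U_q$ meager, use $\kappa^{<\kappa}=\kappa$ together with the $\kappa^+$-additivity of the meager ideal to discard the union of these symmetric differences, and observe that on the remaining comeager set the preimage of each basic open set coincides with the trace of $U_q$. Nothing further is needed.
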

\begin{proof}
Let $f:2^\kappa\rightarrow\kappa^\kappa$ be a Baire measurable function.
For any $p\in \kappa^{<\kappa}$, choose an open set $U_p\s 2^\kappa$ such that $U_p\symdiff f^{-1}[N_p]$ is meager.
As $\kappa^{<\kappa}=\kappa$, the following set is comeager:
$$D:=\kappa^\kappa\setminus \bigcup_{p\in \kappa^{<\kappa}}(U_p\symdiff f^{-1}[N_p]).$$
Now, to see that $f\restriction D$ is continuous,
it suffices to show that, for every $p\in \kappa^{<\kappa}$, $D\cap f^{-1}[N_p]$ is an open set in $D$. But this is clear, since $D\cap f^{-1}[N_p]=D\cap U_p$.
\end{proof}

\begin{defn}\label{xyCODE} For stationary subsets $X,Y$ of $\kappa$,
we say that $(F,H)$ is an \emph{$(X,Y)$-pair} iff all of the following hold:
\begin{enumerate}
\item $F$ encodes a map from $2^\kappa$ to $\kappa^\kappa$;
\item $H$ encodes a comeager set;
\item For every $\eta\in H^*$,
$$X_\eta\text{ is stationary} \iff  Y_{F^*(\eta)}  \text{ is stationary}.$$
\end{enumerate}
\end{defn}

Our next task is to show that if ${=^2_{X}}\redum{=_{Y}}$, then there exists an $(X,Y)$-pair.
For this, we first introduce the notion of \emph{positivity} of a reduction and prove a lemma about it.

\begin{defn}\label{positivity}
For every function $f:2^{\kappa} \rightarrow \kappa^{\kappa}$,
we define the \emph{positivity of $f$}, $f^{+}:2^{\kappa}\rightarrow \kappa^{\kappa}$, via:

$$f^+(\eta)(\alpha) :=
\begin{cases}
0,& \text{if } f(\eta)(\alpha)=f(\vec 0)(\alpha);\\
f(\eta)(\alpha), & \text{if }  f(\eta)(\alpha) \neq f(\vec 0)(\alpha)  \text{ and }  f(\eta)(\alpha) \neq 0;\\
f(\vec 0)(\alpha), & \text{if } f(\eta)(\alpha) \neq f(\vec 0)(\alpha)  \text{ and }   f(\eta)(\alpha)=0.
\end{cases} $$
\end{defn}

\begin{lemma}\label{lemma512} Suppose $f:2^{\kappa} \rightarrow \kappa^{\kappa}$ is a Baire measurable function.
Then so is $f^{+}$.
\end{lemma}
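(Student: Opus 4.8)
The plan is to exhibit $f^{+}$ as the composition $g\circ f$, where $g$ is a suitable \emph{homeomorphism} of the space $\kappa^\kappa$, and then to invoke the elementary fact that the composition of a homeomorphism after a Baire measurable map is again Baire measurable (since $(g\circ f)^{-1}[U]=f^{-1}[g^{-1}[U]]$ and $g^{-1}[U]$ is open whenever $U$ is, so $(g\circ f)^{-1}[U]\in\mathcal B_\kappa$ by Baire measurability of $f$).

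First I would unwind the pointwise definition. Fix $\alpha<\kappa$ and put $c_\alpha:=f(\vec 0)(\alpha)$. Let $\sigma_\alpha\colon\kappa\to\kappa$ be the transposition interchanging $0$ and $c_\alpha$ (so $\sigma_\alpha=\mathrm{id}$ in the degenerate case $c_\alpha=0$). A direct inspection of the three clauses of Definition~\ref{positivity} shows that, for every $\eta\in2^\kappa$, $f^{+}(\eta)(\alpha)=\sigma_\alpha(f(\eta)(\alpha))$: if $f(\eta)(\alpha)=c_\alpha$ then both sides equal $0$; if $f(\eta)(\alpha)=0\neq c_\alpha$ then both equal $c_\alpha$ (here the third clause applies precisely because $c_\alpha\neq0$); and in all remaining cases $f(\eta)(\alpha)\notin\{0,c_\alpha\}$, so both sides equal $f(\eta)(\alpha)$. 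Next, define $g\colon\kappa^\kappa\to\kappa^\kappa$ by $g(\xi)(\alpha):=\sigma_\alpha(\xi(\alpha))$. Since each $\sigma_\alpha$ is an involutive bijection of $\kappa$, the map $g$ is a bijection with $g^{-1}=g$; and since $g(\xi)(\alpha)$ depends only on $\xi(\alpha)$, $g$ is continuous (in fact $1$-Lipschitz), hence, being its own inverse, a homeomorphism of $\kappa^\kappa$. By the identity just established, $f^{+}=g\circ f$, and the conclusion follows as in the first paragraph.

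There is no genuine obstacle here; the only point requiring care is the bookkeeping in the case analysis identifying $f^{+}(\eta)(\alpha)$ with $\sigma_\alpha(f(\eta)(\alpha))$, in particular treating correctly the subcase $c_\alpha=0$, where the third clause of Definition~\ref{positivity} is vacuous. If one prefers to avoid the homeomorphism packaging, one can argue directly: since the sets $N_q$ with $q\in\kappa^{<\kappa}$ form a base for $\kappa^\kappa$ and $\mathcal B_\kappa$ is closed under intersections of length ${<}\kappa$ (this follows from Proposition~\ref{bairep}, as $\mathcal B_\kappa$ is an algebra closed under $\kappa$-unions), it suffices to check that $\{\eta\mid f^{+}(\eta)(\alpha)=\gamma\}\in\mathcal B_\kappa$ for each $\alpha,\gamma<\kappa$; but this set equals $\{\eta\mid f(\eta)(\alpha)=\sigma_\alpha(\gamma)\}=f^{-1}[N_{\{(\alpha,\sigma_\alpha(\gamma))\}}]$, which has the Baire property because $f$ is Baire measurable.
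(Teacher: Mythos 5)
Your proof is correct, but it takes a genuinely different route from the paper's. The paper argues directly: for $p=f^{+}(\eta)\restriction\beta$ it partitions $\beta$ into the three sets $B_0,B_1,B_2$ according to the clauses of Definition~\ref{positivity}, writes $(f^{+})^{-1}[N_{p}]$ as $W_0\cap W_1\cap W_2$ with each $W_i$ a union of at most $\kappa$ many sets of the form $f^{-1}[N_q]$, and closes via Proposition~\ref{bairep}. You instead factor $f^{+}=g\circ f$ through the coordinatewise-transposition map $g(\xi)(\alpha)=\sigma_\alpha(\xi(\alpha))$, verify that $g$ is an involutive homeomorphism of $\kappa^\kappa$, and reduce the lemma to the observation that postcomposing a Baire measurable map with a continuous one preserves Baire measurability. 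Your identity $f^{+}(\eta)(\alpha)=\sigma_\alpha(f(\eta)(\alpha))$ checks out in all cases, including the degenerate one $f(\vec 0)(\alpha)=0$, so the argument is sound; it is shorter and more conceptual, isolating the whole content of the lemma in that one pointwise identity, whereas the paper's computation is more hands-on but self-contained. One cosmetic remark on your alternative direct argument: $\{(\alpha,\sigma_\alpha(\gamma))\}$ is a partial function, not an element of $\kappa^{<\kappa}$, so $N_{\{(\alpha,\sigma_\alpha(\gamma))\}}$ is not literally a basic open set in the sense of Definition~\ref{boundedtopology}; the set $\{\xi\mid\xi(\alpha)=\sigma_\alpha(\gamma)\}$ is nonetheless open (a union of at most $\kappa$ many sets $N_q$ with $q\in\kappa^{\alpha+1}$), so its $f$-preimage still has the Baire property and the argument goes through.
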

\begin{proof} Since every open set is the union of at most $\kappa^{<\kappa}=\kappa$ many sets of the form $N_p$, and since $\mathcal B_\kappa$ is closed under unions of length $\le\kappa$,
it suffices to verify that for every $p \in\kappa^{<\kappa}$, $(f^{+})^{-1}[N_{p}]$ has the Baire property.
Given $p\in\kappa^{<\kappa}$, if there are no $\eta \in 2^\kappa$ and $\beta<\kappa$ such that $ p=f^{+}(\eta)\restriction\beta $, then $(f^{+})^{-1}[N_{p}]$ is empty, and we are done.
Thus, it suffices to consider $p$'s of the form $f^{+}(\eta)\restriction\beta$ for $\eta \in2^\kappa$ and $\beta<\kappa$.

Let $\eta \in2^\kappa$ and $\beta<\kappa$ be arbitrary. Denote $\xi:=f^{+}(\eta)$.
In order to compute $(f^{+})^{-1}[N_{\xi \restriction \beta}]$, we consider the following sets defined to handle each case in the definition of $f^+$:
\begin{itemize}
\item[a)] $B_{0} :=\{\alpha < \beta\mid f(\eta)(\alpha)=f(\vec 0)(\alpha) \}$;
\item[b)] $B_{1} := \{\alpha < \beta\mid f(\eta)(\alpha) \neq f(\vec 0)(\alpha)\text{ and }f(\eta)(\alpha) \neq 0\}$;
\item[c)] $B_{2}:=\{\alpha <\beta\mid f(\eta)(\alpha)\neq f(\vec 0)(\alpha)\text{ and }f(\eta)(\alpha)=0\}$.
\end{itemize}

Next, consider the following sets:
\begin{itemize}
\item[A)] $Q_{0} :=\{q \in \kappa^{\beta} \mid q\restriction B_0 = f(\vec 0)\restriction B_0 \}$;
\item[B)] $Q_{1} :=\{q \in \kappa^{\beta} \mid q\restriction B_1=f(\eta)\restriction B_1 \}$;
\item[C)] $Q_{2} :=\{q \in \kappa^{\beta} \mid q\restriction B_2 = f(\eta) \restriction B_2 \}$.
\end{itemize}

As $f$ is Baire measurable, for each $i<3$,
$$W_{i} := \bigcup\{ f^{-1}[N_{q}] \mid q \in Q_{i} \}$$
is the union of $\le\kappa^{<\kappa}=\kappa$ many sets, each having the Baire property.
So, by Proposition~\ref{bairep}, $W_0 \cap W_1 \cap W_2$ has the Baire property.
Thus, the next claim finishes the proof.
\begin{claim} $(f^{+})^{-1}[N_{\xi\restriction \beta}]= W_{0}\cap W_{1} \cap W_{2}$.
\end{claim}
\begin{proof}
$(\implies)$ Suppose $\zeta \in W_0 \cap W_1 \cap W_2$.
Then $f(\zeta) \restriction B_{0} = f(\vec 0) \restriction B_{0} = f(\eta)\restriction B_0$, $f(\zeta)\restriction B_{1} = f(\eta)\restriction B_{1}$ and $f(\zeta)\restriction B_{2}= f(\eta)\restriction B_{2}$, which implies that $f^{+}(\zeta) \in N_{\xi\restriction\beta}$.

$(\impliedby)$ Suppose $\zeta\in2^\kappa$ with $f^+(\zeta)\in N_{\xi\restriction \beta}$. Then
$$f^{+}(\zeta)\restriction \beta = \xi\restriction \beta = f^{+}(\eta)\restriction \beta = (\vec 0 \restriction B_{0}) \cup (f(\eta)\restriction B_1 )\cup (f(\vec 0)\restriction B_{2}).$$
This implies that
\begin{itemize}
\item $f(\zeta) \restriction B_{0} = f(\vec 0)\restriction B_{0}$,
\item $f(\zeta)\restriction B_{1} = f(\eta) \restriction B_{1} $, and
\item $f(\zeta)\restriction B_{2} = f(\eta) \restriction B_{2} = \vec 0 \restriction B_{2}$.
\end{itemize}

Thus $f(\zeta) \in \bigcap_{i<3}\bigcup_{q \in Q_i}N_q$, so that $\zeta \in W_0 \cap W_1 \cap W_2$.
\end{proof}
This completes the proof.
\end{proof}

\begin{lemma}\label{derivedReduction}
If ${=^2_{X}}\redum{=_{Y}}$, then there exists an $(X,Y)$-pair.
\end{lemma}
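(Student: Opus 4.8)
The plan is to start from a Baire measurable function $f\colon 2^\kappa\rightarrow\kappa^\kappa$ witnessing ${=^2_X}\redum{=_Y}$ and to build the pair $(F,H)$ out of its positivity $f^+$ (Definition~\ref{positivity}). First I would invoke Lemma~\ref{lemma512} to see that $f^+$ is again Baire measurable, and then Lemma~\ref{BMcontinuous} to obtain a comeager set on which $f^+$ is continuous; shrinking this set, I may take it to be $H^*$ for some $H\colon\kappa\times\kappa\rightarrow2^{<\kappa}$ encoding a comeager set, so that Clause~(2) in the definition of an $(X,Y)$-pair (Definition~\ref{xyCODE}) is automatically satisfied and, moreover, $f^+\restriction H^*$ is continuous.

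Next I would define $F\colon 2^{<\kappa}\times\kappa^{<\kappa}\rightarrow 2$ by declaring $F(p,q):=1$ iff $N_p\cap H^*\neq\emptyset$ and $q\subseteq f^+(\eta)$ for every $\eta\in N_p\cap H^*$. The encoding requirement of Definition~\ref{encodingmap} (Clause~(1) of an $(X,Y)$-pair) is then immediate: if $F(p,q)=F(p,q')=1$, then any $\eta\in N_p\cap H^*$ witnesses that $q$ and $q'$ are both initial segments of $f^+(\eta)$, whence $q\cup q'\subseteq f^+(\eta)\in\kappa^{<\kappa}$. The crux of the argument is the verification that $F^*$ decodes $f^+$ on $H^*$, that is, $F^*(\eta)=f^+(\eta)$ for all $\eta\in H^*$. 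Given such an $\eta$ and an arbitrary $\varepsilon<\kappa$, continuity of $f^+\restriction H^*$ provides some $\delta<\kappa$ with $N_{\eta\restriction\delta}\cap H^*\subseteq(f^+)^{-1}[N_{f^+(\eta)\restriction\varepsilon}]$, and since $N_{\eta\restriction\delta'}\subseteq N_{\eta\restriction\delta}$ for all $\delta'\ge\delta$, the same inclusion persists for every larger $\delta'$; as each set $N_{\eta\restriction\delta'}\cap H^*$ contains $\eta$ and is therefore nonempty, we get $F(\eta\restriction\delta',f^+(\eta)\restriction\varepsilon)=1$ for a tail of $\delta'$. Thus $f^+(\eta)$ is a legitimate — and, by the encoding property of $F$, the unique — witness in Definition~\ref{encodingmap}, so $F^*(\eta)=f^+(\eta)$.

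Finally I would verify Clause~(3). Fix $\eta\in H^*$. Since $\eta\in2^\kappa$, the set $X_\eta$ is non-stationary iff $\eta\mathrel{=^2_X}\vec 0$, which by the reduction property of $f$ is equivalent to $f(\eta)=_Y f(\vec 0)$, i.e., to $\{\alpha\in Y\mid f(\eta)(\alpha)\neq f(\vec 0)(\alpha)\}$ being non-stationary. By the definition of positivity, this last set is exactly $\{\alpha\in Y\mid f^+(\eta)(\alpha)\neq 0\}=Y_{f^+(\eta)}$, and by the previous paragraph $f^+(\eta)=F^*(\eta)$; hence $X_\eta$ is stationary iff $Y_{F^*(\eta)}$ is stationary, as required. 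The one genuinely delicate point — and the step I expect to need the most care — is the decoding equality $F^*\restriction H^*=f^+\restriction H^*$, where one must combine the continuity of $f^+$ on the \emph{specific} comeager set $H^*$ with the ``tail of $\delta$'' quantifier in the definition of $F^*$; the passage to positivity at the outset is precisely what makes Clause~(3) come out centered at $\vec 0$ rather than at $f(\vec 0)$, and with that in hand the rest is routine bookkeeping resting on Lemmas~\ref{lemma512} and~\ref{BMcontinuous}.
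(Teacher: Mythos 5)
Your proposal is correct and follows essentially the same route as the paper: pass to the positivity $f^+$ (Lemma~\ref{lemma512}), restrict to a comeager set $H^*$ of continuity (Lemma~\ref{BMcontinuous}), define $F(p,q)=1$ iff $f^+[N_p\cap H^*]\subseteq N_q$, and verify the three clauses exactly as you do. Your extra requirement that $N_p\cap H^*\neq\emptyset$ in the definition of $F$ is vacuous, since by Fact~\ref{bct} a comeager set meets every basic open set, so your $F$ coincides with the paper's.
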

\begin{proof}
Suppose $f:2^{\kappa} \rightarrow \kappa^{\kappa}$ is a Baire measurable reduction from $=^2_{X} $ into $=_{Y}$.
Let $f^+$ be the positivity of $f$, so that, by Lemma~\ref{lemma512}, $f^+$ is Baire measurable.

By Lemma~\ref{BMcontinuous},    Baire measurable functions are continuous on a comeager set,
so we may  choose a function $H:\kappa\times\kappa\rightarrow 2^{<\kappa}$
encoding a comeager set $H^*$ and satisfying that $f^{+}\restriction H^*$ is continuous.
Now, define a function $F:2^{<\kappa}\times \kappa^{<\kappa} \rightarrow 2$ via $F(p,q):=1$ iff
$f^+[N_{p} \cap H^*]\s N_{q}$.
\begin{claim}  $(F,H)$ is an  $(X,Y)$-pair.
\end{claim}
\begin{proof} We go over the clauses of Definition~\ref{xyCODE}:
\begin{enumerate}
\item Suppose $p\in 2^{<\kappa}$, $q,q'\in \kappa^{<\kappa}$ and $F(p,q)=1=F(p,q')$.
Then $f^{+}[N_{p}\cap H^*] \subseteq N_{q}$ and $f^{+}[N_{p} \cap H^* ] \subseteq N_{q'}$.
Hence $f^+[N_p\cap H^*]\subseteq N_q\cap N_{q'}$. By Remark~\ref{bct}, the former is nonempty,
so the latter is nonempty and $q\cup q'\in \kappa^{<\kappa}$.
Altogether, $F$ encodes a map from $2^{\kappa}$ to $\kappa^{\kappa}$.

Let us show that $F^*\restriction H^*=f^+\restriction H^*$. For this, let $\eta \in H^* $ be arbitrary.
Set $\xi:=f^+(\eta)$. By the continuity of $f^+\restriction H^*$, for every $\varepsilon<\kappa$, there is a tail of $\delta < \kappa$ such that
$$ N_{\eta \restriction \delta} \cap H^*\s (f^+) ^{-1}[N_{\xi \restriction \varepsilon}],$$
so $ f^+[N_{\eta \restriction \delta}\cap H^*]\s N_{\xi \restriction \varepsilon}$.
Thus $F(\eta \restriction \delta, \xi \restriction \varepsilon) = 1$ for a tail of $\delta<\kappa$,
and it follows from Definition~\ref{encodingmap} that $F^{*}(\eta)= \xi$.

\item By its very choice, $H$ encodes the comeager set $H^*$.
\item Since $F^*\restriction H^*=f^+\restriction H^*$,
we fix an arbitrary $\eta\in H^*$ and prove that  $X_\eta$ is stationary iff $Y_{f^{+}(\eta)}$ is stationary.
The following are equivalent:
\begin{enumerate}
\item $X_\eta$ is stationary;
\item $\{ \alpha \in X \mid \eta(\alpha) \neq \vec{0}(\alpha) \}$ is stationary;
\item $\neg(\eta =^2_{X} \vec{0})$;
\item $\neg(f(\eta) =_{Y} f(\vec{0}))$;
\item $\{ \alpha \in Y \mid f(\eta)(\alpha) \neq f(\vec{0})(\alpha) \}$ is stationary;
\item $\{\alpha \in Y \mid f^+(\eta)(\alpha) \neq 0\}$ is stationary.
\item $Y_{f^{+}(\eta)}$ is stationary.
\end{enumerate}

The equivalence of (c) and (d) follows from the fact that $f$ reduces ${=^2_{X}}$ to ${=_{Y}}$.
The equivalence of (e) and (f) follows from Definition~\ref{positivity}\qedhere
\end{enumerate}
\end{proof}

This completes the proof.
\end{proof}

\begin{thm}\label{CohenEffect} Suppose $X,Y$ are disjoint stationary subsets of $\kappa$,
with $Y \in\allowbreak{I[\kappa - X]}$.
For every pair $(F,H)$, $V^{\add(\kappa,1)}\models (F,H)\text{ is not an }(X,Y)\text{-pair}$.
\end{thm}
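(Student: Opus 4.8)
The plan is to run the argument of the proof of Theorem~\ref{CohenEffect2}, replacing the filter sequence $\vec{\mathcal F}$ by the $(X,Y)$-pair machinery of Definition~\ref{xyCODE}. Suppose towards a contradiction that some condition forces that $(F,H)$ is an $(X,Y)$-pair. By the almost homogeneity of $\add(\kappa,1)$ and the fact that $F,H,X,Y\in V$, it is forced by $1$ that $(F,H)$ is an $(X,Y)$-pair; in particular, this holds in every $\add(\kappa,1)$-generic extension of $V$. Now form the posets $\mathbb R$, $\mathbb P$ and $\mathbb Q$ exactly as in the proof of Theorem~\ref{CohenEffect2} (they depend only on $X$ and $\kappa$), fix an $\mathbb R$-generic $G$ over $V$, let $G_0$ be the induced $\mathbb P$-generic and $G_1$ the induced $\mathbb Q$-generic, so that $V[G]=V[G_0][G_1]$, and, $\mathbb P$ and $\mathbb R$ being separative ${<}\kappa$-closed of size $\kappa$, by Fact~\ref{Cohen} both $V[G_0]$ and $V[G]$ are $\add(\kappa,1)$-generic extensions of $V$. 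Set $\eta:=\bigcup G_0\in 2^\kappa$. Since for each $i<\kappa$ the set $\{p\in\mathbb P\mid\exists j<\kappa\,(H(i,j)\s p)\}$ is dense (the $i$-th fiber of $H$ being cofinal in $(2^{<\kappa},\s)$), we get $\eta\in\bigcup_{j<\kappa}N_{H(i,j)}$ for all $i$, i.e.\ $\eta\in H^*$; note that this membership is absolute between $V[G_0]$ and $V[G]$.

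Work in $V[G_0]$. As in Claim~\ref{cohen2}, a density argument in $\mathbb P$ shows that $X_\eta=\{\alpha\in X\mid\eta(\alpha)=1\}$ is stationary. Since $(F,H)$ is an $(X,Y)$-pair here and $\eta\in H^*$, clause~(3) of Definition~\ref{xyCODE} gives that $Y_{F^*(\eta)}$ is stationary; writing $\xi:=F^*(\eta)$ as computed in $V[G_0]$, and recalling that $Y_{\vec 0}=\emptyset$, this forces $\xi\neq\vec 0$, so the witnessing clause of Definition~\ref{encodingmap} genuinely holds for $\xi$. As $\xi\in V[G_0]$ and $Y_\xi\s Y\in I[\kappa-X]$ — the latter preserved to $V[G_0]$ by the same witnesses, $\mathbb P$ being cofinality-preserving and adding no bounded subsets of $\kappa$ — we have $Y_\xi\in I[\kappa-X]$ in $V[G_0]$. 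Now the argument of Claim~\ref{cohen3}, with $Y_\xi$ playing the role of $T$, shows that $Y_\xi$ remains stationary in $V[G_0][G_1]=V[G]$. On the other hand, as in Claim~\ref{cohen4}, the set $C:=\{\alpha<\kappa\mid\exists q\in G_1\,(q(\alpha)=1)\}$ is a club disjoint from $X_\eta$ — its unboundedness requires only that $\kappa\setminus X$ be cofinal in $\kappa$, which holds since $Y\s\kappa\setminus X$ is stationary — so $X_\eta$ is non-stationary in $V[G]$.

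Finally, $F^*(\eta)$ is computed identically in $V[G_0]$ and in $V[G]$: the witness $\xi$ already lies in $V[G_0]$, the condition ``for every $\varepsilon<\kappa$ there is a tail of $\delta<\kappa$ with $F(\eta\restriction\delta,\xi\restriction\varepsilon)=1$'' involves only ordinals below $\kappa$ and the bounded restrictions $\eta\restriction\delta,\xi\restriction\varepsilon$, all of which lie in $V$, so the condition is absolute, and the witness is unique; hence $F^*(\eta)^{V[G]}=\xi$. Thus in $V[G]$ — an $\add(\kappa,1)$-generic extension in which $(F,H)$ is an $(X,Y)$-pair and $\eta\in H^*$ — clause~(3) of Definition~\ref{xyCODE} requires $X_\eta$ to be stationary if and only if $Y_\xi$ is. But $X_\eta$ is non-stationary in $V[G]$ while $Y_\xi$ is stationary there, a contradiction.

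The main obstacle is the bookkeeping around the two intermediate models: one must arrange that $X_\eta$ is stationary precisely in $V[G_0]$ (so that the $(X,Y)$-pair forces $Y_\xi$ to be stationary there), then play $Y\in I[\kappa-X]$ against the quotient forcing $\mathbb Q$ so that $Y_\xi$ survives to $V[G]$ while the stationarity of $X_\eta$ is destroyed, and one must verify the absoluteness of $F^*$ so that the concluding appeal to the $(X,Y)$-pair in $V[G]$ is about the same function $\xi$. Granting these points, Claims~\ref{cohen2}, \ref{cohen3} and \ref{cohen4} from the proof of Theorem~\ref{CohenEffect2} carry over essentially verbatim.
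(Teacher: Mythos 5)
Your proof is correct and follows essentially the same route as the paper's: the same reduction to the posets $\mathbb R$, $\mathbb P$, $\mathbb Q$ from Theorem~\ref{CohenEffect2}, the same use of almost homogeneity, and the same three claims ($\eta\in H^*$, $Y_{F^*(\eta)}$ stationary in $V[G]$ via $Y_{F^*(\eta)}\s Y\in I[\kappa-X]$, and $X_\eta$ non-stationary in $V[G]$). Your explicit verification that $F^*(\eta)$ is computed identically in $V[G_0]$ and $V[G]$ is a point the paper leaves implicit, but it does not change the argument.
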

\begin{proof} Towards a contradiction, suppose that $(F,H)$ is a counterexample.
As $\add(\kappa,1)$ is almost homogeneous and $X,Y,F,H$ live in the ground model,
it follows that, in fact, $V^{\add(\kappa,1)}\models (F,H)\text{ is an }(X,Y)\text{-pair}$.

Let $\mathbb R, \mathbb P, G , G_0, \mathbb Q$, and $G_1$ be all defined as in the proof of Theorem~\ref{CohenEffect2}.
Denote $\eta:=\bigcup G_0$.

\begin{claim}\label{Claim4112} In $V[G_0]$, $\eta\in H^*$.
\end{claim}
\begin{proof} Let $i<\kappa$.
The set $H[\{i\}\times\kappa]$ is cofinal in $(2^{<\kappa},\s)$,
so, in particular, $D_i:=\{ p{}^\curvearrowright 0\mid p\in H[\{i\}\times\kappa]\}$ is dense in $\mathbb P$.
Pick $p\in G_0\cap D_i$. Then $\eta\in N_p\s N_{p\restriction\max(\dom(p))}$
and the latter is equal to $N_{H(i,j)}$ for some $j<\kappa$.
It thus follows that  $\eta\in\bigcap_{i<\kappa}\bigcup_{j<\kappa}N_{H(i,j)}= H^*$.
\end{proof}

\begin{claim}\label{claim418} In $V[G_0]$, $Y_{F^*(\eta)}$ is stationary.
\end{claim}
\begin{proof}
In $V[G_0]$, $(F,H)\text{ is an }(X,Y)\text{-pair}$, thus, to prove that $Y_{F^*(\eta)}$ is stationary,
it suffices to prove that $X_\eta$ is stationary.
But the latter is precisely what is proved in Claim~\ref{cohen2}.
\end{proof}

\begin{claim}\label{Claim4114} In $V[G_0][G_1]$, $Y_{F^*(\eta)}$ is stationary.
\end{claim}
\begin{proof} This is the content of Claim~\ref{cohen3}.
\end{proof}
\begin{claim}\label{Claim4115} In $V[G_0][G_1]$, $X_{\eta}$ is non-stationary.
\end{claim}
\begin{proof} This is the content of Claim~\ref{cohen4}.
\end{proof}
By Claims \ref{Claim4112}, \ref{Claim4114} and \ref{Claim4115} we infer that, in $V[G]$, $(F,H)$ is not an $(X,Y)$-pair,
contradicting the choice of $(F,H)$.
\end{proof}
\begin{cor}\label{corollary412} Suppose $X,Y$ are disjoint stationary subsets of $\kappa$,
with $Y \in I[\kappa - X]$.
After forcing with $\add(\kappa,\kappa^{+})$, ${=^2_X}\nredum{=_Y}$.
\end{cor}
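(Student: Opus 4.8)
The plan is to follow the proof of Corollary~\ref{corollary4122}, feeding Theorem~\ref{CohenEffect} through Lemma~\ref{derivedReduction}. Let $G$ be $\add(\kappa,\kappa^+)$-generic over $V$, and for every $\iota\le\kappa^+$ let $G_\iota$ denote the projection of $G$ into the first $\iota$ coordinates, so that $V[G_{\iota+1}]=V[G_\iota]^{\add(\kappa,1)}$ and $\add(\kappa,\kappa^+)/G_\iota$ is isomorphic to $\add(\kappa,\kappa^+)$. Work in $V[G_{\kappa^+}]$ and suppose towards a contradiction that ${=^2_X}\redum{=_Y}$. By Lemma~\ref{derivedReduction}, there is an $(X,Y)$-pair $(F,H)$. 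Exactly as in Corollary~\ref{corollary4122}, since $\add(\kappa,\kappa^+)$ adds no bounded subsets of $\kappa$ and has the $\kappa^+$-cc, each of the size-$\kappa$ objects $F$ and $H$ admits a nice name of size $\kappa$, so we may fix $\iota<\kappa^+$ with $F,H\in V[G_\iota]$; and since the forcing is cofinality-preserving and adds no bounded subsets of $\kappa$, the hypothesis $Y\in I[\kappa-X]$ persists to $V[G_\iota]$. Theorem~\ref{CohenEffect}, applied in $V[G_\iota]$, therefore yields that $(F,H)$ is not an $(X,Y)$-pair in $V[G_{\iota+1}]$.

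The remaining task is to transport this failure up to $V[G_{\kappa^+}]$, against Lemma~\ref{derivedReduction}. First, Clauses~(1) and~(2) of Definition~\ref{xyCODE} speak only about $F$, $H$, and the poset $(\kappa^{<\kappa},\subseteq)$, none of which changes across $V[G_\iota]\subseteq V[G_{\iota+1}]\subseteq V[G_{\kappa^+}]$; since $(F,H)$ satisfies them, the failure in $V[G_{\iota+1}]$ must occur in Clause~(3), witnessed by some $\eta\in (H^*)^{V[G_{\iota+1}]}$. (Inspecting the proof of Theorem~\ref{CohenEffect}, one may take $\eta:=\bigcup G_0$, with $X_\eta$ non-stationary and $Y_{F^*(\eta)}$ stationary, so in particular $F^*(\eta)\ne\vec 0$.) Now, for this fixed $\eta$: membership in $H^*=\bigcap_{i<\kappa}\bigcup_{j<\kappa}N_{H(i,j)}$ is decided by initial segments of $\eta$ and is therefore absolute to $V[G_{\kappa^+}]$; the value $F^*(\eta)$ is the unique $\xi$ satisfying the tail-condition of Definition~\ref{encodingmap}, a condition on bounded segments of $\eta$, $\xi$ and $F$, hence $F^*(\eta)$ is computed identically in $V[G_{\iota+1}]$ and $V[G_{\kappa^+}]$; and the non-stationarity of $X_\eta$, witnessed by a club that remains a club, persists, while the stationarity of $Y_{F^*(\eta)}$ is preserved since $\add(\kappa,\kappa^+)/G_{\iota+1}\cong\add(\kappa,\kappa^+)$ preserves stationary subsets of $\kappa$. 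Thus $\eta$ still witnesses a violation of Clause~(3) in $V[G_{\kappa^+}]$, so $(F,H)$ is not an $(X,Y)$-pair there --- the desired contradiction.

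I expect the only real work to be the last paragraph: checking that each ingredient of Clause~(3) --- membership in $H^*$, the value of $F^*(\eta)$, and the (non-)stationarity of $X_\eta$ and $Y_{F^*(\eta)}$ --- behaves absolutely or is preserved between the single-Cohen extension $V[G_{\iota+1}]$, where Theorem~\ref{CohenEffect} delivers the failure, and the full extension $V[G_{\kappa^+}]$.
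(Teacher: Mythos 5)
Your proof is correct and is essentially the paper's own argument: pull the $(X,Y)$-pair $(F,H)$ back to some intermediate model $V[G_\iota]$ using the $\kappa^+$-cc, destroy it at stage $\iota+1$ via Theorem~\ref{CohenEffect}, and note that the resulting failure of Clause~(3) of Definition~\ref{xyCODE} must survive to $V[G_{\kappa^+}]$ because Clauses~(1)--(2) and the computations of $H^*$ and $F^*(\eta)$ for a fixed $\eta$ are absolute, non-stationarity persists upward, and the quotient $\add(\kappa,\kappa^+)/G_{\iota+1}\cong\add(\kappa,\kappa^+)$ preserves stationary subsets of $\kappa$. The only blemish is the parenthetical claim that the witness may be taken to be $\eta=\bigcup G_0$ with $X_\eta$ non-stationary and $Y_{F^*(\eta)}$ stationary: that description comes from the proof of Theorem~\ref{CohenEffect} under its (there refuted) contradiction hypothesis, so all you are entitled to is that \emph{some} $\eta\in H^*$ violates the equivalence of Clause~(3) in one of the two possible directions --- but since your persistence argument is symmetric in which of $X_\eta$, $Y_{F^*(\eta)}$ is the stationary one, this costs nothing.
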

\begin{proof} Let $G$ be $\add(\kappa,\kappa^+)$-generic over $V$.
For every $\iota\le\kappa^+$, let $G_\iota$ denote the projection of $G$ into the $\iota^{th}$ stage.

Work in $V[G_{\kappa^+}]$.
Towards a contradiction, suppose that $f:2^\kappa\rightarrow\kappa^\kappa$ is a Baire measurable reduction from $=^2_X$ to $=_Y$.
By Lemma~\ref{derivedReduction}, we may fix an $(X,Y)$-pair, say $(F,H)$.
As $F$ and $H$ are elements of $H_{\kappa^+}$
and as $\add(\kappa,\kappa^+)$ has the $\kappa^{+}$-cc, we may find a large enough $\iota<\kappa^+$
such that $(F,H)$ is in $V[G_\iota]$.
Now, by Theorem~\ref{CohenEffect},
$V[G_{\iota+1}]\models`` (F,H)\text{ is not an }(X,Y)\text{-pair}"$.
As $V$ and $V[G_{\kappa^+}]$ have the same bounded subsets of $\kappa$,
the fact that $(F,H)$ is an $(X,Y)$-pair in $V[G_{\kappa^+}]$,
but not in $V[G_{\iota+1}]$ must mean that Clause~(3) of Definition~\ref{xyCODE} fails in $V[G_{\iota+1}]$.
It thus follows that there exists a stationary subset of $\kappa$ in $V[G_{\iota+1}]$ that ceases to be stationary in $V[G_{\kappa^+}]$.
However, the quotient forcing $\add(\kappa,\kappa^+)/G_{\iota+1}$ is isomorphic to $\add(\kappa,\kappa^+)$
and the latter preserves stationary subsets of $\kappa$. This is a contradiction.
\end{proof}

\begin{cor}\label{cor518}  Suppose that there exists a coherent regressive
$C$-sequence over a stationary subset $\Gamma$ of $\kappa$.
After forcing with $\add(\kappa,\kappa^{+})$,
for all two disjoint stationary subsets $X,Y$ of $\Gamma$,
${=^2_X}\nredum{=_Y}$.
\end{cor}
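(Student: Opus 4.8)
The plan is to follow the proof of Corollary~\ref{cor5182} almost verbatim, substituting the non-reduction theorem Corollary~\ref{corollary412} for the non-reflection result Corollary~\ref{corollary4122}, and the reduction Monotonicity Lemma~\ref{transitivity} for the $\mathfrak f$-reflection Monotonicity Lemma~\ref{monotonicity}.

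In detail, I would let $G$ be $\add(\kappa,\kappa^+)$-generic over $V$ and, for every $\iota\le\kappa^+$, let $G_\iota$ denote the projection of $G$ into the $\iota^{\mathrm{th}}$ stage, recalling (as in the earlier corollaries) that the quotient $\add(\kappa,\kappa^+)/G_\iota$ is isomorphic to $\add(\kappa,\kappa^+)$ whenever $\iota<\kappa^+$, and that this poset is ${<}\kappa$-closed and has the $\kappa^+$-cc, hence is cofinality-preserving, adds no bounded subsets of $\kappa$, and preserves stationary subsets of $\kappa$. Working in $V[G]$, I would fix disjoint stationary subsets $X,Y$ of $\Gamma$; since $X,Y\in H_{\kappa^+}$ and the forcing has the $\kappa^+$-cc, I can find $\iota<\kappa^+$ with $X,Y\in V[G_\iota]$, and $X,Y$ remain disjoint and stationary there. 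The coherent regressive $C$-sequence over $\Gamma$ given by hypothesis is still one in $V[G_\iota]$ (the property being absolute), so Lemma~\ref{usingsquare}, applied in $V[G_\iota]$, yields a stationary $Z\subseteq X$ with $\Gamma\in I[\kappa-Z]$; since $I[\kappa-Z]$ is an ideal and $Y\subseteq\Gamma$, also $Y\in I[\kappa-Z]$ (all computed in $V[G_\iota]$), and $Z$ is disjoint from $Y$. Now I would invoke Corollary~\ref{corollary412} over the model $V[G_\iota]$ with the tail forcing $\add(\kappa,\kappa^+)/G_\iota\cong\add(\kappa,\kappa^+)$, obtaining that in $V[G]$ one has ${=^2_Z}\nredum{=_Y}$. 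Finally, to pass from $Z$ back to $X$: were ${=^2_X}\redum{=_Y}$ to hold in $V[G]$, then, writing ${=_Y}$ as ${=^\kappa_Y}$ and using $Z\subseteq X$, Monotonicity Lemma~\ref{transitivity} with $\mathbf p=BM$ would give ${=^2_Z}\redum{=_Y}$, a contradiction; hence ${=^2_X}\nredum{=_Y}$, as required.

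The only delicate points are bookkeeping ones — checking that $X$, $Y$, $Z$ retain the properties (stationarity, disjointness, and $Y\in I[\kappa-Z]$) needed to feed Corollary~\ref{corollary412}, and that the tail of $\add(\kappa,\kappa^+)$ past the $\iota^{\mathrm{th}}$ coordinate is again $\add(\kappa,\kappa^+)$ over $V[G_\iota]$ — and these are precisely the verifications already carried out in the proofs of Corollaries~\ref{corollary4122}, \ref{corollary412} and \ref{cor5182}. Accordingly I do not expect a genuine obstacle: the statement is essentially a formal consequence of the machinery already assembled, the one substantive input being Lemma~\ref{usingsquare}, which converts a coherent regressive $C$-sequence over $\Gamma$ into the approachability-type witness $\Gamma\in I[\kappa-Z]$ that Corollary~\ref{corollary412} requires.
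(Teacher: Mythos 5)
Your proposal is correct and follows essentially the same route as the paper's own proof: localize $X,Y$ in some $V[G_\iota]$ using the $\kappa^+$-cc, apply Lemma~\ref{usingsquare} there to get a stationary $Z\subseteq X$ with $Y\in I[\kappa-Z]$, feed this into Corollary~\ref{corollary412} over the tail forcing, and finish with Monotonicity Lemma~\ref{transitivity}. The extra bookkeeping you flag (disjointness of $Z$ and $Y$, preservation of stationarity, the tail being isomorphic to $\add(\kappa,\kappa^+)$) is exactly what the paper relies on implicitly.
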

\begin{proof}
Let $G$ be  $\add(\kappa,\kappa^+)$-generic over $V$.
As in the proof of Corollary~\ref{corollary412},
for every $\iota\le\kappa^+$, we let $G_\iota$ denote the projection of $G$ into the $\iota^{th}$ stage.

Work in $V[G]$. Suppose that $X$ and $Y$ are disjoint stationary subsets of $\Gamma$.
As $X$ and $Y$ are elements of $H_{\kappa^+}$
and as $\add(\kappa,\kappa^+)$ has the $\kappa^{+}$-cc, we may find a large enough $\iota<\kappa^+$
such that $X$ and $Y$ are in $V[G_\iota]$.
Now, in $V$, and hence also in $V[G_\iota]$, there exists a coherent regressive $C$-sequence over $\Gamma$.
So, by Lemma~\ref{usingsquare}, there is a stationary $Z\subseteq X$ such that $Y\in I[\kappa-Z]$.
Finally, since $\add(\kappa,\kappa^+)/G_{\iota}$ is isomorphic to $\add(\kappa,\kappa^+)$,
Corollary~\ref{corollary412} implies that, in $V[G]$, ${=_{Z}^2}\nredum{=_Y}$.
As $Z\s X$, it follows from Monotonicity Lemma~\ref{transitivity} that,  in $V[G]$, ${=_X^2}\nredum{=_Y}$.
\end{proof}

\begin{cor}[Dense non-reduction]\label{DNR} There exists a cofinality-preserving forcing extension
in which:
\begin{enumerate}
\item For all stationary subsets $X,Y$ of $\kappa$, there exist stationary subsets $X'\s X$ and $Y'\s Y$ such that ${=^2_{X'}}\nredum{=_{Y'}}$;
\item There exists an injection $h:\mathcal P(\kappa)\rightarrow\ns_\kappa^+$ such that, for all $X,Y\in\mathcal P(\kappa)$,
$X\s Y$ iff ${=^2_{h(X)}}\redum{=_{h(Y)}}$;
\item For all two disjoint stationary subsets $X,Y$ of $\kappa$, ${=^2_X}\nredum{=_Y}$.
\end{enumerate}
\end{cor}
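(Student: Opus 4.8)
The plan is to follow the proof of Corollary~\ref{corollary514} essentially line by line, replacing throughout ``$X$ does not $\mathfrak f$-reflect to $S$'' by ``${=^2_X}\nredum{=_Y}$'', the appeal to Corollary~\ref{cor5182} by an appeal to Corollary~\ref{cor518}, and Monotonicity Lemma~\ref{monotonicity} by Monotonicity Lemma~\ref{transitivity}.

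First I would observe that Clauses~(1) and~(2) both follow from Clause~(3). For~(1): given stationary $X,Y\s\kappa$, if $X\cap Y$ is stationary, split it into two stationary pieces $X'\uplus Y'$; otherwise set $X':=X\setminus Y$ and $Y':=Y\setminus X$. In either case $X'\s X$ and $Y'\s Y$ are disjoint and stationary, so~(3) applies. For~(2): using that every stationary subset of $\kappa$ splits into $\kappa$ many disjoint stationary sets, fix a partition $\langle T_i\mid i\le\kappa\rangle$ of $\kappa$ into stationary sets and put $h(X):=T_\kappa\cup\bigcup_{i\in X}T_i$ for $X\in\mathcal P(\kappa)$. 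Then $h$ is an injection of $\mathcal P(\kappa)$ into $\ns_\kappa^+$ satisfying $X\s Y\iff h(X)\s h(Y)$. If $X\s Y$, then $h(X)\s h(Y)$, and applying Monotonicity Lemma~\ref{transitivity} to the trivial reduction ${=^2_{h(Y)}}\redum{=^2_{h(Y)}}$ witnessed by the identity yields ${=^2_{h(X)}}\redum{=_{h(Y)}}$. If $X\setminus Y\neq\emptyset$, fix $i\in X\setminus Y$; then $T_i\s h(X)$ while $T_i\cap h(Y)=\emptyset$, so by Lemma~\ref{transitivity} a reduction ${=^2_{h(X)}}\redum{=_{h(Y)}}$ would supply a reduction ${=^2_{T_i}}\redum{=_{h(Y)}}$ for the disjoint stationary pair $T_i,h(Y)$, contradicting~(3).

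It thus remains to produce a cofinality-preserving extension satisfying~(3). Exactly as in the proof of Corollary~\ref{corollary514}, I would split into two cases and in each pass to an intermediate extension $V[H]$ carrying a coherent regressive $C$-sequence over a club $\Gamma\s\kappa$, and then force with $\add(\kappa,\kappa^+)$. If $\kappa=\lambda^+$, let $H$ be generic for the standard cofinality-preserving notion of forcing adding a $\square_\lambda$-sequence. If $\kappa$ is inaccessible, first shoot a club through $\sing(\kappa)$ by the $<\kappa$-distributive, $\kappa$-sized poset $\mathbb S_0$ from the proof of Corollary~\ref{corollary514} (making $\kappa$ non-Mahlo), and then add a coherent regressive $C$-sequence over $\sing(\kappa)$ by the poset used there; the composition is cofinality-preserving. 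Now work in $V[H][G]$ and let $X,Y$ be arbitrary disjoint stationary subsets of $\kappa$. As $\Gamma$ is a club, $X\cap\Gamma$ and $Y\cap\Gamma$ are disjoint stationary subsets of $\Gamma$; moreover, since $\kappa\setminus\Gamma$ is non-stationary, the relations ${=^2_X}$ and ${=^2_{X\cap\Gamma}}$ coincide and the relations ${=_Y}$ and ${=_{Y\cap\Gamma}}$ coincide. Hence Corollary~\ref{cor518} gives ${=^2_{X\cap\Gamma}}\nredum{=_{Y\cap\Gamma}}$, which is to say ${=^2_X}\nredum{=_Y}$, as desired.

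The one genuinely delicate ingredient is the construction of $V[H]$ in the inaccessible case: one must first destroy the Mahloness of $\kappa$ in a cofinality-preserving fashion before a coherent regressive $C$-sequence over a club can be forced, and check that the composite poset is cofinality-preserving. But this has already been carried out in the proof of Corollary~\ref{corollary514}, so it may simply be quoted here.
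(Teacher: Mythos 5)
Your proposal is correct and follows the paper's own route exactly: the model is the one from Corollary~\ref{corollary514} (force a coherent regressive $C$-sequence over a club, then add $\kappa^+$ Cohen subsets of $\kappa$), Clause~(3) comes from Corollary~\ref{cor518} via the observation that $=^2_X$ and $=^2_{X\cap\Gamma}$ coincide for club $\Gamma$, and Clauses~(1) and~(2) are derived from~(3). The paper leaves the derivation of (1) and (2) from (3) as ``a moment's reflection''; your explicit constructions (splitting $X\cap Y$ or taking $X\setminus Y$, $Y\setminus X$; the map $h(X):=T_\kappa\cup\bigcup_{i\in X}T_i$ from a stationary partition) are valid fillings of that gap.
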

\begin{proof} This is the same model of Corollary~\ref{corollary514}. That is,
we force to add a coherent regressive $C$-sequence over a club in $\kappa$,
and then add $\kappa^+$ many Cohen subsets of $\kappa$.
The only difference is that this time we appeal to Corollary~\ref{cor518} instead of to Corollary~\ref{cor5182}.
\end{proof}

\begin{cor}\label{cor519}
Suppose $X\subseteq \kappa$ and $S\subseteq \kappa\cap\cof (\omega)$ are stationary sets,
for which $X\setminus S$ is stationary. After forcing with $\add(\kappa,\kappa^{+})$,  ${=^2_X}\nredum{=_S}$.
\end{cor}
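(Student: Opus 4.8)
The plan is to reduce this to Corollary~\ref{corollary412} by locating, inside $X$, a stationary set $Z$ for which $S\in I[\kappa-Z]$. The natural candidate is $Z:=X\setminus S$, which is stationary by hypothesis and disjoint from $S$. First I would invoke Lemma~\ref{IdealProp2}(1), which tells us that $\kappa\cap\cof(\omega)\in I[\kappa-Z]$; since $S\s\kappa\cap\cof(\omega)$ and $I[\kappa-Z]$ is an ideal (hence downward closed), it follows at once that $S\in I[\kappa-Z]$. Thus $Z$ and $S$ are disjoint stationary subsets of $\kappa$ with $S\in I[\kappa-Z]$, which is exactly the hypothesis of Corollary~\ref{corollary412}.

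Next I would apply Corollary~\ref{corollary412} to the pair $(Z,S)$: after forcing with $\add(\kappa,\kappa^+)$, we get ${=^2_Z}\nredum{=_S}$. Finally, to pass from $Z$ to $X$, I would use the Monotonicity Lemma~\ref{transitivity} (with $\mathbf p:=BM$, the stationary sets $Z\s X$, and the trivial inclusions $S\s S$, $2\le 2$): since $Z\s X$, a Baire measurable reduction of $=^2_X$ to $=_S$ would yield one of $=^2_Z$ to $=_S$ by restriction along the inclusion, so ${=^2_Z}\nredum{=_S}$ entails ${=^2_X}\nredum{=_S}$. (Concretely, this is the contrapositive of the instance of Lemma~\ref{transitivity} asserting that ${=^2_{X}}\redum{=_S}$ implies ${=^2_{Z}}\redum{=_S}$.) This establishes ${=^2_X}\nredum{=_S}$ in $V^{\add(\kappa,\kappa^+)}$, as desired.

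There is essentially no obstacle here: the corollary is a direct mirror of Corollary~\ref{cor5192}, with the single change of appealing to Corollary~\ref{corollary412} in place of Corollary~\ref{corollary4122}, and the argument is a three-line bookkeeping of ideal-theoretic downward closure, the main Cohen-forcing theorem, and monotonicity. The only point requiring the tiniest bit of care is remembering that $I[\kappa-Z]$ is genuinely an ideal (noted in the Remark following Definition~\ref{appideal}), so that membership of the larger set $\kappa\cap\cof(\omega)$ transfers to the subset $S$.
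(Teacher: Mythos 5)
Your proposal is correct and coincides with the paper's own proof: both take $Z:=X\setminus S$, apply Lemma~\ref{IdealProp2}(1) together with downward closure of $I[\kappa-Z]$ to get $S\in I[\kappa-Z]$, invoke Corollary~\ref{corollary412}, and finish with the Monotonicity Lemma~\ref{transitivity}. Nothing to add.
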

\begin{proof} Consider the stationary set $Z:=X\setminus S$.
By Lemma~\ref{IdealProp2}(1), $\kappa\cap\cof(\omega)\in I[\kappa-Z]$, in particular, $S\in I[\kappa-Z]$.
By Corollary~\ref{corollary412}, we conclude that after forcing with $\add(\kappa,\kappa^{+})$, ${=^2_Z}\nredum{=_S}$.
Now, Monotonicity Lemma~\ref{transitivity} finishes the proof.
\end{proof}

Based on the work done thus far, we are now able to answer a few questions from the literature.
\begin{defn}An equivalence relation $R$ is said to be \emph{$\Sigma^1_1$-complete} iff it is analytic and, for every analytic equivalence relation $E$, ${E}\redub{R}$.
\end{defn}

\begin{question}[Aspero-Hyttinen-Kulikov-Moreno, {\cite[Question 4.3]{AHKM}}]
Is it consistent that $\kappa$ is inaccessible and $=^2_S$ is not $\Sigma_1^1$-complete for some stationary $S\s\kappa$?
\end{question}

We answer the preceding in the affirmative:

\begin{thm} If $\kappa$ is an inaccessible cardinal, then there exists a cofinality-preserving forcing extension in which ($\kappa$ is inaccessible, and) for every stationary co-stationary $S\s \kappa$, $=^2_S$ is not a $\Sigma^1_1$-complete equivalence relation.
\end{thm}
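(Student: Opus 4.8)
The plan is to show that the model furnished by Corollary~\ref{DNR} in the case that $\kappa$ is inaccessible is as desired. First I would recall, from the proof of Corollary~\ref{corollary514}, that for $\kappa$ inaccessible the extension witnessing Corollary~\ref{DNR} is obtained via a cofinality-preserving notion of forcing of size $\kappa$ that adds no bounded subsets of $\kappa$; consequently $\kappa$ remains strongly inaccessible in it (it is merely stripped of its Mahloness). Fix such an extension $W$ and work inside it for the remainder of the argument.

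Let $S\subseteq\kappa$ be an arbitrary stationary co-stationary set, and put $X:=\kappa\setminus S$, so that, as $S$ is co-stationary, $X$ and $S$ are disjoint stationary subsets of $\kappa$. By Corollary~\ref{DNR}(3), ${=^2_X}\nredum{=_S}$. Since every Borel map has the Baire property and is therefore Baire measurable, a Borel reduction is in particular a Baire measurable reduction, and hence ${=^2_X}\nredub{=_S}$. Now suppose toward a contradiction that ${=^2_X}\redub{=^2_S}$. Applying Monotonicity Lemma~\ref{transitivity} with $\mathbf p:=B$, with the stationary sets $X\subseteq X$ and $S\subseteq S$, and with the ordinals $\theta=\theta'=2$, $\lambda=2$, $\lambda'=\kappa$, we would obtain ${=^2_X}\redub{=^\kappa_S}$, i.e., ${=^2_X}\redub{=_S}$, contradicting the previous line. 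Therefore ${=^2_X}\nredub{=^2_S}$.

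Finally, I would observe that $=^2_X$ is an analytic equivalence relation: for any $T\subseteq\kappa$ one has $\eta\mathrel{=^2_T}\xi$ if and only if there is a (code for a) club $C\subseteq\kappa$ disjoint from $\{\alpha\in T\mid\eta(\alpha)\neq\xi(\alpha)\}$, and ``being a club'' is a Borel condition, so $=^2_T$ is $\Sigma^1_1$. Thus $=^2_X$ is an analytic equivalence relation that does not Borel-reduce to the (also analytic) relation $=^2_S$, which by the very definition of $\Sigma^1_1$-completeness means that $=^2_S$ is not $\Sigma^1_1$-complete. As $S$ was an arbitrary stationary co-stationary subset of $\kappa$, this completes the proof.

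The argument is essentially a repackaging of Corollary~\ref{DNR}(3), so there is no serious obstacle; the only points requiring minor care are the verification that $\kappa$ stays inaccessible in the chosen model — already handled by the analysis in the proof of Corollary~\ref{corollary514} — and the passage from ${\nredub}\,{=_S}$ to ${\nredub}\,{=^2_S}$ via the Monotonicity Lemma, which is where one really uses that $2^\kappa$ sits inside $\kappa^\kappa$.
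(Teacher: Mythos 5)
Your proposal is correct and follows essentially the same route as the paper: work in the model of Corollary~\ref{DNR}, set $X:=\kappa\setminus S$, invoke Clause~(3) to get ${=^2_X}\nredum{=_S}$, pass to ${=^2_X}\nredub{=^2_S}$ via monotonicity, and conclude from the analyticity of $=^2_X$ that $=^2_S$ is not $\Sigma^1_1$-complete. Your explicit verifications that $\kappa$ stays inaccessible and that $=^2_X$ is analytic are points the paper leaves implicit, but they are routine and handled correctly.
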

\begin{proof} This is the forcing extension of Corollary~\ref{DNR}.
In this model, for all two disjoint stationary subsets $X,S$ of $\kappa$,
${=^2_X}\nredum{=_S}$.
In particular ${=^2_X}\nredum{=^2_S}$, so that  $=^2_S$ is not $\Sigma_1^1$-complete.
\end{proof}

\begin{question}[Moreno, {\cite[Question 4.16]{Mor17}}]
Is it consistent that $${=_{\kappa\cap\cof (\mu)}}\nredub{=_{\kappa\cap\cof (\nu)}}$$
holds for all infinite regular cardinals $\mu\neq\nu$ below $\kappa$?
\end{question}

We answer the preceding in the affirmative:

\begin{thm}
There is a cofinality-preserving forcing extension, in which,
for all infinite regular cardinals $\mu\neq\nu$ below $\kappa$, $=_{\kappa\cap\cof (\mu)}\nredum =_{\kappa\cap\cof (\nu)}$.
\end{thm}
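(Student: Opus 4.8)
The plan is to reduce the theorem to the machinery already developed, specifically to Corollary~\ref{cor518} (dense non-reduction over coherent regressive $C$-sequences) combined with the ideal-theoretic facts of Lemma~\ref{IdealProp2}, and to handle both the successor and inaccessible cases of $\kappa$ uniformly by first forcing a coherent regressive $C$-sequence over a club. Concretely, I would start in a ground model $V$ and pass to the extension $V[H][G]$ where $H*G$ is $\mathbb S*\add(\kappa,\kappa^+)$-generic, with $\mathbb S$ being the standard cofinality-preserving poset for adding a $\square_\lambda$-sequence when $\kappa=\lambda^+$, or the two-step iteration $\mathbb S_0*\dot{\mathbb S}_1$ (shoot a club through $\sing(\kappa)$, then add a coherent regressive $C$-sequence over $\sing(\kappa)$) when $\kappa$ is inaccessible — exactly the model of Corollary~\ref{corollary514} and Corollary~\ref{DNR}. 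In $V[H]$ there is a coherent regressive $C$-sequence over a club $\Gamma$ in $\kappa$.

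The core point is then the following: for infinite regular $\mu\ne\nu$ below $\kappa$, I want ${=_{\kappa\cap\cof(\mu)}}\nredum{=_{\kappa\cap\cof(\nu)}}$ in $V[H][G]$. By Monotonicity (Lemma~\ref{transitivity}) and since $\redum$ between $=_X$ and $=_Y$ would in particular give $=^2_X \redum =_Y$, it suffices to produce stationary $X\s\kappa\cap\cof(\mu)$ and $Y\s\kappa\cap\cof(\nu)$ with ${=^2_X}\nredum{=_Y}$; but actually I should be careful here, because the goal statement is about $=_{\kappa\cap\cof(\mu)}$ (the full $\kappa$-valued relation), not $=^2$, so I would invoke the chain ${=^2_{\kappa\cap\cof(\mu)}}\redum{=_{\kappa\cap\cof(\mu)}}$ is \emph{not} what I need — rather I need that a Baire measurable reduction of $=_{\kappa\cap\cof(\mu)}$ to $=_{\kappa\cap\cof(\nu)}$ restricts (via the obvious embedding $2^\kappa\hookrightarrow\kappa^\kappa$) to one of $=^2_{\kappa\cap\cof(\mu)}$ to $=_{\kappa\cap\cof(\nu)}$, hence to rule out the former it suffices to rule out the latter. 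So fix a stationary co-stationary $Y_0\s\kappa\cap\cof(\nu)$ that is contained in the club $\Gamma$ — possible since $\cof(\nu)\cap\Gamma$ is stationary — and note that $\kappa\cap\cof(\mu)\cap\Gamma$ is also stationary. Now the two relevant classes $\kappa\cap\cof(\mu)$ and $\kappa\cap\cof(\nu)$ are disjoint, and I want $Y$-many of the second to lie in $I[\kappa-Z]$ for a stationary $Z$ inside the first. This is supplied by Lemma~\ref{usingsquare} applied inside the appropriate intermediate model: it yields, for the stationary $X:=\kappa\cap\cof(\mu)\cap\Gamma$, a stationary $Z\s X$ with $\Gamma\in I[\kappa-Z]$, hence in particular $Y_0\in I[\kappa-Z]$ since $Y_0\s\Gamma$. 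Then Corollary~\ref{cor518} (or directly Corollary~\ref{corollary412}) gives ${=^2_Z}\nredum{=_{Y_0}}$ in $V[H][G]$, and Monotonicity promotes this to ${=^2_{\kappa\cap\cof(\mu)}}\nredum{=_{\kappa\cap\cof(\nu)}}$, whence ${=_{\kappa\cap\cof(\mu)}}\nredum{=_{\kappa\cap\cof(\nu)}}$.

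There is one subtlety in iterating this over all pairs $(\mu,\nu)$ simultaneously: the argument above produces, for each ordered pair, a witness inside the single model $V[H][G]$, and since the argument of Corollary~\ref{cor518} already builds the failure for \emph{all} disjoint stationary subsets of $\Gamma$ at once (it quantifies over $X,Y$ after fixing the generic), no further bookkeeping is needed — the model of Corollary~\ref{DNR} already has the property that for all disjoint stationary $X,Y\s\Gamma$, ${=^2_X}\nredum{=_Y}$. So the proof is essentially: ``This is the model of Corollary~\ref{DNR}; for regular $\mu\ne\nu$ below $\kappa$, pick stationary $X\s\kappa\cap\cof(\mu)\cap\Gamma$ and $Y\s\kappa\cap\cof(\nu)\cap\Gamma$, apply Corollary~\ref{cor518} to get ${=^2_X}\nredum{=_Y}$, and conclude by Lemma~\ref{transitivity} that ${=_{\kappa\cap\cof(\mu)}}\nredum{=_{\kappa\cap\cof(\nu)}}$, using that a reduction of the latter would yield a reduction of $=^2_X$ to $=_Y$.''

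The main obstacle I anticipate is the very first reduction — verifying cleanly that a Baire measurable reduction of $=_{\kappa\cap\cof(\mu)}$ to $=_{\kappa\cap\cof(\nu)}$ gives rise to a Baire measurable reduction of $=^2_{X}$ to $=_{\kappa\cap\cof(\nu)}$ for $X\s\kappa\cap\cof(\mu)$ — since one must check that precomposition with the continuous inclusion $2^\kappa\hookrightarrow\kappa^\kappa$ preserves Baire measurability and that the resulting map still reduces the restricted relation; but this is exactly the content of Monotonicity Lemma~\ref{transitivity} together with the observation that $2^\kappa$ is (homeomorphic to) a closed subspace of $\kappa^\kappa$, so preimages of open sets remain Baire-property sets. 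The only other place requiring care is making sure $\Gamma$ (the domain of the coherent regressive $C$-sequence, a club in $\kappa$ after forcing) meets every $\cof(\theta)$ for $\theta\in\reg(\kappa)$ in a stationary set, which is immediate since a club in $\kappa$ meets every stationary set; and that the forcing $\mathbb S*\add(\kappa,\kappa^+)$ is cofinality-preserving, which is recorded already in the proof of Corollary~\ref{corollary514}. Everything else is a direct citation of the results established earlier in the paper.
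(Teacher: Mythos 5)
Your proposal follows the paper's route --- the model is exactly that of Corollary~\ref{DNR}, and the engine is Corollary~\ref{cor518} via Lemma~\ref{usingsquare} and Corollary~\ref{corollary412} --- but the final promotion step is mis-stated, and as written it does not go through. You fix a stationary $Y_0\s\kappa\cap\cof(\nu)\cap\Gamma$, obtain ${=^2_Z}\nredum{=_{Y_0}}$, and then assert that ``Monotonicity promotes this to ${=^2_{\kappa\cap\cof(\mu)}}\nredum{=_{\kappa\cap\cof(\nu)}}$.'' Lemma~\ref{transitivity} only transfers a reduction from a larger source to a smaller source and from a smaller target to a \emph{larger} target: ${=^{\theta'}_{X'}}\redum{=^{\lambda}_{Y}}$ entails ${=^{\theta}_{X}}\redum{=^{\lambda'}_{Y'}}$ for $Y\s Y'$. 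Contrapositively, non-reduction to the larger target yields non-reduction to the smaller one --- the opposite of what you need. If $Y_0$ is a proper stationary subset of $\kappa\cap\cof(\nu)$ with stationary complement there, a hypothetical Baire measurable reduction of $=^2_{\kappa\cap\cof(\mu)}$ to $=_{\kappa\cap\cof(\nu)}$ gives no reduction to $=_{Y_0}$, so the claimed contradiction does not arise. The same problem recurs in your closing summary, where you say that a reduction of $=_{\kappa\cap\cof(\mu)}$ to $=_{\kappa\cap\cof(\nu)}$ ``would yield a reduction of $=^2_X$ to $=_Y$'' for an arbitrary stationary $Y\s\kappa\cap\cof(\nu)\cap\Gamma$.

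The repair is immediate and is what the paper implicitly does: take $X:=\kappa\cap\cof(\mu)\cap\Gamma$ and $Y:=\kappa\cap\cof(\nu)\cap\Gamma$ exactly. Since $\Gamma$ is a club in the model of Corollary~\ref{DNR}, the symmetric differences $(\kappa\cap\cof(\mu))\symdiff X$ and $(\kappa\cap\cof(\nu))\symdiff Y$ are non-stationary, so $=^2_X$ is literally the same relation as $=^2_{\kappa\cap\cof(\mu)}$ and $=_Y$ is literally the same relation as $=_{\kappa\cap\cof(\nu)}$; no monotonicity on the target side is needed at all. (This is also why Corollary~\ref{DNR}(3) is stated for arbitrary disjoint stationary subsets of $\kappa$ rather than of $\Gamma$, and the paper's proof of the present theorem simply cites that clause with $X=\kappa\cap\cof(\mu)$, $Y=\kappa\cap\cof(\nu)$ and then passes from $=_{\kappa\cap\cof(\mu)}$ to $=^2_{\kappa\cap\cof(\mu)}$ on the source side, which \emph{is} a legitimate use of Lemma~\ref{transitivity}.) Your source-side steps --- restricting the alphabet from $\kappa$ to $2$ and shrinking $X$ to the $Z$ produced by Lemma~\ref{usingsquare} --- are fine, as is the choice of model.
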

\begin{proof} This is the forcing extension of Corollary~\ref{DNR}.
For all infinite regular cardinals $\mu\neq\nu$ below $\kappa$,
$\kappa\cap\cof(\mu)$ and $\kappa\cap\cof(\nu)$ are disjoint stationary subsets of $\kappa$,
and hence ${=^2_{\kappa\cap\cof (\mu)}}\nredum{=_{\kappa\cap\cof (\nu)}}$.
In particular, ${=_{\kappa\cap\cof (\mu)}}\nredum{=_{\kappa\cap\cof (\nu)}}$.
\end{proof}

\begin{question}[Aspero-Hyttinen-Kulikov-Moreno, {\cite[Question 2.12]{AHKM}}]
Is it consistent that, for all infinite regular $\mu<\nu<\kappa$,
the following hold?
$${=_{\kappa\cap\cof (\mu)}}\redub{=^2_{\kappa\cap\cof (\nu)}}\ \&\ {=^2_{\kappa\cap\cof (\nu)}}\nredub{=_{\kappa\cap\cof (\mu)}}.$$
\end{question}

We answer the preceding in the affirmative, along the way, proving Theorem~E:

\begin{thm}\label{MM-application}
Suppose $\mm$ holds. After forcing with $\add(\omega_2,\omega_3)$, $\mm$ still holds,
and so are all of the following:
\begin{enumerate}
\item ${=_{S^2_0}}\reduO{=^2_{S^2_1}}$;
\item For every stationary $X\subseteq S^2_1$, ${=^2_X}\nredum{=_{S^2_0}}$;
\item There are stationary subsets $X\subseteq S^2_0$ and $Y\s S^2_1$ such that ${=^2_X}\nredum{=_Y}$;
\item There is a stationary $Y\s S^2_1$ such that ${=^2_{S^2_1}}\nredum{=_Y}$;
\item ${=_{S^2_0}}\reduO{=^2_{S^2_1}}$ and ${=^2_{S^2_1}}\nredum{=_{S^2_0}}$.
\end{enumerate}

\end{thm}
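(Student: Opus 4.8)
The plan is to verify that $\mm$ is preserved by $\add(\omega_2,\omega_3)$ and then harvest each of the five clauses from results established earlier in the paper, applied to $\kappa=\omega_2$ (noting that $\mm$ implies $2^{\aleph_0}=\aleph_2$, so $\kappa^{<\kappa}=\kappa$ holds, and $\add(\omega_2,\omega_3)$ is the forcing $\add(\kappa,\kappa^+)$).

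\emph{Preservation of $\mm$.} First I would recall the well-known fact that $\add(\omega_2,\omega_3)$, being $<\!\omega_2$-closed (hence adding no new reals and no new subsets of $\omega_1$) and $\omega_3$-cc, cannot destroy a saturated ideal on $\omega_1$ nor kill stationary subsets of $\omega_1$; more to the point, by the standard preservation theorems (e.g.\ Larson's work, or the fact that $<\!\omega_2$-directed-closed forcing preserves $\mm$, which for plain $\add$ follows since it is even $<\!\omega_2$-closed), $\mm$ continues to hold in $V^{\add(\omega_2,\omega_3)}$. This also keeps $\diamondsuit_{S^2_0}$ available: by Shelah's theorem (the footnote to Lemma~\ref{MM}), $\diamondsuit_X$ holds for every stationary $X\s S^2_0$ since $\omega_2=(\omega_1)^+$ and $\omega_1$ has uncountable cofinality, and this is absolute enough — in any case it holds in $V^{\add(\omega_2,\omega_3)}$ because $\mm$ holds there and $\kappa=\kappa^{<\kappa}$.

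\emph{Deriving the positive reduction.} Working in $V^{\add(\omega_2,\omega_3)}$, $\mm$ gives $\fp(\omega_2)$ and in particular $\refl(\omega_1,S^2_0,S^2_1)$; together with $\diamondsuit_{S^2_0}$, Lemma~\ref{MM} yields that $S^2_0$ reflects with $\diamondsuit$ to $S^2_1$, i.e.\ $S^2_0$ $\mathfrak f$-reflects with $\diamondsuit$ to $S^2_1$ using the club filters. By Theorem~\ref{lemma43}, ${\sq{S^2_0}}\reduO{\sqc{S^2_1}}$, whence ${=_{S^2_0}}\reduO{=^2_{S^2_1}}$ as noted in the remark following that theorem (since ${\sq X}\reduO{\sqc S}$ entails ${=_X}\reduO{=^2_S}$). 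This proves Clause~(1), and supplies the positive halves of Clauses~(5).

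\emph{Deriving the non-reductions.} For the negative statements, I would invoke the $\square$-machinery of Section~\ref{sectionFFR}. By Sakai's theorem (item (4) of the Remark after the $C$-sequence definitions), $\mm$ implies the existence of a coherent regressive $C$-sequence over some $\Gamma\s\omega_2$ with $\Gamma\cap\cof(\omega_1)$ stationary; this $C$-sequence lives in $V$ and hence in $V[G_\iota]$ for any initial stage of $\add(\omega_2,\omega_3)$. Now $S^2_1\cap\Gamma$ is stationary, so for any stationary $X\s S^2_1$ we have $X\cap\Gamma$ stationary, and Lemma~\ref{usingsquare} (inside $V[G_\iota]$, for suitable $\iota$) produces a stationary $Z\s X\cap\Gamma$ with $S^2_0\in I[\kappa-Z]$ — here I use that $S^2_0\s\cof(\omega)$, so $\kappa\cap\cof(\omega)\in I[\kappa-Z]$ by Lemma~\ref{IdealProp2}(1), hence $S^2_0\in I[\kappa-Z]$ regardless; actually this last observation means I do not even need $\Gamma$ for Clauses~(2) and~(4), only the trivial ideal membership. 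The point where $\Gamma$ genuinely matters is Clause~(3), where I need a stationary $Y\s S^2_1$ and a stationary $X\s S^2_0$ with $Y\in I[\kappa-X]$; for this, since $X\s S^2_0\s\cof(\omega)$ and $Y\s\cof(\omega_1)$, I take $X:=S^2_0\cap\Gamma$ and $Y:=S^2_1\cap\Gamma$ and apply Lemma~\ref{usingsquare} to get the required stationary $Z\s X$ with $\Gamma\in I[\kappa-Z]$, whence $Y\in I[\kappa-Z]$. In every case, Corollary~\ref{corollary412} (pushed through the tail quotient $\add(\kappa,\kappa^+)/G_{\iota}\cong\add(\kappa,\kappa^+)$, exactly as in the proof of Corollary~\ref{cor518}) gives ${=^2_Z}\nredum{=_{S^2_0}}$ (resp.\ ${=^2_Z}\nredum{=_Y}$), and Monotonicity Lemma~\ref{transitivity} upgrades $Z$ to $X$, delivering Clauses~(2), (3), and~(4). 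Clause~(5) is then the conjunction of Clause~(1) and the instance of Clause~(4)–type reasoning with $Y:=S^2_0$: indeed $S^2_0\in I[\kappa-Z]$ for some stationary $Z\s S^2_1$ gives ${=^2_{S^2_1}}\nredum{=_{S^2_0}}$.

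\emph{Main obstacle.} The only genuinely delicate point is the preservation of $\mm$ under $\add(\omega_2,\omega_3)$; the descriptive-set-theoretic consequences are then essentially immediate from the already-proven Theorems~\ref{lemma43} and the Cohen-forcing non-reduction results, combined with the bookkeeping (via the $\kappa^+$-cc and the self-similarity of the quotient of $\add(\kappa,\kappa^+)$) needed to locate the relevant combinatorial objects at a bounded stage. One subtlety worth double-checking is that all of $\fp(\omega_2)$, $\refl(\omega_1,S^2_0,S^2_1)$, $\diamondsuit_{S^2_0}$, and Sakai's $C$-sequence survive in the final model: the first two are consequences of $\mm$ (preserved), the third holds by Shelah's theorem in the extension since $\mm$ still holds there, and the fourth exists already in $V$ and is not destroyed by the later forcing since $I[\kappa-Z]$-membership is computed using ground-model data and $\add(\kappa,\kappa^+)$ preserves stationarity.
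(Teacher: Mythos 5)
Your overall strategy coincides with the paper's: preserve $\mm$ via Larson's theorem (note that $\add(\omega_2,\omega_3)$ is in fact ${<}\omega_2$-\emph{directed}-closed, which is what the preservation theorem needs --- mere ${<}\omega_2$-closure would not suffice in general), derive Clause~(1) from Lemma~\ref{MM} together with Theorem~\ref{lemma43}, derive Clause~(2) from the trivial membership $S^2_0\in I[\kappa-X]$ (Lemma~\ref{IdealProp2}(1)) pushed through Corollary~\ref{corollary412}, and derive Clause~(3) from Sakai's partial square via Lemma~\ref{usingsquare}. Clauses~(1), (2), (3) and~(5) come out correctly, modulo two small blemishes: the assertion that $X\cap\Gamma$ is stationary for \emph{every} stationary $X\s S^2_1$ is false (take $X:=S^2_1\setminus\Gamma$), though you do not end up using it; and for Clause~(3) you should record why $S^2_0\cap\Gamma$ is stationary (coherence: below any $\alpha\in\Gamma\cap\cof(\omega_1)$, the set $\acc(C_\alpha)$ meets any club and its points of countable cofinality lie in $\Gamma$).

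The genuine gap is Clause~(4). You claim that for Clauses~(2) \emph{and}~(4) ``only the trivial ideal membership'' is needed, but Lemma~\ref{IdealProp2}(1) only places sets concentrating on $\cof(\omega)$ into $I[\kappa-Z]$, so it only produces non-reductions whose \emph{target} lies in $\cof(\omega)$; what it yields is ${=^2_{S^2_1}}\nredum{=_{S^2_0}}$, and $S^2_0$ is not a subset of $S^2_1$. Clause~(4) asks for a stationary $Y\s S^2_1\s\cof(\omega_1)$ with ${=^2_{S^2_1}}\nredum{=_Y}$, and arranging $Y\in I[\kappa-Z]$ for such a $Y$ is precisely the non-trivial step that requires the coherent regressive $C$-sequence. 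Two repairs are available. The paper's route deduces (4) from (1) and (3): if $g$ were a Baire measurable reduction of $=^2_{S^2_1}$ to $=_Y$ for the $Y$ of Clause~(3), then composing it with the Lipschitz reduction witnessing ${=^2_X}\reduO{=^2_{S^2_1}}$ (obtained from Clause~(1) and Lemma~\ref{transitivity}) would contradict ${=^2_X}\nredum{=_Y}$. Alternatively, and entirely within the toolkit you already deploy, split $\Gamma\cap\cof(\omega_1)$ into two disjoint stationary sets $Y_0,Y_1$, apply Corollary~\ref{cor518} to get ${=^2_{Y_0}}\nredum{=_{Y_1}}$, and then Lemma~\ref{transitivity} enlarges the source to give ${=^2_{S^2_1}}\nredum{=_{Y_1}}$ with $Y_1\s S^2_1$, as required. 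As written, your argument does not establish Clause~(4).
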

\begin{proof} We start with a model of $V\models\mm$,
and pass to $V[G]$, where $G$ is $\add(\omega_2,\omega_3)$-generic over $V$.
By \cite[Theorem~4.3]{MR1782117}, $\mm$ is preserved by ${<}\omega_2$-directed-closed forcing,
so that $V[G]\models\mm$.

(1) $\mm$ implies that $2^{\aleph_1}=\aleph_2$, which, by a Theorem of Shelah \cite{Sh:922}, implies that $\diamondsuit_{S^2_0}$ holds.
Now appeal to Lemma~\ref{MM}.

(2) This is Corollary~\ref{cor519}.

(3) Work in $V$. By \cite{Sakaisquare}, $\mm$ implies the existence of a coherent regressive $C$-sequence over some $\Gamma\s\omega_2$ for which $Y:=\Gamma\cap\cof(\omega_1)$ is stationary.
A moment reflection makes it clear that $X:=\Gamma\cap\cof(\omega)$ must be stationary, as well. Now, by Corollary~\ref{cor518},
in $V[G]$, ${=^2_X}\nredum{=_Y}$.

(4) By Clauses (1) and (3).

(5) By Clauses (1) and (2).
\end{proof}

\section*{Acknowledgements}
This research was partially supported by the European Research Council (grant agreement ERC-2018-StG 802756). The third author was also partially supported by the Israel Science Foundation (grant agreement 2066/18).
At the end of the preparation of this article, the second author was visiting the Kurt G\"odel Research Center supported by the Vilho, Yrj\"o and Kalle V\"ais\"al\"a Foundation of the Finnish Academy of Science and Letters.

We thank Andreas Lietz, Benjamin Miller, Ralf Schindler and Liuzhen Wu for their feedback on a preliminary version of this manuscript.

\end{document}